%    \UseRawInputEncoding

\documentclass[11pt]{amsart}
    \usepackage{amsmath,amssymb,amsthm,amsfonts,amsrefs}
    \usepackage{geometry}
    \usepackage{graphicx}
    \usepackage[all]{xy}
    \usepackage[colorlinks, linkcolor=blue, anchorcolor=blue,
            citecolor=blue]{hyperref}

    \usepackage{enumerate}
    \usepackage{tikz}
    \usepackage{tikz-cd}
    \usepackage{xcolor,float}
    \usepackage{mathrsfs}
    \usepackage{caption}

    \usepackage{fancyhdr}
    \cfoot{\thepage}
    \geometry{left=3.18cm,right=3.18cm,top=2.54cm,bottom=2.54cm}
%    \linespread{1.1}

    \usepackage{mathrsfs}
    \newtheorem{definition}{Definition}[section]
    \newtheorem{theorem}{Theorem}[section]
    \newtheorem{proposition}[theorem]{Proposition}
    \newtheorem{lemma}[theorem]{Lemma}
    \newtheorem{corollary}[theorem]{Corollary}
    
    \newtheorem{example}{Example}[section]
    \newtheorem{remark}[example]{Remark}

\newcommand{\SF}{\mathscr{F}}
\newcommand{\SG}{\mathscr{G}}

\newcommand{\SL}{\mathscr{L}}

%\newcommand{\J}{\mathcal{J}}

%--unordered commands, mostly textformatting--
% MADE MATH BOLD

\newcommand{\bC}{\mathbb{C}}

\newcommand{\bR}{\mathbb{R}}
\newcommand{\bZ}{\mathbb{Z}}

\begin{document}

\title{Lagrangian Cobordism Functor in Microlocal Sheaf Theory II}
\date{}
\author{Wenyuan Li}
\address{Department of Mathematics, Northwestern University.}
\email{wenyuanli2023@u.northwestern.edu}
\maketitle

\begin{abstract}
    For an exact Lagrangian cobordism $L$ between Legendrians in $J^1(M)$ from $\Lambda_-$ to $\Lambda_+$ whose Legendrian lift is $\widetilde{L}$, we prove that sheaves in $Sh_{\widetilde{L}}(M \times \bR \times \bR_{>0})$ are equivalent to sheaves at the negative end $Sh_{\Lambda_-}(M \times \bR)$ together with the data of local systems $Loc({L})$ by studying sheaf quantizations for general noncompact Lagrangians. Thus we interpret the Lagrangian cobordism functor between $Sh_{\Lambda_\pm}(M \times \bR)$ as a correspondence parametrized by $Loc({L})$. This enables one to consider generalizations to immersed Lagrangian cobordisms. We also prove that the Lagrangian cobordism functor is action decreasing and recover results on the lengths of embedded Lagrangian cobordisms. Finally, using the construction of Courte-Ekholm, we obtain a family of Legendrians with sheaf categories Morita equivalent to chains on based loop spaces on the Lagrangian fillings.
\end{abstract}

\section{Introduction}

\subsection{Context of Lagrangian cobordism}
    Our goal in this paper is to study exact Lagrangian cobordisms between Legendrian submanifolds in 1-jet bundles from the perspective of microlocal sheaf theory, following the previous work \cite{LiCobordism}.

    In the framework of symplectic field theory \cite{SFT}, given Legendrian submanifolds $\Lambda_\pm \subset (J^1(M), \alpha_\text{std})$, an exact Lagrangian cobordism in the symplectization $L \subset (J^1(M) \times \mathbb{R}_{>0}, $ $d(s\alpha_\text{std}))$ should induce a morphism between the Chekanov-Eliashberg dg algebras \cite{Ekcobordism,EHK}. By enhancing the Chekanov-Eliashberg dg algebra with coefficients in the chains of based loop spaces \cite{EkholmLekili}, we expect a dg algebra morphism
    $$\Phi_L^*: \mathcal{A}_{C_{-*}(\Omega_*\Lambda_+)}(\Lambda_+) \rightarrow \mathcal{A}_{C_{-*}(\Omega_*\Lambda_-)}(\Lambda_-) \otimes_{C_{-*}(\Omega_*\Lambda_-)} C_{-*}(\Omega_*L),$$
    and a functor over the $\mathcal{A}_\infty$-categories of augmentations \cite{AugSheaf}, i.e.~an $\mathcal{A}_\infty$-category associated to 1-dimensional representations of the dg algebra (see \cite{PanTorus} for the case of Legendrian knots)\footnote{We use the convention that $\mathcal{A}ug_-(\Lambda)$ is the augmentation category for the dg algebra $\mathcal{A}(\Lambda)$ without loop space coefficients, while $\mathcal{A}ug_+(\Lambda)$ is the augmentation category for $\mathcal{A}_{C_{-*}(\Omega_*\Lambda)}(\Lambda)$ following \cite{EkholmLekili}. For Legendrian knots, objects in $\mathcal{A}ug_-(\Lambda)$ lift uniquely to $\mathcal{A}ug_+(\Lambda)$ \cite{Leverson}, so people don't distinguish the objects.}
    $$\Phi_L: \mathcal{A}ug_+(\Lambda_-) \times_{Loc^1(\Lambda_-)} Loc^1(L) \rightarrow \mathcal{A}ug_+(\Lambda_+).$$

    Combining the work of Ganatra-Pardon-Shende \cite{GPS3} and the Legendrian surgery formula \cite{EkholmLekili,EkSurgery,AsplundEkholm}, we know the Chekanov-Eliashberg dg algebra with loop space coefficients is equivalent to the compact objects of sheaves with singular support on the Legendrian
    $$Sh^c_\Lambda(M \times \mathbb{R}) \simeq \mathrm{Perf}\,\mathcal{W}(T^*(M \times \mathbb{R}), \Lambda)^\text{op} \simeq \mathrm{Perf}\,\mathcal{A}_{C_{-*}(\Omega_*\Lambda)}(\Lambda)^\text{op}.$$
    Respectively, we also know that augmentation category of the dg algebra are equivalent to microlocal rank~1 sheaves for Legendrian links $\Lambda \subset \mathbb{R}^3$ \cite{AugSheaf} (with generalizations to certain cases in higher dimensional Legendrians \cite{AugSheafknot,AugSheafsurface,AugSheafLink} and higher dimensional representations \cite{RepSheaf,SackelGraph})
    $$Sh^1_\Lambda(\mathbb{R}^2) \simeq \mathcal{A}ug_+(\Lambda).$$
    Hence we expect a corresponding construction in microlocal sheaf theory.

    Such a functor is constructed in our previous work \cite{LiCobordism} following Nadler-Shende \cite{NadShen}.

\begin{theorem}[\cite{LiCobordism} following \cite{NadShen}]\label{thm:cob-functor}
    Let $\Lambda_-, \Lambda_+ \subset J^1(M)$ be Legendrian submanifolds, and $L \subset J^1(M) \times \mathbb{R}$ an exact Lagrangian cobordism from $\Lambda_-$ to $\Lambda_+$. Then there is a cobordism functor between the sheaf categories of compact objects
    $$\Phi_L^*: \, Sh^c_{\Lambda_+}(M \times \mathbb{R}) \longrightarrow Sh^c_{\Lambda_-}(M \times \mathbb{R}) \otimes_{Loc^c(\Lambda_-)} Loc^c(L),$$
    and a fully faithful adjoint functor between the sheaf categories of proper objects
    $$\Phi_L: \, Sh^b_{\Lambda_-}(M \times \mathbb{R}) \times_{Loc^b(\Lambda_-)} Loc^b(L) \hookrightarrow Sh^b_{\Lambda_+}(M \times \mathbb{R})$$
    such that concatenations of Lagrangian cobordisms give rise to compositions of functors.
\end{theorem}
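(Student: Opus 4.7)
The plan is to quantize the exact Lagrangian cobordism $L$ itself as a sheaf in an auxiliary ambient space, and then realize the cobordism functor as restriction to the cylindrical ends. The cobordism $L \subset J^1(M) \times \bR_{>0}$ admits a Legendrian lift $\widetilde{L}$ sitting over the front space $M \times \bR \times \bR_{>0}$; the cylindrical structure of $L$ near $s = 0$ and $s \to \infty$ forces $\widetilde{L}$ to coincide at the two ends with the cylinders over $\Lambda_-$ and $\Lambda_+$ respectively. This places the entire cobordism functor within the framework of sheaves with singular-support condition on a single ambient Legendrian.

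The main input, which occupies the bulk of the paper, is the sheaf quantization equivalence
\[
    Sh_{\widetilde{L}}(M \times \bR \times \bR_{>0}) \simeq Sh_{\Lambda_-}(M \times \bR) \otimes_{Loc(\Lambda_-)} Loc({L})
\]
for the noncompact Lagrangian $L$ with cylindrical ends. The strategy for this equivalence is to restrict to a collar neighborhood of $\{s \to 0\}$, producing the factor $Sh_{\Lambda_-}(M \times \bR)$, and then to show that the extension of such a sheaf into the interior of the cobordism region is controlled by a local system on $L$, encoding the microlocal monodromy picked up along the Lagrangian.

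Granting this equivalence, the cobordism functor is built from the restriction to the positive end
\[
    r_+ : Sh_{\widetilde{L}}(M \times \bR \times \bR_{>0}) \longrightarrow Sh_{\Lambda_+}(M \times \bR),
\]
which is well defined by cylindrical invariance of $\widetilde{L}$ at $s \to \infty$. Composing with the inverse of the main equivalence yields $\Phi_L$, and its adjoint $\Phi_L^*$ on compact objects follows formally. Full faithfulness should reduce to the fact that the positive end is cylindrical, so $r_+$ identifies a genuine full subcategory. Functoriality under concatenation $L_1 \cup_\Lambda L_2$ then comes from a cosheaf-type gluing argument: the sheaf quantization regions glue along a cylindrical collar over $\Lambda$, and the restrictions to the outer ends compose as required.

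The main obstacle is proving the quantization equivalence above. Earlier sheaf quantization results (Guillermou--Kashiwara--Schapira, Jin--Treumann, Nadler--Shende) treat either compact Lagrangians or globally conic/cylindrical ones, whereas a Lagrangian cobordism is cylindrical only at its ends. One must therefore propagate microsupport from the negative end along $L$, show that such an extension always exists, and identify the precise obstruction to uniqueness with $Loc({L})$. Managing the compact/proper bookkeeping when passing between $\Phi_L$ and its adjoint $\Phi_L^*$, and verifying fibered compatibility with $Loc^c(\Lambda_-)$ and $Loc^b(\Lambda_-)$, will add a final technical layer to the argument.
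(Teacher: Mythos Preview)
Your proposal is coherent, but it is \emph{not} the proof that Theorem~\ref{thm:cob-functor} cites. That theorem is imported from \cite{LiCobordism}, and the construction recalled in Section~\ref{sec:NadShen} of the present paper is quite different: one attaches $L$ to the Lagrangian skeleton $\mathfrak{c}_X \cup \Lambda_- \times \bR$ along the \emph{radial} (Liouville) direction, applies Nadler--Shende's antimicrolocalization to embed $\mu Sh$ back into an honest sheaf category, and then runs the \emph{gapped specialization} (nearby cycle) functor $\Phi_L(\SF)_{\mathrm{dbl}} = i^{-1} j_* \Psi_Z(\SF)_{\mathrm{dbl}}$ along a degenerating contact isotopy. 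Full faithfulness and concatenation-functoriality are both consequences of the nearby-cycle formalism (Proposition~\ref{basechange}).

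What you have sketched is instead the \emph{alternative} construction developed in the present paper: the conditional sheaf quantization equivalence $\Psi_L$ of Theorem~\ref{thm:cond-quan-cob-intro} followed by restriction $i_+^{-1}$ to the positive end. The paper devotes all of Section~5 (culminating in Theorem~\ref{thm:compatible-cob}) to proving that this alternative agrees with the original $\Phi_L$; this is not automatic and requires building the suspension cobordism $\Sigma L$ of Section~\ref{sec:suspend} to interpolate between the ``horizontal'' Legendrian-cobordism picture and the ``vertical'' skeleton-attachment picture. So your route is valid but genuinely different, and the paper treats it as a second construction whose agreement with the first is a theorem, not a definition.

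One genuine gap: your claim that concatenation-functoriality follows from ``cosheaf-type gluing'' is not substantiated. In the nearby-cycle approach this comes from iterating specialization along a two-parameter family, but in the restriction-to-positive-end approach you would need to show that $\Psi_{L_0 \cup L_1}$ factors as $\Psi_{L_1}$ applied after $\Psi_{L_0}$, which is not addressed here and does not obviously reduce to a gluing statement. The present paper sidesteps this by inheriting concatenation from the original \cite{LiCobordism} construction via the compatibility theorem.
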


    However, there is another perspective of understanding the algebraic structures arising from exact Lagrangian cobordisms. An exact Lagrangian cobordism $L \subset J^1(M) \times \mathbb{R}_{>0}$ can be lifted to a Legendrian cobordism $\widetilde{L} \subset J^1(M \times \mathbb{R}_{>0})$ with conical ends. In $J^1(\mathbb{R})$ and $J^1(S^1)$, Pan-Rutherford showed that the dg algebra map can be viewed as a bimodule \cite{PanRuther}. By enhancing with loop space coefficients, we conjectured \cite{LiCobordism}*{Section 1.3.3} that one can define a dg algebra with loop space coefficients on the cobordism $\widetilde{L}$ coming from an embedded Lagrangian so that
    $$\mathcal{A}_{C_{-*}(\Omega_*\widetilde{L})}(\widetilde{L}) \xrightarrow{\sim} \mathcal{A}_{C_{-*}(\Omega_*\Lambda_-)}(\Lambda_-) \otimes_{C_{-*}(\Omega_*\Lambda_-)} C_{-*}(\Omega_*L),$$
    and then the enhanced dg algebra map can be viewed as correspondences parametrized by chains on the based loop space of $L$
    $$\mathcal{A}_{C_{-*}(\Omega_*\Lambda_-)}(\Lambda_-) \otimes_{C_{-*}(\Omega_*\Lambda_-)} C_{-*}(\Omega_*L) \xrightarrow{\sim} \mathcal{A}_{C_{-*}(\Omega_*\widetilde{L})}(\widetilde{L}) \leftarrow \mathcal{A}_{C_{-*}(\Omega_*\Lambda_+)}(\Lambda_+).$$
    Forgetting the data of $C_{-*}(\Omega_*L)$, we in particular recover the diagram of dg algebras with no loop space coefficients that Pan-Rutherford proved for Legendrian knots
    $$\mathcal{A}(\Lambda_-) \xrightarrow{\sim} \mathcal{A}(\widetilde{L}) \leftarrow \mathcal{A}(\Lambda_+).$$
    Similarly, in microlocal sheaf theory, one may also consider the restriction functors to both ends which define correspondences between the sheaf categories with singular supports on $\Lambda_\pm$ parametrized by local systems on $L$ \cite{LiCobordism}*{Section 1.3.3}. Our goal in this paper is to realize the Lagrangian cobordism functor as correspondences, which in particular will provide a framework for studying immersed Lagrangian cobordisms. 

    We remark that, though less general than the setting first paper \cite{LiCobordism}, the setting in this paper is more intuitive and straightforward from the geometric perspective, and is the one considered in recent works on Legendrian weaves \cite{CasalsZas,AlgWeave}.

\subsection{Sheaf quantization of Lagrangian cobordisms}
    Let $\Lambda \subset T^{*,\infty}_{\tau>0}(M \times \bR)$ be a Legendrian submanifold. We consider the category $Sh_\Lambda(M \times \bR)_0$ of sheaves with singular support in $\Lambda$ and acyclic stalks at $-\infty$, and $Sh^c_\Lambda(M \times \bR)_0$ and $Sh^b_\Lambda(M \times \bR)_0$ the subcategories of compact and proper objects.

\begin{remark}
    Throughout the paper, we assume that the Lagrangians $L$ and Legendrians $\Lambda_-, \Lambda_+$ are equipped with Maslov data that are compatible with respect to inclusions \cite[Section 10]{NadShen}. When $\Bbbk$ is a ring, the existence of Maslov data requires that the Maslov class of the Lagrangian $\mu(L) = 0$ and that of the Legendrians $\mu(\Lambda_\pm) = 0$. When $\mathrm{char}\,\Bbbk \neq 2$, we require in addition that $L$ and $\Lambda_\pm$ are relatively spin.
\end{remark}

    For any sheaf in $Sh_{\widetilde{L}}(M \times \mathbb{R} \times \mathbb{R}_{>0})_0$, by restrictions to both ends we can get sheaves in $Sh_{\Lambda_\pm}(M \times \mathbb{R})_0$ and the microlocalization in $Loc({L})$. However, in order to get a Lagrangian cobordism functor from the diagram of restrictions, it is necessary to define a functor that construct sheaves in $Sh_{\widetilde{L}}(M \times \mathbb{R} \times \mathbb{R}_{>0})_0$ from sheaves at the negative end $Sh_{\Lambda_-}(M \times \bR)_0$ and local systems on the cobordism $Loc(L)$. Our first theorem solves this problem.

\begin{theorem}\label{thm:cond-quan-cob-intro}
    Let $\Lambda_\pm \subset J^1(M)$ be closed Legendrian submanifolds, and $L \subset J^1(M) \times \mathbb{R}_{>0}$ an embedded exact Lagrangian cobordism from $\Lambda_-$ to $\Lambda_+$, which lifts to a Legendrian cobordism $\widetilde{L} \subset (J^1(M) \times \mathbb{R}_{>0}) \times \mathbb{R} \cong J^1(M \times \mathbb{R}_{>0})$. Write $i_-: M \times \mathbb{R} \times \{s_-\} \hookrightarrow M \times \mathbb{R} \times \mathbb{R}_{>0}$ for some small $s_->0$. Then there is an equivalence functor, which we call the conditional sheaf quantization functor,
    $$\Psi_L: \,Sh^b_{\Lambda_-}(M \times \mathbb{R})_0 \times_{Loc^b(\Lambda_-)} Loc^b(\widetilde{L}) \xrightarrow{\sim} Sh^b_{\widetilde{L}}(M \times \mathbb{R} \times \mathbb{R}_{>0})_0$$
    that is the inverse of the fiber product between the restriction functor $i_-^{-1}$ and microlocalization functor $m_{\widetilde{L}}$. Correspondingly, there is the left adjoint
    $$\Psi_L^*: \, Sh^c_{\widetilde{L}}(M \times \mathbb{R} \times \mathbb{R}_{>0})_0 \xrightarrow{\sim} Sh^c_{\Lambda_-}(M \times \mathbb{R})_0 \times_{Loc^c(\Lambda_-)} Loc^c(\widetilde{L}).$$
\end{theorem}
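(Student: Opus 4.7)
The plan is to identify the combined functor $(i_-^{-1}, m_{\widetilde{L}})$ from $Sh^b_{\widetilde{L}}(M \times \mathbb{R} \times \mathbb{R}_{>0})_0$ to the fiber product as the natural target, then exhibit an explicit inverse $\Psi_L$ by sheaf quantization of $\widetilde{L}$ conditioned on initial data at $s = s_-$, and finally verify the resulting equivalence.

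First, I would check that the pair $(i_-^{-1}, m_{\widetilde{L}})$ actually lands in the fiber product. Near the negative end the Legendrian lift $\widetilde{L}$ is cylindrical, identified with $\Lambda_- \times (0, \epsilon)$ via a standard contactomorphism of $J^1(M \times \mathbb{R}_{>0})$, so for $s_- \in (0, \epsilon)$ the singular support of $i_-^{-1} F$ is contained in $\Lambda_-$. Because microlocalization commutes with transverse restriction, the microlocal monodromy of $i_-^{-1} F$ along $\Lambda_-$ agrees with the restriction of $m_{\widetilde{L}}(F) \in Loc^b(\widetilde{L})$ along the inclusion $\Lambda_- = \widetilde{L} \cap \{s = s_-\} \hookrightarrow \widetilde{L}$, supplying the required compatibility datum in the fiber product.

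Next, to construct $\Psi_L$, given $(F_-, \mathcal{L})$ in the fiber product, the strategy is to propagate $F_-$ along $\widetilde{L}$ in the positive $s$-direction. One exhausts $\widetilde{L}$ by a cofinal family of compact Legendrian pieces interpolating between slices $s = s_k$; on each, apply Guillermou--Kashiwara--Schapira / Nadler--Shende style sheaf quantization, which is unique up to the choice of a local system once boundary data is prescribed. Then use $\mathcal{L}$ to pin down the gluing/descent data between adjacent pieces, and $F_-$ to pin down the remaining global ambiguity at the negative end. Non-characteristic deformation in the $s$-direction extends the sheaf across cylindrical portions. The resulting object has singular support in $\widetilde{L}$ by construction and acyclic stalks at $s \to 0$ inherited from $F_-$ lying in the $(\,\cdot\,)_0$ subcategory.

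The main obstacle is proving full faithfulness. For $F, G \in Sh^b_{\widetilde{L}}(M \times \mathbb{R} \times \mathbb{R}_{>0})_0$, the sheaf $R\mathcal{H}om(F, G)$ has singular support contained in $\widetilde{L} \cap (-\widetilde{L})$, so after pushforward to $\mathbb{R}_{>0}$ it is constructible with stratification determined by the Reeb chords of $\widetilde{L}$. Away from critical values it is locally constant on $\widetilde{L}$, controlled by the microlocal monodromies $m_{\widetilde{L}}(F)$ and $m_{\widetilde{L}}(G)$. Using the cylindrical structure of $\widetilde{L}$ near $s = 0$ together with non-characteristic propagation in $s$, one expresses the global Hom as a limit of restrictions to slices $\{s = s_0\}$ as $s_0 \to s_-$, which recovers precisely the Hom in the fiber product. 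Essential surjectivity is then immediate from the explicit construction of $\Psi_L$, and the adjoint statement for compact objects follows by passing to left adjoints and noting that the equivalence preserves the compact/proper duality on both sides of the fiber product.
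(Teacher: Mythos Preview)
Your proposal has two genuine gaps that prevent it from going through.

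First, the construction of $\Psi_L$ is not well-specified. You propose to exhaust $\widetilde{L}$ by compact Legendrian pieces and apply ``GKS/Nadler--Shende style sheaf quantization'' on each, but the Guillermou and Jin--Treumann quantization theorems apply to \emph{closed} exact Lagrangians or to Lagrangians with positive conical ends, not to compact Legendrians with boundary. There is no off-the-shelf quantization for a compact piece of $\widetilde{L}$ with two boundary components, and gluing such hypothetical quantizations along their boundaries is exactly the hard part. The paper's route is quite different: it builds a \emph{doubling} $w_{\widetilde{L}}$ into $Sh_{T_{-\epsilon}(\widetilde{L})\cup T_\epsilon(\widetilde{L})}$, constructed separately near the negative end (by hand, as a cone $T_{-\epsilon}(\pi^{-1}\mathscr{F}) \to T_\epsilon(\pi^{-1}\mathscr{F})$ using the given $\mathscr{F} \in Sh_{\Lambda_-}$) and away from it (using that $\widetilde{L} \cap J^1(M \times (s_0,\infty))$ admits a tubular neighbourhood of positive radius for a suitable complete adapted metric). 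Then one separates the two copies by an ambient Hamiltonian isotopy and restricts. The central obstruction the paper isolates --- that the conical negative end has \emph{no} tubular neighbourhood of uniform positive radius, so one cannot uniformly Reeb push off --- is precisely what forces the prescribed data $\mathscr{F}_-$ at the negative end; your outline does not engage with this.

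Second, your full faithfulness argument does not recover the Hom in the fiber product. You reduce $Hom(F,G)$ to a limit over slices $s_0 \to s_-$, but that limit only sees $Hom(\mathscr{F}_-, \mathscr{G}_-)$ in $Sh_{\Lambda_-}$; it does not see $Hom_{Loc(\widetilde{L})}(\mathscr{L}, \mathscr{L}')$ on the part of $\widetilde{L}$ lying over $s > s_-$, nor the gluing over $Loc(\Lambda_-)$. (Also, the singular support of $\mathscr{H}om(F,G)$ is not contained in $\widetilde{L} \cap (-\widetilde{L})$; the relevant estimate is $-SS(F) + SS(G)$ under a non-intersection hypothesis that fails here since both lie in $\widetilde{L}$.) In the paper, full faithfulness is obtained from an adjunction: one shows $\iota_{\widetilde{L}}^* \circ w_{\widetilde{L}}[-1]$ is left adjoint to $(i_-^{-1}, m_{\widetilde{L}})$ by combining a direct computation on the negative end with the general doubling adjunction $m_\Lambda^* = \iota_\Lambda^* \circ w_\Lambda[-1]$ away from it, and then checks the unit is an isomorphism. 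Essential surjectivity is then verified by showing $\Psi_L(i_-^{-1}\mathscr{F}, m_{\widetilde{L}}\mathscr{F}) \simeq \mathscr{F}$ via the exact triangle $T_{-\epsilon} \to T_\epsilon \to w_{\widetilde{L}} \circ (i_-^{-1}, m_{\widetilde{L}})$.
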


    Constructing sheaves from known Lagrangian subamnifolds (often called sheaf quantization) has been the core problem in the field, studied in a number of celebrated works \cite{Tamarkin1,GKS,Gui,JinTreu,AsanoIkeimmersion}. The work of Guillermou \cite{Gui} and Jin-Treumann \cite{JinTreu} have considered exact Lagrangians with Legendrian lifts $\widetilde{L} \subset J^1(M)$ that are closed or with Legendrian boundaries at positive infinity $T^{*,\infty}M$, and constructed fully faithful functors
    $$\Psi_L: Loc(L) \xrightarrow{\sim} Sh_{\widetilde{L}}(M \times \mathbb{R})_0$$
    that are inverses of taking microlocalization (also known as microlocal monodromy).

    However, for Lagrangian cobordisms between Legendrian submanifolds, we know for sure that there does not always exist a sheaf quantization which produces sheaves in $Sh_{\widetilde{L}}(M \times \mathbb{R} \times \mathbb{R}_{>0})$ (for example the trivial endocobordism of a stabilized or loose Legendrian). Thus what we prove should be viewed as a conditional sheaf quantization result in the spirit of Guillermou-Jin-Treumann, which explains that given a local system in $Loc(L)$, the necessary condition of existence of a sheaf quantization at the negative end $Sh_{\Lambda_-}(M \times \mathbb{R})$ is in fact also the sufficient condition.

\subsection{Lagrangian cobordism functor as correspondence}
    We are able to show that the Lagrangian cobordism functor in Theorem \ref{thm:cob-functor} is compatible with the restriction functors, which implies that the Lagrangian cobordism functor is indeed determined by the correspondence coming from the restriction functors to both conical ends.

\begin{theorem}\label{thm:compatible-cob-intro}
    Let $\Lambda_\pm \subset J^1(M)$ be closed Legendrian submanifolds, and $L \subset J^1(M) \times \mathbb{R}_{>0}$ an embedded exact Lagrangian cobordism from $\Lambda_-$ to $\Lambda_+$, which lifts to a Legendrian cobordism $\widetilde{L} \subset (J^1(M) \times \mathbb{R}_{>0}) \times \mathbb{R} \cong J^1(M \times \mathbb{R}_{>0})$.
    Write $i_\pm: M \times \mathbb{R} \times \{s_\pm\} \hookrightarrow M \times \mathbb{R} \times \mathbb{R}_{>0}$ for some small $s_- > 0$ and large $s_+ > 0$. Then there is a commutative diagram between sheaf categories of compact objects
    \[\xymatrix@R=6mm{
    Sh^c_{\Lambda_+}(M \times \mathbb{R}) \ar[rr]^{\Phi_L^*\hspace{40pt}} \ar[dr]_{\iota_{\widetilde{L}}^* \circ\, i_{+!}} & & Sh^c_{\Lambda_-}(M \times \mathbb{R}) \times_{Loc^c(\Lambda_-)} Loc^c(\widetilde{L}) \ar[dl]^{(\iota_{\widetilde{L}}^* \circ \, i_{-!}, m_{\widetilde{L}}^*)}, \\
    & Sh^c_{\widetilde{L}}(M \times \mathbb{R} \times \mathbb{R}_{>0}) &
    }\]
    where $m_{\widetilde{L}}^*$ is the left adjoint of the microlocalization, and $\iota_{\widetilde{L}}^* \circ i_{+!}$ is the left adjoint of $i_+^{-1}$. Correspondingly there is a commutative diagram between sheaf categories of proper objects
    \[\xymatrix@R=6mm{
    & Sh^b_{\widetilde{L}}(M \times \mathbb{R} \times \mathbb{R}_{>0}) \ar[dl]_{(i_-^{-1}, m_{\widetilde{L}})} \ar[dr]^{i_+^{-1}} & \\
    Sh^b_{\Lambda_-}(M \times \mathbb{R}) \times_{Loc^b(\Lambda_-)} Loc^b(\widetilde{L}) \ar[rr]_{\hspace{40pt}\Phi_L} & & Sh^b_{\Lambda_+}(M \times \mathbb{R}),
    }\]
    where $m_{\widetilde{L}}$ is the microlocalization. In particular, $i_+^{-1} \circ \Psi_L \simeq \Phi_L$ and $\Psi_L^* \circ \iota_{\widetilde{L}}^* \circ i_{+!} \simeq \Phi_L^*$.
\end{theorem}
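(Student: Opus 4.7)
The plan is to use Theorem \ref{thm:cond-quan-cob-intro} to reduce the entire statement to the single identification $i_+^{-1} \circ \Psi_L \simeq \Phi_L$. Since $\Psi_L$ is an equivalence whose inverse is precisely $(i_-^{-1}, m_{\widetilde L})$, the lower triangle in the statement follows by composing with $\Psi_L^{-1}$: the left leg $(i_-^{-1}, m_{\widetilde L}) \circ \Psi_L$ is the identity on the fiber product by construction, and the right leg $i_+^{-1} \circ \Psi_L$ is what we must identify with $\Phi_L$. The upper diagram between categories of compact objects then follows formally by passing to left adjoints throughout, using that the left adjoint of $i_\pm^{-1}$ on sheaves with the appropriate singular support is $\iota_{\widetilde L}^* \circ i_{\pm !}$.

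To identify $i_+^{-1} \circ \Psi_L$ with $\Phi_L$, I would first recall the construction of $\Phi_L$ in \cite{LiCobordism}, modeled on \cite{NadShen}: given a sheaf $F_-$ at the negative end with a compatible local system $\mathcal{L}$ on $\widetilde{L}$, one builds an intermediate sheaf $F$ on $M \times \mathbb{R} \times \mathbb{R}_{>0}$ with singular support in a collar of $\widetilde{L}$, and sets $\Phi_L(F_-, \mathcal{L}) := i_+^{-1} F$. The key step is then to show $F \simeq \Psi_L(F_-, \mathcal{L})$ in $Sh^b_{\widetilde{L}}(M \times \mathbb{R} \times \mathbb{R}_{>0})_0$. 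By Theorem \ref{thm:cond-quan-cob-intro} any such sheaf is determined up to canonical equivalence by its restriction to the negative end together with its microlocalization, so it suffices to check $i_-^{-1} F \simeq F_-$ and $m_{\widetilde L}(F) \simeq \mathcal{L}$. The first compatibility is immediate from the design of the construction; the second amounts to unwinding the microlocal content of the wrapping/collar construction.

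The main obstacle is precisely this microlocal monodromy compatibility: both the intermediate sheaf built in \cite{LiCobordism} and the quantization produced by $\Psi_L$ carry a priori different descriptions of their microlocal monodromy along $\widetilde L$, and matching them requires careful bookkeeping of Maslov data and, in positive characteristic, relative spin structures on $L$. Assuming compatible choices have been fixed at the outset, this should reduce to a local calculation near the Legendrian ends of $\widetilde L$, where both constructions specialize to the standard sheaf quantization of a Lagrangian with Legendrian boundary in the sense of Guillermou \cite{Gui} and Jin-Treumann \cite{JinTreu}. Once this microlocal match is in place, the equivalence $F \simeq \Psi_L(F_-, \mathcal{L})$ is natural in $(F_-, \mathcal{L})$, and restricting to the positive end yields $\Phi_L \simeq i_+^{-1} \circ \Psi_L$; the compact-object triangle then follows by adjunction as explained above.
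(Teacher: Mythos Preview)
Your first paragraph is correct and matches the paper: reducing to the single identification $i_+^{-1}\circ\Psi_L\simeq\Phi_L$ (equivalently $\Phi_L\circ(i_-^{-1},m_{\widetilde L})\simeq i_+^{-1}$) and then passing to left adjoints for the compact-object triangle is exactly what the paper does at the start of Section~5.

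The gap is in your second paragraph, where you mischaracterize how $\Phi_L$ is defined. The functor $\Phi_L$ from \cite{LiCobordism} is \emph{not} built by producing an intermediate sheaf $F$ on $M\times\bR\times\bR_{>0}$ with singular support in $\widetilde L$ and then restricting to $s=s_+$. It is defined via the Nadler--Shende gapped specialization: one works with the microlocal sheaf category on the Lagrangian skeleton $\mathfrak{c}_X\cup\Lambda_-\times\bR\cup L$ inside $T^*(M\times\bR)$, applies antimicrolocalization and a nearby-cycle functor along the \emph{Liouville} (radial) direction, and lands in $\mu Sh_{\mathfrak{c}_X\cup\Lambda_+\times\bR}$. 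The extra $\bR_{>0}$ here is the radial symplectization parameter, not the base $s$-direction of the conical Legendrian cobordism $\widetilde L\subset J^1(M\times\bR_{>0})$. So there is no natural candidate $F\in Sh_{\widetilde L}(M\times\bR\times\bR_{>0})$ sitting inside the construction of $\Phi_L$ whose restrictions and microlocalization you could simply read off; producing such an $F$ is precisely the content of the theorem, not an input to it.

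The paper resolves this by a genuinely geometric argument: it builds a \emph{suspension} Lagrangian cobordism $\Sigma L\subset T^*(M\times(1,\infty)\times\bR_{>0})$, diffeomorphic to $L\times\bR$, which is a cobordism (in the vertical $r$-direction) from $\widetilde L$ to the trivial cone $\Lambda_+\times(1,\infty)$, and whose symplectic reductions along horizontal slices $s=s_\mp$ recover $L$ and $\Lambda_+\times\bR_{>0}$ respectively (Proposition~\ref{prop:suspend}). Applying the Nadler--Shende functor $\Phi_{\Sigma L}$ and using a base-change property for nearby cycles (Proposition~\ref{basechange}) gives a commutative square relating $\Phi_L$ on the left slice to the identity $\Phi_{\Lambda_+\times\bR_{>0}}$ on the right slice (Lemma~\ref{lem:commute}); two further lemmas then identify the top restriction maps $\tilde j_\pm^{-1}$ with $(i_-^{-1},m_{\widetilde L})$ and $i_+^{-1}$. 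In short, the comparison between the ``vertical'' Nadler--Shende picture and the ``horizontal'' conical-cobordism picture requires an auxiliary two-parameter family interpolating between them, and this is the substantive work your proposal skips.
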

\begin{remark}
    Though the functor $\Psi_L$ from the negative end to the cobordism is always an equivalence, the restriction $i_+^{-1}$ to the positive end is not. Hence we will see that the functor $\Phi_L$ from the negative end to the positive end is in general not an equivalence.
\end{remark}

    The above result indicates that we can in fact generalize the Lagrangian cobordism functor to certain immersed exact Lagrangian cobordisms whenever $(i_-^{-1}, m_{\widetilde{L}})$ is an essentially surjective functor so that we can choose a right inverse.

\begin{definition}
    Let $\Lambda_\pm \subset J^1(M)$ be closed Legendrian submanifolds, and $L \subset J^1(M) \times \mathbb{R}_{>0}$ an immersed exact Lagrangian cobordism from $\Lambda_-$ to $\Lambda_+$, which lifts to a Legendrian cobordism $\widetilde{L} \subset (J^1(M) \times \mathbb{R}_{>0}) \times \mathbb{R} \cong J^1(M \times \mathbb{R}_{>0})$. Suppose
    $$(i_-^{-1}, m_{\widetilde{L}}): Sh^b_{\widetilde{L}}(M \times \mathbb{R} \times \mathbb{R}_{>0}) \rightarrow Sh^b_{\Lambda_-}(M \times \mathbb{R}) \times_{Loc^b(\Lambda_-)} Loc(L)$$
    is essentially surjective with a chosen right inverse $\Psi_L$. Then the Lagrangian cobordism functor for the immersed cobordism is
    $$\Phi_L = i_+^{-1} \circ \Psi_L: \, Sh^b_{\Lambda_-}(M \times \mathbb{R}) \times_{Loc^b(\Lambda_-)} Loc^b(L) \rightarrow Sh^b_{\Lambda_+}(M \times \mathbb{R}).$$
\end{definition}
\begin{remark}
    Consider $Sh^b_{\Lambda_\pm}(M \times \mathbb{R})_\text{tri}$ and $Sh^b_{\widetilde{L}}(M \times \mathbb{R} \times \mathbb{R}_{>0})_\text{tri}$ be the subcategories of sheaves whose microlocalizations are constant local systems. When
    $$i_-^{-1}: Sh^b_{\widetilde{L}}(M \times \mathbb{R} \times \mathbb{R}_{>0})_\text{tri} \rightarrow Sh^b_{\Lambda_-}(M \times \mathbb{R})_\text{tri}$$
    is essentially surjective with a chosen right inverse $\Psi_L$, then the cobordism functor is
    $$\Phi_L = i_+^{-1} \circ \Psi_L: \, Sh^b_{\Lambda_-}(M \times \mathbb{R})_\text{tri} \rightarrow Sh^b_{\Lambda_+}(M \times \mathbb{R})_\text{tri}.$$
    This means that for any sheaf $\mathscr{F}_- \in Sh^b_{\Lambda_-}(M \times \mathbb{R})_\text{tri}$, we choose an extension $\mathscr{F} \in Sh^b_{\widetilde{L}}(M \times \mathbb{R} \times \mathbb{R}_{>0})_\text{tri}$, and the assignment $\mathscr{F}_- \mapsto i_+^{-1}\mathscr{F}$ determines a cobordism functor.
\end{remark}

    One may compare the definition with the work of Pan-Rutherford \cite{PanRuther}, where they considered augmentations $\epsilon_- \in \mathcal{A}ug_-(\Lambda_-)$ (without enhancing with loop space coefficients). When $\epsilon_-$ lifts to an augmentation $\widetilde{\epsilon} \in \mathcal{A}ug_-(\widetilde{L})$, then one can pull it back to get $\epsilon_+ \in \mathcal{A}ug_-(\Lambda_+)$.

    Under the above framework, the correspondences of sheaf or flag moduli spaces induced by algebraic weaves \cite[Theorem 1.4]{AlgWeave} is indeed induced by the immersed Lagrangian cobordism functors (where immersed cobordisms are given by weaves with caps and cups).

\subsection{Action Filtration and Cobordism Length}
    Legendrian contact homologies are dg algebras generated by Reeb chords on the Legendrians, and the lengths of Reeb chords naturally induces a filtration on the dg algebra. It is known that Lagrangian cobordism maps between Legendrian contact homologies preserve the action filtration by the lengths of Reeb chords \cite{Ekcobordism,SabTraynorLength}, from which one can define the capacities of Legendrian submanifolds with augmentations \cite{SabTraynorLength}. 

    Under the framework of Lagrangian cobordisms as correspondences between the positive and negative ends, we are able to recover the action filtration of the Lagrangian cobordism maps from the sheaf theory perspective.

    Recall that in our previous paper \cite{LiEstimate}, following Asano-Ike \cite{AsanoIke}, for sheaves $\SF, \SG \in Sh_\Lambda(M \times \bR)_0$, we have defined a persistence module $\mathscr{H}om_{(0,\infty)}(\SF, \SG)$, in particular, a sequence of modules with maps, where $c_i$ are lengths of Reeb chords on $\Lambda$,
    $$Hom(\SF, \SG) \rightarrow Hom(\SF, T_{c_1}(\SG)) \rightarrow \dots \rightarrow Hom(\SF, T_{c_k}(\SG)) \rightarrow \dots$$
    so that when $\mathrm{supp}(\SF)$ and $\mathrm{supp}(\SG)$ are compact, the sequence will end with zero (in finitely many steps).

    Consider on $J^1(M)$ the local coordinates $(x, \xi, t)$ where $x \in M, \xi \in T^*_xM$ and $t \in \bR$. For $\Lambda \subset J^1(M)$, we will use the convention
    $$\Lambda^s = \{(x, s\xi, st) | (x, \xi, t) \in \Lambda\} \subset J^1(M),$$
    which is the image of $\Lambda$ under the contactomorphism defined by scaling.

\begin{theorem}\label{thm:persist-intro}
    Let ${L} \subset J^1(M) \times \mathbb{R}_{>0}$ be an embedded Lagrangian cobordism from $\Lambda_- \subset J^1(M)$ to $\Lambda_+ \subset J^1(M)$ that is conical outside $J^1(M) \times (s_-, s_+)$. Let $\SL, \SL' \in Loc^b(L)$, $\SF_-, \SG_- \in Sh_{\Lambda_-}^b(M \times \bR)_0$ and $\SF_+, \SG_+ \in Sh_{\Lambda_+}^b(M \times \bR)_0$, such that under the Lagrangian cobordism functor
    $$\Phi_L(\SF_-, \SL) = \SF_+, \; \Phi_L(\SG_-, \SL') = \SG_+.$$
    Let $\SF_-^{s_-}, \SG_-^{s_-} \in Sh^b_{\Lambda_-^{s_-}}(M \times \bR)_0$ and $\SF_+^{s_+}, \SG_+^{s_+} \in Sh^b_{\Lambda_+^{s_+}}(M \times \bR)_0$ be the images of $\SF_-, \SG_-$ and $\SF_+, \SG_+$ under the scaling contactomorphism. Then there is an (action decreasing) map between persistence modules
    $$\mathscr{H}om_{(0,+\infty)}(\SF_+^{s_+}, \SG_+^{s_+}) \longrightarrow \mathscr{H}om_{(0,+\infty)}(\SF_-^{s_-}, \SG_-^{s_-}).$$
\end{theorem}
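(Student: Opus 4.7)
The plan is to obtain the persistence module map from the Lagrangian cobordism functor (Theorem \ref{thm:cob-functor}) applied to the \emph{doubled cobordism} $L \sqcup T_c L$, followed by the canonical projection of the resulting fiber-product Hom onto its sheaf factor.

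First I would lift the data to the cobordism. Set $\SF = \Psi_L(\SF_-,\SL)$ and $\SG = \Psi_L(\SG_-,\SL')$ in $Sh^b_{\widetilde{L}}(M\times\bR\times\bR_{>0})_0$, so that $\SF_\pm^{s_\pm} = i_\pm^{-1}\SF$ and $\SG_\pm^{s_\pm} = i_\pm^{-1}\SG$ by Theorem \ref{thm:compatible-cob-intro}. The translation $T_c$ in the middle $\bR$-factor commutes with $i_\pm^{-1}$ and intertwines $\Psi_L$ with $\Psi_{T_c L}$, so the task reduces to constructing, naturally in $c \geq 0$, a map
\begin{equation*}
Hom(\SF_+^{s_+}, T_c\SG_+^{s_+}) \lr Hom(\SF_-^{s_-}, T_c\SG_-^{s_-}).
\end{equation*}

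For generic $c>0$, I would regard $L \sqcup T_c L$ as a (possibly immersed) exact Lagrangian cobordism from $\Lambda_- \sqcup T_c\Lambda_-$ to $\Lambda_+ \sqcup T_c\Lambda_+$ and apply the cobordism functor formalism (in its immersed generalization described after Theorem \ref{thm:compatible-cob-intro}). The essential surjectivity of $(i_-^{-1}, m_{\widetilde{L}\sqcup T_c\widetilde{L}})$ on the specific objects $\SF$ and $T_c\SG$ we care about is automatic because $T_c\SG = \Psi_{T_c L}(T_c\SG_-, T_c\SL')$ lies in the image of the conditional sheaf quantization for $T_c L$. Thus the doubled cobordism functor $\Phi_{L\sqcup T_cL}$ is fully faithful on the relevant summands, producing
\begin{equation*}
Hom(\SF_+^{s_+}, T_c\SG_+^{s_+}) \xleftarrow{\sim} Hom_{\mathrm{fp}}\bigl((\SF_-^{s_-},\SL),\, T_c(\SG_-^{s_-},\SL')\bigr).
\end{equation*}
Composing with the canonical projection of the fiber-product Hom onto its sheaf factor gives the sought map, and naturality in $c$ assembles these into a morphism of persistence modules. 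The action-decreasing property is automatic since the construction has zero shift in the persistence parameter.

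The main obstacle is to make the cobordism functor for $L \sqcup T_c L$ depend naturally on $c > 0$, in particular to handle small $c$ for which $L$ and $T_c L$ are far from disjoint. For generic $c$ the intersections of $L$ and $T_c L$ are transverse double points corresponding to Reeb chords of $\widetilde{L}$ of length $c$, and one approach is to apply Theorem \ref{thm:cond-quan-cob-intro} component-wise and glue via a Mayer-Vietoris argument. An alternative route is to use the GKS kernel sheaf-quantizing the $t$-translation flow and realize $T_c\SG$ by convolution, reducing the Hom comparison to a formal consequence of Theorem \ref{thm:cob-functor} and bypassing the need to treat $L \sqcup T_c L$ as a bona fide cobordism altogether.
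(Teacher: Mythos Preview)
Your central step fails. You claim that $\Phi_{L\sqcup T_c L}$ is fully faithful on the relevant objects, giving
\[
Hom(\SF_+^{s_+}, T_c\SG_+^{s_+}) \;\xleftarrow{\sim}\; Hom_{\mathrm{fp}}\bigl((\SF_-^{s_-},\SL),\, T_c(\SG_-^{s_-},\SL')\bigr).
\]
But full faithfulness of $\Phi_L$ in Theorem \ref{thm:cob-functor} (equivalently of $i_+^{-1}$ in Proposition \ref{prop:restrict-ff}) relies on the Legendrian lift having \emph{no Reeb chords}. The lift $\widetilde{L}\sqcup T_c\widetilde{L}$ has Reeb chords of length exactly $c$ between the two copies, for every $c>0$, so neither Theorem \ref{thm:cond-quan-cob-intro} nor Proposition \ref{prop:restrict-ff} applies to the doubled cobordism. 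Worse, your isomorphism would prove too much: since $\SL$ and $T_c\SL'$ live on disjoint components of $L\sqcup T_cL$, the local-system factor of the fiber-product Hom vanishes and the fiber-product Hom collapses to $Hom(\SF_-^{s_-}, T_c\SG_-^{s_-})$. Your isomorphism would then force the two persistence modules to be \emph{isomorphic} for all $c$, which is false already when $\Lambda_-=\varnothing$ and $L$ is a nontrivial filling.

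The paper's argument is entirely different and avoids ever treating $L\sqcup T_cL$ as a cobordism. It packages all $c$ at once into a single sheaf on $\bR_{>0,s}\times\bR_{>0,u}$, namely $(u\times s)_*\mathscr{H}om(\SF_q,\SG_r)$, and estimates its singular support (Lemma \ref{lem:ss-persist}); the absence of Reeb chords on the \emph{single} $\widetilde{L}$ is exactly what makes the $\nu$-direction clean. The $\sigma$-direction is not clean, but a microlocal cutoff $\iota_{\sigma\ge 0}^*$ (realised by convolution with $\Bbbk_{[0,+\infty)}$) forces $SS^\infty\subset\{\sigma>0,\nu<0\}$ without altering the restrictions to $s=s_\pm$ (Lemma \ref{lem:ss-cutoff}, Proposition \ref{prop:persist}). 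The persistence map then drops out of the microlocal Morse lemma applied in the $s$-direction. Your ``alternative route'' via the GKS Reeb-flow kernel is in fact the starting point of this argument, but the real content lies in the singular-support cutoff, which your proposal does not anticipate.
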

\begin{remark}
    In particular, using the description of the persistence module in terms of Reeb chords, the Lagrangian cobordism map preserves the action filtration of Reeb chords rescaled by $s > 0$.
\end{remark}

    The action filtration of Lagrangian cobordism maps has been used to obtain a number of numerical results, including the minimal lengths \cite{SabTraynorLength} of the embedded Lagrangian cobordisms by Sabloff-Traynor. The above result on action filtration allows us to prove numerical relations between the Legendrian capacities for a Lagrangian cobordisms. We in particular recover the result on minimal lengths of cobordisms in \cite[Theorem 1.1 \& 1.2]{SabTraynorLength}.

\begin{remark}
    All the results on the action filtration essentially rely on the assumption that the Lagrangian cobordism is embedded. For immersed exact Lagrangian cobordisms, one can easily build counterexamples.
\end{remark}

\subsection{Constructing Legendrians with simple sheaf categories}
    The conditional sheaf quantization functor essentially classifies sheaves with singular support on the conical Legendrian filling (with no Reeb chords). Using the construction of Ekholm and Courte-Ekholm \cite{CourteEkholm} by gluing two copies of the conical Legendrians together along their boundary, we can construct Legendrian submanifolds with sheaf categories equivalent to local systems on the Lagrangian fillings without appealing to the Legendrian surgery formula.

\begin{theorem}\label{thm:courteekholm-intro}
    For any $n$-dimensional smooth manifold $L$ with boundary such that $TL \otimes_\bR \bC$ is trivial, there is an $n$-dimensional Legendrian $\Lambda(L, L)$ diffeomorphic to $L \cup_{\partial L} L$ with Lagrangian filling $K(L, L)$ diffeomorphic to $L \times (0, 1)$ and
    $$Sh^b_{\Lambda(L, L)}(\bR^{n+1})_0 \simeq Loc^b(L), \; Sh^c_{\Lambda(L, L)}(\bR^{n+1})_0 \simeq Loc^c(L).$$
\end{theorem}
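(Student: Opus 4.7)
The plan is to realize $\Lambda(L,L)$ as the positive end of a Lagrangian filling $K(L,L)$ and apply Theorem~\ref{thm:cond-quan-cob-intro} to transfer the problem to local systems on $K(L,L) \simeq L$, then use a direct sheaf-theoretic analysis to identify the restriction functor at the positive end as an equivalence.

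First, following Courte-Ekholm's construction, the hypothesis $TL \otimes_\bR \bC \cong \bC^n$ supplies a Legendrian immersion of $L$ into $J^1(\bR^n)$ with properly embedded Legendrian boundary together with compatible Maslov data; doubling across $\partial L$ with a reflection in the fiber direction produces the closed embedded Legendrian $\Lambda(L,L) \cong L \cup_{\partial L} L$ and an exact Lagrangian filling $K(L,L) \cong L \times (0,1) \subset J^1(\bR^n) \times \bR_{>0}$. The Legendrian lift $\widetilde{K(L,L)} \subset J^1(\bR^n \times \bR_{>0})$ is then an exact Lagrangian cobordism from $\Lambda_- = \emptyset$ to $\Lambda_+ = \Lambda(L,L)$.

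Next, I would apply Theorem~\ref{thm:cond-quan-cob-intro} to $\widetilde{K(L,L)}$. Since $\Lambda_- = \emptyset$, one has $Sh^b_\emptyset(\bR^n \times \bR)_0 \simeq 0$ (a locally constant sheaf on $\bR^{n+1}$ with acyclic stalk at $-\infty$ vanishes) and $Loc^b(\emptyset) \simeq 0$, so the fiber product simplifies to $Loc^b(\widetilde{K(L,L)}) \simeq Loc^b(L)$. Thus
$$\Psi_{K(L,L)}: Loc^b(L) \xrightarrow{\sim} Sh^b_{\widetilde{K(L,L)}}(\bR^n \times \bR \times \bR_{>0})_0$$
is an equivalence, with the analogous statement on compact objects via the left adjoint $\Psi_{K(L,L)}^*$. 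Composing with $i_+^{-1}$ and using Theorem~\ref{thm:compatible-cob-intro}, one obtains the cobordism functor $\Phi_{K(L,L)}: Loc^b(L) \to Sh^b_{\Lambda(L,L)}(\bR^{n+1})_0$, which is fully faithful by Theorem~\ref{thm:cob-functor}.

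The hard part is essential surjectivity of $\Phi_{K(L,L)}$: as noted in the remark following Theorem~\ref{thm:compatible-cob-intro}, the restriction $i_+^{-1}$ is not an equivalence in general, so this step uses the specific geometry of the Courte-Ekholm construction. I would establish it by directly classifying $Sh^b_{\Lambda(L,L)}(\bR^{n+1})_0$ via the front projection. The front projection of $\Lambda(L,L)$ divides $\bR^{n+1}$ into an unbounded component containing $t \to -\infty$ and a bounded region homotopy equivalent to $L$ (namely the front projection of the filling $K(L,L)$). For any $\SF \in Sh^b_{\Lambda(L,L)}(\bR^{n+1})_0$, the acyclic stalk at $-\infty$ together with non-characteristic propagation across the unbounded component forces $\SF$ to vanish there; on the bounded region $\SF$ has no singular support and is therefore locally constant, corresponding to an object of $Loc^b(L)$. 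The gluing across $\Lambda(L,L)$ is then dictated by the singular support condition and the positive co-orientation coming from $\Lambda(L,L) \subset J^1(\bR^n) \subset T^{*,\infty}_{\tau>0}(\bR^{n+1})$, producing an inverse functor to $\Phi_{K(L,L)}$. Restricting to compact objects on both sides yields the statement for $Sh^c$, completing the proof.
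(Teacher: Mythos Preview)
Your full faithfulness half is fine and matches the paper's spirit: applying the conditional sheaf quantization to the filling $K(L,L)$ (as a cobordism from $\varnothing$) identifies $Loc^b(L)$ with $Sh^b_{\widetilde{K(L,L)}}(\bR^{n+1}\times\bR_{>0})_0$, and $i_+^{-1}$ is fully faithful. The gap is in your essential surjectivity argument. You assert that the front of $\Lambda(L,L)$ divides $\bR^{n+1}$ into an unbounded region and a single bounded region homotopy equivalent to $L$. This is false in general. By construction $\Lambda(L,L)$ contains, in the middle strip $M\times\bR\times(-2\epsilon,2\epsilon)$, a deformation of the cylinder $\Lambda\times(-2\epsilon,2\epsilon)$, where $\Lambda=\partial L$ is the Legendrian that $L$ fills. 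The front of $\Lambda$ typically has many sheets and Reeb chords (e.g.\ already for a Legendrian trefoil), so the stratification of $\bR^{n+1}$ by the front of $\Lambda(L,L)$ has many strata and a sheaf in $Sh^b_{\Lambda(L,L)}(\bR^{n+1})_0$ is not determined by a single local system on a single bounded region. Your proposed inverse simply does not exist as described.

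The paper circumvents this by a different decomposition: instead of treating $\Lambda(L,L)$ as the positive end of the filling $K(L,L)$, it slices $\Lambda(L,L)$ itself along the extra $\bR_s$-direction. The two halves $\Lambda(L,L)\cap J^1(M\times(\pm\epsilon,\pm\infty))$ are each copies of the conical Legendrian filling $\widetilde L$ of $\Lambda$, so Theorem~\ref{thm:cond-quan-cob} gives $Sh_{\Lambda(L,L)}(M\times\bR\times(-\infty,-\epsilon))_0\simeq Loc(L)$ and likewise on the right; the middle strip gives $Sh_\Lambda(M\times\bR)_0$. Proposition~\ref{prop:restrict-ff} shows the two restriction functors to the middle are fully faithful with the \emph{same} essential image, so the homotopy pullback is $Loc(L)$. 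This is exactly what handles the complicated middle front that your argument ignored: the complexity of $Sh_\Lambda$ is absorbed into the pullback diagram rather than analyzed directly.
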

\begin{remark}
    It is proved \cite{CourteEkholm} that when $L \cong D^{n}$ then $\Lambda(D, D)$ is Legendrian isotopic to the standard unknot. Note that this construction is generalized by Roy by considering open book decompositions whose page contains a Lagrangian $L$ with boundary $\partial L$ \cite{Roy}.
\end{remark}

    When $n \geq 3$, for any $n$-dimensional smooth manifold $K$ with boundary such that $TK \otimes_\bR \bC$ is trivial, Eliashberg-Ganatra-Lazarev \cite[Theorem 4.5]{FlexLag} constructed Legendrian submanifolds $\Lambda \subset S^{2n-1}$ with flexible Lagrangian fillings $K \subset D^{2n}$ such that\footnote{The result \cite[Theorem 4.7]{FlexLag} is stated in terms of the wrapped Fukaya categories after handle attachments along $\Lambda$ the Legendrian spheres, but the proof applies to any $\Lambda$ and their partially wrapped Fukaya categories line by line by the Legendrian surgery formula with loop space coefficients \cite{AsplundEkholm}.}
    $$\mathcal{W}(D^{2n}, \Lambda) \simeq \mathrm{Perf}\,C_{-*}(\Omega_*K)$$
    by the Legendrian surgery formula. When $L$ is a regular Lagrangian filling \cite{FlexLag}, Courte-Ekholm proved that $K(L, L)$ is not only flexible but subcritical inside $D^{2n}$ \cite{CourteEkholmv1}\footnote{We would like to thank Oleg Lazarev for explaining to us this result about subcritical Lagrangians in the first version of Courte and Ekholm's preprint.}. In general, it is not known whether $K(L, L)$ is always flexible, but $\Lambda(L, L)$ is determined by the maps over tangent bundles $TL \otimes_\bR \bC \rightarrow TD^{2n}$ \cite{CourteEkholm}.

\subsection*{Acknowledgements}
    We would like to thank Tomohiro Asano and Yuichi Ike for very important discussions on sheaf quantizations of noncompact Lagrangians. Actually, the idea on the construction of sheaf quantization in Section \ref{sec:noncpt} first took shape during the collaboration with Tomohiro and Yuichi on a different project. Then the author would like to express gratitude to his advisors Emmy Murphy and Eric Zaslow for helpful discussions and warm encouragements throughout, and in particular, to Emmy Murphy for asking the sheaf theoretic proof of the result on minimal lengths of cobordisms. We would like to thank Xin Jin and David Treumann for fruitful discussions on the sheaf quantization results, and in particular David Treumann for numerous enlightening questions on this project. We would like to thank Roger Casals, C\^ome Dattin, St\'ephane Guillermou, Hingyuan Hu, James Hughes, Christopher Kuo and Jun Zhang for their interest in this project. Finally, we would like to thank Tomohiro Asano and Yuichi Ike again for helpful comments on an earlier version of the draft, and Oleg Lazarev for important comments on the result of Courte and Ekholm.

\section{Geometric Preliminary of Lagrangian Cobordisms}

    We discuss the necessary geometric preliminaries, namely the translation between exact Lagrangian cobordisms and conical Legendrian cobordisms, and Weinstein tubular neighbourhoods of exact Lagrangian cobordisms. Everything in this section is essentially trivial, but they turn out to be the key ingredients we need for our main results.

    We remark that, as mentioned in Pan-Rutherford \cite{PanRutherImmersed}, conical Legendrian cobordisms are equivalent to Legendrian cobordisms in the sense of Arnol'd \cite{ArnoldCobordism}, which is a homotopical theoretic object \cite{AudinCobordism,EliashCobordism,Vasilev}. However, conical Legendrian cobordisms with no Reeb chords, coming from embedded Lagrangian cobordisms, are much more rigid, and the only examples we know are Legendrian isotopies \cite{GroEliashGF,Chantraine}, Lagrangian handle attachments \cite{EHK,Rizconnectsum}, and Lagrangian caps \cite{LagCap}.

\subsection{Lagrangian cobordisms and Legendrian cobordisms}\label{sec:conical-leg}

    In this section, we explain the relation between Lagrangian cobordisms in the symplectization of $J^1(M) \cong T^{*,\infty}_{\tau > 0}(M \times \bR_t)$ and conical Legendrian cobordisms in $J^1(M \times \mathbb{R}_{>0,s}) \cong T^{*,\infty}_{\tau > 0}(M \times \bR_{>0,s} \times \bR_z)$. For the local coordinates $(x, \xi, t) \in J^1(M)$, the contact form is $\alpha_\text{std} = dt + \xi dx$; for the local coordinates $(x, s; y, \sigma; z) \in J^1(M \times \bR_{>0,s})$, the contact form is $\alpha_\text{std} = dz + \sigma ds + y dx$. 

    Let $(Y, \alpha)$ be a cooriented contact manifold. The symplectization is defined as $(Y \times \mathbb{R}_{>0,s}, d(s\alpha))$. Following \cite[Section 2.8]{SFT}, Chantraine \cite{Chantraine} and Ekholm \cite{Ekcobordism}, for instance, considered the category of Lagrangian cobordisms between Legendrians in the symplectization.

\begin{definition}
    The category of Lagrangian cobordisms $\mathrm{Cob}(Y)$, has objects being Legendrian submanifolds $\Lambda \subset Y$ and morphisms $Hom(\Lambda_-, \Lambda_+)$ being exact Lagrangian submanifolds $L \subset (Y \times \mathbb{R}_{>0,s}, d(s\alpha))$ with $s\alpha|_L = df_L$ such that
    $$L \cap (Y \times (0, s_-)) = \Lambda_- \times (0, s_-), \,\, L \cap (Y \times (s_+, +\infty)) = \Lambda_+ \times (s_+, +\infty)$$
    for some $0 < s_- < s_+ < +\infty$, and the primitive $f_L$ is a constant on $\Lambda_- \times (0, s_-)$ and $\Lambda_+ \times (s_+, +\infty)$. We call such an $L$ a Lagrangian cobordism from $\Lambda_-$ to $\Lambda_+$.

    Compositions in $\mathrm{Cob}(Y)$ are defined by concatenating Lagrangian cobordisms along their common conical ends. We will denote the concatenation of $L_0 \in Hom(\Lambda_0, \Lambda_1)$ and $L_1 \in Hom(\Lambda_1, \Lambda_2)$ by $L_0 \cup L_1$.
\end{definition}
\begin{remark}
    The assumption that the primitive $f_L$ is a constant on $\Lambda_- \times (0, s_-)$ and $\Lambda_+ \times (s_+, \infty)$ is made to ensure that concatenations of exact Lagrangians are exact \cite{ChantraineEx}.
\end{remark}

    For exact Lagrangians in the symplectization $(J^1(M) \times \bR_{>0,s}, d(s\alpha_\text{std}))$, one can consider the Legendrian lift in $((J^1(M) \times \bR_{>0,s}) \times \bR_w, dw + s\alpha_\text{std})$. It is known \cite{ChekanovGF,PanRuther} that there is a (strict) contactomorphism
    \[\begin{array}{cccc}
    \varphi: & (J^1(M) \times \bR_{>0,s}) \times \bR_w & \rightarrow & J^1(M \times \bR_{>0,s}) \\
    & (x, \xi, t; s; w) & \mapsto & (x, s; s\xi, -t; st + w).
    \end{array}\]
    Therefore, an exact Lagrangian cobordism gives a conical Legendrian cobordism with no Reeb chords \cite{PanRuther} (see also \cite{CasalsZas}).

\begin{definition}\label{def:conical-cob}
    Let $\Lambda_\pm \subset J^1(M)$ be Legendrian submanifolds. Then a Legendrian $\widetilde{L} \subset J^1(M \times \bR_{>0})$ is a conical Legendrian cobordism from $\Lambda_-$ to $\Lambda_+$ if for some $s_-, s_+ \in \bR_{>0}$, there are $w_{0,-}, w_{0,+} \in \bR$ such that
    \[\begin{split}
    \widetilde{L} \cap J^1(M \times (0, s_-)) &= \{(x, s, s\xi, -t, st + w_{0,-}) | (x, \xi, t) \in \Lambda_-, s \in (0, s_-)\}, \\
    \widetilde{L} \cap J^1(M \times (s_+, +\infty)) &= \{(x, s, s\xi, -t, st + w_{0,+}) | (x, \xi, t) \in \Lambda_+, s \in (s_+, +\infty)\}.
    \end{split}\]
\end{definition}

\begin{figure}
  \centering
  \includegraphics[width=0.8\textwidth]{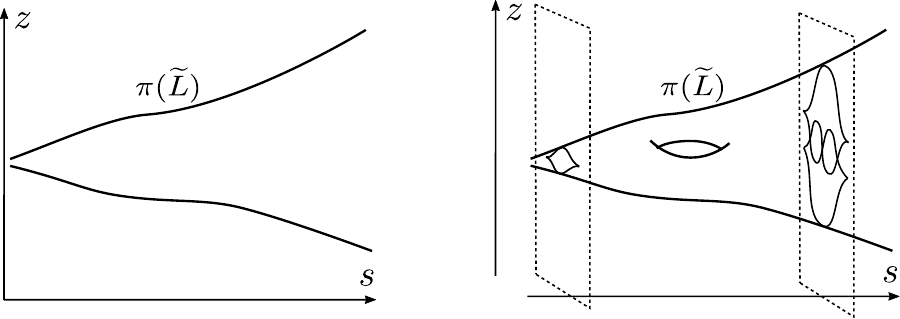}\\
  \caption{The front projection of a conical Legendrian cobordism $\widetilde{L} \subset J^1(\mathrm{pt} \times \bR_{>0})$ (on the left) and $J^1(\mathbb{R} \times \bR_{>0})$ (on the right).}\label{fig:conic-leg}
\end{figure}

\subsection{Weinstein neighbourhood for Legendrian cobordisms}
    For a closed Lagrangian submanifold $L \subset X$, Weinstein neighbourhood theorem asserts that there is a Weinstein tubular neighbourhood of $L \subset X$ which is symplectomorphic to a neighbourhood of the zero section $L \subset T^*L$. Similarly, for any closed Legendrian submanifold $\Lambda \subset Y$, there is a Weinstein tubular neighbourhood of $\Lambda \subset Y$ which is contactomorphic to a neighbourhood of the zero section $\Lambda \subset J^1(\Lambda)$.

    However, the neighbourhood theorem for noncompact Lagrangian/Legendrian submanifolds could be nontrivial, as the radius of the tubular neighbourhood may not have a positive lower bound with respect to the given Riemannian metric. This will be essential when we discuss the sheaf quantization problem for noncompact Lagrangian/Legendrians in Sections \ref{sec:noncpt} and \ref{sec:quan-cob}. To deal with this issue, we first introduce the notion of an adapted metric following \cite[Section 2.2.2]{GroEliashGF}.

\begin{definition}[Eliashberg-Gromov \cite{GroEliashGF}]
    A Riemannian metric $g$ on a symplectic manifold $X$ is adapted to the symplectic form $\omega$ on $X$ if for any $H \in C^\infty(X)$
    $$\|dH\|_g = \|X_H\|_g.$$
    or equivalently $\omega = \sum_{i=1}^n dx_i \wedge dy_i$ for some $g$-orthonormal coframing
    $$\left< dx_1, \dots, dx_n, dy_1, \dots, dy_n \right>.$$

    A Riemannian metric $g$ on a contact manifold $Y$ is adapted to the contact form $\alpha$ on $Y$ if for any $H \in C^\infty(Y)$
    $$\|dH\|_g^2 + |H|^2 = \|X_H\|^2_g$$
    or equivalently $\alpha = dz - \sum_{i=1}^n dx_i \wedge dy_i$ for some $g$-orthonormal coframing
    $$\left< dx_1, \dots, dx_n, dy_1, \dots, dy_n , dz \right>.$$
\end{definition}
\begin{example}\label{ex:adapt-metric}
    Consider $X = T^*M$ and $\omega = d\lambda_\text{std}$. Then a Riemannian metric $g_M$ on $M$ determines an adapted Riemannian metric on $T^*M$ by
    $$g_{T^*M} = g_M + g_M^\vee:  \in T_xM \oplus T_x^*M \times T_xM \oplus T_x^*M \rightarrow \bR,$$
    where $g^\vee: T^*M \otimes T^*M \rightarrow \bR$ is the dual bilinear form to $g: TM \times TM \rightarrow \bR$. It also determines an adapted Riemannian metric on $J^1(M)$ by
    $$g_{J^1(M)} = g_{T^*M} + dz^2:  T_xM \oplus T_x^*M \oplus T_z\bR \times T_xM \oplus T_x^*M \oplus T_z\bR \rightarrow \bR.$$
    In particular, when $g_M$ is complete, $g_{T^*M}$ and $g_{J^1(M)}$ are complete as well. We call them the standard adapted metric on $T^*M$ and $J^1(M)$.
\end{example}

    Later we will see in Sections \ref{sec:noncpt} and \ref{sec:quan-cob} that the reason we discuss metrics on symplectic/contact manifolds is to understand when a noncompact Hamiltonian vector field can be integrated. Adapted metrics allow us to estimate the norm of the Hamiltonian vector fields in terms of their $C^1$-norm. On the other hand, complete metrics allow us to deduce existence of the integration flow from the estimation of the norm of vector fields.

    It is proved that any symplectic manifold admits a complete adapted metric \cite{GroEliashGF}. It seems unclear whether a contact manifold always has a complete adapted metric, but we will only need the case of cotangent bundles and 1-jet bundles.

\begin{definition}\label{def:nbhd-positive-rad}
    Let $L \subset X$ be a submanifold. A (tubular) neighbourhood $U$ of $L$ of positive radius $r > 0$ with respect to a metric $g$ on $X$ is a (tubular) neighbourhood $U$ such that for any $x \in X$ with $d_g(x, L) \leq r$, we have $x \in U$.
\end{definition}
\begin{lemma}\label{lem:nbhd-symp-cont}
    Let $L \subset (X, d\lambda_X)$ be an exact Lagrangian submanifold. Suppose $L$ has a tubular neighbourhood of positive radius $r > 0$ with respect to a complete adapted metric $g_X$, then the Legendrian lift $\widetilde{L} \subset (X \times \bR, dt - \lambda_X)$ also has a tubular neighbourhood of positive radius $r > 0$ with respect to the complete adapted metric $g_X \oplus g_{\bR, \text{std}}$, where $g_{\bR, \text{std}}$ is the Euclidean metric.
\end{lemma}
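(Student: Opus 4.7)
The plan is to lift the symplectic Weinstein tubular neighborhood $U$ of $L \subset X$ to a contact tubular neighborhood of $\widetilde{L} \subset X \times \bR$ of product form $U \times \bR$. The product form is the key observation: since the metric $g_X \oplus g_{\bR,\text{std}}$ splits, a radius bound in the product forces a radius bound on the $X$-factor alone, so the positive-radius property of $U$ transfers to $U \times \bR$ with no additional metric estimate.

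First I would fix a Weinstein symplectic tubular neighborhood $\Phi: V \to U$, with $V \subset T^*L$ an open neighborhood of the zero section. The 1-form $\Phi^*\lambda_X - \lambda_L$ is closed, since $\Phi$ is a symplectomorphism; and because $V$ deformation retracts onto $L$ and this 1-form restricts on $L$ to $\lambda_X|_L = df$, which is exact, it is exact on all of $V$. Writing $\Phi^*\lambda_X = \lambda_L + dg$ and adjusting by a constant on each component of $L$, we may arrange $g|_L = f$.

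Next, I would define the contact lift
\[\widetilde{\Phi}: V \times \bR \longrightarrow U \times \bR, \qquad \widetilde{\Phi}(q, p, z) = \bigl(\Phi(q, p),\, z + g(q, p)\bigr).\]
A direct calculation gives $\widetilde{\Phi}^*(dt - \lambda_X) = dz + dg - (\lambda_L + dg) = dz - \lambda_L$, so $\widetilde{\Phi}$ is a strict contactomorphism from the open neighborhood $V \times \bR$ of the zero section of $J^1(L)$ onto $U \times \bR$, and $\widetilde{\Phi}(q, 0, 0) = (q, f(q))$, so the zero section is carried onto $\widetilde{L}$. Hence $\widetilde{U} := U \times \bR$ is a bona fide contact tubular neighborhood of $\widetilde{L}$.

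Finally, for any $(x, t) \in X \times \bR$ with $d_{g_X \oplus g_\bR}((x, t), \widetilde{L}) \le r$, the product form of the metric gives $d_X(x, y)^2 + (t - f(y))^2 \le (r + \varepsilon)^2$ for some $y \in L$ and every $\varepsilon > 0$, so $d_X(x, L) \le r$, whence $x \in U$ by the hypothesis on $U$, and therefore $(x, t) \in U \times \bR = \widetilde{U}$. The main point, once the shearing formula for the contact lift is in hand, is really just unpacking the product structure of the metric; the only mildly subtle technicality is the global existence of the primitive $g$ on $V$, which is automatic from $V$ being contractible onto $L$.
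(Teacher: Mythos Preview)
Your proof is correct, and the paper actually states this lemma without proof, treating it as an elementary observation. Your argument---lifting the symplectic Weinstein neighborhood $U$ to the contact neighborhood $U \times \bR$ via the shear $\widetilde{\Phi}(q,p,z) = (\Phi(q,p), z + g(q,p))$ and then using that the projection $X \times \bR \to X$ is $1$-Lipschitz for the product metric---is exactly the natural way to fill in the details, and there are no gaps.
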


    However, for Lagrangian cobordisms between closed Legendrians, such a tubular neighbourhood does not exist, for the simple reason that the symplectic area near the concave end of the symplectization has an upper bound, while a tubular neighbourhood of positive radius for the conical/cylindrical submanifold cannot have a bounded symplectic area.

\begin{figure}
  \centering
  \includegraphics[width=1.0\textwidth]{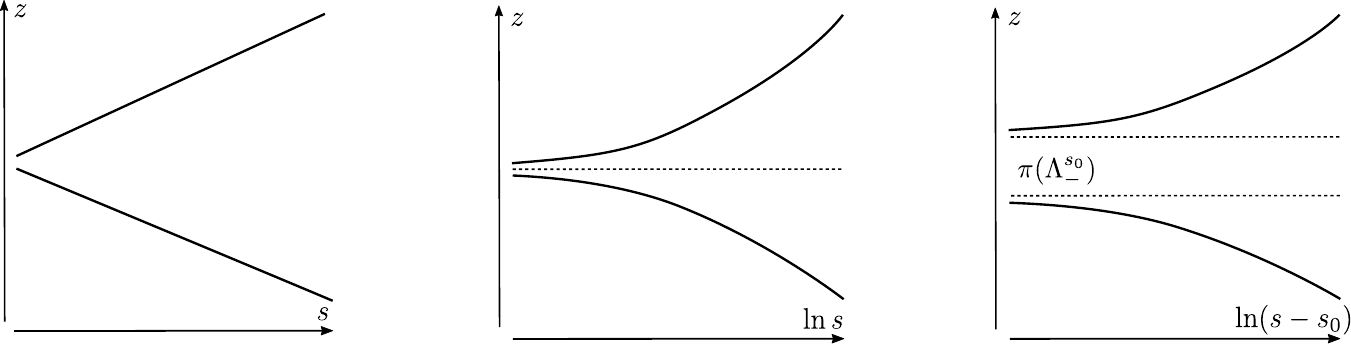}\\
  \caption{The figure on the left is the front projection of a conical Legendrian cobordism $\widetilde{L} \subset J^1(\mathrm{pt} \times \bR_{>0})$. The figure in the middle is the front projection after applying the diffeomorphism $s \mapsto \ln s$, where the complete adapted metric induced by $g_M + s^{-2} ds^2$ becomes the Euclidean metric. The figure on the right is the front projection after applying the diffeomorphism $s \mapsto \ln (s - s_0)$, where the complete adapted metric induced by $g_M + (s - s_0)^{-2} ds^2$ becomes the Euclidean metric.}\label{fig:leg-nbhd}
\end{figure}

    For simplicity, we restrict to the case $J^1(M) \subset T^{*,\infty}(M \times \bR)$. In this case the symplectization is symplectomophic to a cotangent bundle
    $$J^1(M) \times \bR_{>0} \xrightarrow{\sim} T^*(M \times \bR_{>0}), \; (x, \xi, t; s) \mapsto (x, s, s\xi, -t).$$
    We will consider a different standard complete adapted metric on $T^*(M \times \bR_{>0})$ induced by the complete metric $g_M + s^{-2} ds^2$ on $M \times \bR_{>0}$, which is $g_M + g_M^\vee + s^{-2} ds^2 + s^2 dt^2$; see Figure \ref{fig:leg-nbhd} middle (note that the metric $s^{-2} ds^2$ is identical to the Euclidean metric under the diffeomorphism $s \mapsto \ln s$).

\begin{lemma}\label{lem:nbhd-sft-cob-1}
    Let $L \subset T^{*}(M \times \bR_{>0}) \cong J^1(M) \times \bR_{>0}$ be a Lagrangian cobordism between closed Legendrians from $\Lambda_-$ to $\Lambda_+ \subset J^1(M)$. Then for any sufficiently small $s_0 > 0$, $L \cap T^{*}(M \times (s_0, +\infty))$ has a tubular neighbourhood of positive radius $r > 0$ with respect to the adapted metric on $T^*(M \times \bR_{>0})$.
\end{lemma}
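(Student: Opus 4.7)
The plan is to construct the tubular neighbourhood piecewise, applying the Weinstein tubular neighbourhood theorem for the closed Legendrians $\Lambda_\pm$ on the two conical ends and the standard compact tubular neighbourhood theorem on the middle piece, and then to verify by direct computation that the three radii admit a uniform positive lower bound in the adapted metric on $T^*(M \times \bR_{>0})$.

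First, I would fix $s_0 < s_-$ and decompose
\[
L \cap T^{*}(M \times (s_0, +\infty)) = L_-^{\mathrm{con}} \cup L_{\mathrm{mid}} \cup L_+^{\mathrm{con}},
\]
where $L_\pm^{\mathrm{con}}$ are the conical ends restricted to $s \in (s_0, s_-]$ and $[s_+, +\infty)$ respectively, and $L_{\mathrm{mid}} = L \cap T^{*}(M \times [s_-, s_+])$. Since $L$ is properly embedded and cylindrical outside $[s_-, s_+]$, the piece $L_{\mathrm{mid}}$ is a compact Lagrangian with boundary, so the standard tubular neighbourhood theorem yields a tubular neighbourhood of positive radius in the adapted metric.

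For each conical end, the Weinstein tubular neighbourhood theorem applied to the closed Legendrian $\Lambda_\pm \subset J^1(M)$ supplies a strict contactomorphism $\phi_\pm \colon V_\pm \to V_\pm'$ from an adapted-metric $r_0$-neighbourhood $V_\pm$ of $\Lambda_\pm$ (with $r_0 > 0$ by compactness) onto a neighbourhood of the zero section in $J^1(\Lambda_\pm)$. Extending $s$-invariantly to a symplectomorphism of symplectizations and transporting via $J^1(M) \times \bR_{>0} \xrightarrow{\sim} T^{*}(M \times \bR_{>0})$, $(x, \xi, t; s) \mapsto (x, s, s\xi, -t)$, I obtain a Weinstein tube $\Phi_\pm^{-1}(V_\pm' \times \bR_{>0})$ around each $L_\pm^{\mathrm{con}}$ inside $T^{*}(M \times \bR_{>0})$.

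The crux is bounding the adapted-metric radius of this tube from below uniformly. At a reference point $(x_0, s, y_0 = s\xi_0, \sigma_0 = -t_0) \in L_\pm^{\mathrm{con}}$, the adapted $r$-ball is (to leading order)
\[
|\Delta x|^2 + |\Delta y|^2 + s^{-2}|\Delta s|^2 + s^{2}|\Delta\sigma|^2 < r^2,
\]
while membership in $\Phi_\pm^{-1}(V_\pm' \times \bR_{>0})$ translates via $\xi = y/s$, $t = -\sigma$ into
\[
|\Delta x|^2 + s^{-2}|\Delta y - \xi_0\Delta s|^2 + |\Delta\sigma|^2 < r_0^2.
\]
Bounding $|\Delta y - \xi_0\Delta s|^2 \leq 2|\Delta y|^2 + 2|\xi_0|^2|\Delta s|^2$ and combining the coordinate-wise bounds from the first inequality yields inclusion of the $r$-ball in the Weinstein tube whenever
\[
r^2 \bigl(1 + 3\,s^{-2} + 2\sup\nolimits_{\Lambda_\pm}|\xi|^2\bigr) \leq r_0^2.
\]
Since $\xi$ is bounded on the compact Legendrians $\Lambda_\pm$ and $s > s_0 > 0$, the left-hand factor is uniformly bounded above on $(s_0, +\infty)$, so one obtains a uniform positive $r$ on each conical end.

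The main obstacle is precisely this metric-comparison estimate: without the cutoff $s_0 > 0$ the term $3 s^{-2}$ blows up as $s \to 0$ and the radius degenerates, reflecting the geometric fact that an infinite Lagrangian cone in the full symplectization does not admit a tubular neighbourhood of uniformly positive radius. Having obtained uniform positive radii on all three pieces, I would then take $r$ smaller than the minimum of the three and patch the tubular neighbourhoods along the overlaps $s \in (s_- - \varepsilon, s_- + \varepsilon)$ and $s \in (s_+ - \varepsilon, s_+ + \varepsilon)$, where both neighbourhoods are Weinstein neighbourhoods of the same compact Lagrangian segment, using a standard partition-of-unity interpolation on normal frames to produce a single global tubular neighbourhood of positive radius.
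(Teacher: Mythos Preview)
Your proposal is correct and follows essentially the same strategy as the paper: decompose $L \cap T^*(M \times (s_0,\infty))$ into a bounded piece handled by compactness and conical piece(s) handled by a metric comparison showing that a product/Weinstein tube around the cylinder on $\Lambda_\pm$ contains a uniform adapted-metric ball.

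The differences are organizational rather than substantive. The paper uses a two-piece split: everything over $(s_0, s_1')$ for $s_1'$ large is treated as a single bounded subset (absorbing the negative conical part into it), and only the positive cone over $(s_1,\infty)$ gets the metric comparison. For that comparison the paper observes directly that the pullback of the adapted metric $g_M + g_M^\vee + s^{-2}ds^2 + s^2 d\sigma^2$ under $(x,\xi,t;s)\mapsto(x,s,s\xi,-t)$ is $g_M + s^2 g_M^\vee + s^2 dt^2 + s^{-2}ds^2$, which dominates the product metric $g_{J^1(M)} + s^{-2}ds^2$ once $s \geq 1$; hence a product tube of the compact Legendrian $\Lambda_+$ already has positive adapted radius. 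Your three-piece split treats the negative cone separately with an explicit coordinate estimate, which is unnecessary (that piece is bounded and could simply be merged with $L_{\mathrm{mid}}$), and your inequality $r^2(1 + 3s^{-2} + 2\sup|\xi|^2) \leq r_0^2$ is just the coordinate form of the same pullback-metric bound. So your argument does a little more work than needed but arrives at the same conclusion by the same mechanism.
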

\begin{proof}
    First consider $L \cap T^{*,\infty}M \times (s_0, s_1')$ where $s_0$ is small, $s_1$ is sufficiently large and $s_1' > s_1$. Since the intersection is a bounded subset, there exists a tubular neighbourhood of positive radius $r > 0$. Then consider $L \cap T^{*,\infty}M \times (s_1, +\infty)$. Since $s_1$ is sufficiently large, we may assume that
    $$L \cap T^{*,\infty}M \times (s_1, +\infty) = \{(x, s, s\xi) | (x, \xi) \in \Lambda_+, s \in (s_1, +\infty)\}.$$
    Then since $\Lambda_+$ is a closed Legendrian, it has a tubular neighbourhood of positive radius $r > 0$ with respect to the any complete metric. The adapted metric on $T^*(M \times (s_1, +\infty))$ is given by
    $$g_{T^*(M \times \bR_{>0})} = g_M + g_M^\vee + s^{-2} ds^2 + s^2 d\sigma^2 = dx^2 + dy^2 + s^{-2} ds^2 + s^2 d\sigma^2.$$
    Under the identification $J^1(M) \times \bR_{>0} \xrightarrow{\sim} T^*(M \times \bR_{>0}), \; (x, s, y, \sigma) = (x, s, s\xi, -t)$, the cone on the right hand side is identified with the product cylinder on the left hand side $\Lambda_+ \times (s_1, +\infty)$. The pullback metric is identified with
    $$g_M + s^2 g_M^\vee + s^2 dt^2 + s^{-2} ds^2 = dx^2 + s^2 d\xi^2 + s^2 dt^2 + s^{-2} ds^2,$$
    which is bounded from below by the product metric $g_{J^1(M)} + s^{-2} ds^2$ on $J^1(M) \times (s_1, +\infty)$. Therefore, by considering the product neighbourhood of $\Lambda_+ \times (s_1, +\infty)$, we get a tubular neighbourhood of positive radius.
\end{proof}

    We can restrict to the open submanifold $J^1(M) \times (s_0, +\infty)$ which is symplectomorphic to $T^*(M \times (s_0, +\infty))$. Consider the complete adapted metric on the submanifold induced by $g_M + (s - s_0)^{-2} ds^2$ on $M \times (s_0, +\infty)$; see Figure \ref{fig:leg-nbhd} right (note that the metric $(s - s_0)^{-2} ds^2$ is identical to the Euclidean metric under the diffeomorphism $s \mapsto \ln (s - s_0)$). The advantage of this new metric is that it is complete on $T^*(M \times (s_0, +\infty))$, so that we can deal with the subset independently
    $$L \cap T^*(M \times (s_0, +\infty)) \subset T^*(M \times (s_0, +\infty))$$
    when studying Hamiltonian vector fields and their integration flows in later sections.

    However, under this metric, $L \cap T^*(M \times (s_0, +\infty))$ is no longer a bounded subset in an ambient manifold, so it is no longer true that $L \cap T^*(M \times (s_0, +\infty))$ has a tubular neighbourhood of positive radius $r > 0$ with respect to this new complete adapted metric. We only have a weaker result (weaker in the sense of Lemma \ref{lem:nbhd-symp-cont}).

\begin{lemma}\label{lem:nbhd-sft-cob-2}
    Let $L \subset J^1(M) \times \bR_{>0} \cong T^*(M \times \bR_{>0})$ be a Lagrangian cobordism between closed Legendrians from $\Lambda_-$ to $\Lambda_+ \subset J^1(M)$. Then for any sufficiently small $s_0 > 0$, the Legendrian lift $\widetilde{L} \cap J^1(M \times (s_0, +\infty))$ has a tubular neighbourhood of positive radius $r > 0$ with respect to the complete adapted metric on $J^1(M \times (s_0, +\infty))$.
\end{lemma}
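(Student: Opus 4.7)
The plan is to construct the tubular neighbourhood piecewise by decomposing $\widetilde{L} \cap J^1(M \times (s_0, +\infty))$ via its $s$-coordinate into three regions: the negative conical end $s \in (s_0, s_-)$, the compact body $s \in [s_-, s_+]$, and the positive conical end $s \in (s_+, +\infty)$. The middle piece is a compact submanifold and admits a tubular neighbourhood of positive radius by the standard Legendrian neighbourhood theorem. For the positive conical end, since $(s-s_0)^{-2} \sim s^{-2}$ as $s \to +\infty$, the new adapted metric is bilipschitz equivalent to the metric appearing in Lemma~\ref{lem:nbhd-sft-cob-1} on this region; combining that lemma applied to $L$ with Lemma~\ref{lem:nbhd-symp-cont} then yields a tubular neighbourhood of positive radius for $\widetilde{L}$ at the positive end. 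Taking the minimum of the three radii gives the desired uniform positive radius.

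The heart of the argument is the negative conical end, where the new adapted metric degenerates as $s \to s_0^+$ (the Lagrangian analogue in fact fails here: two distinct points of $L$ coming from a Reeb chord of $\Lambda_-$ approach each other in the new metric as $s \to s_0^+$). The idea is the change of coordinates $u = \ln(s - s_0)$ and $\sigma'' = (s-s_0)\sigma$, which defines a contactomorphism $J^1(M \times (s_0, +\infty)) \xrightarrow{\sim} J^1(M \times \bR)$ sending the contact form $dz + \sigma ds + y dx$ to $dz + \sigma'' du + y dx$. In these coordinates the negative end of $\widetilde{L}$ is parametrized, with $s = s_0 + e^u$, as
\[
\widetilde{L}' = \{(x, u, s\xi, -(s-s_0)t, st + w_{0,-}) : (x, \xi, t) \in \Lambda_-,\; u < \ln(s_- - s_0)\},
\]
and one verifies directly that $\widetilde{L}'$ converges at exponential rate in the $C^\infty$ topology as $u \to -\infty$ to the embedded product Legendrian $\Lambda_-^{s_0} \times \bR_u$ (suitably translated in $z$), where $\Lambda_-^{s_0}$ is the image of $\Lambda_-$ under the scaling contactomorphism. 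The key point that makes the Legendrian case work is that the $z$-coordinate keeps lifts of Reeb chords separated even as their $\sigma''$-distance collapses.

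The pulled-back metric on $J^1(M \times \bR)$ under this diffeomorphism computes to $g_M + g_M^\vee + du^2 + (d\sigma'' - \sigma'' du)^2 + dz^2$, which agrees with the standard adapted metric on the locus $\{\sigma'' = 0\}$ and is bilipschitz equivalent to it on any region of bounded $|\sigma''|$. Since $\Lambda_-^{s_0}$ is a compact embedded Legendrian in $J^1(M)$, its product $\Lambda_-^{s_0} \times \bR_u$ has a tubular neighbourhood of positive radius in $J^1(M \times \bR)$ with the standard adapted metric, hence also in the pulled-back metric restricted to a tube of bounded $\sigma''$. The exponential $C^\infty$-convergence of $\widetilde{L}'$ to $\Lambda_-^{s_0} \times \bR_u$ then allows this tubular neighbourhood to be transported to $\widetilde{L}'$ via a $C^\infty$-small contact isotopy, losing only a controlled amount of radius. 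The main obstacle will be ensuring that this transport yields a uniform positive radius as $u \to -\infty$, which follows from the exponential decay rate of the convergence.
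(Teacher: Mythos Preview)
Your proposal is correct and follows essentially the same strategy as the paper: split off the region away from the negative end (where Lemma~\ref{lem:nbhd-sft-cob-1} together with Lemma~\ref{lem:nbhd-symp-cont} applies), and near the negative end compare the conical Legendrian to the product cylinder $\Lambda_-^{s_0} \times (\text{interval})$, which inherits a tubular neighbourhood of positive radius from the compact Legendrian $\Lambda_-^{s_0} \subset J^1(M)$.

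The execution near the negative end differs in style. You pass to the logarithmic coordinate $u = \ln(s - s_0)$, identify the pulled-back metric, and invoke exponential $C^\infty$-convergence of $\widetilde{L}'$ to $\Lambda_-^{s_0} \times \bR_u$ together with a contact isotopy to transport the tubular neighbourhood. The paper instead works directly in the $s$-coordinate on a short interval $(s_0, s_0')$: it observes that the adapted metric is bounded below by the product metric $g_{J^1(M)} + (s-s_0)^{-2}ds^2 + (s-s_0)^2 dt^2$, takes the product tubular neighbourhood of the cylinder $\Lambda_-^{s_0} \times (s_0, s_0')$, and then does an elementary Hausdorff-distance estimate between the cylinder and the cone, showing this distance is bounded by $(s_0's_0^{-1} - s_0(s_0')^{-1})\cdot r$ and hence can be made arbitrarily small by taking $s_0'$ close to $s_0$. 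Your framing via exponential convergence is more conceptual and perhaps generalizes more readily; the paper's direct estimate is shorter and avoids the isotopy transport step entirely.
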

\begin{proof}
    We notice that the same argument in Lemma \ref{lem:nbhd-sft-cob-1} shows that for any $s_0' > s_0$, $L \cap J^1(M \times (s_0', +\infty))$ admits a tubular neighbourhood of positive radius $r > 0$. Therefore, by Lemma \ref{lem:nbhd-symp-cont}, it suffices to show that the Legendrian lift $\widetilde{L} \cap J^1(M \times (s_0, s_0'))$ admits a tubular neighbourhood of positive radius.

    On $J^1(M \times (s_0, s_0'))$, we know that the Legendrian lift of the cobordism is
    \[\begin{split}
    \widetilde{L} \cap J^1(M \times (s_0, s_0')) & = \{(x, s, s\xi, -t, st) | (x, \xi, t) \in \Lambda_-, s \in (s_0, s_0')\} \\
    & = \{ (x, s, y, -s^{-1}z, z) | (x, y, z) \in \Lambda_-^s, s \in (s_0, s_0')\}.
    \end{split}\]
    where $\Lambda_-^s = \{(x, s\xi, st) | (x, \xi, t) \in \Lambda_-\}$ is a closed Legendrian. $\Lambda_-^s$ has a tubular neighbourhood of positive radius $r > 0$ with respect to the complete adapted metric on $J^1(M)$
    $$g_{J^1(M)} = g_M + g_M^\vee + dz^2.$$
    On the other hand, the complete adapted metric on $J^1(M \times (s_0, +\infty))$ is given by
    $$g_{J^1(M \times (s_0, +\infty))} = g_M + g_M^\vee + (s - s_0)^{-2} ds^2 + (s - s_0)^2 dt^2 + dz^2. $$
    When $s > s_0 > 0$, we know that $g_{J^1(M \times (s_0, +\infty))}$ is bounded from below by the product metric $ g_{J^1(M)} + (s - s_0)^{-2} ds^2 + (s - s_0)^2 dt^2$. By considering the product neighbourhood of $\Lambda_-^{s_0} \times (s_0, s_0')$, we get a tubular neighbourhood of positive radius for the cylinder $\Lambda_-^{s_0} \times (s_0, s_0')$. Finally, we estimate the distance between the cylinder and the cone
    $$\{(x, s, y, 0, z) | (x, y, z) \in \Lambda_-^{s_0}, s \in (s_0, s_0')\}, \;\, \{ (x, s, y, -s^{-1}z, z) | (x, y, z) \in \Lambda_-^s, s \in (s_0, s_0')\}.$$
    Consider pairs of the form $(x, s, y, 0, z)$ and $(x, s, ss_0^{-1}y, -s^{-1}z, ss_0^{-1}z)$, and set
    $$r = \max_{(x, y, z) \in \Lambda_-^{s_0}}(|y|^2 + z^2)^{1/2}.$$
    Then we know that the distance is bounded by
    \[\begin{split}
    \sup_{(x, y, z) \in \Lambda_-^{s_0}, s \in (s_0, s_0')} & d_{J^1(M \times (s_0, +\infty))}\big((x, s, y, 0, z), (x, s, ss_0^{-1}y, -s^{-1}z, ss_0^{-1}z)\big) \\
    & \leq \sup_{(x, y, z) \in \Lambda_-^{s_0}}d_{(s_0, +\infty)}((s_0')^{-1}z, 0) + d_{J^1(M)}\big((x, y, z), (x, s_0's_0^{-1}y, s_0's_0^{-1}z)\big) \\
    & \leq \sup_{(x, y, z) \in \Lambda_-^{s_0}}(s - s_0)((s_0')^{-1}z - 0) + \big((y - s_0's_0^{-1}y)^2 + (z - s_0's_0^{-1}z)^2\big)^{1/2} \\
    & \leq (s_0' - s_0)(s_0')^{-1}r + (s_0's_0^{-1} - 1)r = (s_0' s_0^{-1} - s_0(s_0')^{-1})r.
    \end{split}\]
    We know that the distance can be arbitrarily small when $s_0'$ is sufficiently close to $s_0$. Therefore, a tubular neighbourhood of positive radius for $\Lambda_-^{s_0} \times (s_0, s_0')$ gives a (possibly smaller) tubular neighbourhood of positive radius for the cone.
\end{proof}

\section{Sheaf Quantization of Lagrangian Cobordisms}

    Given an exact Lagrangian cobordism $L$ from $\Lambda_-$ to $\Lambda_+ \subset J^1(M)$, following Section \ref{sec:conical-leg}, we can identify it with a conical Legendrian cobordism with no Reeb chords $\widetilde{L} \subset J^1(M \times \mathbb{R}_{>0})$. The main theorem in this section is a conditional sheaf quantization theorem:

\begin{theorem}\label{thm:cond-quan-cob}
    Let $\widetilde{L} \subset J^1(M \times \mathbb{R}_{>0})$ be a conical Legendrian cobordism with no Reeb chords from $\Lambda_- \subset J^1(M)$ to $\Lambda_+ \subset J^1(M)$ with vanishing Maslov class and relative 2nd Stiefel-Whitney class. Then there exists a fully faithful (conditional) sheaf quantization functor
    $$\Psi_{\widetilde{L}}: \, Sh_{\Lambda_-}(M \times \mathbb{R})_0 \times_{Loc(\Lambda_-)} Loc(\widetilde{L}) \xrightarrow{\sim} Sh_{\widetilde{L}}(M \times \mathbb{R} \times \mathbb{R}_{>0})_0$$
    on the subcategories of sheaves with acyclic stalks at $-\infty$.
\end{theorem}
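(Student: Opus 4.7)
The plan is to construct $\Psi_{\widetilde{L}}$ as the sheaf quantization of a contact isotopy that straightens $\widetilde{L}$ to the trivial cylinder on $\Lambda_-$, and then verify that its inverse is the restriction-plus-microlocalization functor $(i_-^{-1}, m_{\widetilde{L}})$. The absence of Reeb chords on $\widetilde{L}$ is what makes such a straightening possible, while the vanishing of the Maslov class and relative second Stiefel-Whitney class supply the Maslov data needed for a well-defined GKS-style kernel compatible with the input local system on $\widetilde{L}$.

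First I would produce a 1-parameter family $\{\widetilde{L}_\tau\}_{\tau \in [0,1]}$ of conical Legendrian cobordisms from $\Lambda_-$ to $\Lambda_+$, all equal to the cylinder on $\Lambda_-$ near the negative end and to the cylinder on $\Lambda_+$ near the positive end, with $\widetilde{L}_0 = \Lambda_- \times \mathbb{R}_{>0}$ (under the identification of Section \ref{sec:conical-leg}) and $\widetilde{L}_1 = \widetilde{L}$. Such a family comes from a parametric Moser argument: since $\widetilde{L}$ has no Reeb chords, it admits a Weinstein tubular neighbourhood modeled on $J^1(\widetilde{L})$ in which it is the graph of a closed 1-form that can be interpolated to zero. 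By Lemma \ref{lem:nbhd-sft-cob-2}, this neighbourhood has positive radius with respect to a complete adapted metric, so the generating contact Hamiltonian $h_\tau$ has bounded $C^1$-norm and an integrable flow. Applying the noncompact sheaf quantization of Section \ref{sec:noncpt} to this isotopy yields a convolution kernel realizing an equivalence
\[
Sh_{\Lambda_- \times \mathbb{R}_{>0}}(M \times \mathbb{R} \times \mathbb{R}_{>0})_0 \xrightarrow{\sim} Sh_{\widetilde{L}}(M \times \mathbb{R} \times \mathbb{R}_{>0})_0.
\]
On the source side, the cylindrical structure together with propagation in the $s$-direction identifies the category with $Sh_{\Lambda_-}(M \times \mathbb{R})_0 \times_{Loc(\Lambda_-)} Loc(\Lambda_- \times \mathbb{R}_{>0})$, and transport along the isotopy intertwines microlocalization, turning a local system on the cylinder into the chosen local system on $\widetilde{L}$; composing these identifications defines $\Psi_{\widetilde{L}}$.

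Invertibility would then be checked by showing $i_-^{-1} \circ \Psi_{\widetilde{L}} \simeq \mathrm{id}$, which holds because the isotopy is the identity near $s = 0$, and $m_{\widetilde{L}} \circ \Psi_{\widetilde{L}} \simeq \mathrm{id}$, which follows from naturality of microlocalization under GKS quantization, with the inverse direction obtained by running the same kernel backwards. The main obstacle is the noncompactness of $\widetilde{L}$ at the positive end, which obstructs a direct application of the classical GKS theorem. Overcoming this relies essentially on two ingredients: the Weinstein neighbourhood of positive radius in a complete adapted metric from Lemma \ref{lem:nbhd-sft-cob-2}, which guarantees the existence of an integrable generating Hamiltonian despite $\widetilde{L}$ being noncompact; and the noncompact sheaf quantization machinery of Section \ref{sec:noncpt}, which extends GKS to contact isotopies of this controlled noncompactness type. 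The remaining bookkeeping is to match conventions between the symplectization $(J^1(M) \times \mathbb{R}_{>0}, d(s\alpha))$ and $J^1(M \times \mathbb{R}_{>0})$ under the contactomorphism $\varphi$ of Section \ref{sec:conical-leg}, and to verify that the Maslov and spin data transport correctly along the isotopy so that the sheaf-theoretic microlocal local system is canonically identified with the prescribed element of $Loc(\widetilde{L})$.
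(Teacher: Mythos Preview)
Your proposed strategy has a fatal gap at the very first step: there is in general \emph{no} Legendrian isotopy from the trivial cylinder $\Lambda_- \times \mathbb{R}_{>0}$ to $\widetilde{L}$. These two Legendrians have different positive ends ($\Lambda_-$ versus $\Lambda_+$), so no isotopy that is conical at the positive end can carry one to the other; worse, they need not even be diffeomorphic, since $\widetilde{L}$ can be a Lagrangian filling ($\Lambda_- = \varnothing$) or can carry handles. Your own description already contains this contradiction: you ask for a family $\widetilde{L}_\tau$ ``all equal to the cylinder on $\Lambda_+$ near the positive end'' with $\widetilde{L}_0 = \Lambda_- \times \mathbb{R}_{>0}$, which has $\Lambda_-$ at the positive end. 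The claim that absence of Reeb chords lets one write $\widetilde{L}$ as the graph of a closed $1$-form over the cylinder is a confusion: a Weinstein neighbourhood of $\widetilde{L}$ is modeled on $J^1(\widetilde{L})$ with $\widetilde{L}$ itself as zero section, and says nothing about $\widetilde{L}$ sitting inside a neighbourhood of $\Lambda_- \times \mathbb{R}_{>0}$. For the same reason, even if an isotopy existed it could not account for the factor $Loc(\widetilde{L})$, which is strictly larger than $Loc(\Lambda_- \times \mathbb{R}_{>0}) \simeq Loc(\Lambda_-)$ whenever the cobordism has nontrivial relative topology.

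The paper's actual argument is entirely different and does not rely on any isotopy of $\widetilde{L}$. It builds a \emph{conditional doubling} $w_{\widetilde{L}}$ into $Sh_{T_{-\epsilon}(\widetilde{L}) \cup T_\epsilon(\widetilde{L})}(M \times \mathbb{R} \times \mathbb{R}_{>0})_0$ (Theorem \ref{thm:double-cob}): near the negative end one writes down $\mathrm{Cone}(T_{-\epsilon}\pi^{-1}\mathscr{F} \to T_\epsilon\pi^{-1}\mathscr{F})$ directly from the prescribed sheaf $\mathscr{F} \in Sh_{\Lambda_-}(M \times \mathbb{R})_0$ (Lemma \ref{lem:cob-compatible}), while away from the negative end one uses the Weinstein neighbourhood of positive radius (Lemma \ref{lem:nbhd-sft-cob-2}) and the noncompact doubling of Theorem \ref{thm:double-noncpt} applied to the local system $\mathscr{L}$; the two pieces are then matched on the overlap. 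After that, Proposition \ref{prop:sep-double-cob} uses a cut-off Hamiltonian supported away from $\widetilde{L}$ to push the copy $T_\epsilon(\widetilde{L})$ arbitrarily far, and a final restriction produces an object in $Sh_{\widetilde{L}}(M \times \mathbb{R} \times \mathbb{R}_{>0})_0$. Full faithfulness and essential surjectivity are then read off from the adjunction in Theorem \ref{thm:double-cob-ad} and the explicit identity $w_{\widetilde{L}}(i_-^{-1}\mathscr{F}, m_{\widetilde{L}}\mathscr{F}) = \mathrm{Cone}(T_{-\epsilon}\mathscr{F} \to T_\epsilon\mathscr{F})$. The point is precisely that the negative end is the obstruction to a global doubling, and the input datum $\mathscr{F}_-$ is what replaces it there; your proposal bypasses this by positing an isotopy that does not exist.
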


    Following the convention of \cite{GKS,Gui}, sheaf quantization is a functor from Lagrangian submanifolds $L \subset T^*M$ with local systems to sheaves with singular support on the Legendrian lift, i.e.
    $$\Psi_L: \, Loc(L) \rightarrow Sh_{\widetilde{L}}(M \times \mathbb{R})_0.$$
    For conical Lagrangian cobordisms, we do not have a sheaf quantization functor in general. In fact, we need the prescribed data near the negative end $\Lambda_-$.

    Let $T_u: J^1(M \times \mathbb{R}_{>0}) \rightarrow J^1(M \times \mathbb{R}_{>0})$ be the Reeb flow.Guillermou and Jin-Treumann \cite{Gui,JinTreu} considered the sheaf quantization of closed exact Lagrangians or exact Lagrangians with positive conical ends. First they construct a doubling functor using a small Reeb push-off in a Weinstein tubular neighbourhood of the Legendrian lift
    $$Loc(L) \hookrightarrow Sh_{T_{-\epsilon}(\widetilde{L}) \cup T_\epsilon(\widetilde{L})}(M \times \mathbb{R})_0$$
    and then push one of the copies to infinity through a Legendrian isotopy and get
    $$Loc(L) \rightarrow Sh_{\widetilde{L}}(M \times \mathbb{R})_0.$$

    However, for general noncompact Lagrangians, without control on the Weinstein tubular neighbourhood, one may not get a uniform Reeb push-off $T_{-\epsilon}(\widetilde{L}) \cup T_\epsilon(\widetilde{L})$ for some fixed time $\epsilon > 0$, and then fail to connect the small Reeb push-off with some large Reeb push-off. Even though we could still find a Legendrian isotopy between small push-offs and large push-offs, the norm of the candidate Hamiltonian vector field may be unbounded and we may no longer get a Hamiltonian vector field. For Lagrangian cobordisms, one can easily see that the negative end is exactly where the radius of the Weinstein tubular neighbourhood loses control.

    Therefore, our strategy is to construct the doubling separately near the negative end and away from the negative end. Near the negative end, using the sheaf singularly supported on a single copy of the Legendrian, one can immediately define a sheaf supported on a double copy of the Legendrian by hand, while away from the negative end, we have good control on the radius of the Weinstein tubular neighbourhood theorem and the original doubling construction works. Then we show that one can push off one of the copies to infinity.

    We set up the general theory of the doubling construction and sheaf quantization for noncompact Legendrians in the course of the proof, which we believe should apply to other interesting noncompact Legendrians as well.

\subsection{Preliminary in microlocal sheaf theory}
    Sheaves on manifolds are systematically studied by Kashiwara-Schapira \cite{KS}. They introduced the notion of singular supports and investigated the microlocal behaviour of sheaves on manifolds.

\begin{definition}
    Let $\mathscr{F} \in Sh(M)$ be a sheaf on $M$ over the dg-derived category of the base ring $\Bbbk$. Then the singular support $SS(\mathscr{F}) \subseteq T^*M$ (resp.~$SS^\infty(\mathscr{F}) \subseteq T^{*,\infty}M$) is the closure of points $(x, \xi) \in T^*M$ (resp.~$(x, \xi) \in T^{*,\infty}M$) such that there exists $\varphi \in C^1(M)$, $\varphi(x) = 0, d\varphi(x) = \xi$ and
    $$\Gamma_{\varphi^{-1}((-\infty, 0])}(\mathscr{F})_x \neq 0.$$
    For any conical subset $\widehat \Lambda \subseteq T^*M$ (resp.~any subset $\Lambda \subseteq T^{*,\infty}M$), $Sh_{\hat{\Lambda}}(M)$ (resp.~$Sh_\Lambda(M)$) is the full subcategory of sheaves such that $SS(\mathscr{F}) \subseteq \widehat\Lambda$ (resp.~$SS^\infty(\mathscr{F}) \subseteq \Lambda$).
\end{definition}

    When $\widehat \Lambda \subset T^*M$ is subanalytic Lagrangian (resp.~$\Lambda \subset T^{*,\infty}M$ is subanalytic Legendrian), $Sh_{\hat{\Lambda}}(M)$ (resp.~$Sh_\Lambda(M)$) are compactly generated dg categories \cite[Corollary 4.21]{GPS3}. Using the fact that restrictions $Sh_\Lambda(M) \rightarrow Sh_{\Lambda \cap T^*U}(U)$ and inclusions $Sh_\Lambda(M) \hookrightarrow Sh_{\Lambda'}(M)$ preserve limits and colimits, we can construct left and right adjoints and restrict the left adjoints to compact objects.

\begin{remark}\label{rem:cpt-prop}
    For $\Lambda \subseteq \Lambda'$, the tautological inclusion $\iota_{\Lambda \Lambda'}: Sh_\Lambda(M) \hookrightarrow Sh_{\Lambda'}(M)$ preserves limits and colimits and hence admits left and right adjoints $\iota_{\Lambda \Lambda'}^*, \iota_{\Lambda \Lambda'}^!$, where the left adjoint $\iota_{\Lambda \Lambda'}^*$ preserves compact objects \cite[Lemma 4.12]{GPS3} (see \cite{Kuo} for the geometric construction of the adjoint functors). When $\Lambda' = T^{*,\infty}M$, we often just omit them in the subscripts.

    For $U \subseteq M$, the restriction functor $i_U^{-1}: Sh_\Lambda(M) \rightarrow Sh_{\Lambda \cap T^*U}(U)$ preserves limits and colimits and hence admits left and right adjoints, where the left adjoint preserves compact objects. In fact, since $i_U^{-1} = i_U^{-1} \circ \iota_\Lambda$ where $\iota_\Lambda: Sh_\Lambda(M) \hookrightarrow Sh(M)$ is the inclusion, the left adjoint is $\iota_\Lambda^* \circ i_{U !}$, the composition of the left adjoint (between categories of all sheaves) $i_{U !}: Sh(U) \rightarrow Sh(M)$ and the left adjoint of the tautological inclusion $\iota_\Lambda^*: Sh(M) \rightarrow Sh_\Lambda(M)$.
\end{remark}

    The celebrated theorem of Guillermou-Kashiwara-Schapira showed that the category of sheaves is invariant under contact Hamiltonian isotopies and the action of contact Hamiltonian isotopies preserve singular supports.

\begin{definition}\label{lagmovie}
    Let $\widehat H_s: T^*M \times I \rightarrow \mathbb{R}$ be a homogeneous Hamiltonian on $T^*M$, and $H_s = \widehat H_s|_{T^{*,\infty}M}$ the corresponding contact Hamiltonian on $T^{*,\infty}M$. For a conical Lagrangian $\widehat\Lambda$, the Lagrangian movie of $\widehat\Lambda$ under the Hamiltonian isotopy $\varphi_{\widehat H}^s\,(s \in I)$ is
    $$\widehat\Lambda_{\widehat H} = \{(x, \xi, s, \sigma) | (x, \xi) = \varphi_{\widehat H}^s(x_0, \xi_0), \sigma = -\widehat H_s \circ \varphi_{\widehat H}^s(x_0, \xi_0), (x_0, \xi_0) \in \widehat \Lambda\} \subset T^*(M \times I).$$
    For a Legendrian $\Lambda$, the Legendrian movie of $\Lambda$ under the corresponding contact Hamiltonian isotopy is $\Lambda_H = \widehat\Lambda_{\widehat H} \cap T^{*,\infty}M$.
\end{definition}
\begin{theorem}[Guillermou-Kashiwara-Schapira \cite{GKS}*{Proposition 3.12}]\label{GKS}
    Let $H_s: T^{*,\infty}M \times I \rightarrow \mathbb{R}$ be a contact Hamiltonian on $T^{*,\infty}M$ and $\Lambda$ a Legendrian in $T^{*,\infty}M$. Then there are equivalences
    $$Sh_{\Lambda}(M) \xleftarrow{\sim} Sh_{\Lambda_{H}}(M \times I) \xrightarrow{\sim} Sh_{\varphi_{H}^1(\Lambda)}(M),$$
    given by restriction functors $i_0^{-1}$ and $i_1^{-1}$ where $i_s: M \times \{s\} \hookrightarrow M \times I$ is the inclusion. We denote their inverses by $\Psi_H^0$ and $\Psi_H^1$, and $\Psi_H = i_1^{-1} \circ \Psi_H^0$.
\end{theorem}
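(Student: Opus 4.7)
The plan is to prove both restriction equivalences at once by producing a sheaf-theoretic integral kernel on $M \times M \times I$ that quantizes the contact isotopy. The inverses $\Psi_H^0$ and $\Psi_H^1$ will be defined as convolution with this kernel, and the restriction functors $i_0^{-1}, i_1^{-1}$ will turn out to be their two-sided inverses.

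First I would record the non-characteristic observation on which the whole argument rests. The conormal of the slice $M \times \{s\} \subset M \times I$ consists of covectors whose $M$-component vanishes, whereas every covector in the Lagrangian cone over $\Lambda_H$ has non-zero $M$-component (it is the image under $\varphi_{\wh H}^s$ of a covector in the cone over $\Lambda$, and the homogeneous Hamiltonian flow preserves the complement of the zero section). Hence each inclusion $i_s \colon M \hookrightarrow M \times I$ is non-characteristic for every $\SF \in Sh_{\Lambda_H}(M \times I)$, and the Kashiwara--Schapira non-characteristic pullback estimate gives $SS^\infty(i_s^{-1}\SF) \subseteq \Lambda_{H,s} = \varphi_H^s(\Lambda)$, which makes the two restriction functors in the statement well-defined with the claimed target categories.

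The main step is to construct a kernel $K \in Sh(M \times M \times I)$ whose singular support is contained in the conic Lagrangian graph of the lifted homogeneous flow $\varphi_{\wh H}^s$ inside $T^*(M \times M \times I)$ and that satisfies the initial condition $K|_{s=0} \simeq \Bbbk_{\Delta_M}$. To build $K$, I would first reduce, via microlocal cutoffs supported away from a conic neighbourhood of $\Lambda$, to the case in which $\wh H$ is compactly supported; then cover $I$ by short subintervals on which the graph of $\varphi_{\wh H}^s$ is close enough to the diagonal that it can be straightened by a local contact transformation, and define $K$ locally as the canonical sheaf attached to the straightened graph. These local pieces glue because the sheaf-theoretic initial value problem with prescribed microsupport and prescribed initial datum has at most one solution, which follows from the Kashiwara--Schapira non-characteristic deformation lemma together with involutivity of the microsupport.

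Given the kernel, define $\Psi_H^0(\SF) := Rp_{13!}(K \otimes p_{12}^{-1}\SF)$ where $p_{12}, p_{13}$ are the obvious projections from $M \times M \times I$. The microsupport estimate for composition of kernels forces $SS^\infty(\Psi_H^0 \SF) \subseteq \Lambda_H$; base change together with $K|_{s=0} \simeq \Bbbk_{\Delta_M}$ yields $i_0^{-1} \Psi_H^0 \simeq \mathrm{id}$; and applying the same construction to the time-reversed Hamiltonian produces a convolution inverse to $K$, hence a two-sided inverse to $i_0^{-1}$. Restricting at $s=1$ instead of $s=0$ shows that $i_1^{-1}$ is also an equivalence, and $\Psi_H = i_1^{-1} \circ \Psi_H^0$ realizes the composed functor in the statement. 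The main obstacle is the kernel construction itself: exhibiting a globally defined $K$ with the prescribed microsupport on all of $M \times M \times I$ requires patching local models along the isotopy while keeping microsupport under control, and this patching -- resting on uniqueness via non-characteristic deformation and involutivity -- is the true technical core of the GKS theorem; once it is done, essential surjectivity and full faithfulness of $i_0^{-1}$ and $i_1^{-1}$ are formal consequences of the convolution formalism.
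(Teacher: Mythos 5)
The paper does not reprove this statement; it cites it directly as Proposition 3.12 of Guillermou--Kashiwara--Schapira. Your proposal correctly reproduces the strategy of the original GKS proof — the non-characteristic slice observation, construction of the kernel $K$ on $M \times M \times I$ with microsupport on the Lagrangian graph of the lifted flow and initial condition $\Bbbk_{\Delta_M}$, the local-to-global patching via microlocal cutoffs and uniqueness of the quantization, and the convolution/base-change argument producing two-sided inverses via the time-reversed isotopy — so it takes essentially the same route as the cited source.
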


\subsection{Structure of Kashiwara-Schapira stack}
    We review the definition and properties of microlocalization and the sheaf of categories $\mu Sh$, which has been introduced and studied in \cite{KS}*{Section 6}, \cite{Gui}*{Section 6} or \cite{NadWrapped}*{Section 3.4}. This is a category that we will frequently use. Here we follow the definition in \cite{NadShen}*{Section 5}.

\begin{definition}\label{musheaf}
    Let $\widehat\Lambda \subseteq T^*M$ be a conical subset. Then define a presheaf of dg categories on $T^*M$ supported on $\widehat \Lambda$ to be
    $$\mu Sh^{\text{pre}}_{\widehat\Lambda}: \, \widehat \Omega \mapsto Sh_{\widehat \Lambda\, \cup \,T^*M \backslash \widehat \Omega}(M)/Sh_{T^*M \backslash \widehat \Omega}(M).$$
    The sheafification of the presheaf $\mu Sh^{\text{pre}}_{\widehat\Lambda}$ is $\mu Sh_{\widehat\Lambda}$. In particular, we write $\mu Sh = \mu Sh_{T^*M}$ for the sheaf of dg categories on $T^*M$.

    Let $Sh_{(\widehat\Lambda)}(M)$ be the subcategory of sheaves $\mathscr{F}$ such that there exists some neighbourhood $\widehat \Omega$ of $\widehat\Lambda$ satisfying $SS(\mathscr{F}) \cap \widehat \Omega \subset \widehat\Lambda$. For $\mathscr{F, G} \in Sh_{(\widehat\Lambda)}(M)$, let the sheaf of homomorphisms in the sheaf of categories $\mu Sh_{\widehat\Lambda}$ be
    $$\mu hom(\mathscr{F}, \mathscr{G})|_{\widehat\Lambda}: \, \widehat\Omega \mapsto Hom_{\mu Sh_{\widehat\Lambda}(\widehat \Omega)}(\mathscr{F, G}).$$
    Write $\mu hom(\mathscr{F}, \mathscr{G})$ to be the sheaf of homomorphisms in $\mu Sh$.

    Let $\Lambda \subset T^{*,\infty}M$ be a subset where $T^{*,\infty}M$ is identified with the unit cotangent bundle. Then $\mu Sh_\Lambda$ is defined by $\mu Sh_\Lambda = \mu Sh_{\Lambda \times \mathbb{R}_{>0}}|_\Lambda$.
\end{definition}
\begin{remark}
    We define the sheafification in the (large) category of dg categories whose morphisms are exact functors. When $\widehat\Lambda$ is a conical subanalytic Lagrangian, the sheafification takes value in the (large) category of presentable dg categories whose morphisms are colimit preserving functors \cite{NadShen}*{Remark 6.1}.
\end{remark}

    Denote by $m_\Lambda$ the natural quotient functor on the sheaf of categories, which, on the level of global sections, induces
    $$m_\Lambda: \, Sh_\Lambda(M) \rightarrow \mu Sh_\Lambda(\Lambda).$$
    We call $m_\Lambda$ the microlocalization functor. Like the restriction functors on sheaf categories, it preserves limits and colimits and hence admits a left adjoint preserving compact objects under certain assumptions.

\begin{remark}\label{rem:micro-cpt-prop}
    When $\Lambda \subset T^{*,\infty}M$ is subanalytic Legendrian, $Sh_\Lambda(M)$ and $\mu Sh_\Lambda(\Lambda)$ are compactly generated dg categories \cite[Corollary 7.19]{GPS3}. The microlocalization $m_\Lambda$ preserves limits and colimits, and hence admits left and right adjoints $m_\Lambda^*, m_\Lambda^!$, where $m_\Lambda^*$ preserves compact objects.
\end{remark}

    The following lemma immediately follows from the isomorphism $\Gamma(T^*M, \mu hom(\mathscr{F, G})) \simeq Hom(\mathscr{F}, \mathscr{G})$ \cite[Equation (4.3.1)]{KS} and the estimation $\mathrm{supp}(\mu hom(\mathscr{F, G})) \subseteq SS(\mathscr{F}) \cap SS(\mathscr{G})$ \cite{KS}*{Corollary 5.4.10}.

\begin{lemma}[\cite{NadWrapped}*{Remark 3.18}]\label{sheafmusheaf}
    Let $\widehat\Lambda \subset T^*M$ be a conical subanalytic Lagrangian. Then
    $$Sh_{M \cup \widehat\Lambda}(M) \xrightarrow{\sim} \mu Sh_{M \cup \widehat\Lambda}(M \cup \widehat\Lambda).$$
\end{lemma}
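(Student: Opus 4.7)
The plan is to show that the natural microlocalization functor $m\colon Sh_{M \cup \widehat\Lambda}(M) \to \mu Sh_{M \cup \widehat\Lambda}(M \cup \widehat\Lambda)$ is an equivalence of dg categories by checking fully faithfulness and essential surjectivity, using exactly the two facts recalled just above the lemma.

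First I would establish fully faithfulness. Fix $\mathscr{F}, \mathscr{G} \in Sh_{M \cup \widehat\Lambda}(M)$. By the definition of $\mu hom$ as computing Homs in $\mu Sh$ and the sheaf-of-categories property on $T^*M$,
$$Hom_{\mu Sh_{M \cup \widehat\Lambda}(M \cup \widehat\Lambda)}(\mathscr{F}, \mathscr{G}) \simeq \Gamma(M \cup \widehat\Lambda, \mu hom(\mathscr{F}, \mathscr{G})|_{M \cup \widehat\Lambda}).$$
By the second displayed fact, $\mathrm{supp}(\mu hom(\mathscr{F}, \mathscr{G})) \subseteq SS(\mathscr{F}) \cap SS(\mathscr{G}) \subseteq M \cup \widehat\Lambda$, so the restriction to $M \cup \widehat\Lambda$ captures all the sections, i.e.~the right-hand side coincides with $\Gamma(T^*M, \mu hom(\mathscr{F}, \mathscr{G}))$. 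By the first displayed fact this equals $Hom_{Sh(M)}(\mathscr{F}, \mathscr{G}) = Hom_{Sh_{M \cup \widehat\Lambda}(M)}(\mathscr{F}, \mathscr{G})$, which gives fully faithfulness.

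Next I would verify essential surjectivity. The key observation is that $M \cup \widehat\Lambda$ contains the zero section $M$. Hence for any open neighbourhood $\widehat\Omega \supseteq M \cup \widehat\Lambda$, any sheaf on $M$ with singular support entirely in $T^*M \setminus \widehat\Omega$ would have singular support disjoint from the zero section and must therefore vanish. This kills the denominator in
$$\mu Sh^{\text{pre}}_{M \cup \widehat\Lambda}(\widehat\Omega) = Sh_{(M \cup \widehat\Lambda) \cup (T^*M \setminus \widehat\Omega)}(M)/Sh_{T^*M \setminus \widehat\Omega}(M),$$
so each presheaf value is simply $Sh_{(M \cup \widehat\Lambda) \cup (T^*M \setminus \widehat\Omega)}(M)$. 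Passing to the limit over $\widehat\Omega$ shrinking to $M \cup \widehat\Lambda$, the singular support condition localizes to $\bigcap_{\widehat\Omega} [(M \cup \widehat\Lambda) \cup (T^*M \setminus \widehat\Omega)] = M \cup \widehat\Lambda$, so every object is represented by some $\mathscr{F}_0 \in Sh_{M \cup \widehat\Lambda}(M)$.

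The main point requiring care is checking that the sheafification of $\mu Sh^{\text{pre}}$ does not introduce further objects beyond this limit. This is precisely guaranteed by the fully faithfulness step: the Hom spaces in the presheaf are already computed by $\mu hom$, so the presheaf is already a sheaf on neighbourhoods of $M \cup \widehat\Lambda$, and the identification of global sections over $M \cup \widehat\Lambda$ with $Sh_{M \cup \widehat\Lambda}(M)$ is forced. The expected obstacle—a priori microlocal data along $\widehat\Lambda$ with no global sheaf lift—is ruled out exactly because $M \cup \widehat\Lambda$ retains the zero section, so nothing is lost in passing from sheaves to microsheaves.
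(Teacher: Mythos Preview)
Your proof is correct and follows exactly the approach the paper indicates: the paper's entire justification is the sentence preceding the lemma, citing $\Gamma(T^*M,\mu hom(\mathscr{F},\mathscr{G}))\simeq Hom(\mathscr{F},\mathscr{G})$ and $\mathrm{supp}(\mu hom(\mathscr{F},\mathscr{G}))\subseteq SS(\mathscr{F})\cap SS(\mathscr{G})$, which is precisely your fully faithfulness argument. You go slightly further than the paper by spelling out essential surjectivity via the observation that $M\subseteq M\cup\widehat\Lambda$ forces $Sh_{T^*M\setminus\widehat\Omega}(M)=0$; this is the right point, and the paper simply leaves it implicit (deferring to \cite{NadWrapped}).
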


    Guillermou \cite{Gui} proved that the structure of the Kashiwara-Schapira stack for a smooth Legendrian can be trivialized once certain characteristic classes vanish.

\begin{theorem}[\cite{Gui}*{Proposition 6.6 \& Lemma 6.7}, \cite{NadShen}*{Corollary 5.4}]\label{microstalk}
    Let $\Lambda \subset T^{*,\infty}M$ be a subanalytic Legendrian. For any smooth point $p = (x, \xi) \in \Lambda \subset T^{*,\infty}M$, the stalk $\mu Sh_{\Lambda,p} \simeq \mathrm{Mod}(\Bbbk)$.
\end{theorem}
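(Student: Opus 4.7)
The plan is to use the contact invariance of the Kashiwara--Schapira stack to reduce to a local normal form, and then compute the resulting stalk directly.

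First I would appeal to the contact Darboux theorem at the smooth point $p = (x,\xi) \in \Lambda$: since $\Lambda$ is smooth near $p$, there is a local contactomorphism $\psi$ from a conic neighborhood of the ray through $p$ in $T^*M$ to a conic neighborhood of a point $p_0 \in T^{*,\infty}\bR^n$ identifying $(\Lambda, p)$ with $(\Lambda_0, p_0)$, where $\Lambda_0 \subset T^{*,\infty}\bR^n$ is the standard positive half-conormal of the hyperplane $\{x_1 = 0\}$ and $p_0$ is the ray through $dx_1$ at the origin. Extending $\psi$ to a compactly supported contact isotopy and applying Theorem~\ref{GKS} produces equivalences of sheaf categories compatible with the subcategories and quotients in Definition~\ref{musheaf}, and hence an equivalence of stalks $\mu Sh_{\Lambda,p}\simeq \mu Sh_{\Lambda_0,p_0}$.

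For the standard model, I would work in a small contractible conic neighborhood $\widehat\Omega \subset T^*\bR^n$ of $p_0$, chosen so that $\widehat\Omega \cap \Lambda_0$ is a single smooth sheet, and compute the presheaf $\mu Sh^{\text{pre}}_{\Lambda_0}(\widehat\Omega)$. The key claim is that, up to noncharacteristic propagation (Kashiwara--Schapira) in the $n-1$ tangential directions, every sheaf in $Sh_{\Lambda_0 \cup (T^*\bR^n \setminus \widehat\Omega)}(\bR^n)$ is built from the half-space sheaf $\Bbbk_{\{x_1 \leq 0\}}$ together with objects of $Sh_{T^*\bR^n \setminus \widehat\Omega}(\bR^n)$; this reduces the problem to the one-dimensional classification of constructible sheaves on $\bR$ constant off the origin, where $\Bbbk_{(-\infty, 0]}$ and $\Bbbk_\bR$ generate. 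A direct computation of $\mu hom(\Bbbk_{\{x_1 \leq 0\}}, \Bbbk_{\{x_1 \leq 0\}})$ at $p_0$, using its definition as the microlocalization of the internal $\mathscr{H}om$ sheaf, yields $\Bbbk$ concentrated in degree zero. Since a compactly generated dg category with one compact generator of endomorphism ring $\Bbbk$ is Morita equivalent to $\mathrm{Mod}(\Bbbk)$, passing to the colimit over shrinking $\widehat\Omega$ gives $\mu Sh_{\Lambda_0, p_0} \simeq \mathrm{Mod}(\Bbbk)$.

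The main obstacle is making the generation claim precise: that after quotienting by sheaves microsupported off $\Lambda_0$, every object is in the triangulated closure of the image of $\Bbbk_{\{x_1 \leq 0\}}$. This requires the microlocal cutoff lemma of Kashiwara--Schapira to trivialize the tangential directions and descend to the one-dimensional case, together with verifying that the sheafification in Definition~\ref{musheaf} does not alter the stalk at a smooth point -- which holds because on a conic neighborhood of a smooth Legendrian point, any finer open cover can be refined to a trivial one, so $\mu Sh^{\text{pre}}_{\Lambda_0}$ is already a sheaf and its stalk agrees with that of $\mu Sh_{\Lambda_0}$.
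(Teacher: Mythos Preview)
The paper does not provide its own proof of this theorem: it is stated with attribution to \cite{Gui}*{Proposition 6.6 \& Lemma 6.7} and \cite{NadShen}*{Corollary 5.4}, and no argument is given in the body of the paper. So there is no ``paper's proof'' to compare against.

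Your proposal is essentially the standard argument from those references and is sound. The reduction via a local contact transformation to the conormal model $\Lambda_0 = N^{*,+}\{x_1=0\}$ is exactly how Kashiwara--Schapira and Guillermou proceed (via quantized contact transformations, \cite{KS}*{Chapter 7}); you phrase it through GKS-type isotopy invariance, which is an equivalent route. The computation in the model --- that $\Bbbk_{\{x_1\le 0\}}$ generates the quotient with endomorphism ring $\Bbbk$ --- is the content of the microlocal cutoff lemma combined with the elementary classification of constructible sheaves on $\bR$ with a single jump.

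One small simplification: your final justification that sheafification does not alter the stalk is overcomplicated. For any presheaf (of sets, spaces, or presentable $\infty$-categories), the stalk of the sheafification agrees with the stalk of the presheaf, since both are the same filtered colimit over neighborhoods of $p$. You do not need to argue that $\mu Sh^{\text{pre}}_{\Lambda_0}$ is already a sheaf near $p_0$; it suffices that its sections over a cofinal system of neighborhoods are all equivalent to $\mathrm{Mod}(\Bbbk)$ with the restriction maps being equivalences, which is what your model computation shows.
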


\begin{theorem}[Guillermou \cite{Gui}*{Theorem 11.5}]\label{Gui}
    Let $\Lambda \subset T^{*,\infty}M$ be a smooth Legendrian submanifold. Suppose the Maslov class $\mu(\Lambda) = 0$ and $\Lambda$ is relative spin, then as sheaves of categories
    $$\mu Sh_\Lambda \xrightarrow{\sim} Loc_\Lambda.$$
\end{theorem}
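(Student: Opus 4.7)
The plan is to prove the equivalence by first establishing it locally on small open subsets of $\Lambda$ via Guillermou-Kashiwara-Schapira, and then showing that the two hypotheses $\mu(\Lambda)=0$ and $\Lambda$ relatively spin are precisely what is needed to assemble the local equivalences into a global map of sheaves of categories. The intuition is that $\mu Sh_\Lambda$ is a ``gerbe'' over $\Lambda$ banded by $\mathrm{Mod}(\Bbbk)$, and the classes measuring the twisting split into a degree-$1$ and a degree-$2$ piece matching the Maslov class and relative $w_2$.

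\emph{Local trivialization.} Since $\Lambda$ is smooth, any point $p \in \Lambda$ has a contact Darboux neighbourhood in which, after a compactly supported contact Hamiltonian isotopy, $\Lambda$ can be identified with (a piece of) the conormal $T^{*,\infty}_N M$ of a smooth submanifold $N \subset M$. Theorem \ref{GKS} then gives an equivalence $\mu Sh_\Lambda|_U \simeq \mu Sh_{T^{*,\infty}_N M}|_{U'}$, and for the standard conormal Lemma \ref{sheafmusheaf} together with the computation that sheaves microsupported in $T^{*,\infty}_N M$ are locally constant along $N$ identifies $\mu Sh_{T^{*,\infty}_N M}|_N \simeq Loc_N$. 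Combined with Theorem \ref{microstalk}, which shows that every stalk of $\mu Sh_\Lambda$ is $\mathrm{Mod}(\Bbbk)$, this exhibits $\mu Sh_\Lambda$ as a locally trivial sheaf of dg categories with fiber $\mathrm{Mod}(\Bbbk) \simeq Loc_{\mathrm{pt}}$.

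\emph{Obstruction analysis and gluing.} Such a locally trivial sheaf of categories is classified by cohomology of $\Lambda$ with values in the autoequivalences of $\mathrm{Mod}(\Bbbk)$. The connected component of these autoequivalences relevant here splits into (i) shift functors, contributing a $\mathbb{Z}$-valued cocycle and hence a class in $H^1(\Lambda;\mathbb{Z})$, and (ii) sign/orientation-line twists, contributing a class in $H^2(\Lambda;\mathbb{Z}/2)$. Following Guillermou, I would identify these cocycles geometrically: comparing two local Darboux straightenings over an overlap rotates Lagrangian tangent planes along loops in $\Lambda$ and recovers $\mu(\Lambda)$ for the $H^1$-part, while the ambiguity in choosing coherent orientation/spin lines recovers the relative second Stiefel-Whitney class for the $H^2$-part. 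Under the hypotheses $\mu(\Lambda)=0$ and $\Lambda$ relatively spin, both classes vanish, a compatible choice of Maslov data exists, and the local equivalences glue to a global equivalence $\mu Sh_\Lambda \xrightarrow{\sim} Loc_\Lambda$.

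The hard part is the identification of the two cohomological obstructions with $\mu(\Lambda)$ and the relative $w_2(\Lambda)$. One must carefully compare, over overlaps of Darboux charts, the autoequivalences of $\mathrm{Mod}(\Bbbk)$ induced by Theorem \ref{GKS} with the geometric cocycles that compute the Maslov and Stiefel-Whitney classes; this requires keeping precise track of grading and sign data as a Legendrian is transported through Hamiltonian flows. The local trivialization and the abstract classification of gerbes are essentially formal once Theorems \ref{GKS} and \ref{microstalk} are available, so the entire substance of the argument lies in this characteristic-class matching, which is the content of Guillermou's original proof.
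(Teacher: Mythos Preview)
The paper does not give its own proof of this theorem: it is stated as a quotation from Guillermou \cite{Gui}*{Theorem 11.5} and used as a black box. So there is no ``paper's proof'' to compare against.

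Your sketch is a faithful outline of Guillermou's original argument: local trivialization via contact Darboux charts and the Guillermou--Kashiwara--Schapira equivalence, then identification of the gluing obstructions for the resulting $\mathrm{Mod}(\Bbbk)$-gerbe with the Maslov class in $H^1(\Lambda;\mathbb{Z})$ and the relative second Stiefel--Whitney class in $H^2(\Lambda;\mathbb{Z}/2)$. You are also correct that the substantive content lies entirely in the characteristic-class matching, which you do not carry out; what you have written is a roadmap rather than a proof. One small caution: the classification of locally trivial sheaves of dg categories with fiber $\mathrm{Mod}(\Bbbk)$ is by the full space of autoequivalences, and asserting that the relevant piece ``splits'' into shifts and sign twists already presupposes some of the computation (in Guillermou's treatment this is done by working with the Maslov sheaf and orientation data directly rather than by an abstract gerbe classification). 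But as a high-level summary of the strategy, your proposal is accurate.
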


    Similar to the invariance of the sheaf category under Hamiltonian isotopies, the category of microlocal sheaves (Kashiwara-Schapira stack) is also invariant under contact Hamiltonian isotopies.

\begin{theorem}[Kashiwara-Schapira \cite{KS}*{Theorem 7.2.1}, Nadler-Shende \cite{NadShen}*{Lemma 5.6}, \cite{LiCobordism}*{Theorem 2.10}]\label{cont-trans}
    Let $H_s: T^{*,\infty}M \times I \rightarrow \mathbb{R}$ be a contact Hamiltonian on $T^{*,\infty}M$ and $\Lambda$ a Legendrian in $T^{*,\infty}M$. Then there are equivalences
    $$\mu Sh_\Lambda(\Lambda) \xleftarrow{\sim} \mu Sh_{\Lambda_H}(\Lambda_H) \xrightarrow{\sim} \mu Sh_{\varphi_H^1(\Lambda)}(\varphi_H^1(\Lambda))$$
    given by restriction functors $i_0^{-1}$ and $i_1^{-1}$ where $i_s: M \times \{s\} \hookrightarrow M \times I$ is the inclusion. We denote their inverses by $\Psi_H^0$ and $\Psi_H^1$, and $\Psi_H = i_1^{-1} \circ \Psi_H^0$.
\end{theorem}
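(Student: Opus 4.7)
The plan is to deduce the result from Theorem \ref{GKS} by working level by level with the presentation of $\mu Sh$ as a sheafification of quotient presheaves given in Definition \ref{musheaf}. Setting $\widehat\Lambda = \Lambda \times \mathbb{R}_{>0}$ and working throughout with conical Lagrangians in $T^*M$, it suffices to construct natural equivalences of the presheaves $\mu Sh^{\text{pre}}_{\widehat\Lambda}$, $\mu Sh^{\text{pre}}_{\widehat\Lambda_H}$, and $\mu Sh^{\text{pre}}_{\varphi_H^1(\widehat\Lambda)}$, compatible with the Hamiltonian flow on the base, and then take global sections after sheafifying and restricting back to $T^{*,\infty}M$.

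First I would verify that Theorem \ref{GKS} in fact applies to any closed conical subset $\widehat A \subset T^*M$, not only to smooth Legendrians: the GKS equivalences are realized by convolution with a single integral-kernel sheaf on $M \times (M \times I)$ associated to the graph of the Hamiltonian isotopy, and the argument only uses the singular-support transformation law, so smoothness is not required. Applying this extended GKS equivalence simultaneously to the pair of closed conical subsets $\widehat\Lambda \cup (T^*M \setminus \widehat\Omega)$ and $T^*M \setminus \widehat\Omega$, for each conical open $\widehat\Omega \subset T^*M$ with open Hamiltonian movie $\widehat\Omega_H \subset T^*(M\times I)$, gives compatible equivalences
\[
Sh_{\widehat\Lambda \cup T^*M \setminus \widehat\Omega}(M) \xleftarrow{\sim} Sh_{\widehat\Lambda_H \cup T^*(M \times I) \setminus \widehat\Omega_H}(M \times I) \xrightarrow{\sim} Sh_{\varphi_H^1(\widehat\Lambda) \cup T^*M \setminus \varphi_H^1(\widehat\Omega)}(M),
\]
and the analogous diagram with $\widehat\Lambda$ removed. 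Since both equivalences are induced by the same integral kernel and respect the inclusion between the two lines, they descend to equivalences of the Verdier quotients
\[
\mu Sh^{\text{pre}}_{\widehat\Lambda}(\widehat\Omega) \simeq \mu Sh^{\text{pre}}_{\widehat\Lambda_H}(\widehat\Omega_H) \simeq \mu Sh^{\text{pre}}_{\varphi_H^1(\widehat\Lambda)}(\varphi_H^1(\widehat\Omega)).
\]

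Next I would upgrade these pointwise equivalences to equivalences of presheaves, and then of sheaves, of dg categories. Naturality under an inclusion $\widehat\Omega' \subseteq \widehat\Omega$ follows because the GKS equivalence is governed by a fixed integral kernel whose restrictions to open subsets of $M$ induce the corresponding restricted equivalences; the same kernel therefore produces a natural transformation between the presheaves in question. Sheafifying yields equivalences of sheaves of dg categories on $T^*M$ and $T^*(M \times I)$ covering the Hamiltonian flow on the base; restricting to $\Lambda, \Lambda_H, \varphi_H^1(\Lambda)$ and taking global sections gives the stated equivalences. Tracing through the construction identifies the induced functors with the microlocalizations of the sheaf-level restrictions $i_0^{-1}$ and $i_1^{-1}$ of Theorem \ref{GKS}, as required.

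The step I expect to be the main obstacle is the descent through sheafification: this is a delicate operation on presheaves of (large) dg categories, and one has to ensure that all the higher coherence data needed to produce a morphism of sheaves of dg categories is present, not merely pointwise equivalences. This is precisely the difficulty addressed in Kashiwara-Schapira \cite{KS}*{Theorem 7.2.1} and Nadler-Shende \cite{NadShen}*{Lemma 5.6} in closely related settings, and I would follow their formal framework rather than attempt to bypass it; once the naturality of the integral-kernel convolutions is set up correctly, no genuinely new difficulty should arise.
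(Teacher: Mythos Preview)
The paper does not give its own proof of this statement: it is recorded as a citation to Kashiwara--Schapira, Nadler--Shende, and the author's earlier paper \cite{LiCobordism}*{Theorem 2.10}, with no argument supplied here. Your proposed strategy---apply the GKS kernel equivalence simultaneously to the numerator and denominator of each quotient $Sh_{\widehat\Lambda\cup T^*M\setminus\widehat\Omega}(M)/Sh_{T^*M\setminus\widehat\Omega}(M)$, check naturality in $\widehat\Omega$, and then sheafify---is correct and is essentially the route taken in \cite{LiCobordism}*{Theorem 2.10}; indeed Remark \ref{GKSviaCont} immediately following the statement records exactly the commutative square between $Sh$ and $\mu Sh$ that your construction produces. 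Your caveat about higher coherence in the sheafification step is well placed, but as you say this is precisely what the cited references already handle, so nothing new is required.
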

\begin{remark}\label{GKSviaCont}
    From the proof in \cite[Theorem 2.10]{LiCobordism}, one may notice that there is a commutative diagram
    \[\xymatrix{
    Sh_{\Lambda_{H}}(M \times I) \ar[r]^{\hspace{7pt}i_s^{-1}} \ar[d] & Sh_{\varphi_{H}^s(\Lambda)}(M) \ar[d] \\
    \mu Sh_{\Lambda_H}(\Lambda_H) \ar[r]^{i_s^{-1}\hspace{12pt}} & \mu Sh_{\varphi_H^s(\Lambda)}(\varphi_H^s(\Lambda)).
    }\]
\end{remark}

\subsection{Doubling Functor for Noncompact Legendrians}\label{sec:noncpt}
    The doubling construction goes back to Guillermou \cite{Gui} (and Jin-Treumann \cite{JinTreu}), who defined an unconditional doubling functor for a compact exact Lagrangian (or an exact Lagrangian with positive conical ends)
    $$w_\Lambda: \, \mu Sh_\Lambda(\Lambda) \hookrightarrow Sh_{T_{-\epsilon}(\Lambda) \cup T_\epsilon(\Lambda)}(M \times \mathbb{R}),$$
    and is also formulated in a different way in Nadler-Shende \cite{NadShen}. Similar construction has also appeared in partially wrapped Fukaya categories \cite{SylvanOrlov,GPS2} and twisted generating families \cite{TwistGF}. This construction plays a crucial role in the proof of the sheaf quantization theorems.

    The goal of this section is to generalize the doubling construction to certain noncompact Legendrian submanifolds. Our main theorem is the following.

\begin{theorem}\label{thm:double-noncpt}
    Let $\Lambda \subset J^1(M)$ be any smooth Legendrian submanifold. If there exists a complete adapted metric on $J^1(M)$ such that $\Lambda$ admits a tubular neighbourhood of positive radius $\epsilon_0 > 0$, then for any $0 < \epsilon < \epsilon_0$, there exists a fully faithful functor
    $$w_\Lambda: \, \mu Sh_\Lambda(\Lambda) \hookrightarrow Sh_{T_{-\epsilon}(\Lambda) \cup T_\epsilon(\Lambda)}(M \times \mathbb{R})$$
    such that there is an exact triangle $T_{-\epsilon} \rightarrow T_\epsilon \rightarrow w_\Lambda \circ m_\Lambda$.
\end{theorem}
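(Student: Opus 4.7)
I would extend Guillermou's doubling construction for closed Legendrians \cite{Gui} to the present noncompact setting by exploiting the uniform geometric control afforded by the positive-radius tubular neighborhood. First, the Weinstein neighborhood theorem applied to $\Lambda \subset J^1(M)$ produces a contactomorphism between a neighborhood of $\Lambda$ in $J^1(M)$ and an $\epsilon_0$-neighborhood of the zero section $\Lambda_0 \subset J^1(\Lambda)$; in particular, for $\epsilon < \epsilon_0$ both Reeb push-offs $T_{\pm\epsilon}(\Lambda_0)$ lie in this model. Given $\mathcal{F} \in \mu Sh_\Lambda(\Lambda)$, use the trivialization $\mu Sh_\Lambda \simeq Loc_\Lambda$ of Theorem \ref{Gui} to present $\mathcal{F}$ as a local system $\mathcal{L}$ on $\Lambda$, and form, in the style of \cite{Gui,JinTreu}, the model doubled sheaf
$$w_0(\mathcal{F}) \,:=\, p^{-1}\mathcal{L} \otimes k_{\Lambda \times (-\epsilon,\,\epsilon]} \,\in\, Sh_{T_{-\epsilon}(\Lambda_0)\,\cup\, T_\epsilon(\Lambda_0)}(\Lambda \times \mathbb{R}),$$
where $p : \Lambda \times \mathbb{R} \to \Lambda$ is the projection. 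A direct microlocal calculation (along the lines of \cite{Gui}) gives $m_{\Lambda_0}(w_0(\mathcal{F})) \simeq \mathcal{F}$, and $w_0(\mathcal{F})$ fits into the tautological cofiber sequence
$$T_{-\epsilon}\bigl(p^{-1}\mathcal{L} \otimes k_{\Lambda \times (-\infty,\,0]}\bigr) \,\longrightarrow\, T_\epsilon\bigl(p^{-1}\mathcal{L} \otimes k_{\Lambda \times (-\infty,\,0]}\bigr) \,\longrightarrow\, w_0(\mathcal{F})$$
on $\Lambda \times \mathbb{R}$, from which the ambient triangle $T_{-\epsilon} \to T_\epsilon \to w_\Lambda \circ m_\Lambda$ will descend.

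Next, I would transport $w_0(\mathcal{F})$ from the Weinstein model back to $M \times \mathbb{R}$. The idea is to realize the Weinstein identification as the time-one flow of a contact Hamiltonian on $T^{*,\infty}(M \times \mathbb{R})$ whose trajectory carries the doubled Legendrian inside the model onto $T_{-\epsilon}(\Lambda) \cup T_\epsilon(\Lambda)$ in the ambient, and then apply the GKS equivalence (Theorem \ref{GKS}) to produce $w_\Lambda(\mathcal{F})$. Fully faithfulness reduces to the identity $m_\Lambda \circ w_\Lambda \simeq \mathrm{id}$, obtained from the stalk calculation in the model together with the compatibility between microlocalization and GKS transport recorded in Remark \ref{GKSviaCont}.

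The principal obstacle is guaranteeing that the contact Hamiltonian actually integrates to a globally defined flow on the noncompact contact manifold; for compact $\Lambda$ this is automatic and does not appear in \cite{Gui}, whereas here the naive construction may fail. This is exactly where both parts of the geometric hypothesis enter. \emph{Adaptedness} of the metric gives the pointwise identity $\|X_H\|_g^2 = \|dH\|_g^2 + |H|^2$, so that a Hamiltonian of uniformly bounded $C^1$-norm has a vector field of uniformly bounded $g$-norm; \emph{completeness} of the metric then upgrades such a bounded vector field to a globally defined flow; and the \emph{positive radius} $\epsilon_0$ of the tubular neighborhood makes it possible to construct such an $H$ by performing the isotopy fiberwise inside a Weinstein model of uniform thickness, with cutoffs whose derivatives are controlled by $\epsilon_0$. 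Once the flow exists globally, GKS applies verbatim and the remainder of the construction proceeds as in the compact case.
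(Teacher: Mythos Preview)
Your proposal has a genuine gap at the transport step. The model doubled sheaf $w_0(\mathcal{F}) = p^{-1}\mathcal{L} \otimes \Bbbk_{\Lambda \times (-\epsilon,\epsilon]}$ is a sheaf on $\Lambda \times \mathbb{R}$, but the target of $w_\Lambda$ is $Sh(M \times \mathbb{R})$. The Weinstein identification is a contactomorphism between an open subset of $J^1(M)$ and an open subset of $J^1(\Lambda)$; it is \emph{not} a contact isotopy of $T^{*,\infty}(M \times \mathbb{R})$, and it does not induce any map between sheaves on the two different base manifolds $\Lambda \times \mathbb{R}$ and $M \times \mathbb{R}$. GKS (Theorem~\ref{GKS}) only moves sheaves around on a fixed base under contact isotopies of its cosphere bundle; it cannot transfer your model object to $M \times \mathbb{R}$. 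So ``realize the Weinstein identification as the time-one flow of a contact Hamiltonian on $T^{*,\infty}(M \times \mathbb{R})$'' is not a meaningful step as written.

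The paper circumvents this by never leaving $M \times \mathbb{R}$. It first builds a doubling directly on $M \times \mathbb{R}$ with a \emph{non-uniform} push-off: for any $C^1$-small $\rho: M \times \mathbb{R} \to (0,\infty)$ one gets a fully faithful $w_{\Lambda,\rho}: \mu Sh_\Lambda(\Lambda) \hookrightarrow Sh_{T_{-\rho}(\Lambda)\cup T_\rho(\Lambda)}(M \times \mathbb{R})$ by restricting the parametrized local doubling to the graph of $\rho$ (Proposition~\ref{prop:double-nonuni}). Only then does the tubular-neighborhood hypothesis enter: inside $J^1_{<\epsilon_0}(\Lambda)$ one writes down a fiberwise Hamiltonian carrying $T_{\pm\rho}(\Lambda)$ to $T_{\pm\epsilon}(\Lambda)$, cuts it off in $J^1_{<\epsilon'}(\Lambda)$, and extends by zero to an honest compactly-supported-in-the-normal-direction Hamiltonian on $J^1(M)$. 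Your intuition about adaptedness/completeness controlling the flow is correct, but it is applied to this isotopy from $\rho$ to $\epsilon$ on $J^1(M)$, not to a putative transport from $J^1(\Lambda)$. A secondary issue: invoking Theorem~\ref{Gui} to identify $\mu Sh_\Lambda \simeq Loc_\Lambda$ imposes Maslov and relative spin hypotheses that Theorem~\ref{thm:double-noncpt} does not assume; the paper's construction works at the level of $\mu Sh_\Lambda$ without any such trivialization.
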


    Moreover, we get an adjunction property for the doubling functor.

\begin{theorem}\label{thm:double-noncpt-ad}
    Let $\Lambda \subset J^1(M)$ be any smooth Legendrian submanifold. If there exists a complete adapted metric on $J^1(M)$ such that $\Lambda$ admits a tubular neighbourhood of positive radius $\epsilon_0 > 0$, then the doubling functor can be identified with the left adjoint of microlocalization
    $$m_\Lambda^* = \iota_\Lambda^* \circ w_\Lambda[-1]: \, \mu Sh_\Lambda(\Lambda) \rightarrow Sh_{\Lambda}(M \times \mathbb{R}),$$
    where $\iota_\Lambda^*$ is the left adjoint to the inclusion $\iota_\Lambda: Sh_\Lambda(M \times \bR) \hookrightarrow Sh(M \times \bR)$.
\end{theorem}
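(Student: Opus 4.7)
The plan is to verify that $\iota_\Lambda^* \circ w_\Lambda[-1]$ satisfies the universal property of the left adjoint $m_\Lambda^*$ of microlocalization, by establishing a natural equivalence
\[
\mathrm{Hom}_{Sh_\Lambda(M \times \bR)}\bigl(\iota_\Lambda^* w_\Lambda(\SF)[-1],\, \SG\bigr) \simeq \mathrm{Hom}_{\mu Sh_\Lambda(\Lambda)}\bigl(\SF,\, m_\Lambda \SG\bigr)
\]
natural in $\SF \in \mu Sh_\Lambda(\Lambda)$ and $\SG \in Sh_\Lambda(M \times \bR)$, and then invoking uniqueness of adjoints.

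Because $m_\Lambda$ is essentially surjective (by the sheafification definition of $\mu Sh_\Lambda$), I may reduce to the case $\SF = m_\Lambda \tilde\SF$ for some $\tilde\SF \in Sh_\Lambda(M \times \bR)$. The $(\iota_\Lambda^*, \iota_\Lambda)$-adjunction rewrites the left-hand side as $\mathrm{Hom}_{Sh(M \times \bR)}(w_\Lambda(m_\Lambda \tilde\SF)[-1], \SG)$. The exact triangle of Theorem \ref{thm:double-noncpt}, $T_{-\epsilon}\tilde\SF \to T_\epsilon \tilde\SF \to w_\Lambda(m_\Lambda \tilde\SF)$, identifies $w_\Lambda(m_\Lambda \tilde\SF)[-1]$ with the fiber of the first map. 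Applying $\mathrm{Hom}(-, \SG)$ together with the $T_c \dashv T_{-c}$ adjunction gives
\[
\mathrm{Hom}_{Sh}\bigl(w_\Lambda(m_\Lambda\tilde\SF)[-1],\, \SG\bigr) \simeq \mathrm{cofib}\bigl(\mathrm{Hom}(\tilde\SF,\, T_{-\epsilon}\SG) \longrightarrow \mathrm{Hom}(\tilde\SF,\, T_\epsilon \SG)\bigr).
\]

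The heart of the proof is to identify this cofiber with $\mathrm{Hom}_{\mu Sh_\Lambda(\Lambda)}(m_\Lambda \tilde\SF, m_\Lambda \SG) \simeq \Gamma(\Lambda, \mu hom(\tilde\SF, \SG))$. The persistence module $c \mapsto \mathrm{Hom}(\tilde\SF, T_c \SG)$ has jumps precisely at Reeb chord lengths of $\Lambda$. The Weinstein tubular neighbourhood of radius $\epsilon_0 > \epsilon$ forces $\Lambda$ to have no Reeb chords of length in $(0, \epsilon_0)$: the Reeb flow on $J^1(M)$ is an isometric $t$-translation for the adapted metric, and inside the Weinstein model $\Lambda$ appears as the zero section of $J^1(\Lambda)$, which admits no Reeb chords. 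Hence, for $\epsilon$ small compared to $\epsilon_0$, the cofiber isolates the length-zero self-contribution along $\Lambda$, producing exactly the microlocal Hom.

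The main obstacle will be making this cofiber-to-$\mu hom$ identification rigorous. My plan is to combine two ingredients: (i) the Asano-Ike persistence-module formalism reviewed in \cite{LiEstimate}, which expresses cofibers of this form in terms of $\mu hom$ via the Tamarkin translation functor; and (ii) a local reduction using the Weinstein tubular neighbourhood of $\Lambda$, where the statement reduces to the classical identification for the zero section of $J^1(\Lambda)$ due to Guillermou \cite{Gui}. Once established, naturality in $\tilde\SF$ shows that the resulting isomorphism depends only on the class $\SF = m_\Lambda \tilde\SF$; together with essential surjectivity of $m_\Lambda$, this yields the adjunction equivalence for all $\SF \in \mu Sh_\Lambda(\Lambda)$, and uniqueness of left adjoints identifies $\iota_\Lambda^* \circ w_\Lambda[-1]$ with $m_\Lambda^*$.
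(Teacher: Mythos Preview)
Your reduction to $\SF = m_\Lambda \tilde\SF$ is the genuine gap. The microlocalization $m_\Lambda: Sh_\Lambda(M \times \bR) \to \mu Sh_\Lambda(\Lambda)$ is \emph{not} essentially surjective in general, and the sheafification definition of $\mu Sh_\Lambda$ does not give this: sheafification only guarantees that an object of $\mu Sh_\Lambda(\Lambda)$ is locally represented by a sheaf, not globally. Whether a local system on $\Lambda$ lifts to a global sheaf in $Sh_\Lambda(M\times\bR)$ is precisely the content of sheaf quantization, and fails e.g.\ for stabilized Legendrians. So as written you have only checked the adjunction identity on the essential image of $m_\Lambda$, which is not enough.

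The cofiber computation you outline is correct in spirit --- it is the Sato--Sabloff exact triangle --- but the way to use it is local: on small enough open sets every $\SF \in \mu Sh_\Lambda$ \emph{is} of the form $m_\Lambda\tilde\SF$ by the refined microlocal cut-off lemma, and there your argument goes through. One then glues. This local-to-global adjunction is exactly Proposition~\ref{prop:double-ad} (imported from \cite{KuoLi}), and Proposition~\ref{prop:double-para-ad} records its consequence for the non-uniform doubling $w_{\Lambda,\rho}$. The paper's proof of Theorem~\ref{thm:double-noncpt-ad} is then a one-line transport: the Hamiltonian isotopy constructed in the proof of Theorem~\ref{thm:double-noncpt} gives, via GKS, an equivalence $Sh_{T_{-\rho}(\Lambda)\cup T_\rho(\Lambda)}(M\times\bR)\xrightarrow{\sim}Sh_{T_{-\epsilon}(\Lambda)\cup T_\epsilon(\Lambda)}(M\times\bR)$ intertwining $w_{\Lambda,\rho}$ with $w_\Lambda$, so the adjunction for $w_{\Lambda,\rho}$ carries over to $w_\Lambda$. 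In short, your Sato--Sabloff idea is what underlies the cited Proposition~\ref{prop:double-ad}, but the actual proof of the theorem is a reduction to that proposition via GKS rather than a direct global verification.
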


    We remark that the assumption that $\Lambda$ admits a tubular neighbourhood of positive radius is crucial, and indeed this is why there is not in general a doubling functor for conical Legendrian cobordisms with a uniform $\epsilon > 0$ (and why we need extra data near the negative end of the cobordisms).

\subsubsection{Local doubling functor for Legendrians}
    We briefly recall the construction. First of all, given any $\mathscr{F} \in \mu Sh_\Lambda(\Lambda)$, for a sufficiently small open subsets $V \subset U \subset M \times \mathbb{R}$, the refined microlocal cut-off lemma \cite{Gui}*{Lemma 6.7} or \cite{KuoLi}*{Corollary 4.16} ensures that there exists a sheaf $\mathscr{F}_{U} \in Sh_\Lambda(U)$. Let
    $$\Lambda_{\pm T} = \{(x, t, u; \xi, \tau, \nu) \mid u > 0, (x, t; \xi, \tau) \in T_{\pm u}(\Lambda), \nu = -H_u \circ T_u(x, t; \xi, \tau)\}$$
    be the Legendrian movies of $\Lambda$ under the positive/negative Reeb flows. Let
    $$T_\pm: Sh_\Lambda(M \times \mathbb{R}) \rightarrow Sh_{\Lambda_{\pm T}}(M \times \mathbb{R} \times (0, +\infty))$$
    be the equivalences of sheaf categories induced by the corresponding Hamiltonian isotopies $T_\pm$ by Guillermou-Kashiwara-Schapira Theorem \ref{GKS} \cite{GKS}.

\begin{lemma}[\cite{Tamarkin1}*{Section 2.2.2}, \cite{Gui}*{Corollary 16.6}, \cite{GuiSchapira}*{Equation (76)--(77)}]\label{lem:small-pushoff}
    Let $\widetilde{L} \subset J^1(M \times \mathbb{R}_{>0})$ be a conical Legendrian cobordism from $\Lambda_- \subset J^1(M)$ to $\Lambda_+ \subset J^1(M)$. Then for any $\SF, \SG \in Sh_{\widetilde{L}}(M \times \bR \times \bR_{>0})$ and $\epsilon > 0$ sufficiently small,
    $$Hom(\SF, \SG) = Hom(\SF, T_\epsilon(\SG)).$$
\end{lemma}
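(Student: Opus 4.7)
The plan is to invoke the classical Tamarkin small Reeb pushoff invariance as developed in the cited works of Tamarkin, Guillermou, and Guillermou--Schapira. Since $\widetilde{L} \subset T^{*,\infty}_{\tau>0}(M \times \bR \times \bR_{>0})$ lies in the positive Reeb half-space (with $\tau$ the cotangent coordinate dual to the $z$-direction in the 1-jet bundle), any $\SF, \SG \in Sh_{\widetilde{L}}(M \times \bR \times \bR_{>0})$ sit in the corresponding Tamarkin-positive subcategory, and Theorem \ref{GKS} applied to the constant Reeb Hamiltonian produces a canonical natural transformation $\eta_\epsilon : \SG \to T_\epsilon \SG$ (obtained by restricting the GKS sheaf quantization of the Reeb movie to its two endpoints). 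This induces the comparison map between $\mathrm{Hom}(\SF, \SG)$ and $\mathrm{Hom}(\SF, T_\epsilon \SG)$, and the claim is that it is an isomorphism for $\epsilon$ sufficiently small.

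The essential microsupport input is the Kashiwara--Schapira estimate \cite{KS}*{Corollary 5.4.10}:
$$\mathrm{supp}\, \mu hom(\SF, T_c \SG) \subseteq SS^\infty(\SF) \cap SS^\infty(T_c \SG) \subseteq \widetilde{L} \cap T_c \widetilde{L}.$$
Because $\widetilde{L}$ arises as the Legendrian lift of an embedded exact Lagrangian cobordism $L$, Reeb chords on $\widetilde{L}$ in $J^1(M \times \bR_{>0})$ pull back under the contactomorphism $\varphi$ of Section \ref{sec:conical-leg} to self-intersections of $L$ (using the constancy of the primitive at the conical ends), which are excluded by embeddedness. Hence $\widetilde{L} \cap T_c \widetilde{L} = \varnothing$ for every $c > 0$, so $\mu hom(\SF, T_c \SG)$ is supported on the zero section for all $c > 0$. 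The Guillermou--Schapira persistence-module formalism (equations (76)--(77) of the cited reference), whose barcode endpoints are determined by Reeb chord lengths seen by the $\mu hom$-support of $(\SF, \SG)$, then shows that the persistence module $c \mapsto R\mathrm{Hom}(\SF, T_c \SG)$ carries no bars with positive left endpoint, and a continuity argument at $c = 0$ (where $\eta_0$ reduces to the identity) extends the isomorphism to an open neighbourhood of $c = 0$.

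The main obstacle is the noncompactness of $\widetilde{L}$ at the conical ends: a priori, the support of $\mu hom(\SF, \SG) \subseteq \widetilde{L}$ may extend unboundedly in the $\bR_{>0}$-direction, so one cannot immediately extract a uniform positive lower bound on $\epsilon$. However, for each fixed pair $(\SF, \SG)$ this support is a constructible closed subset of $\widetilde{L}$, and the argument can be localized by exhausting it by bounded pieces on each of which the persistence bar-decomposition supplies a strictly positive threshold. Using that $\widetilde{L}$ has no Reeb chords globally, these thresholds do not accumulate at zero on the relevant support, yielding the $\epsilon > 0$ depending on $(\SF, \SG)$ required by the statement.
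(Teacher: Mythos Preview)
The paper states this lemma without proof, citing Tamarkin, Guillermou, and Guillermou--Schapira; your proposal is a reconstruction of the standard argument, and the core mechanism you identify (the Tamarkin continuation map together with the $\mu hom$ support estimate at Reeb chord lengths) is correct.

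Two points deserve comment. First, you import the hypothesis that $L$ is embedded (so that $\widetilde{L}$ has no Reeb chords), which the lemma does not assume. Under your hypothesis the argument in your second paragraph actually yields the isomorphism for \emph{all} $\epsilon>0$, a stronger fact that the paper obtains only later by combining the present lemma with Proposition~\ref{prop:sep-double-cob}. For the lemma as stated the relevant observation is rather that, by the description in Section~\ref{sec:conical-leg}, the Lagrangian projection of the conical ends of $\widetilde{L}$ is the embedded cylinder on $\Lambda_\pm$, so any Reeb chords of $\widetilde{L}$ lie over the region $M\times[s_-,s_+]$; since $\Lambda_\pm$ are closed this piece of $\widetilde{L}$ is compact and the Reeb chord lengths are bounded below by some $c_{\min}>0$. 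The cited results then give the claim for all $0<\epsilon<c_{\min}$, uniformly in $(\SF,\SG)$.

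Second, the final paragraph is both unnecessary and not a proof. You have already shown $\widetilde{L}\cap T_c\widetilde{L}=\varnothing$ for the relevant $c$; the noncompactness of $\mathrm{supp}\,\mu hom(\SF,\SG)\subseteq\widetilde{L}$ at $c=0$ has nothing to do with the size of the admissible $\epsilon$, which is governed solely by Reeb chord lengths. The ``exhaust by bounded pieces and check thresholds do not accumulate'' strategy is circular: non-accumulation at $0$ is exactly what must be established, and it already follows from $c_{\min}>0$. If your concern is properness in the singular-support estimate for the push-forward to the $c$-line, the cited references handle this directly via the convolution description of the continuation map (the kernel $\Bbbk_{[0,\epsilon)}$), for which no properness hypothesis is needed.
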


    Then for $\epsilon > 0$ a sufficiently small number depending on $U, V$ and $\Lambda$, we define
    $$\widetilde{w}_\Lambda(\mathscr{F})_{V \times (0, \epsilon)} = \mathrm{Cone}(T_-(j_{U*}\mathscr{F}_{U}) \rightarrow T_+(j_{U*}\mathscr{F}_{U}))|_{V \times (0, \epsilon)}.$$
    Locally, it follows from the Sato-Sabloff exact triangle of Ike \cite[Section 4.3]{Ike} or \cite[Section 3.2]{LiEstimate}, \cite[Section 4.1]{KuoLi} that the assignment is fully faithful. Then using the local-to-global argument, we have the well-definedness along with full faithfulness \cite{KuoLi}*{Section 4.4}. In summary, we have the following proposition.

\begin{proposition}\label{prop:double-para}
    Let $\Lambda \subset T^{*,\infty}_{\tau > 0}(M \times \mathbb{R})$ be any subanalytic Legendrian subset. Then there exists a locally finite open covering $\{U_\alpha\}_{\alpha \in I}$ and an open refinement $\{V_\alpha\}_{\alpha \in I}$, with a collection of $\epsilon_\alpha > 0$ depending on $U_\alpha$ and $V_\alpha$, such that there is a fully faithful functor
    $$\widetilde{w}_\Lambda: \, \mu Sh_\Lambda(\Lambda) \hookrightarrow Sh_{\Lambda_{-T} \cup \Lambda_T}\Big(\bigcup_{\alpha \in I} V_\alpha \times (0, \epsilon_\alpha)\Big),$$
    where $T_- \rightarrow T_+ \rightarrow \widetilde{w}_\Lambda \circ m_\Lambda$ is an exact triangle, viewed as functors from sheaves on $M \times \bR$ to sheaves on $\bigcup_{\alpha \in I} V_\alpha \times (0, \epsilon_\alpha) \subset M \times \bR \times (0, +\infty)$.
\end{proposition}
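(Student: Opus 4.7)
The plan is to realize $\widetilde{w}_\Lambda$ as the result of a sheafification of a local construction over a suitably chosen cover, where on each chart the construction is literally the one outlined in the paragraph preceding the statement. First I would choose a locally finite subanalytic open cover $\{U_\alpha\}$ of $M \times \mathbb R$ and a shrinking $\{V_\alpha\}$ with $\overline{V_\alpha} \subset U_\alpha$, both fine enough that on each $T^*U_\alpha$ the refined microlocal cut-off lemma (Guillermou, or Kuo--Li) applies to the image of $\mathscr F \in \mu Sh_\Lambda(\Lambda)$: that is, there exists $\mathscr F_{U_\alpha} \in Sh_\Lambda(U_\alpha)$ whose image under $m_\Lambda$ agrees with $\mathscr F|_{\Lambda \cap T^*U_\alpha}$. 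For each such $(U_\alpha, V_\alpha)$, the Legendrian $\Lambda \cap T^{*,\infty} U_\alpha$ is relatively compact in $T^{*,\infty}U_\alpha$ after restricting to $V_\alpha$, so Lemma~\ref{lem:small-pushoff} (plus ordinary relative compactness in $V_\alpha$) provides an $\epsilon_\alpha > 0$ small enough that for $u \in (0,\epsilon_\alpha)$ the Reeb push-offs $T_{\pm u}(\Lambda)$ over $V_\alpha$ stay in a Weinstein tubular neighbourhood and $Hom(\mathscr G, T_\epsilon \mathscr G)$ remains equal to $Hom(\mathscr G, \mathscr G)$ for any $\mathscr G$ with singular support on $\Lambda$ over $V_\alpha$.

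With these choices in hand, on each chart I set
\[
\widetilde w_\Lambda(\mathscr F)_\alpha \;=\; \mathrm{Cone}\bigl(T_-(j_{U_\alpha *}\mathscr F_{U_\alpha}) \longrightarrow T_+(j_{U_\alpha *}\mathscr F_{U_\alpha})\bigr)\Big|_{V_\alpha \times (0,\epsilon_\alpha)},
\]
using the canonical map produced by Lemma~\ref{lem:small-pushoff}. The singular support estimate for cones and the Guillermou--Kashiwara--Schapira equivalence for the flows $T_\pm$ place $\widetilde w_\Lambda(\mathscr F)_\alpha$ in $Sh_{\Lambda_{-T} \cup \Lambda_T}(V_\alpha \times (0,\epsilon_\alpha))$, and by construction one obtains the exact triangle $T_-(\,\cdot\,) \to T_+(\,\cdot\,) \to \widetilde w_\Lambda \circ m_\Lambda$ on each chart. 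The local full faithfulness is then the content of the Sato--Sabloff triangle as used in \cite{Ike} and \cite{KuoLi}: taking $\mu hom$ of two such cones with itself, the long exact sequence identifies the contribution from the two Reeb translates with $\mu hom$ of the microlocalizations on $\Lambda$, so locally the induced map $\mu Sh_\Lambda \to Sh_{\Lambda_{-T} \cup \Lambda_T}$ is an isomorphism on Hom-complexes.

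The hard step, and the one I would spend the most care on, is the gluing. The lift $\mathscr F_{U_\alpha}$ provided by microlocal cut-off is not canonical: on $U_\alpha \cap U_\beta$ the two lifts $\mathscr F_{U_\alpha}|_{U_\alpha \cap U_\beta}$ and $\mathscr F_{U_\beta}|_{U_\alpha \cap U_\beta}$ only agree after passing to $\mu Sh_\Lambda$. To fix this I would work one dimension up, on $M \times \mathbb R \times (0,\epsilon_\alpha \wedge \epsilon_\beta)$, where after applying $T_+$ and forming the cone with $T_-$ the ambiguity disappears: any two microlocal-cutoff lifts differ by an object whose singular support over the relevant open set lies in the zero section, and the cone $T_- \to T_+$ kills such ambient contributions in degree zero (and the ambient sheaves themselves after restriction to $V_\alpha \times (0,\epsilon_\alpha)$, since on that locus $\Lambda \times (0,\epsilon_\alpha)$ is no longer the only direction of singular support but the Reeb push-offs separate the two copies from the zero section). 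This identification is canonical and compatible on triple overlaps, which is precisely the descent data needed to sheafify the construction into a globally defined functor $\widetilde w_\Lambda$ landing in $Sh_{\Lambda_{-T} \cup \Lambda_T}(\bigcup_\alpha V_\alpha \times (0,\epsilon_\alpha))$. Full faithfulness then follows from the local statement together with the fact that both $\mu Sh_\Lambda$ and the relevant $\mu hom$-computation on the target are sheaves, so the isomorphism on Hom-complexes that was established on each chart sheafifies.

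Modulo bookkeeping on the compatibility of the $\epsilon_\alpha$'s (which is arranged by the locally finite refinement together with Lemma~\ref{lem:small-pushoff}), the construction above is precisely the local version of the Guillermou--Jin--Treumann doubling; it crucially does not require any uniform lower bound on the radius of a Weinstein neighbourhood of $\Lambda$, because the chart size and the Reeb-time $\epsilon_\alpha$ are allowed to shrink with $\alpha$. That flexibility is exactly what will later let the authors apply the construction to noncompact Legendrians, albeit in the conditional form announced in Theorem~\ref{thm:cond-quan-cob}.
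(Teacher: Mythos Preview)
Your proposal is correct and follows essentially the same approach as the paper: the paper itself does not give a detailed proof of this proposition but rather summarizes exactly the strategy you describe---local lifts via the refined microlocal cut-off lemma, the cone-of-Reeb-pushoffs definition on each chart, local full faithfulness via the Sato--Sabloff triangle (citing Ike and Kuo--Li), and then a local-to-global gluing argument deferred to \cite{KuoLi}*{Section 4.4}. Your expanded account of the gluing step (that the ambiguity in the lift is killed after passing to the cone) is the substance of that reference, so nothing is missing.
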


    Moreover, using the local-to-global argument, we also get an adjunction property following from the exact triangle \cite{KuoLi}*{Section 4.5}. In summary, we have the following proposition.

\begin{proposition}\label{prop:double-ad}
    Let $\Lambda \subset T^{*,\infty}_{\tau > 0}(M \times \mathbb{R})$ be any subanalytic Legendrian subset. Let $\{U_\alpha\}_{\alpha \in I}$, $\{V_\alpha\}_{\alpha \in I}$ and $\{\epsilon_\alpha\}_{\alpha \in I}$ be a collection as above. Then the doubling functor is the left adjoint of microlocalization
    $$m_{\Lambda \times \bR_{>0}}^* = \iota^*_{\Lambda \times \bR_{>0}} \circ \widetilde{w}_\Lambda[-1]: \, \mu Sh_\Lambda(\Lambda) \rightarrow Sh_{\Lambda \times \bR_{>0}}\Big(\bigcup_{\alpha \in I} V_\alpha \times (0, \epsilon_\alpha)\Big),$$
    where $\iota^*_{\Lambda \times \bR_{>0}}$ is the left adjoint to $\iota_{\Lambda \times \bR_{>0}}: Sh_{\Lambda \times \bR_{>0}}(M \times \bR \times \bR_{>0}) \hookrightarrow Sh(M \times \bR \times \bR_{>0})$.
\end{proposition}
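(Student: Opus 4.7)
The plan is to verify the universal property characterizing the left adjoint of $m_{\Lambda \times \bR_{>0}}$: for every $\mathscr{F} \in \mu Sh_\Lambda(\Lambda)$ and $\mathscr{G} \in Sh_{\Lambda \times \bR_{>0}}(\bigcup_\alpha V_\alpha \times (0, \epsilon_\alpha))$, one constructs a natural isomorphism
$$\mathrm{Hom}(\iota^*_{\Lambda \times \bR_{>0}} \circ \widetilde{w}_\Lambda[-1](\mathscr{F}), \mathscr{G}) \simeq \mathrm{Hom}(\mathscr{F}, m_{\Lambda \times \bR_{>0}}(\mathscr{G})).$$
Using the $(\iota^*_{\Lambda \times \bR_{>0}}, \iota_{\Lambda \times \bR_{>0}})$-adjunction, the left-hand side simplifies to $\mathrm{Hom}(\widetilde{w}_\Lambda(\mathscr{F}), \mathscr{G})[1]$, so the goal reduces to showing $\mathrm{Hom}(\widetilde{w}_\Lambda(\mathscr{F}), \mathscr{G})[1] \simeq \mathrm{Hom}(\mathscr{F}, m_{\Lambda \times \bR_{>0}}(\mathscr{G}))$ naturally in both variables.

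My approach is local-to-global. Since $\mu Sh_\Lambda$ is a sheaf of dg categories and the sheaf Hom satisfies descent, a compatible family of local natural isomorphisms assembles into a global one. On each piece $V_\alpha \times (0, \epsilon_\alpha)$ of the cover, I would lift $\mathscr{F}|_{\Lambda \cap U_\alpha}$ to a sheaf $\mathscr{F}_{U_\alpha} \in Sh_\Lambda(U_\alpha)$ via the refined microlocal cut-off lemma, then apply $\mathrm{RHom}(-, \mathscr{G})$ to the defining exact triangle $T_-(j_{U_\alpha *}\mathscr{F}_{U_\alpha}) \to T_+(j_{U_\alpha *}\mathscr{F}_{U_\alpha}) \to \widetilde{w}_\Lambda(\mathscr{F})$ from Proposition \ref{prop:double-para}. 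This produces the exact triangle
$$\mathrm{Hom}(\widetilde{w}_\Lambda(\mathscr{F}), \mathscr{G}) \to \mathrm{Hom}(T_+ \mathscr{F}_{U_\alpha}, \mathscr{G}) \to \mathrm{Hom}(T_- \mathscr{F}_{U_\alpha}, \mathscr{G}),$$
so the task reduces to identifying the mapping fiber of the rightmost arrow with $\mathrm{Hom}(\mathscr{F}, m_{\Lambda \times \bR_{>0}}(\mathscr{G}))[-1]$.

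The key input is a Sato-Sabloff microlocal pairing. Since $\mathscr{G}$ has singular support in the zero Reeb pushoff $\Lambda \times \bR_{>0}$ while $T_\pm \mathscr{F}_{U_\alpha}$ have singular supports in the positive and negative pushoffs $\Lambda_{\pm T}$, one can invoke the Guillermou-Kashiwara-Schapira equivalence together with the small Reeb pushoff principle of Lemma \ref{lem:small-pushoff} to evaluate each $\mathrm{Hom}(T_\pm \mathscr{F}_{U_\alpha}, \mathscr{G})$: $T_+ \mathscr{F}_{U_\alpha}$ sits strictly above $\mathscr{G}$ in the Reeb direction so that positive iterated translations on the source separate the singular supports without altering the Hom, forcing the positive term to vanish, whereas $T_- \mathscr{F}_{U_\alpha}$ lies just below $\mathscr{G}$ and the pairing picks out precisely the microstalk hom $\mathrm{Hom}_{\mu Sh_\Lambda}(\mathscr{F}, m_{\Lambda \times \bR_{>0}}(\mathscr{G}))$ via the Sato-Sabloff exact triangle of Ike \cite[Section 4.3]{Ike}. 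Combining gives $\mathrm{Hom}(\widetilde{w}_\Lambda(\mathscr{F}), \mathscr{G}) \simeq \mathrm{Hom}(\mathscr{F}, m_{\Lambda \times \bR_{>0}}(\mathscr{G}))[-1]$, and the degree-one shift yields the claim.

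The hard part will be establishing the naturality of this Sato-Sabloff identification across the cover. Because the radii $\epsilon_\alpha$ are not uniform and the sheaf lift $\mathscr{F}_{U_\alpha}$ is only defined up to non-canonical choice, one must check that the local isomorphisms are compatible with restrictions to the overlaps $V_\alpha \cap V_\beta \times (0, \min(\epsilon_\alpha, \epsilon_\beta))$ so that local-to-global patching produces a genuine natural transformation of functors, rather than merely an objectwise isomorphism. This compatibility is exactly what the bookkeeping of \cite{KuoLi}*{Section 4.5} encapsulates, and once it is in place the proof concludes.
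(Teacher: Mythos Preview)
Your proposal is essentially the same approach as the paper's: the paper states this proposition as a summary, deferring the argument to ``the local-to-global argument \ldots following from the exact triangle \cite{KuoLi}*{Section 4.5}'', which is exactly the strategy you outline (apply $\mathrm{Hom}(-,\SG)$ to the exact triangle of Proposition~\ref{prop:double-para}, identify the result via Sato--Sabloff, then glue). One small correction: your claim that $\mathrm{Hom}(T_+\mathscr{F}_{U_\alpha},\SG)$ vanishes outright is not what Sato--Sabloff gives in general---rather, the cofiber of $\mathrm{Hom}(T_+\mathscr{F}_{U_\alpha},\SG)\to\mathrm{Hom}(T_-\mathscr{F}_{U_\alpha},\SG)$ is the microlocal $\mathrm{Hom}$, which is already enough for the adjunction; the separate vanishing and identification you describe would require an extra hypothesis such as acyclic stalks at $-\infty$ (compare Lemmas~\ref{lem:negative-0} and~\ref{lem:negative-hom}).
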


    When $U \cap \pi(\Lambda) = \varnothing$, note that we can choose $\mathscr{F}_U = 0_U$ and $\epsilon = +\infty$. When $\Lambda \subset T^{*,\infty}_{\tau > 0}(M \times \mathbb{R})$ is compact, we can choose an open covering such that only finitely many open subsets intersect $\pi(\Lambda) \subset M \times \mathbb{R}$. Then there exists a uniform positive number
    $$0 < \epsilon < \min_{\alpha \in I}\epsilon_\alpha = \min_{\alpha \in I, \, U_\alpha \cap \Lambda \neq \varnothing}\epsilon_\alpha.$$
    In other words, we have
    $$M \times \mathbb{R} \times \{\epsilon\} \subseteq \bigcup_{\alpha \in I} V_\alpha \times (0, \epsilon_\alpha)$$
    Hence by restricting to $M \times \mathbb{R} \times \{\epsilon\}$, there is the following corollary:

\begin{corollary}\label{cor:double-cpt}
    Let $\Lambda \subset T^{*,\infty}_{\tau > 0}(M \times \mathbb{R})$ be a compact subanalytic Legendrian subset. Then there exists a fully faithful functor
    $$w_\Lambda: \, \mu Sh_\Lambda(\Lambda) \hookrightarrow Sh_{T_{-\epsilon}(\Lambda) \cup T_\epsilon(\Lambda)}(M \times \mathbb{R}).$$
\end{corollary}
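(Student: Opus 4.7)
\medskip

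The plan is to derive Corollary \ref{cor:double-cpt} directly from Proposition \ref{prop:double-para} by exploiting compactness to extract a uniform positive radius, and then applying Guillermou-Kashiwara-Schapira (Theorem \ref{GKS}) to collapse the thickened picture onto a single slice.

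First I would choose a locally finite open cover $\{U_\alpha\}_{\alpha \in I}$ of $M \times \bR$ together with an open refinement $\{V_\alpha\}_{\alpha \in I}$ of the type required by Proposition \ref{prop:double-para}. Since $\Lambda$ is compact, its front projection $\pi(\Lambda) \subset M \times \bR$ is compact, so by shrinking the cover we can arrange that only finitely many indices $\alpha \in I_0 \subset I$ satisfy $U_\alpha \cap \pi(\Lambda) \neq \varnothing$. For $\alpha \notin I_0$ the local model sheaf can be taken to be the zero sheaf $\SF_{U_\alpha} = 0_{U_\alpha}$, so the associated local radius $\epsilon_\alpha$ may be chosen as $+\infty$. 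Setting
$$\epsilon := \tfrac{1}{2} \min_{\alpha \in I_0} \epsilon_\alpha > 0$$
we obtain a strictly positive number, and by construction
$$M \times \bR \times \{\epsilon\} \; \subseteq \; \bigcup_{\alpha \in I} V_\alpha \times (0, \epsilon_\alpha).$$

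Next I would compose the local doubling functor from Proposition \ref{prop:double-para} with restriction to the slice $u = \epsilon$. That is, defining $i_\epsilon: M \times \bR \times \{\epsilon\} \hookrightarrow M \times \bR \times (0, +\infty)$, I set
$$w_\Lambda := i_\epsilon^{-1} \circ \widetilde{w}_\Lambda: \, \mu Sh_\Lambda(\Lambda) \longrightarrow Sh_{T_{-\epsilon}(\Lambda) \cup T_\epsilon(\Lambda)}(M \times \bR).$$
The target singular support is correct because the slices of the Legendrian movies $\Lambda_{-T}$ and $\Lambda_T$ at $u = \epsilon$ are precisely $T_{-\epsilon}(\Lambda)$ and $T_\epsilon(\Lambda)$.

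The only thing to check is that this restriction does not destroy full faithfulness. On the locus $u \in (0, \epsilon_\alpha)$ (after possibly shrinking $\epsilon$ further so that the two branches $T_{-u}(\Lambda)$ and $T_u(\Lambda)$ are disjoint for $u \in (0, \epsilon]$), the Legendrian $\Lambda_{-T} \cup \Lambda_T$ is the Legendrian movie of $\Lambda \sqcup \Lambda$ under the contact Hamiltonian isotopy given by the positive Reeb flow on one copy and the negative Reeb flow on the other. By the Guillermou-Kashiwara-Schapira theorem (Theorem \ref{GKS}) applied to this Hamiltonian isotopy on a closed subinterval containing $\epsilon$, the restriction
$$i_\epsilon^{-1}: Sh_{\Lambda_{-T} \cup \Lambda_T}\bigl((M \times \bR) \times (0, \epsilon_\alpha)\bigr) \xrightarrow{\sim} Sh_{T_{-\epsilon}(\Lambda) \cup T_\epsilon(\Lambda)}(M \times \bR)$$
is an equivalence of categories. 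Hence the composition $w_\Lambda = i_\epsilon^{-1} \circ \widetilde{w}_\Lambda$ is fully faithful, being the composition of a fully faithful functor with an equivalence. The exact triangle $T_{-\epsilon} \to T_\epsilon \to w_\Lambda \circ m_\Lambda$ follows by restricting the triangle of Proposition \ref{prop:double-para} to the slice $u = \epsilon$.

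The only mildly delicate point is arranging the cover so that $\epsilon$ can be chosen uniformly, which is where compactness of $\Lambda$ is used; once that is done, the invariance of sheaf categories under Reeb translation reduces the corollary to a direct application of the parametrized statement.
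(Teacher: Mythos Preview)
Your proof is correct and follows essentially the same approach as the paper: use compactness of $\pi(\Lambda)$ to extract a uniform $\epsilon > 0$ so that the slice $M \times \bR \times \{\epsilon\}$ sits inside the domain of $\widetilde{w}_\Lambda$, then restrict and invoke Guillermou--Kashiwara--Schapira to deduce full faithfulness from the fact that $\Lambda_{-T} \cup \Lambda_T$ is a Legendrian movie. The paper additionally phrases the singular support control via the non-characteristic condition for the slice, but this is exactly what your observation about the slices of the movies amounts to.
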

\begin{proof}
    Since $T_-(\Lambda) \cup T_+(\Lambda)$ is non-characteristic with respect to $M \times \mathbb{R} \times \{\epsilon\}$, we know that by restricting $\widetilde{w}_\Lambda(\mathscr{F})$ to $M \times \mathbb{R} \times \{\epsilon\}$, the singular support is contained in $T_{-\epsilon}(\Lambda) \cup T_\epsilon(\Lambda)$. Full faithfulness follows from the fact that $T_-(\Lambda) \cup T_+(\Lambda)$ is the Legendrian movie of a Hamiltonian flow.
\end{proof}

    In general, there may not exist any $\epsilon > 0$ such that
    $$M \times \mathbb{R} \times \{\epsilon\} \subseteq \bigcup_{\alpha \in I} V_\alpha \times (0, \epsilon_\alpha).$$
    Hence it is difficult to construct the doubling functor for a uniform Reeb pushoff $\epsilon > 0$.

\subsubsection{Doubling for noncompact Legendrians}
    While there may not be a uniform $\epsilon > 0$ so that $M \times \mathbb{R} \times \{\epsilon\} \subseteq \bigcup_{\alpha \in I} V_\alpha \times (0, \epsilon_\alpha)$, there always exists some smooth function $\rho: M \times \mathbb{R} \rightarrow (0, +\infty)$ such that
    $$\mathrm{Graph}(\rho) \subseteq \bigcup_{\alpha \in I} V_\alpha \times (0, \epsilon_\alpha).$$
    Instead of restricting to the slice $M \times \mathbb{R} \times \{\epsilon\}$, we will restrict to the slice $\mathrm{Graph}(\rho)$.

    Let the contact Hamiltonian be $\rho(x, t)$. The induced Hamiltonian vector field is $X_\rho = \rho(x, t)\partial_t -(\partial_x\rho(x, t) + \xi\,\partial_t\rho(x, t))\partial_\xi$, and the Hamiltonian diffeomorphism is
    $$T_\rho(x, t; \xi) = (x, t+\rho(x, t); \xi - \partial_x\rho(x, t) - \xi\,\partial_t\rho(x, t)).$$
    We will show that by restricting the doubled sheaf to the slice $\mathrm{Graph}(\rho)$, we get a sheaf with singular support on $T_{-\rho}(\Lambda) \cup T_\rho(\Lambda)$.

\begin{proposition}\label{prop:double-nonuni}
    Let $\Lambda \subset T^{*,\infty}_{\tau > 0}(M \times \mathbb{R})$ be a subanalytic Legendrian subset. Then for any $C^1$-small smooth function $\rho: M \times \mathbb{R} \rightarrow (0, +\infty)$, there exists a fully faithful functor
    $$w_{\Lambda,\rho}: \, \mu Sh_\Lambda(\Lambda) \hookrightarrow Sh_{T_{-\rho}(\Lambda) \cup T_\rho(\Lambda)}(M \times \mathbb{R}).$$
\end{proposition}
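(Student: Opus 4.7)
The plan is to restrict the doubled sheaf produced by Proposition~\ref{prop:double-para} to the graph of $\rho$, using $C^1$-smallness of $\rho$ to ensure that this restriction is non-characteristic with the correct singular support, and then to derive full faithfulness by combining the exact triangle of Proposition~\ref{prop:double-para} with Guillermou-Kashiwara-Schapira invariance. Concretely, Proposition~\ref{prop:double-para} furnishes a locally finite open covering $\{U_\alpha\}$ with refinement $\{V_\alpha\}$, positive numbers $\{\epsilon_\alpha\}$, and the fully faithful functor
\[
\widetilde{w}_\Lambda: \mu Sh_\Lambda(\Lambda) \hookrightarrow Sh_{\Lambda_{-T} \cup \Lambda_T}\Big(\bigcup_\alpha V_\alpha \times (0, \epsilon_\alpha)\Big),
\]
equipped with the exact triangle $T_- \to T_+ \to \widetilde{w}_\Lambda \circ m_\Lambda$. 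Since $\rho$ is $C^0$-small (a consequence of $C^1$-smallness), we may arrange that $\mathrm{Graph}(\rho) \subset \bigcup_\alpha V_\alpha \times (0, \epsilon_\alpha)$, and set $w_{\Lambda, \rho} := i_\rho^{-1} \circ \widetilde{w}_\Lambda$ where $i_\rho: M \times \mathbb{R} \xrightarrow{\sim} \mathrm{Graph}(\rho) \hookrightarrow M \times \mathbb{R} \times \mathbb{R}_{>0}$, $(x, t) \mapsto (x, t, \rho(x, t))$, denotes the graph embedding.

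Next I would verify that $i_\rho$ is non-characteristic for $\Lambda_{-T} \cup \Lambda_T$ and compute the pushed-forward singular support. A covector in $\Lambda_{\pm T}$ over a point $(x, t, \rho(x, t))$ has the form $(\xi, \tau, \mp \tau)$ with $\tau > 0$, while the conormal line to $\mathrm{Graph}(\rho)$ is spanned by $du - \partial_x \rho \, dx - \partial_t \rho \, dt$. Identifying these would force $\partial_t \rho = \pm 1$, which the assumption $\|d\rho\|_\infty < 1$ excludes, so $i_\rho$ is non-characteristic. The non-characteristic pullback formula then yields the induced covector $(\xi \mp \tau \partial_x \rho, (1 \mp \partial_t \rho) \tau)$ on $\mathrm{Graph}(\rho) \cong M \times \mathbb{R}$, which matches exactly the form of a covector on $T_{\pm \rho}(\Lambda)$ given by the time-$1$ contact flow of $\pm \rho$; hence $SS(w_{\Lambda, \rho}(\mathscr{F})) \subseteq T_{-\rho}(\Lambda) \cup T_\rho(\Lambda)$.

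The main obstacle is establishing full faithfulness of $w_{\Lambda, \rho}$. The plan is to apply $i_\rho^{-1}$ to the exact triangle $T_- \to T_+ \to \widetilde{w}_\Lambda \circ m_\Lambda$ and to identify the resulting restrictions $i_\rho^{-1} T_\pm$ with the Guillermou-Kashiwara-Schapira equivalences $T_{\pm \rho}: Sh_\Lambda(M \times \mathbb{R}) \xrightarrow{\sim} Sh_{T_{\pm \rho}(\Lambda)}(M \times \mathbb{R})$ of Theorem~\ref{GKS} applied to the contact flow of $\pm \rho$. This identification extends Remark~\ref{GKSviaCont} from constant slices to the non-constant slice $\mathrm{Graph}(\rho)$; its verification is the technical heart of the argument, and is expected to proceed by interpolating through the family of non-characteristic slices $\{\mathrm{Graph}(s \rho)\}_{s \in [0, 1]}$ and invoking Theorem~\ref{cont-trans} at each stage. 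Once this is achieved, the exact triangle $T_{-\rho} \to T_\rho \to w_{\Lambda, \rho} \circ m_\Lambda$ locally reproduces the Sato-Sabloff triangle, so that full faithfulness of the local doubling follows as in Corollary~\ref{cor:double-cpt} and \cite[Section~4.4]{KuoLi}; the local-to-global argument used in Proposition~\ref{prop:double-para} then promotes this to global full faithfulness of $w_{\Lambda, \rho}$.
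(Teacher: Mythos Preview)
Your construction of $w_{\Lambda,\rho}$ and your singular support computation via non-characteristic restriction are exactly what the paper does. The difference lies entirely in how you establish full faithfulness, and here you work harder than necessary.

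The paper observes that the reparametrization
\[
\phi_\rho: M \times \mathbb{R} \times (0,+\infty) \to M \times \mathbb{R} \times (0,+\infty), \qquad (x,t,\epsilon\rho(x,t)) \mapsto (x,t,\epsilon),
\]
sends $\mathrm{Graph}(\rho)$ to the constant slice $\{\epsilon = 1\}$ and conjugates the Legendrian movie $\Lambda_{-T} \cup \Lambda_T$ to the Legendrian movie of the Hamiltonian flow $\epsilon \mapsto T_{\pm\epsilon\rho}$. Then Theorem~\ref{GKS} says restriction to any slice of this movie is an \emph{equivalence}. Hence $i_\rho^{-1}$ is an equivalence on the relevant subcategories, and $w_{\Lambda,\rho} = i_\rho^{-1} \circ \widetilde{w}_\Lambda$ is fully faithful simply because $\widetilde{w}_\Lambda$ already is (Proposition~\ref{prop:double-para}). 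No further Sato--Sabloff or local-to-global argument is needed.

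Your plan instead re-derives the exact triangle $T_{-\rho} \to T_\rho \to w_{\Lambda,\rho} \circ m_\Lambda$ and then re-runs the entire Sato--Sabloff and local-to-global machinery that was already invoked to prove Proposition~\ref{prop:double-para}. This would work, but it duplicates effort: you are effectively reproving full faithfulness of the doubling from scratch at the level of the restricted sheaf, rather than inheriting it from the parametrized doubling via an equivalence. The reparametrization trick is worth internalizing, as it converts a non-constant slice into a constant one and lets GKS do the work in one line.
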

\begin{proof}
    Since $\rho: M \times \mathbb{R} \rightarrow (0, +\infty)$ is $C^1$-small, we know that $T_-(\Lambda) \cup T_+(\Lambda)$ is non-characteristic with respect to $\mathrm{Graph}(\rho) \subseteq M \times \mathbb{R} \times (0, +\infty)$. Hence by restricting  $\widetilde{w}_\Lambda(\mathscr{F})$ to $\mathrm{Graph}(\rho)$, the singular support is contained in $T_{-\rho}(\Lambda) \cup T_\rho(\Lambda)$ by \cite[Proposition 5.4.13]{KS}. Full faithfulness follows from Guillermou-Kashiwara-Schapira Theorem \ref{GKS} \cite{GKS}, using the fact that up to the reparametrization
    $$\phi_\rho: M \times \mathbb{R} \times (0, +\infty) \rightarrow M \times \mathbb{R} \times (0, +\infty), \; (x, t, \epsilon \rho(x, t)) \mapsto (x, t, \epsilon),$$
    $T_-(\Lambda) \cup T_+(\Lambda)$ is the Legendrian movie of a Hamiltonian flow.
\end{proof}

\begin{proposition}\label{prop:double-para-ad}
    Let $\Lambda \subset T^{*,\infty}_{\tau > 0}(M \times \mathbb{R})$ be a subsanalytic Legendrian subset. Then for any $C^1$-small smooth function $\rho: M \times \mathbb{R} \rightarrow (0, +\infty)$, the doubling functor is the left adjoint to the microlocalization
    $$m_\Lambda^* = \iota_\Lambda^* \circ w_{\Lambda,\rho}[-1]: \mu Sh_\Lambda(\Lambda) \rightarrow Sh_\Lambda(M).$$
\end{proposition}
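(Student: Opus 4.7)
\medskip

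\noindent\textbf{Proof proposal.} The plan is to transport the already-established parametrized adjunction of Proposition~\ref{prop:double-para-ad} (applied one level up, i.e.~Proposition~\ref{prop:double-ad}) to the non-horizontal slice $\mathrm{Graph}(\rho)$ via a Guillermou-Kashiwara-Schapira equivalence, so that the adjunction property propagates automatically along an equivalence of categories.

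First I would unwind definitions: the proof of Proposition~\ref{prop:double-nonuni} constructs $w_{\Lambda,\rho}$ precisely as $\iota_\rho^{-1}\circ\widetilde w_\Lambda$, where $\iota_\rho:M\times\bR\hookrightarrow M\times\bR\times\bR_{>0}$ is the graph embedding $(x,t)\mapsto(x,t,\rho(x,t))$ and $\widetilde w_\Lambda$ is the parametrized doubling of Proposition~\ref{prop:double-para}. Since $\rho$ is $C^1$-small, $\mathrm{Graph}(\rho)$ sits inside $\bigcup_\alpha V_\alpha\times(0,\epsilon_\alpha)$ and is non-characteristic for $\Lambda_{-T}\cup\Lambda_T$ and for $\Lambda\times\bR_{>0}$. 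Up to the reparametrization $\phi_\rho$ used in the proof of Proposition~\ref{prop:double-nonuni}, the hypersurface $\mathrm{Graph}(\rho)$ becomes a horizontal slice while both $\Lambda\times\bR_{>0}$ and $\Lambda_{-T}\cup\Lambda_T$ become Legendrian movies of genuine contact Hamiltonian isotopies. Hence Theorem~\ref{GKS} applies to yield an equivalence
\[
\iota_\rho^{-1}:Sh_{\Lambda\times\bR_{>0}}\Bigl(\bigcup_\alpha V_\alpha\times(0,\epsilon_\alpha)\Bigr)\xrightarrow{\ \sim\ }Sh_\Lambda(M\times\bR),
\]
and by Theorem~\ref{cont-trans} together with Remark~\ref{GKSviaCont}, this equivalence intertwines microlocalization: $m_\Lambda\circ\iota_\rho^{-1}\simeq m_{\Lambda\times\bR_{>0}}$ after the canonical identification $\mu Sh_{\Lambda\times\bR_{>0}}(\Lambda\times\bR_{>0})\simeq\mu Sh_\Lambda(\Lambda)$.

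Next I would transport the adjunction. By Proposition~\ref{prop:double-ad}, the composition $\iota_{\Lambda\times\bR_{>0}}^{*}\circ\widetilde w_\Lambda[-1]$ is left adjoint to $m_{\Lambda\times\bR_{>0}}$. Applying the equivalence $\iota_\rho^{-1}$ on the target and using the previous paragraph,
\[
\iota_\rho^{-1}\circ\iota_{\Lambda\times\bR_{>0}}^{*}\circ\widetilde w_\Lambda[-1]\ \simeq\ \iota_\Lambda^{*}\circ\iota_\rho^{-1}\circ\widetilde w_\Lambda[-1]\ =\ \iota_\Lambda^{*}\circ w_{\Lambda,\rho}[-1],
\]
where the first isomorphism is the Beck-Chevalley identity for the square of left adjoints to the inclusions, compatibly restricted along the equivalence $\iota_\rho^{-1}$. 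Since $\iota_\rho^{-1}$ is an equivalence intertwining microlocalization, left adjoints are transported to left adjoints, and this identifies $\iota_\Lambda^{*}\circ w_{\Lambda,\rho}[-1]$ with the left adjoint $m_\Lambda^{*}$ of $m_\Lambda$.

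The step I expect to be the main obstacle is the Beck-Chevalley identity $\iota_\rho^{-1}\circ\iota_{\Lambda\times\bR_{>0}}^{*}\simeq\iota_\Lambda^{*}\circ\iota_\rho^{-1}$, i.e.~checking that the restriction to $\mathrm{Graph}(\rho)$ commutes with the two left adjoints to the tautological inclusions. To dispatch this, I would argue directly on right adjoints: the inclusion $\iota_\Lambda$ corresponds under the GKS equivalence $\iota_\rho^{-1}$ to the inclusion $\iota_{\Lambda\times\bR_{>0}}$ (both have the same source after identification, since $Sh_{\Lambda_{-T}\cup\Lambda_T}$ restricted to $\mathrm{Graph}(\rho)$ lands in $Sh_{T_{-\rho}(\Lambda)\cup T_\rho(\Lambda)}$), so their left adjoints automatically commute with $\iota_\rho^{-1}$. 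Once this formal step is in place, the proposition follows.
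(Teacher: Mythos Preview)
Your proposal is correct and follows essentially the same approach as the paper: the paper's proof is the single sentence ``This follows immediately from Proposition~\ref{prop:double-ad} and Theorem~\ref{GKS}'', and what you have written is precisely a careful unpacking of how those two ingredients combine via the reparametrization $\phi_\rho$ and the transport of adjunctions along the GKS equivalence $\iota_\rho^{-1}$.
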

\begin{proof}
    This follows immediately from Proposition \ref{prop:double-ad} and Theorem \ref{GKS} \cite{GKS}.
\end{proof}

    As explained at the beginning, for a general noncompact Legendrian, this is in fact the best we can do. In the following section, we will explain how to strengthen the result with the presence of a tubular neighbourhood of positive radius.

\begin{proof}[Proof of Theorem \ref{thm:double-noncpt}]
    Consider the Weinstein tubular neighbourhood $U_{\epsilon_0}(\Lambda) \cong J^1_{<\epsilon_0}(\Lambda)$ of radius $\epsilon_0 > 0$ of the noncompact Legendrian $\Lambda \subset J^1(M)$ and $\rho: M \times \bR \rightarrow \bR$ sufficiently small so that $T_{\pm \rho}(\Lambda) \subset J^1_{<\epsilon_0}(\Lambda)$. We claim that there is a contact Hamiltonian isotopy $\varphi_H^u, \, 0 \leq u \leq 1$, such that
    $$\varphi_H^1(T_{\pm \rho}(\Lambda)) = T_{\pm \epsilon}(\Lambda).$$
    Consider the standard coordinates in $J^1_{<\epsilon_0}(\Lambda) \subseteq J^1(\Lambda)$. There exists a Legendrian isotopy from $T_{\pm \rho}(\Lambda)$ to $T_{\pm\epsilon}(\Lambda)$
    \[T_{\pm u\epsilon \pm (1-u)\rho}(\Lambda) = \{(x, (1-u)d\rho(x, t), ut + (1-u)\rho(x, t)) | (x, t) \in \Lambda \times \mathbb{R}\}.\]
    We then extend the Legendrian isotopy to a contact Hamiltonian isotopy. Fix $\epsilon < \epsilon' < \epsilon_0$. Define $H: \Lambda \times \mathbb{R} \rightarrow \mathbb{R}$ such that
    \[H|_{\bigcup_{u \in [0,1]}\pi(\Lambda_{\pm u})} = \pm (\epsilon - \rho(x, t)).\]
    Then the corresponding contact Hamiltonian vector field
    \[X_H = H\frac{\partial}{\partial t} + \sum_{i=1}^n\frac{\partial H}{\partial x_i}\frac{\partial}{\partial \xi_i}\]
    is tangent to the fibers/leaves $J_x^1(\Lambda) = T_x^*\Lambda \times \bR$. Since the vector field is bounded on all the fibers, the integral flow is well defined for all time $u \geq 0$ and $\varphi_H^u(T_{\pm \rho}(\Lambda)) = T_{\pm u\epsilon \pm (1-u)\rho}(\Lambda)$. Then we can cut off the Hamiltonian such that
    $$H'|_{J^1_{\leq \epsilon}(\Lambda)} = H|_{J^1_{\leq \epsilon}(\Lambda)}, \; H|_{J^1(\Lambda) \backslash J^1_{<\epsilon'}(\Lambda)} = 0.$$

    The condition that the Hamiltonian $H'$ is supported in $J^1_{<\epsilon'}(\Lambda)$ then allows us to extend the Hamiltonian flow trivially to the ambient manifold $J^1(M)$. Using Guillermou-Kashiwara-Schapira Theorem \ref{GKS} \cite{GKS}, we can then conclude that there is an equivalence
    $$Sh_{T_{-\rho}(\Lambda) \cup T_\rho(\Lambda)}(M \times \mathbb{R}) \xrightarrow{\sim} Sh_{T_{-\epsilon}(\Lambda) \cup T_\epsilon(\Lambda)}(M \times \mathbb{R}).$$
    Then the result follows immediately from Proposition \ref{prop:double-nonuni}.
\end{proof}
\begin{remark}
    Assume that there exists an open covering $\{U_\alpha\}_{\alpha \in I}$ and a refinement of $\{V_\alpha\}_{\alpha \in I}$ of $M \times \mathbb{R}$ with a collection of time intervals $\epsilon_\alpha > 0$ with a uniform lower bound. Then by restricting to small open subsets, one can easily show that the doubling functor constructed using Corollary \ref{cor:double-cpt} agrees with Theorem \ref{thm:double-noncpt}.
\end{remark}

\begin{proof}[Proof of Theorem \ref{thm:double-noncpt-ad}]
    Using Guillermou-Kashiwara-Schapira Theorem \ref{GKS} \cite{GKS}, we can then conclude that there is an equivalence of sheaf categories
    $$Sh_{T_{-\rho}(\Lambda) \cup T_\rho(\Lambda)}(M \times \mathbb{R}) \xrightarrow{\sim} Sh_{T_{-\epsilon}(\Lambda) \cup T_\epsilon(\Lambda)}(M \times \mathbb{R}).$$
    Then the result follows immediately from Proposition \ref{prop:double-para-ad}.
\end{proof}

    There are definitely examples of noncompact Legendrians that do not admit a tubular neighbourhood of a positive radius with respect to any complete adapted metric. Conical Legendrian cobordisms are one class of such examples. This is why we need some extra data. We will deal with them in the next section.

\subsubsection{Separating double copies of the Legendrian}
    Given the doubling on $\Lambda \cup T_\epsilon(\Lambda)$ for some $\epsilon > 0$, to separate the double copies of the Legendrian, we need to apply some Hamiltonian isotopy from $\Lambda \cup T_\epsilon(\Lambda)$ to $\Lambda \cup T_u(\Lambda)$ for any $u > 0$. We show that this can again be done once there exists some tubular neighbourhood of positive radius.

\begin{proposition}\label{prop:separate}
    Let $\Lambda \subset J^1(M)$ be any smooth Legendrian submanifold. When there exists a complete adapted metric on $J^1(M)$ such that $\Lambda$ admits a neighbourhood of positive radius $\epsilon/2 > 0$ disjoint from $\bigcup_{u \geq \epsilon}T_u(\Lambda)$, then for any $u \geq \epsilon$, there exists an equivalence functor
    $$Sh_{\Lambda \cup T_{\epsilon}(\Lambda)}(M \times \mathbb{R}) \xrightarrow{\sim} Sh_{\Lambda \cup T_{u}(\Lambda)}(M \times \mathbb{R}).$$
\end{proposition}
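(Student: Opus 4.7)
The plan is to invoke the Guillermou--Kashiwara--Schapira theorem (Theorem \ref{GKS}) by constructing a time-independent contact Hamiltonian on $J^1(M)$ whose time-$1$ flow fixes $\Lambda$ pointwise and carries $T_\epsilon(\Lambda)$ to $T_u(\Lambda)$. The positive-radius hypothesis provides precisely the room needed to build such a Hamiltonian with globally bounded $C^1$-norm, hence a complete flow.

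First I would choose a smooth cutoff function $\chi: J^1(M) \to [0,1]$ such that $\chi \equiv 0$ on the neighbourhood of radius $\epsilon/4$ around $\Lambda$ and $\chi \equiv 1$ outside the neighbourhood of radius $\epsilon/2$, with $C^0$- and $C^1$-norms bounded uniformly; this is possible precisely because the $\epsilon/2$-tubular neighbourhood has positive radius with respect to the complete adapted metric. Then I would set
$$H := (u - \epsilon)\,\chi : J^1(M) \longrightarrow \mathbb{R},$$
a (time-independent) contact Hamiltonian of bounded $C^1$-norm.

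The next step is to argue that the Hamiltonian flow $\varphi_H^s$ is defined for all $s \in \mathbb{R}$. Since the metric is adapted, we have the pointwise identity $\|X_H\|_g^2 = \|dH\|_g^2 + |H|^2$, so $\|X_H\|_g$ is globally bounded; combined with completeness of $g_{J^1(M)}$, this yields completeness of $\varphi_H^s$ (this is the main technical point, and is the reason we recorded the notion of adapted metric in Section 2). I would then verify the displacement property: since $\chi \equiv 0$ near $\Lambda$, $\Lambda$ is fixed by $\varphi_H^s$. On the other hand, by hypothesis $\bigcup_{u' \geq \epsilon} T_{u'}(\Lambda)$ lies in the region $\{\chi \equiv 1\}$, where $dH = 0$ and hence $X_H = H\cdot R = (u-\epsilon)\,\partial_t$. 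Consequently the orbit of any point of $T_\epsilon(\Lambda)$ is $\{T_{\epsilon + s(u-\epsilon)}(\Lambda)\}_{s \in [0,1]}$, which stays in this region throughout $s \in [0,1]$, so $\varphi_H^1(T_\epsilon(\Lambda)) = T_u(\Lambda)$. Altogether $\varphi_H^1\bigl(\Lambda \cup T_\epsilon(\Lambda)\bigr) = \Lambda \cup T_u(\Lambda)$.

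Finally, applying Theorem \ref{GKS} to the Legendrian movie of $\Lambda \cup T_\epsilon(\Lambda)$ under $\varphi_H^s$ produces the desired equivalence
$$Sh_{\Lambda \cup T_\epsilon(\Lambda)}(M \times \mathbb{R}) \xrightarrow{\sim} Sh_{\Lambda \cup T_u(\Lambda)}(M \times \mathbb{R}).$$
The only step I expect to be nontrivial is the completeness of $\varphi_H^s$ on the noncompact manifold $J^1(M)$; this is exactly why the positive-radius tubular neighbourhood (allowing a cutoff with bounded $C^1$-norm) and the completeness of the adapted metric were both built into the hypothesis.
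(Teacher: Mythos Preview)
Your proposal is correct and follows essentially the same approach as the paper: build a cutoff contact Hamiltonian vanishing near $\Lambda$ and constant on the region containing $\bigcup_{u'\geq\epsilon}T_{u'}(\Lambda)$, use the adapted metric to bound $\|X_H\|$ and completeness to integrate, then apply Theorem~\ref{GKS}. The only cosmetic differences are that the paper cuts off between radii $\epsilon/2$ and $\epsilon$ with $H$ taking values in $[0,1]$ and runs the flow for time $u-\epsilon$, whereas you cut off between $\epsilon/4$ and $\epsilon/2$ and absorb $u-\epsilon$ into the Hamiltonian; your choice of radii is in fact cleaner given the stated hypothesis.
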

\begin{proof}
    Denote the Weinstein tubular neighbourhood of $\Lambda$ by $U_\epsilon(\Lambda)$. By the assumption, we know that $U_{\epsilon/2}(\Lambda)$ and $\bigcup_{u \geq \epsilon}T_{u}(\Lambda)$ have a positive distance. Choose a cut-off function $H: J^1(M) \rightarrow \mathbb{R}$ such that $$H|_{U_{\epsilon/2}(\Lambda)} \equiv 0, \; H|_{J^1(M) \backslash U_{\epsilon}(\Lambda)} \equiv 1, \; |dH| \leq 3/\epsilon < +\infty.$$
    Since the metric on $J^1(M)$ is adapted, we know that the Hamiltonian vector field $|X_H| \leq (1 + 9/\epsilon^2)^{1/2} < +\infty$. Since in addition that the metric on $J^1(M)$ is complete, we know that the Hamiltonian flow $\varphi_H^u$ exists for any $u \in \mathbb{R}$. Moreover, when $u \geq \epsilon$,
    $$\varphi_H^{u-\epsilon}\big(\Lambda \cup T_{2\epsilon/3}(\Lambda)\big) = \widetilde{L} \cup T_{u}(\widetilde{L}).$$
    Therefore, by Guillermou-Kashiwara-Schapira Theorem \ref{GKS} \cite{GKS}, we can conclude that there is a canonical equivalence as in the statement of the proposition.
\end{proof}
\begin{corollary}
    Let $\Lambda \subset J^1(M)$ be any smooth Legendrian submanifold. When there exists a complete adapted metric on $J^1(M)$ such that $\Lambda$ admits a tubular neighbourhood of positive radius $\epsilon > 0$ disjoint from $\bigcup_{u \geq \epsilon}T_{u}(\Lambda)$, then for any $s > \epsilon$, there exists a fully faithful functor
    $$w_\Lambda: \, \mu Sh_\Lambda(\Lambda) \hookrightarrow Sh_{\Lambda \cup T_{u}(\Lambda)}(M \times \mathbb{R}).$$
    In particular, when the Lagrangian projection $\pi_{Lag}(\Lambda) \subset T^*M$ admits a tubular neighbourhood of positive radius, we always get the above functor.
\end{corollary}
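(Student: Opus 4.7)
The plan is to obtain the fully faithful embedding by composing three constructions already at our disposal: the doubling of Theorem \ref{thm:double-noncpt}, a single Reeb translation provided by Theorem \ref{GKS}, and the separation equivalence of Proposition \ref{prop:separate}. Concretely, I would first apply Theorem \ref{thm:double-noncpt} with the parameter $\epsilon/2$ (legitimate since the hypothesis supplies a positive-radius tubular neighbourhood of $\Lambda$ of size $\epsilon > \epsilon/2$) to obtain
$$\mu Sh_\Lambda(\Lambda) \hookrightarrow Sh_{T_{-\epsilon/2}(\Lambda) \cup T_{\epsilon/2}(\Lambda)}(M \times \mathbb{R}).$$

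Next I would translate by the Reeb flow $T_{\epsilon/2}$; since this is a contact Hamiltonian flow, Theorem \ref{GKS} produces an equivalence
$$Sh_{T_{-\epsilon/2}(\Lambda) \cup T_{\epsilon/2}(\Lambda)}(M \times \mathbb{R}) \xrightarrow{\sim} Sh_{\Lambda \cup T_\epsilon(\Lambda)}(M \times \mathbb{R}).$$
Finally I would invoke Proposition \ref{prop:separate} with the $\epsilon$ of the corollary playing the role of its parameter: the disjointness condition there (a neighbourhood of radius $\epsilon/2$ disjoint from $\bigcup_{u \geq \epsilon}T_u(\Lambda)$) is weaker than the corollary's hypothesis and is therefore satisfied, so we obtain equivalences
$$Sh_{\Lambda \cup T_\epsilon(\Lambda)}(M \times \mathbb{R}) \xrightarrow{\sim} Sh_{\Lambda \cup T_u(\Lambda)}(M \times \mathbb{R})$$
for every $u \geq \epsilon$. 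Composing the three arrows produces the desired fully faithful $w_\Lambda$; full faithfulness survives because the last two functors are equivalences.

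For the ``In particular'' clause I would reduce to the main statement by producing the tubular neighbourhood of $\Lambda \subset J^1(M)$ and verifying the required disjointness. By Lemma \ref{lem:nbhd-symp-cont}, a positive-radius tubular neighbourhood of $\pi_{Lag}(\Lambda) \subset T^*M$ lifts to a positive-radius tubular neighbourhood of the Legendrian lift $\Lambda \subset J^1(M)$ with respect to the standard adapted metric $g_{T^*M} + dz^2$. Because the Reeb flow on $J^1(M)$ acts simply as $z \mapsto z+u$, any point of $T_u(\Lambda)$ lying within distance $r < u$ of $\Lambda$ would project into the $r$-neighbourhood of $\pi_{Lag}(\Lambda)$; the tubular-neighbourhood property of $\pi_{Lag}(\Lambda)$ combined with injectivity of $\pi_{Lag}$ on $\Lambda$ would then force a same-projection point of $\Lambda$ at $z$-distance $u$ from the original, contradicting $r<u$. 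Picking $\epsilon$ smaller than both the tubular radius and this bound yields an $\epsilon$-neighbourhood of $\Lambda$ disjoint from $\bigcup_{u \geq \epsilon}T_u(\Lambda)$, so the first half of the corollary applies.

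The main obstacle is simply the bookkeeping — matching the three parameter choices so that each invoked result's hypothesis is fed exactly by the previous step's output — together with the elementary but noncompact-sensitive disjointness estimate in the ``In particular'' clause. No new analytic input is required beyond the tools already assembled in this section.
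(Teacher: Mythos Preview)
Your argument for the main statement is correct and is exactly the route the paper intends: Theorem~\ref{thm:double-noncpt} gives the doubling on $T_{-\epsilon/2}(\Lambda)\cup T_{\epsilon/2}(\Lambda)$, a global Reeb shift (via Theorem~\ref{GKS}) centers it to $\Lambda\cup T_\epsilon(\Lambda)$, and Proposition~\ref{prop:separate} pushes the second copy out to $T_u(\Lambda)$. The parameter bookkeeping is right.

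Your argument for the ``In particular'' clause, however, has a genuine gap. You claim that if a point of $T_u(\Lambda)$ lies within distance $r$ of $\Lambda$, the tubular-neighbourhood property of $\pi_{Lag}(\Lambda)$ forces the two Lagrangian projections to coincide. This is false: a tubular neighbourhood of radius $r$ only says the $r$-ball around $\pi_{Lag}(\Lambda)$ fibers over it, not that distinct points of $\pi_{Lag}(\Lambda)$ are at distance $\geq r$ from one another. Concretely, take $M=\bR$ and $\Lambda=\{(x,x,-x^2/2)\}\subset J^1(\bR)$; then $\pi_{Lag}(\Lambda)$ is a straight line with tubular neighbourhood of infinite radius, yet for $p=(N,N,-N^2/2)\in\Lambda$ and $q=(N+\delta,\,N+\delta,\,-(N+\delta)^2/2+u)\in T_u(\Lambda)$ with $u\approx N\delta$ one gets $d(p,q)\approx\sqrt{2}\,\delta$, which for fixed $u\geq\epsilon$ and $N\to\infty$ becomes arbitrarily small. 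So the disjointness hypothesis of the first half of the corollary is \emph{not} a consequence of the Lagrangian-projection hypothesis alone, and your reduction does not go through. (The paper gives no proof of this clause either; whatever argument is intended must use more than what you have invoked --- likely some uniform bound on the fiber coordinate or a direct construction of the separating Hamiltonian that does not pass through the metric disjointness condition.)
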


    Using the above results, we can in fact prove the sheaf quantization theorem for certain noncompact embedded Lagrangians that admit a tubular neighbourhood of a positive radius for some adapted metric. In particular, we believe that we can recover the result of Jin-Treumann \cite{JinTreu}. These applications will appear elsewhere.

\subsection{Doubling for conical Legendrian cobordisms}\label{sec:quan-cob}
    We will modify the doubling construction in this section and construct the conditional doubling functor for conical Legendrian cobordisms. The reason we construct a conditional doubling functor with prescribed data at the negative end is exactly because we do not have a Weinstein tubular neighbourhood of positive radius at the negative end.

    Recall that $Sh_\Lambda(M \times \bR)_0$ is the subcategory of sheaves with acyclic stalks at $-\infty$. This will be used frequently in the statements.

\begin{theorem}\label{thm:double-cob}
    Let $\widetilde{L} \subset J^1(M \times \mathbb{R}_{>0})$ be a conical Legendrian cobordism from $\Lambda_- \subset J^1(M)$ to $\Lambda_+ \subset J^1(M)$. Then there exists a fully faithful conditional doubling functor on the subcategory of sheaves
    $$w_{\widetilde{L}}: \, Sh_{\Lambda_-}(M \times \mathbb{R})_0 \times_{\mu Sh_{\Lambda_-}(\Lambda_-)} \mu Sh_{\widetilde{L}}(\widetilde{L}) \xrightarrow{\sim} Sh_{T_{-\epsilon}(\widetilde{L}) \cup T_\epsilon(\widetilde{L})}(M \times \mathbb{R} \times \mathbb{R}_{>0})_0.$$
\end{theorem}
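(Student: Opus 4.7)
The plan is to construct the doubling separately on two open regions covering $M \times \mathbb{R} \times \mathbb{R}_{>0}$ and glue them using the fiber product structure of the source. I would choose $0 < s_0 < s_0' < s_-$ and set $U_+ = M \times \mathbb{R} \times (s_0, +\infty)$ and $U_- = M \times \mathbb{R} \times (0, s_0')$; the overlap lies entirely in the region where $\widetilde{L}$ is conical over $\Lambda_-$.

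On $U_+$, Lemma \ref{lem:nbhd-sft-cob-2} gives a tubular neighbourhood of uniform positive radius for $\widetilde{L} \cap J^1(M \times (s_0, +\infty))$ in a complete adapted metric, so Theorem \ref{thm:double-noncpt} yields a fully faithful doubling functor $w_+: \mu Sh_{\widetilde{L}}(\widetilde{L} \cap U_+) \hookrightarrow Sh_{T_{-\epsilon}\widetilde{L} \cup T_\epsilon\widetilde{L}}(U_+)_0$ for a uniform $\epsilon > 0$. On $U_-$, the Legendrian $\widetilde{L} \cap J^1(M \times (0, s_0'))$ is the scaling cone over the closed $\Lambda_-$ coming from the contactomorphism $\varphi$ of Section \ref{sec:conical-leg}. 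On compact subintervals of $(0, s_0')$ this scaling is a genuine contact Hamiltonian isotopy, and combining Theorem \ref{GKS} with the acyclicity-at-$(-\infty)$ condition (which pins down the behaviour as $s \to 0^+$) produces an equivalence $\Psi^-: Sh_{\Lambda_-}(M \times \mathbb{R})_0 \xrightarrow{\sim} Sh_{\widetilde{L} \cap U_-}(U_-)_0$. Setting $\widetilde{\mathscr{F}}_- = \Psi^-(\mathscr{F}_-)$, I would define $w_-(\mathscr{F}_-) = \mathrm{Cone}(T_{-\epsilon}\widetilde{\mathscr{F}}_- \to T_\epsilon \widetilde{\mathscr{F}}_-)$. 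Since $\Lambda_-$ is closed and $\widetilde{L}$ is Reeb-chord-free, any sufficiently small $\epsilon$ works uniformly, and by Corollary \ref{cor:double-cpt} applied to $\Lambda_-$ and transported via $\Psi^-$, the sheaf $w_-(\mathscr{F}_-)$ has the correct singular support on $U_-$.

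The final step is to glue the two local constructions on $U_+ \cap U_-$ using the fiber-product compatibility $m_{\Lambda_-}(\mathscr{F}_-) \simeq \mathscr{M}|_{\Lambda_-}$. On the overlap, the restricted microsheaf $\mathscr{M}|_{\widetilde{L} \cap U_+ \cap U_-}$ is determined by $m_{\Lambda_-}(\mathscr{F}_-)$ via the cone-to-base equivalence $\mu Sh_{\widetilde{L}}(\widetilde{L} \cap U_+ \cap U_-) \simeq \mu Sh_{\Lambda_-}(\Lambda_-)$, and the left-adjoint description of the doubling functor (Theorem \ref{thm:double-noncpt-ad}) implies that $w_+(\mathscr{M})|_{U_+ \cap U_-}$ and $w_-(\mathscr{F}_-)|_{U_+ \cap U_-}$ are canonically isomorphic. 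Gluing via the sheaf property then yields the global functor $w_{\widetilde{L}}$, and full faithfulness follows from a Mayer--Vietoris computation of $\mathrm{Hom}$ in the target combined with full faithfulness of $w_\pm$. I expect the gluing to be the main obstacle: one must verify that the two constructions agree on the overlap coherently (functorially, not merely object-wise), and the way to rigidify the doubling is precisely the left-adjoint characterization of Theorem \ref{thm:double-noncpt-ad}, which makes the fiber-product condition canonically force the required identifications.
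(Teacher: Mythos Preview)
Your overall strategy matches the paper's almost exactly: decompose into a near-negative-end region $U_-$ and an away-from-negative-end region $U_+$, apply Theorem \ref{thm:double-noncpt} on $U_+$ via Lemma \ref{lem:nbhd-sft-cob-2}, define $w_-$ on $U_-$ as the cone of $T_{-\epsilon}\to T_\epsilon$ applied to the pulled-back sheaf, and glue using the fiber-product condition. The paper uses the explicit pullback $\pi^{-1}\mathscr{F}$ with $\pi(x,z,s)=(x,z/s)$ in place of your $\Psi^-$; these are the same thing.

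There is one genuine gap. You assert full faithfulness of $w_-: Sh_{\Lambda_-}(M\times\mathbb{R})_0 \to Sh_{T_{-\epsilon}(\widetilde{L})\cup T_\epsilon(\widetilde{L})}(U_-)_0$ and then invoke Mayer--Vietoris, but you give no argument for it, and it does not follow from Corollary \ref{cor:double-cpt} transported by $\Psi^-$. The source of Corollary \ref{cor:double-cpt} is $\mu Sh_{\Lambda_-}(\Lambda_-)$, not $Sh_{\Lambda_-}(M\times\mathbb{R})_0$; on a single slice the cone functor $\mathscr{F}\mapsto\mathrm{Cone}(T_{-\epsilon}\mathscr{F}\to T_\epsilon\mathscr{F})$ factors through microlocalization $m_{\Lambda_-}$ by the exact triangle in Theorem \ref{thm:double-noncpt}, and is therefore \emph{not} fully faithful on sheaves. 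What makes $w_-$ fully faithful is the extra conical $s$-direction together with the acyclic-stalk-at-$(-\infty)$ condition: one must show $Hom(T_\epsilon(\pi^{-1}\mathscr{F}), T_{-\epsilon}(\pi^{-1}\mathscr{G}))\simeq 0$ and $Hom(T_{-\epsilon}(\pi^{-1}\mathscr{F}), T_\epsilon(\pi^{-1}\mathscr{G}))\simeq Hom(\mathscr{F},\mathscr{G})$ on all of $U_-$. The paper carries this out in Lemmas \ref{lem:negative-0} and \ref{lem:negative-hom} (assembled in Proposition \ref{prop:double-negative-ff}) by estimating $SS^\infty$ of the internal $\mathscr{H}om$ and using the microlocal Morse lemma to push the computation either towards $s\to 0^+$, where the acyclic-stalk condition kills everything, or towards a fixed slice near $s_-$, where it reduces to $Hom(\mathscr{F},\mathscr{G})$ via Lemma \ref{lem:small-pushoff}. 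This is the substantive step on $U_-$ and needs to be supplied.
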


\begin{figure}
  \centering
  \includegraphics[width=0.8\textwidth]{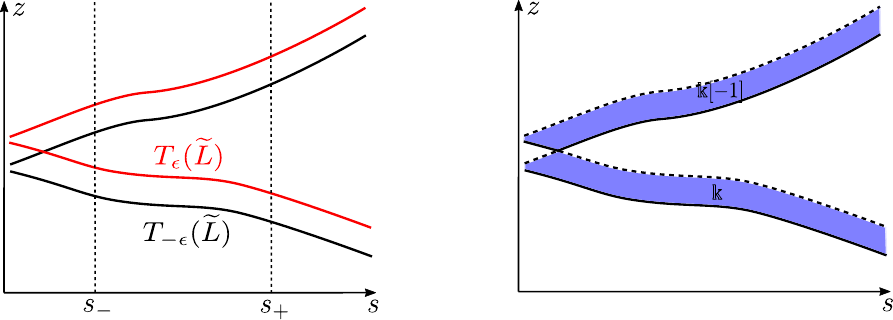}\\
  \caption{The conditional doubling construction which define sheaves in $T_{-\epsilon}(\widetilde{L}) \cup T_\epsilon(\widetilde{L})$ for a conical Legendrian cobordism.}\label{fig:condition-double}
\end{figure}

\subsubsection{Doubling near the negative end}
    First, we construct the doubling functor near the negative end of the Legendrian cobordism, using the information $Sh_{\Lambda_-}(M \times \bR)_0$.

    More precisely, assume that $\widetilde{L} \cap J^1(M \times (0, s_-))$ is conical. Consider $c(\Lambda_-)$ the length of the shortest Reeb chord on $\Lambda_-$. Then for $\epsilon > 0$, pick $s_0 > 0$ such that $\epsilon < s_0c(\Lambda_-) < s_-c(\Lambda_-)$. We show the existence of the doubling functor on the conical end $M \times \mathbb{R} \times (0, s_0)$.

\begin{lemma}\label{lem:cob-compatible}
    Let $\widetilde{L} \subset J^1(M \times \mathbb{R}_{>0})$ be a conical Legendrian cobordism from $\Lambda_- \subset J^1(M)$ to $\Lambda_+ \subset J^1(M)$ that is conical on $J^1(M \times (0, s_-))$ where $\epsilon < s_0c(\Lambda_-) < s_-c(\Lambda_-)$. Then there exists a doubling functor
    $$w_{\widetilde{L}}^{(0,s_-)}: Sh_{\Lambda_-}(M \times \mathbb{R}) \rightarrow Sh_{T_{-\epsilon}(\widetilde{L}) \cup T_\epsilon(\widetilde{L})}(M \times \mathbb{R} \times (0, s_-))$$
    such that $i_{s_0}^{-1}w_{\widetilde{L}}^{(0,s_-)}(\mathscr{F}) = w_{\Lambda_-} \circ m_{\Lambda_-}(\mathscr{F})$.
\end{lemma}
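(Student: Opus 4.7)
The plan is to define $w_{\widetilde L}^{(0,s_-)}(\mathscr{F})$ by pulling back $\mathscr{F}$ from the negative end along the scaling that parametrizes the conical structure of $\widetilde L$, and then forming a Tamarkin-style translation cone that realizes the doubling by hand. After translating the $z$-coordinate so that $w_{0,-} = 0$, the conical part reads
$$\widetilde L \cap J^1(M \times (0, s_-)) = \{(x, s, st; s\xi, -t, 1) : (x, \xi, t) \in \Lambda_-, s \in (0, s_-)\}.$$
I will consider the surjective submersion $p: M \times \mathbb{R}_z \times (0, s_-) \to M \times \mathbb{R}_t$, $(x, z, s) \mapsto (x, z/s)$, and set $\mathscr{F}_T := p^{-1}\mathscr{F}$. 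Non-characteristic pullback \cite[Prop.~5.4.13]{KS} gives $SS(\mathscr{F}_T) = p^\sharp SS(\mathscr{F})$: a covector $\tau(\xi, 1)$ at $(x, z/s)$ pulls back to $\tau(\xi, 1/s, -z/s^2)$ at $(x, z, s)$, and after normalization in the positive $\tau$-direction this becomes $(s\xi, -t, 1)$ with $t = z/s$, exactly matching the parametrization above. Acyclicity of stalks at $z \to -\infty$ is inherited since $z/s \to -\infty$ for fixed $s > 0$.

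Next, since $SS^\infty(\mathscr{F}_T) \subset \{\tau>0\}$, Lemma \ref{lem:small-pushoff} together with the Tamarkin translation structure (see \cite{GuiSchapira}, Eq.~(76)--(77)) supplies a canonical continuation morphism $T_{-\epsilon}\mathscr{F}_T \to T_\epsilon \mathscr{F}_T$, which can be realized by convolution in the $z$-direction with the natural map $\Bbbk_{[-\epsilon, \infty)} \to \Bbbk_{[\epsilon, \infty)}$. I define
$$w_{\widetilde L}^{(0,s_-)}(\mathscr{F}) := \mathrm{Cone}\bigl(T_{-\epsilon}\mathscr{F}_T \to T_\epsilon \mathscr{F}_T\bigr).$$
Each summand has singular support on $T_{\pm\epsilon}(\widetilde L)$. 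The hypothesis $\epsilon < s_0 c(\Lambda_-) \le s c(\Lambda_-)$ for every $s$ in the slab ensures that $T_{-\epsilon}(\widetilde L)$ and $T_\epsilon(\widetilde L)$ are disjoint throughout $J^1(M \times (0, s_-))$ (no Reeb chord of the rescaled Legendrian $\Lambda_-^s$ has length at most $2\epsilon$), so the cone lies in $Sh_{T_{-\epsilon}(\widetilde L) \cup T_\epsilon(\widetilde L)}(M \times \mathbb{R} \times (0, s_-))$.

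For the compatibility at $s = s_0$, restriction commutes with translation in the $z$-direction and with cones, so
$$i_{s_0}^{-1}w_{\widetilde L}^{(0,s_-)}(\mathscr{F}) \simeq \mathrm{Cone}\bigl(T_{-\epsilon}(i_{s_0}^{-1}p^{-1}\mathscr{F}) \to T_\epsilon(i_{s_0}^{-1}p^{-1}\mathscr{F})\bigr).$$
The sheaf $i_{s_0}^{-1}p^{-1}\mathscr{F}$ is the pullback of $\mathscr{F}$ along the scaling $(x,z) \mapsto (x, z/s_0)$, which is the canonical identification $Sh_{\Lambda_-}(M \times \mathbb{R}) \xrightarrow{\sim} Sh_{\Lambda_-^{s_0}}(M \times \mathbb{R}_z)$. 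Under this identification the displayed cone is precisely the local formula for $w_{\Lambda_-} \circ m_{\Lambda_-}(\mathscr{F})$ coming from Proposition \ref{prop:double-para} and Corollary \ref{cor:double-cpt}, yielding the claimed equality.

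The hard part will be to verify that the continuation morphism appearing in the cone above agrees with the one used in the local-to-global construction of the absolute doubling $w_{\Lambda_-}$; both are characterized as the unique continuation in the positive Tamarkin/persistence structure (equivalently, as the connecting map of the Sato-Sabloff exact triangle), which makes the identification canonical. The numerical estimate $\epsilon < s_0 c(\Lambda_-)$ is essentially sharp, being exactly what prevents $T_{-\epsilon}(\widetilde L)$ and $T_\epsilon(\widetilde L)$ from colliding inside the conical slab.
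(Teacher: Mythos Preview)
Your proposal is correct and follows essentially the same approach as the paper: your projection $p(x,z,s)=(x,z/s)$ is the paper's $\pi$, and the definition $w_{\widetilde L}^{(0,s_-)}(\mathscr{F})=\mathrm{Cone}(T_{-\epsilon}\pi^{-1}\mathscr{F}\to T_{\epsilon}\pi^{-1}\mathscr{F})$ is identical, with the compatibility at $s_0$ deduced from the exact triangle $T_{-\epsilon}\to T_\epsilon\to w_{\Lambda_-}\circ m_{\Lambda_-}$. Your write-up supplies more explicit detail on the singular support computation and the identification of continuation maps than the paper's terse proof, but there is no substantive difference in method.
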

\begin{proof}
    Define the projection $\pi: M \times \mathbb{R} \times (0, t_0) \rightarrow M \times \mathbb{R}$ by $\pi(x, z, t) = (x, z/t)$, and let
    $$w_{\widetilde{L}}^{(0,s_-)} (\mathscr{F}) = \mathrm{Cone}(T_{-\epsilon}(\pi^{-1}\mathscr{F}) \rightarrow T_{\epsilon}(\pi^{-1}\mathscr{F})).$$
    Then it is clear that when $SS^\infty(\mathscr{F}) \subset \Lambda_-$, $SS^\infty\big(w_{\widetilde{L}}^{(0,s_-)} (\mathscr{F})\big) \subset (T_{-\epsilon}(\widetilde{L}) \cup T_\epsilon(\widetilde{L})) \cap T^{*,\infty}(M \times \mathbb{R} \times (0, s_-))$. Finally, the identity
    $$i_{s_0}^{-1}w_{\widetilde{L}}^{(0,s_-)}(\mathscr{F}) = w_{\Lambda_-} \circ m_{\Lambda_-}(\mathscr{F})$$
    follows from the exact triangle of functors $T_{-\epsilon} \rightarrow T_\epsilon \rightarrow w_\Lambda \circ m_\Lambda$.
\end{proof}
\begin{remark}
    We explain why it is necessary to assume the existence of a sheaf $\SF \in Sh_{\Lambda_-}(M \times \bR)_0$ in the construction. In fact, this is the obstruction to go from Proposition \ref{prop:double-nonuni} to Theorem \ref{thm:double-noncpt}. Lack of a Weinstein tubular neighbourhood of $\widetilde{L}$ with positive radius at the negative end makes it difficult to connect $T_{-\rho}(\widetilde{L}) \cup T_\rho(\widetilde{L})$ and $T_{-\epsilon}(\widetilde{L}) \cup T_\epsilon(\widetilde{L})$, following the notation in Proposition \ref{prop:double-nonuni}.

    For example, consider the trivial 1-dimensional conical Legendrian cobordism, by the reparametrization identifying $J^1(\mathrm{pt} \times (0, +\infty))$ with $J^1(\mathrm{pt} \times \mathbb{R})$ as in Figure \ref{fig:leg-nbhd} (middle), one may assume that
    $$\widetilde{L} = \{(s, \pm e^s, \pm e^s) | s \in \mathbb{R}\}.$$
    One can easily check that $\inf_{x, x' \in \widetilde{L}}d(x, x') = 0$ under the standard complete adapted metric. Then lack of a Weinstein tubular neighbourhood of positive radius with respect to the standard metric will prevent the Legendrian isotopy between $T_{-\rho}(\widetilde{L}) \cup T_\rho(\widetilde{L})$ and $T_{-\epsilon}(\widetilde{L}) \cup T_\epsilon(\widetilde{L})$ from being extended to a Hamiltonian isotopy.
\end{remark}

    Next, we prove full faithfulness of the functor near negative end.

\begin{lemma}\label{lem:negative-0}
    Let $\widetilde{L} \subset J^1(M \times \mathbb{R}_{>0})$ be a conical Legendrian cobordism from $\Lambda_- \subset J^1(M)$ to $\Lambda_+ \subset J^1(M)$ that is conical on $J^1(M \times (0, s_-))$. Then for $\SF, \SG \in Sh_{\Lambda_-}(M \times \bR)_0$,
    $$Hom(T_{\epsilon}(\pi^{-1}\mathscr{F}), T_{-\epsilon}(\pi^{-1}\mathscr{G})) \simeq 0.$$
\end{lemma}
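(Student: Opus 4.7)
The plan is to reduce this Hom computation to the vanishing of a convolution product in Tamarkin's microlocal sheaf category. Since $T_{\pm\epsilon}$ are auto-equivalences of sheaf categories, the Hom rewrites immediately as
\[
Hom(T_\epsilon(\pi^{-1}\SF), T_{-\epsilon}(\pi^{-1}\SG)) \simeq Hom(\pi^{-1}\SF, T_{-2\epsilon *}\pi^{-1}\SG).
\]

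Next I would perform the coordinate change $(x, z, s) \leadsto (x, t, s)$ on $M \times \bR_z \times (0, t_0)_s$ with $t = z/s$, which is a diffeomorphism onto its image. Under this change, $\pi^{-1}\SF$ becomes the pullback $\mathrm{pr}_{12}^{-1}\SF$ along the trivial projection to $M \times \bR$, while $T_{-2\epsilon *}\pi^{-1}\SG$ becomes $f^{-1}\SG$ with $f(x, t, s) = (x, t + 2\epsilon/s)$. A further diffeomorphism $u = 2\epsilon/s$ identifies $(0, t_0)_s$ with $(2\epsilon/t_0, +\infty)_u$ and turns $f$ into the restriction of the addition map $g(x, t, u) = (x, t + u)$. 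Since $g$ is a smooth submersion of relative dimension $1$, one has $g^{-1} \simeq g^![-1]$, and combining the adjunction $Rg_! \dashv g^!$ with the fact that the extension by zero of $\mathrm{pr}_{12}^{-1}\SF$ to $M \times \bR \times \bR$ is $\SF \boxtimes \Bbbk_{(2\epsilon/t_0, +\infty)}$ yields
\[
Hom(\pi^{-1}\SF, T_{-2\epsilon *}\pi^{-1}\SG) \simeq Hom\bigl(\SF \star \Bbbk_{(2\epsilon/t_0, +\infty)}, \SG\bigr)[-1],
\]
where $\star = R\mathrm{add}_!(- \boxtimes -)$ is Tamarkin convolution along $\bR$.

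It thus suffices to show $\SF \star \Bbbk_{(c, +\infty)} \simeq 0$ for $c > 0$. Since $\SF \in Sh_{\Lambda_-}(M \times \bR)_0$ has $SS^\infty(\SF) \subset \Lambda_- \subset T^{*,\infty}_{\tau>0}$ together with acyclic stalks at $-\infty$, Tamarkin's projector identity gives $\SF \simeq \Bbbk_{[0, +\infty)} \star \SF$. By associativity of convolution this reduces to the direct stalk computation
\[
\bigl(\Bbbk_{[0, +\infty)} \star \Bbbk_{(c, +\infty)}\bigr)_t \simeq R\Gamma_c\bigl([0, t - c), \Bbbk\bigr) \simeq 0,
\]
the compactly supported cohomology of a half-open interval being zero.

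The main obstacle will be tracking the coordinate changes and associated singular-support transformations carefully: one must verify that the substitution $t = z/s$ really turns the conical singular supports of the noncompact sheaves $\pi^{-1}\SF$ and $T_{-2\epsilon *}\pi^{-1}\SG$ near $s = 0$ into the product-type supports on $M \times \bR \times \bR_{>0}$ needed to apply the submersion adjunction for $g$. Once this is in place, the final convolution vanishing is a standard formal consequence of Tamarkin theory.
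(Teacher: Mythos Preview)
Your proof is correct but follows a genuinely different route from the paper's argument. The paper proceeds by two applications of singular support estimates and the microlocal Morse lemma: first it computes that the $\sigma$-component of $SS^\infty\big(\mathscr{H}om(T_\epsilon(\pi^{-1}\SF), T_{-\epsilon}(\pi^{-1}\SG))\big)$ is strictly positive, which allows localization to $M \times \bR \times (0, s_-')$ for arbitrarily small $s_-'$; then, on that region, the front of $T_{-\epsilon}(\widetilde{L})$ sits entirely below the support of $T_\epsilon(\pi^{-1}\SF)$ (using acyclic stalks at $-\infty$), so the Hom sheaf has $\tau < 0$ and another microlocal Morse step kills it.

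Your approach instead strips the geometry away via the diffeomorphisms $t = z/s$ and $u = 2\epsilon/s$, reducing the question to the vanishing of a Tamarkin convolution $\SF \star \Bbbk_{(c,+\infty)}$ for $c > 0$, which you dispatch by the projector identity $\SF \simeq \Bbbk_{[0,+\infty)} \star \SF$ and an elementary stalk computation. This is cleaner once one grants Tamarkin's machinery, and it makes the role of the hypothesis ``acyclic stalks at $-\infty$'' transparent: it is exactly what places $\SF$ in the Tamarkin category so that the projector identity holds. The paper's approach, by contrast, stays entirely within the singular support calculus already in use throughout the section and avoids importing the convolution formalism.

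One remark: the ``main obstacle'' you flag is not actually an obstacle. The adjunction $Rg_! \dashv g^!$ and the identification $g^{-1} \simeq g^![-1]$ for the submersion $g$ require no singular support hypotheses whatsoever; they are general facts about sheaves. The only place where you genuinely need the microlocal hypothesis on $\SF$ is to invoke $\SF \simeq \Bbbk_{[0,+\infty)} \star \SF$, and for $\SF \in Sh_{\Lambda_-}(M \times \bR)_0$ with $\Lambda_-$ compact this is standard (it amounts to $\SF \in {}^\perp Sh_{\tau \leq 0}$, which follows from $SS^\infty(\SF) \subset \{\tau > 0\}$ together with $\SF|_{t \ll 0} = 0$). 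You should cite this rather than the coordinate-change bookkeeping as the point requiring care.
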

\begin{proof}
    Since $SS^\infty(T_{-\epsilon}(\pi^{-1}\mathscr{F})) \cap SS^\infty(T_{\epsilon}(\pi^{-1}\mathscr{G})) = \varnothing$, by \cite{KS}*{Proposition 5.4.14}, we know that
    $$SS^\infty\big(\mathscr{H}om(T_{\epsilon}(\pi^{-1}\mathscr{F}), T_{-\epsilon}(\pi^{-1}\mathscr{G})) \big) \subset -T_{\epsilon}(\Lambda \times (0, s_-)) + T_{-\epsilon}(\Lambda_- \times (0, s_-)).$$
    Therefore, $(x, z, s; y, 0, \sigma) \in SS^\infty(\mathscr{H}om(T_{\epsilon}(\pi^{-1}\mathscr{F}), T_{-\epsilon}(\pi^{-1}\mathscr{F})) )$ means there are points $(x, t; \xi, 1)$ and $(x, t'; \xi, 1) \in \Lambda_-$ such that $(x, z, s) = (x, st + \epsilon, s) = (x, st' - \epsilon, s)$ and
    $$(x, z, s; y, 0, \sigma) = -(x, st + \epsilon, s; s\xi, 1, t) + (x, st' - \epsilon, s; s\xi, 1, t').$$
    This implies $s(t' - t) = 2\epsilon$ and thus $\sigma = t' - t > 0$. By microlocal Morse lemma \cite{KS}*{Corollary 5.4.19}, we know that
    $$Hom(T_{\epsilon}(\pi^{-1}\mathscr{F}), T_{-\epsilon}(\pi^{-1}\mathscr{G})) = Hom(T_{\epsilon}(\pi^{-1}\mathscr{F})|_{M \times \bR \times (0, s_-')}, T_{-\epsilon}(\pi^{-1}\mathscr{G})|_{M \times \bR \times (0, s_-')}).$$
    Write $\pi^{-1}\mathscr{F}|_{M \times \bR (0, s_-')} = \pi^{-1}\mathscr{F}|_{(0, s_-')}$. For $s_-' < s_-$ sufficiently small, since $\pi^{-1}\mathscr{F}|_{(0, s_-')}$ has acyclic stalk at $-\infty$, $T_{-\epsilon}(\pi^{-1}\mathscr{G})|_{(0, s_-')}$ is a local system on $\mathrm{supp}(T_{\epsilon}(\pi^{-1}\mathscr{G})|_{(0, s_-')})$. Then we know that
    $$SS^\infty(\mathscr{H}om(T_{\epsilon}(\pi^{-1}\mathscr{F})|_{(0, s_-')}, T_{-\epsilon}(\pi^{-1}\mathscr{G})|_{(0, s_-')})) \subset - SS^\infty(T_{\epsilon}(\pi^{-1}\mathscr{F})|_{(0, s_-')})$$
    consisting of points $(x, t, s; y, \tau, \sigma)$ such that $\tau < 0$. Therefore, by microlocal Morse lemma again,
    \begin{equation*}
    Hom(T_{\epsilon}(\pi^{-1}\mathscr{F})|_{(0, s_-')}, T_{-\epsilon}(\pi^{-1}\mathscr{G})|_{(0, s_-')}) = 0.
    \end{equation*}
    This proves our claim.
\end{proof}

\begin{lemma}\label{lem:negative-hom}
    Let $\widetilde{L} \subset J^1(M \times \mathbb{R}_{>0})$ be a conical Legendrian cobordism from $\Lambda_- \subset J^1(M)$ to $\Lambda_+ \subset J^1(M)$ that is conical on $J^1(M \times (0, s_-))$ where $\epsilon < s_-c(\Lambda_-)$. Then
    $$Hom(T_{-\epsilon}(\pi^{-1}\mathscr{F}), T_{\epsilon}(\pi^{-1}\mathscr{G})) \simeq Hom(\mathscr{F, G}).$$
\end{lemma}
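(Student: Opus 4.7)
The plan is to exploit the fact that the Reeb flow $T_{\pm\epsilon}$ is an autoequivalence on $Sh(M \times \mathbb{R} \times (0, s_-))$ (by Guillermou-Kashiwara-Schapira, Theorem \ref{GKS}), combined with the adjunction $(\pi^{-1}, \pi_*)$, to reduce the computation to a Hom on the base $M \times \mathbb{R}$. Concretely, since $T_{-\epsilon}$ and $T_\epsilon$ are mutually inverse autoequivalences,
$$Hom(T_{-\epsilon}\pi^{-1}\SF, T_\epsilon\pi^{-1}\SG) \simeq Hom(\pi^{-1}\SF, T_{2\epsilon}\pi^{-1}\SG),$$
and then by adjunction
$$Hom(\pi^{-1}\SF, T_{2\epsilon}\pi^{-1}\SG) \simeq Hom(\SF, \pi_* T_{2\epsilon}\pi^{-1}\SG).$$
Thus the problem reduces to identifying the sheaf $\pi_* T_{2\epsilon}\pi^{-1}\SG$ on $M \times \mathbb{R}$.

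To analyze this pushforward I would change coordinates to $(x, u, t)$ with $u = z/t$, in which $\pi$ is the projection $(x, u, t) \mapsto (x, u)$ and $T_{2\epsilon}\pi^{-1}\SG$ becomes the pullback of $\SG$ along $\alpha(x, u, t) = (x, u - 2\epsilon/t)$, i.e.\ a one-parameter family of Reeb shifts by amounts $2\epsilon/t$ ranging over $(2\epsilon/s_-, +\infty)$ and minimized at $t = s_-$. At a point $(x_0, u_0)$ of the target, the stalk of $\pi_* T_{2\epsilon}\pi^{-1}\SG$ is computed as a derived limit of sections whose stalks are $\SG(x_0, u_0 - 2\epsilon/t)$ for $t \in (0, s_-)$. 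The acyclicity of $\SG$ at $z \to -\infty$ kills the contributions from $t \to 0^+$ (where $2\epsilon/t \to +\infty$ drives the argument into the acyclic region), while the microlocal bound $SS^\infty(\SG) \subset \{\tau > 0\}$ controls the monotonicity in $t$. A direct verification on generating sheaves of the form $\Bbbk_{\{z \geq f(x)\}}$ shows that the derived sections stabilize to the translate by the minimum shift, i.e.
$$\pi_* T_{2\epsilon}\pi^{-1}\SG \simeq T_{2\epsilon/s_-}\SG,$$
and the general case follows by triangulated extension.

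Finally, the hypothesis $\epsilon < s_-c(\Lambda_-)$ (possibly combined with the tighter estimate $\epsilon < s_-c(\Lambda_-)/2$ coming out of the pushforward analysis) ensures that $2\epsilon/s_-$ is smaller than the shortest Reeb chord on $\Lambda_-$. By the standard small Reeb pushoff invariance on the base, which is Lemma \ref{lem:small-pushoff} applied on $M \times \mathbb{R}$ and can be verified by non-characteristic deformation since $SS^\infty(\mathscr{H}om(\SF, T_s\SG))$ has sign-definite $\tau$-component for $s < c(\Lambda_-)$, one gets
$$Hom(\SF, T_{2\epsilon/s_-}\SG) \simeq Hom(\SF, \SG).$$
Concatenating the isomorphisms yields the claim.

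The main obstacle is the pushforward identification $\pi_* T_{2\epsilon}\pi^{-1}\SG \simeq T_{2\epsilon/s_-}\SG$. Since $\pi$ is not proper (its fibers are homeomorphic to the open interval $(0, s_-)$), neither the projection formula nor the proper base-change theorem apply directly, and one must argue by hand that the acyclicity of $\SG$ at $z = -\infty$ is exactly what selects the ``minimum shift'' $2\epsilon/s_-$ at the endpoint $t = s_-$ of the conical parameter. Once this pushforward computation is in place, the rest of the proof is a formal chain of adjunctions combined with the base-manifold small-pushoff invariance.
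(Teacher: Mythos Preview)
Your route via adjunction is genuinely different from the paper's. The paper works directly with $\mathscr{H}om(T_{-\epsilon}\pi^{-1}\SF, T_\epsilon\pi^{-1}\SG)$: it shows that the $\sigma$-component of its singular support (in the $s$-direction, with the $M\times\bR$ components zero) is strictly negative, uses the microlocal Morse lemma to replace global sections over $(0,s_-)$ by sections over a thin collar $(s_-',s_-)$, and there the two Reeb-shifted cones form a GKS movie so that the Hom collapses to a single slice via Theorem~\ref{GKS} and Lemma~\ref{lem:small-pushoff}.

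Your first two reductions (the GKS autoequivalence and the $(\pi^{-1},\pi_*)$-adjunction) are fine, but the pushforward identification $\pi_* T_{2\epsilon}\pi^{-1}\SG \simeq T_{2\epsilon/s_-}\SG$ is where the actual content lies, and your justification does not work as written. Sheaves of the form $\Bbbk_{\{z\ge f(x)\}}$ do not generate $Sh_{\Lambda_-}(M\times\bR)$ under cones and shifts for a general $\Lambda_-$, and even if they did, $\pi_*$ is a right adjoint and does not commute with the colimits you would need to pass from a generating class to an arbitrary $\SG$; nor have you constructed the comparison map whose invertibility you want to check. You also invoke acyclicity of $\SG$ at $-\infty$, which is neither assumed in the lemma nor needed for this step (it is Lemma~\ref{lem:negative-0} that uses it, not this one).

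The pushforward claim is in fact true, but the clean way to prove it is exactly the paper's mechanism transported to your coordinates: from $SS^\infty(\SG)\subset\{\tau>0\}$ one reads off $SS(T_{2\epsilon}\pi^{-1}\SG)\subset\{\sigma\ge 0\}$ in the $(x,u,s)$-frame, and then microlocal Morse localizes $p_*$ to a collar near $s_-$, where GKS reduces to the slice. So the adjunction repackages but does not bypass the core estimate; the paper's direct approach on $\mathscr{H}om$ is shorter. The factor-of-two issue you flag---needing $2\epsilon/s_-<c(\Lambda_-)$ rather than $\epsilon/s_-<c(\Lambda_-)$---is shared by the paper's final line and is harmless since $\epsilon$ is eventually taken arbitrarily small.
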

\begin{proof}
    Similarly, we know $(x, z, s; y, 0, \sigma) \in SS^\infty(\mathscr{H}om(T_{-\epsilon}(\pi^{-1}\mathscr{F}), T_{\epsilon}(\pi^{-1}\mathscr{F})) )$ means that there are points $(x, t; \xi, 1)$ and $(x, t'; \xi, 1) \in \Lambda_-$ such that $(x, z, s) = (x, st - \epsilon, s) = (x, st' + \epsilon, s)$ and
    $$(x, z, s; y, 0, \sigma) = (x, st - \epsilon, s; s\xi, 1, t) + (x, st' + \epsilon, s; s\xi, 1, t').$$
    In other words, $s(t' - t) = -2\epsilon$ and thus $\sigma = t' - t < 0$. By microlocal Morse lemma \cite{KS}*{Corollary 5.4.19}, we know that
    $$Hom(T_{-\epsilon}(\pi^{-1}\mathscr{F}), T_{\epsilon}(\pi^{-1}\mathscr{G})) = Hom(T_{-\epsilon}(\pi^{-1}\mathscr{F})|_{M \times \bR \times (s_-', s_-)}, T_{\epsilon}(\pi^{-1}\mathscr{G})|_{M \times \bR \times (s_-', s_-)}).$$
    Then for $s_-' < s_-$ sufficiently close, the Legendrian cones $T_{-\epsilon}(\Lambda_- \times (s_-', s_-))$ and $T_\epsilon(\Lambda_- \times (s_-', s_-))$ are Legendrian movies of a Legendrian isotopy. Hence by Guillermou-Kashiwara-Schapira Theorem \ref{GKS} and Lemma \ref{lem:small-pushoff}
    \[\begin{split}
    Hom(& T_{-\epsilon}( \pi^{-1}\mathscr{F})|_{(s_-', s_-)}, T_{\epsilon}(\pi^{-1}\mathscr{G})|_{(s_-', s_-)}) \\
    &= Hom(T_{-\epsilon/ s_-}\SF, T_{\epsilon/ s_-}\SG) = Hom(\SF, \SG).
    \end{split}\]
    This proves our claim.
\end{proof}

\begin{proposition}\label{prop:double-negative-ff}
    Let $\widetilde{L} \subset J^1(M \times \mathbb{R}_{>0})$ be a conical Legendrian cobordism from $\Lambda_- \subset J^1(M)$ to $\Lambda_+ \subset J^1(M)$ that is conical on $J^1(M \times (0, s_-))$ where $\epsilon < s_-c(\Lambda_-)$. Then the doubling functor is fully faithful
    $$w_{\widetilde{L}}^{(0,s_-)}: Sh_{\Lambda_-}(M \times \mathbb{R})_0 \hookrightarrow Sh_{T_{-\epsilon}(\widetilde{L}) \cup T_\epsilon(\widetilde{L})}(M \times \mathbb{R} \times (0, s_-))_0.$$
\end{proposition}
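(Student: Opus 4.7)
The plan is to deduce full faithfulness by writing out the two long exact sequences that come from the defining exact triangle
\[
T_{-\epsilon}(\pi^{-1}\SF)\longrightarrow T_{\epsilon}(\pi^{-1}\SF)\longrightarrow w_{\widetilde L}^{(0,s_-)}(\SF)
\]
and its analogue for $\SG$, and then identifying each resulting $\mathrm{Hom}$ group by means of the two preparatory vanishing lemmas. First I would apply $\mathrm{Hom}(-,w_{\widetilde L}^{(0,s_-)}(\SG))$ to the triangle above, and then apply $\mathrm{Hom}(T_{\pm\epsilon}(\pi^{-1}\SF),-)$ to the triangle $T_{-\epsilon}(\pi^{-1}\SG)\to T_{\epsilon}(\pi^{-1}\SG)\to w_{\widetilde L}^{(0,s_-)}(\SG)$, so that the four corners of a $2\times 2$ square of Hom-groups
\[
\mathrm{Hom}(T_{\pm\epsilon}(\pi^{-1}\SF),T_{\pm\epsilon}(\pi^{-1}\SG))
\]
appear, and are related to $\mathrm{Hom}(w(\SF),w(\SG))$ by iterated long exact sequences.

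Next I would evaluate these corners. Lemma \ref{lem:negative-0} gives
$\mathrm{Hom}(T_{\epsilon}(\pi^{-1}\SF),T_{-\epsilon}(\pi^{-1}\SG))\simeq 0$, which immediately collapses the triangle for $\mathrm{Hom}(T_{\epsilon}(\pi^{-1}\SF),w(\SG))$ to $\mathrm{Hom}(T_{\epsilon}(\pi^{-1}\SF),T_{\epsilon}(\pi^{-1}\SG))$, and since $T_{\epsilon}$ is an equivalence by Theorem \ref{GKS}, this simplifies further to $\mathrm{Hom}(\pi^{-1}\SF,\pi^{-1}\SG)$. I would then factor $\pi$ through the diffeomorphism $(x,z,t)\mapsto(x,z/t,t)$ followed by projection $p: M\times\bR\times(0,s_-)\to M\times\bR$; since the fibers of $p$ are contractible and $p_*p^{-1}\SG\simeq\SG$, this yields $\mathrm{Hom}(\pi^{-1}\SF,\pi^{-1}\SG)\simeq\mathrm{Hom}(\SF,\SG)$. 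Analogously, Lemma \ref{lem:negative-hom} identifies $\mathrm{Hom}(T_{-\epsilon}(\pi^{-1}\SF),T_{\epsilon}(\pi^{-1}\SG))\simeq\mathrm{Hom}(\SF,\SG)$ via the GKS equivalence that relates the double pushoff near $s_-$ to the small Reeb pushoff $T_{\pm\epsilon/s_-}$ on $\Lambda_-$, combined with Lemma \ref{lem:small-pushoff}.

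With these identifications in hand, the triangle computing $\mathrm{Hom}(T_{-\epsilon}(\pi^{-1}\SF),w(\SG))$ takes the form $\mathrm{Hom}(\SF,\SG)\to\mathrm{Hom}(\SF,\SG)\to\mathrm{Hom}(T_{-\epsilon}(\pi^{-1}\SF),w(\SG))$; so I need only verify that the first arrow is the identity, equivalently that the Sato--Sabloff continuation $T_{-\epsilon}(\pi^{-1}\SF)\to T_{\epsilon}(\pi^{-1}\SF)$ induces an isomorphism $\mathrm{Hom}(T_{\epsilon}(\pi^{-1}\SF),T_{\epsilon}(\pi^{-1}\SG))\xrightarrow{\sim}\mathrm{Hom}(T_{-\epsilon}(\pi^{-1}\SF),T_{\epsilon}(\pi^{-1}\SG))$. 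Under the identifications above this map corresponds to the Reeb continuation $\mathrm{Hom}(\SF,\SG)\to\mathrm{Hom}(\SF,T_{2\epsilon/s_-}\SG)$, which is an isomorphism by Lemma \ref{lem:small-pushoff} provided $2\epsilon/s_-<c(\Lambda_-)$, a condition implied by the hypothesis $\epsilon<s_-c(\Lambda_-)$ (after possibly shrinking). Granting this, $\mathrm{Hom}(T_{-\epsilon}(\pi^{-1}\SF),w(\SG))=0$, and the outer triangle $\mathrm{Hom}(T_{-\epsilon}(\pi^{-1}\SF)[1],w(\SG))\to\mathrm{Hom}(w(\SF),w(\SG))\to\mathrm{Hom}(T_{\epsilon}(\pi^{-1}\SF),w(\SG))$ collapses to the desired isomorphism $\mathrm{Hom}(w(\SF),w(\SG))\simeq\mathrm{Hom}(\SF,\SG)$.

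The main obstacle I anticipate is the compatibility verification in the last paragraph: one must check carefully that, under the chain of identifications coming from Lemmas \ref{lem:negative-0} and \ref{lem:negative-hom} together with the GKS equivalence and the contraction of fibers of $\pi$, the Sato--Sabloff continuation map goes to the Reeb continuation on $\SF,\SG$, rather than some twisted or off-by-a-sign version. Once this naturality is in place, the acyclic-stalk condition (the subscript $0$) is used precisely to invoke Lemma \ref{lem:negative-hom}'s microlocal Morse argument in the region near $s_-'<s_-$ where $\pi^{-1}\SF$ becomes a local system.
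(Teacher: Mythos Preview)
Your approach is essentially the same as the paper's: decompose via the defining triangle, use Lemmas \ref{lem:negative-0} and \ref{lem:negative-hom} to identify the four corner $\mathrm{Hom}$'s, and then check that the Reeb continuation map induces an isomorphism so that the second triangle collapses. One small correction: the acyclic-stalk hypothesis (the subscript $0$) is actually used in Lemma \ref{lem:negative-0}---to argue that near $s\to 0$ one of the pushed-off sheaves is locally constant on the support of the other---rather than in Lemma \ref{lem:negative-hom}.
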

\begin{proof}
    It suffices to show that for any $\SF, \SG \in Sh_{\Lambda_-}(M \times \bR)_0$ with acyclic stalks at $-\infty$,
    $$Hom(\mathrm{Cone}(T_{-\epsilon}(\pi^{-1}\mathscr{F}) \rightarrow T_{\epsilon}(\pi^{-1}\mathscr{F})), \mathrm{Cone}(T_{-\epsilon}(\pi^{-1}\mathscr{G}) \rightarrow T_{\epsilon}(\pi^{-1}\mathscr{G}))) \simeq Hom(\SF, \SG).$$

    First, we prove that
    $$Hom(T_{\epsilon}(\pi^{-1}\mathscr{F}), \mathrm{Cone}(T_{-\epsilon}(\pi^{-1}\mathscr{G}) \rightarrow T_{\epsilon}(\pi^{-1}\mathscr{G}))) = Hom(\SF, \SG).$$
    By Lemma \ref{lem:negative-0} we know that for $s_-' < s_-$ sufficiently small,
    $$Hom(T_{\epsilon}(\pi^{-1}\mathscr{F}), T_{-\epsilon}(\pi^{-1}\mathscr{G})) = Hom(T_{\epsilon}(\pi^{-1}\mathscr{F})|_{(0, s_-')}, T_{-\epsilon}(\pi^{-1}\mathscr{G})|_{(0, s_-')}) = 0.$$
    On the other hand, we know by Guillermou-Kashiwara-Schapira Theorem \ref{GKS} \cite{GKS} that
    $$Hom(T_{\epsilon}(\pi^{-1}\mathscr{F}), T_{\epsilon}(\pi^{-1}\mathscr{G})) = Hom(T_{\epsilon}(\pi^{-1}\mathscr{F})|_{(0, s_-')}, T_{\epsilon}(\pi^{-1}\mathscr{G})|_{(0, s_-')}) = Hom(\SF, \SG).$$
    Since the natural continuation map given by the Reeb flow $T_t$ factors through the restrictions to $M \times \bR \times (0, s_-')$. This implies that
    $$Hom(T_{\epsilon}(\pi^{-1}\mathscr{F}), \mathrm{Cone}(T_{-\epsilon}(\pi^{-1}\mathscr{G}) \rightarrow T_{\epsilon}(\pi^{-1}\mathscr{G}))) = Hom(\SF, \SG).$$

    Next, we prove that
    $$Hom(T_{-\epsilon}(\pi^{-1}\mathscr{F}), \mathrm{Cone}(T_{-\epsilon}(\pi^{-1}\mathscr{G}) \rightarrow T_{\epsilon}(\pi^{-1}\mathscr{G}))) = 0.$$
    By Lemma \ref{lem:negative-hom}, we know that for $s_-' < s_-$ sufficiently close to each other,
    $$Hom(T_{-\epsilon}(\pi^{-1}\mathscr{F}), T_{\epsilon}(\pi^{-1}\mathscr{G})) = Hom(T_{-\epsilon}( \pi^{-1}\mathscr{F})|_{(s_-', s_-)}, T_{\epsilon}(\pi^{-1}\mathscr{G})|_{(s_-', s_-)}) = Hom(\SF, \SG).$$
    On the other hand, we know by Theorem \ref{GKS} that
    $$Hom(T_{-\epsilon}(\pi^{-1}\mathscr{F}), T_{-\epsilon}(\pi^{-1}\mathscr{G})) = Hom(T_{-\epsilon}( \pi^{-1}\mathscr{F})|_{(s_-', s_-)}, T_{-\epsilon}(\pi^{-1}\mathscr{G})|_{(s_-', s_-)}) = Hom(\SF, \SG)$$
    Since the natural continuation map given by the Reeb flow $T_t$ factors through the restrictions to $M \times \bR \times (s_-', s_-)$. This implies that
    $$Hom(T_{-\epsilon}(\pi^{-1}\mathscr{F}), \mathrm{Cone}(T_{-\epsilon}(\pi^{-1}\mathscr{G}) \rightarrow T_{\epsilon}(\pi^{-1}\mathscr{G}))) = 0.$$
    Combining the two equalities, we can therefore conclude that the doubling functor is fully faithful near the negative end.
\end{proof}

\begin{proposition}\label{prop:double-negative-ad}
    Let $\widetilde{L} \subset J^1(M \times \mathbb{R}_{>0})$ be a conical Legendrian cobordism from $\Lambda_- \subset J^1(M)$ to $\Lambda_+ \subset J^1(M)$ that is conical on $J^1(M \times (0, s_-))$. Then $\iota^*_{\widetilde{L}} \circ w_{\widetilde{L}}^{(0,s_-)}[-1]$ is the left adjoint of the restriction
    $$i_-^{-1}: Sh_{\widetilde{L}}(M \times \bR \times (0, s_-))_0 \hookrightarrow Sh_{\Lambda_-}(M \times \bR).$$
\end{proposition}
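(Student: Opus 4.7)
Fix $\SF \in Sh_{\Lambda_-}(M \times \bR)_0$ and $\SG \in Sh_{\widetilde{L}}(M \times \bR \times (0, s_-))_0$. The plan is to establish the adjunction by direct computation of $Hom$ spaces. Using the adjunction $\iota^*_{\widetilde{L}} \dashv \iota_{\widetilde{L}}$, the target identity reduces to
$$Hom_{Sh(M \times \bR \times (0, s_-))}(w_{\widetilde{L}}^{(0,s_-)}(\SF)[-1], \SG) \simeq Hom_{Sh_{\Lambda_-}}(\SF, i_-^{-1}\SG).$$
Applying $Hom(-, \SG)$ to the defining exact triangle
$$w_{\widetilde{L}}^{(0,s_-)}(\SF)[-1] \longrightarrow T_{-\epsilon}(\pi^{-1}\SF) \longrightarrow T_\epsilon(\pi^{-1}\SF)$$
yields a cofiber sequence that reduces the claim to two subclaims: (a) the vanishing $Hom(T_\epsilon\pi^{-1}\SF, \SG) \simeq 0$, and (b) the identification $Hom(T_{-\epsilon}\pi^{-1}\SF, \SG) \simeq Hom(\SF, i_-^{-1}\SG)$. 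Both subclaims can be attacked by mirroring the singular support computations already carried out in Lemmas \ref{lem:negative-0} and \ref{lem:negative-hom}, the only difference being that one of the two arguments of $\mathscr{H}om$ is now a general $\SG \in Sh_{\widetilde{L}}$ rather than a doubled object.

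For (a), I would run the argument of Lemma \ref{lem:negative-0} essentially verbatim. Estimating $SS(\mathscr{H}om(T_\epsilon\pi^{-1}\SF, \SG))$ via \cite{KS}*{Proposition 5.4.14}, any point with vanishing first covector components matches a pair $(x, t; \xi, 1), (x, t'; \xi, 1) \in \Lambda_-$ with $st' = st + \epsilon$, forcing the $\sigma$-component $t' - t = \epsilon/s$ to be positive. Microlocal Morse \cite{KS}*{Corollary 5.4.19} then identifies the global $Hom$ with its restriction to an arbitrarily small slab $M \times \bR \times (0, s_-')$, and on such a slab the acyclic-stalk hypothesis at $-\infty$ transferred to $\pi^{-1}\SF$ annihilates the remaining $Hom$, exactly as in the final step of Lemma \ref{lem:negative-0}.

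For (b), I would combine Lemma \ref{lem:negative-hom} with GKS and the small Reeb pushoff. The analogous singular support computation gives $\sigma = t' - t = -\epsilon/s < 0$, so microlocal Morse reduces the global $Hom$ to its restriction to a small upper slab $M \times \bR \times (s_-', s_-)$; on this slab the scaling contactomorphism provides an identification with $Hom(T_{-\epsilon/s_-}\SF, i_-^{-1}\SG)$, after which Lemma \ref{lem:small-pushoff} drops the Reeb pushoff to produce $Hom(\SF, i_-^{-1}\SG)$. The hard part will be making this last step rigorous: one must verify that the slice restriction $i_-^{-1}$, as defined in the statement of the proposition (with target $Sh_{\Lambda_-}(M\times\bR)$ rather than $Sh_{\Lambda_-^{s_-}}(M\times\bR)$), fits into the framework of Theorem \ref{GKS} applied to the scaling Hamiltonian on $J^1(M \times (0, s_-))$ that identifies the conical end of $\widetilde{L}$ with the cylinder on $\Lambda_-$, and to keep careful track of how the rescaling of the Reeb coordinate interacts with the normalization $\Lambda_-^{s_-} \leftrightarrow \Lambda_-$ implicit in that convention. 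Once this compatibility is in place, the computation of $Hom(\pi^{-1}\SF, \SG)$ as $Hom(\SF, i_-^{-1}\SG)$ is a formal consequence of the equivalence $Sh_{\widetilde{L}}(M \times \bR \times (0, s_-))_0 \xrightarrow{\sim} Sh_{\Lambda_-}(M \times \bR)_0$ provided by this scaling, and the rest of the argument is routine.
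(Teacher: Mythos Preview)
Your proposal is correct and matches the paper's approach. The paper's proof is a single line: it reduces to showing
\[
Hom\big(\mathrm{Cone}(T_{-\epsilon}(\pi^{-1}\SF) \to T_\epsilon(\pi^{-1}\SF)),\, \pi^{-1}\SG\big) \simeq Hom(\SF, \SG)
\]
for $\SF, \SG \in Sh_{\Lambda_-}(M \times \bR)_0$, and then simply cites Lemmas~\ref{lem:negative-0} and~\ref{lem:negative-hom}. The only organizational difference is that the paper invokes the GKS equivalence on the conical region \emph{first}, replacing a general $\SG \in Sh_{\widetilde{L}}(M \times \bR \times (0, s_-))_0$ by $\pi^{-1}\SG'$ with $\SG' = i_-^{-1}\SG$, so that both lemmas apply verbatim (with $2\epsilon$ replaced by $\epsilon$); you instead keep $\SG$ general, redo the singular support estimates, and invoke GKS at the end of step~(b). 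The bookkeeping concern you raise about the normalization $\Lambda_-^{s_-} \leftrightarrow \Lambda_-$ is real but routine, and the paper simply absorbs it into the convention that $\pi^{-1}$ is the inverse of $i_-^{-1}$ on the conical region.
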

\begin{proof}
    It suffices to show that for any $\SF, \SG \in Sh_{\Lambda_-}(M \times \bR)_0$ with acyclic stalks at $-\infty$
    $$Hom(\mathrm{Cone}(T_{-\epsilon}(\pi^{-1}\mathscr{F}) \rightarrow T_{\epsilon}(\pi^{-1}\mathscr{F})), \pi^{-1}\mathscr{G}) \simeq Hom(\SF, \SG).$$
    This follows from the Lemma \ref{lem:negative-0} and \ref{lem:negative-hom}.
\end{proof}

\subsubsection{Doubling away from the negative end}
    We construct the doubling functor away from the negative end using the doubling construction with some uniform Reeb pushoff.

    In fact, by Lemma \ref{lem:nbhd-sft-cob-2}, there exists a complete adapted metric on $J^1(M \times (s_0, +\infty))$ such that $\widetilde{L} \cap J^1(M \times (s_0, +\infty))$ admits a tubular neighbourhood of positive radius. Therefore, we get a fully faithful doubling functor
    $$w_{\widetilde{L}}^{(s_0,+\infty)}: \mu Sh_{\widetilde{L}}(\widetilde{L}) \rightarrow Sh_{T_{-\epsilon}(\widetilde{L}) \cup T_\epsilon(\widetilde{L})}(M \times \mathbb{R} \times (s_0, +\infty)).$$

\begin{lemma}
    Let $\widetilde{L} \subset J^1(M \times \mathbb{R}_{>0})$ be a conical Legendrian cobordism from $\Lambda_- \subset J^1(M)$ to $\Lambda_+ \subset J^1(M)$ that is conical on $J^1(M \times (0, s_-))$ where $\epsilon < s_0c(\Lambda_-) < s_-c(\Lambda_-)$. Given $\mathscr{F} \in Sh_{\Lambda_-}(M \times \mathbb{R})$ and $\mathscr{L} \in \mu Sh_{\widetilde{L}}(\widetilde{L})$, suppose
    $$m_{\Lambda_-}(\mathscr{F}) = i_{\Lambda_-}^{-1}\mathscr{L}.$$
    Then for the inclusions $i_{(s_0,s_-)}^-: M \times \mathbb{R} \times (s_0, s_-) \hookrightarrow M \times \mathbb{R} \times (0, s_-)$ and $i_{(s_0,s_-)}^+: M \times \mathbb{R} \times (s_0, s_-) \hookrightarrow M \times \mathbb{R} \times (s_0, +\infty)$, we have a canonical isomorphism
    $$(i_{(s_0,s_-)}^-)^{-1}w_{\widetilde{L}}^{(0,s_-)}(\mathscr{F}) = (i_{(s_0,s_-)}^+)^{-1}w_{\widetilde{L}}^{(s_0,+\infty)}(\mathscr{L}).$$
\end{lemma}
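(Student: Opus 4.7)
The plan is to exploit the cylindrical (conical) structure of $\widetilde{L}$ on the overlap region $M \times \mathbb{R} \times (s_0, s_-)$ to reduce the claimed identification to a comparison on a single slice, where both sides are doublings of the common microlocal data.

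First, I would observe that on $J^1(M \times (0, s_-))$ the Legendrian $\widetilde{L}$ has the conical form of Definition \ref{def:conical-cob}, and the same is true for $T_{\pm\epsilon}(\widetilde{L})$. After the reparametrization $(x, z, s) \mapsto (x, z/s, s)$ used in the construction of $w_{\widetilde{L}}^{(0, s_-)}$, the doubled Legendrian on the overlap becomes the Legendrian movie of a contact Hamiltonian isotopy in the $s$-direction (the rescaling isotopy). By Guillermou--Kashiwara--Schapira (Theorem \ref{GKS}), the restriction to the slice $s = s_0$ gives an equivalence
\[
Sh_{T_{-\epsilon}(\widetilde{L}) \cup T_\epsilon(\widetilde{L})}\big(M \times \mathbb{R} \times (s_0, s_-)\big) \xrightarrow{\sim} Sh_{T_{-\epsilon/s_0}(\Lambda_-^{s_0}) \cup T_{\epsilon/s_0}(\Lambda_-^{s_0})}(M \times \mathbb{R}),
\]
and by Remark \ref{GKSviaCont} this equivalence is compatible with microlocalization. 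Thus it suffices to check that the two sheaves agree after applying $i_{s_0}^{-1}$.

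Second, I would compute both slices separately. For the first construction, Lemma \ref{lem:cob-compatible} directly yields
\[
i_{s_0}^{-1}\, w_{\widetilde{L}}^{(0,s_-)}(\SF) \simeq w_{\Lambda_-^{s_0}}\big(m_{\Lambda_-^{s_0}}(i_{s_0}^{-1}\pi^{-1}\SF)\big) \simeq w_{\Lambda_-^{s_0}}\big(m_{\Lambda_-}(\SF)\big),
\]
where the last identification absorbs the rescaling $\Lambda_-^{s_0} \simeq \Lambda_-$ into the contact equivalence. For the second construction, I would invoke the defining exact triangle $T_{-\epsilon} \to T_\epsilon \to w_{\widetilde{L}} \circ m_{\widetilde{L}}$ of Theorem \ref{thm:double-noncpt}. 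Since on the conical subregion the Reeb pushoffs $T_{\pm\epsilon}$ act trivially in $s$ and the slice $\{s_0\}$ is non-characteristic for the doubled Legendrian, restriction to $\{s_0\}$ commutes with both $T_{\pm\epsilon}$ and with microlocalization along the cone, giving
\[
i_{s_0}^{-1}\, w_{\widetilde{L}}^{(s_0,+\infty)}(\SL) \simeq w_{\Lambda_-^{s_0}}\big(i_{\Lambda_-^{s_0}}^{-1} \SL\big).
\]

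Third, the compatibility hypothesis $m_{\Lambda_-}(\SF) = i_{\Lambda_-}^{-1}\SL$ (transported along the scaling equivalence between $\Lambda_-$ and $\Lambda_-^{s_0}$) identifies the two microlocal sheaves, and functoriality of the doubling produces a canonical isomorphism between the two slices. Pulling back through the GKS equivalence in the first step then extends this to the desired canonical isomorphism on the entire overlap.

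The main obstacle I expect is bookkeeping: carefully tracking the various contact isotopies (the rescaling isotopy that makes $\widetilde{L}$ cylindrical, and the Hamiltonian isotopy inside the Weinstein neighborhood used in Theorem \ref{thm:double-noncpt}) and checking that restriction to $\{s_0\}$ intertwines them with the doubling in a canonical way. Concretely, the delicate point is justifying that $w_{\widetilde{L}}^{(s_0, +\infty)}$ — built via Proposition \ref{prop:double-nonuni} and a Hamiltonian isotopy from $T_{\pm\rho}(\widetilde{L})$ to $T_{\pm\epsilon}(\widetilde{L})$ supported in the Weinstein neighborhood — really restricts to the standard closed-Legendrian doubling $w_{\Lambda_-^{s_0}}$ on a slice, which should follow from naturality of the local doubling $\widetilde{w}_\Lambda$ of Proposition \ref{prop:double-para} with respect to inclusions of open subsets.
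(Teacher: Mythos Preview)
Your approach is essentially the same as the paper's: reduce to a single slice via the GKS equivalence on the conical overlap, then identify both restrictions with $w_{\Lambda_-}(m_{\Lambda_-}(\SF)) = w_{\Lambda_-}(i_{\Lambda_-}^{-1}\SL)$ using Lemma~\ref{lem:cob-compatible} and the locality of the noncompact doubling. The only slip is that you restrict to the boundary slice $\{s_0\}$, where $w_{\widetilde{L}}^{(s_0,+\infty)}(\SL)$ is not defined; the paper (and your argument, once corrected) takes an arbitrary interior slice $s_1 \in (s_0, s_-)$ instead.
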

\begin{proof}
    Since $\widetilde{L}$ is conical on $J^1(M \times (s_0, s_-))$, there is a canonical equivalence by Guillermou-Kashiwara-Schapira Theorem \ref{GKS} \cite{GKS} that
    $$Sh_{\Lambda_-}(M \times \mathbb{R}) \xrightarrow{\sim} Sh_{\widetilde{L}}(M \times \mathbb{R} \times (s_0, s_-)),$$
    it suffices to show that for any $s_1 \in (s_0, s_-)$ and the corresponding inclusions $i_{s_1}^-: M \times \mathbb{R} \times \{s_1\} \hookrightarrow M \times \mathbb{R} \times (0, s_-)$ and $i_{s_1}^+: M \times \mathbb{R} \times \{s_1\} \hookrightarrow M \times \mathbb{R} \times (s_0, +\infty)$, there is an isomorphism
    $$(i_{s_1}^-)^{-1}w_{\widetilde{L}}^{(0,s_-)}(\mathscr{F}) = (i_{s_1}^+)^{-1}w_{\widetilde{L}}^{(s_0,+\infty)}(\mathscr{L}).$$
    On the other hand, as our construction of $w_{\widetilde{L}}^{(s_0,+\infty)}$ is local with respect to small open subsets $U \times I \times J \subset M \times \mathbb{R} \times (s_0, s_-)$, it is compatible with the construction on a single slice $w_{\Lambda_-}$ and there is an obvious isomorphism that
    $$(i_{s_1}^+)^{-1}w_{\widetilde{L}}^{(s_0,+\infty)}(\mathscr{L}) = w_{\Lambda_-}(i_{\Lambda_-}^{-1}\mathscr{L}) = w_{\Lambda_-} \circ m_{\Lambda_-}(\mathscr{F}).$$
    Then the isomorphism follows from Lemma \ref{lem:cob-compatible} that $i_{s_1}^{-1}w_{\widetilde{L}}^{(0,s_-)}(\mathscr{F}) = w_{\Lambda_-} \circ m_{\Lambda_-}(\mathscr{F})$.
\end{proof}
\begin{remark}
    Instead of defining the doubling functor for the Legendrian cobordism in $J^1(M \times \mathbb{R}_{>0})$, one may consider defining the doubling functor on the truncated cobordism in $J^1(M \times (s_0, +\infty))$, where a Weinstein neighbourhood of positive radius always exist. However, then there will be obstruction to apply Reeb pushoff to send one copy of the Legendrian to infinity, see Remark \ref{rem:obstruct-cob}. As we will explain, actually both the construction of doubling and the construction of pushing one copy to infinity come down to the question about the tubular neighbourhoods.
\end{remark}

    Using the above lemma, we can prove the well-definedness of the doubling functor in Theorem \ref{thm:double-cob} which is
    $$w_{\widetilde{L}}: \, Sh_{\Lambda_-}(M \times \mathbb{R}) \times_{\mu Sh_{\Lambda_-}(\Lambda_-)} \mu Sh_{\widetilde{L}}(\widetilde{L}) \hookrightarrow Sh_{T_{-\epsilon}(\widetilde{L}) \cup T_\epsilon(\widetilde{L})}(M \times \mathbb{R} \times \mathbb{R}_{>0}).$$
    Next, we will need to address the full faithfulness of the doubling functor. This follows immediately from full faithfulness on the negative end Proposition \ref{prop:double-negative-ff} and full faithfulness away from the negative end Theorem \ref{thm:double-noncpt}.

    Moreover, we can show the adjunction property in the following proposition.

\begin{theorem}\label{thm:double-cob-ad}
    Let $\widetilde{L} \subset J^1(M \times \mathbb{R}_{>0})$ be a conical Legendrian cobordism from $\Lambda_- \subset J^1(M)$ to $\Lambda_+ \subset J^1(M)$. Then $\iota^*_{\widetilde{L}} \circ w_{\widetilde{L}}[-1]$ is the left adjoint of the functor
    $$(i_-, m_{\widetilde{L}}): Sh_{\widetilde{L}}(M \times \bR \times (0, +\infty))_0 \rightarrow Sh_{\Lambda_-}(M \times \bR)_0 \times_{\mu Sh_{\Lambda_-}(\Lambda_-)} \mu Sh_{\widetilde{L}}(\widetilde{L}).$$
\end{theorem}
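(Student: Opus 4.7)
The plan is to glue the two local adjunctions already established---Proposition \ref{prop:double-negative-ad} on the cylindrical end $U_- = M \times \bR \times (0, s_-)$ and Theorem \ref{thm:double-noncpt-ad} on $U_+ = M \times \bR \times (s_0, +\infty)$, the hypothesis for the latter being supplied by Lemma \ref{lem:nbhd-sft-cob-2}---using the fact that $Sh_{\widetilde{L}}$ is a sheaf of dg categories along the open cover $\{U_-, U_+\}$ of $M \times \bR \times \bR_{>0}$.

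Given $(\SF, \SL)$ in the source and $\SG \in Sh_{\widetilde{L}}(M \times \bR \times \bR_{>0})_0$, the universal property of the fiber product on the source gives
$$Hom\bigl((\SF, \SL),\, (i_-^{-1}\SG, m_{\widetilde{L}}\SG)\bigr) \simeq Hom(\SF, i_-^{-1}\SG) \times_{Hom(m_{\Lambda_-}\SF,\, i_{\Lambda_-}^{-1} m_{\widetilde{L}}\SG)} Hom(\SL, m_{\widetilde{L}}\SG).$$
On the other hand, the sheaf-of-categories property, together with the equalities $w_{\widetilde{L}}(\SF,\SL)|_{U_-} = w_{\widetilde{L}}^{(0,s_-)}(\SF)$ and $w_{\widetilde{L}}(\SF,\SL)|_{U_+} = w_{\widetilde{L}}^{(s_0,+\infty)}(\SL)$ from the lemma preceding this theorem, presents $Hom(\iota^*_{\widetilde{L}} w_{\widetilde{L}}(\SF,\SL)[-1], \SG)$ as a fiber product of $Hom(\iota^*_{\widetilde{L}}w_{\widetilde{L}}^{(0,s_-)}(\SF)[-1], \SG|_{U_-})$ and $Hom(\iota^*_{\widetilde{L}}w_{\widetilde{L}}^{(s_0,+\infty)}(\SL)[-1], \SG|_{U_+})$ over the corresponding Hom on the overlap $U_- \cap U_+ = M \times \bR \times (s_0, s_-)$.

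Next I would rewrite these using the local adjunctions: Proposition \ref{prop:double-negative-ad} identifies the first factor with $Hom(\SF, i_-^{-1}\SG)$, and Theorem \ref{thm:double-noncpt-ad} applied to $\widetilde{L}|_{U_+}$ identifies the second with $Hom(\SL, m_{\widetilde{L}}\SG)$. To match the overlap term with $Hom(m_{\Lambda_-}\SF,\, i_{\Lambda_-}^{-1} m_{\widetilde{L}}\SG)$, I would use Theorem \ref{GKS} over the conical overlap to reduce to a single slice, Lemma \ref{lem:small-pushoff} to ensure small Reeb pushoffs compute the correct Hom, and the exact triangle $T_{-\epsilon} \to T_\epsilon \to w_{\Lambda_-} \circ m_{\Lambda_-}$ combined with Lemma \ref{sheafmusheaf} to transition from $Sh$ to $\mu Sh$ on the overlap.

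The main obstacle will be verifying that the two restriction maps into the overlap Hom---one from the first factor via restriction to a slice in $(s_0,s_-)$, the other from the second factor via restriction of microsheaves to $\Lambda_-$---coincide as maps into $Hom(m_{\Lambda_-}\SF,\, i_{\Lambda_-}^{-1} m_{\widetilde{L}}\SG)$. This is the morphism-level analogue of the object-level compatibility proved in the lemma preceding the theorem, and I expect it to follow by the same local-to-global argument based on the refined microlocal cut-off lemma that underlies the construction of both doublings.
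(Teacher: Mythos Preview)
Your proposal is correct and follows essentially the same route as the paper: both invoke Proposition~\ref{prop:double-negative-ad} on $U_-$ and Theorem~\ref{thm:double-noncpt-ad} on $U_+$, then glue over the conical overlap using the object-level compatibility lemma just before the theorem. The paper's proof is in fact terser than yours---it simply records the three local Hom identifications (negative end, away from the negative end, overlap via $w_{\Lambda_-}$) and asserts that the global adjunction follows---so the careful fiber-product bookkeeping and the explicit verification of the overlap restriction maps that you flag as the main obstacle are left implicit there.
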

\begin{proof}
    On the negative end, for any $\SF \in Sh_{\Lambda_-}(M \times \bR)_0$ and $\SG \in Sh_{\widetilde{L}}(M \times \bR \times (0, s_-))_0$, by Proposition \ref{prop:double-negative-ad}, we know that
    $$Hom(w_{\widetilde{L}}^{(0, s_-)}(\SF), \SG) = Hom(\SF, i_-^{-1}\SG).$$
    Away from the negative end, for any $\SL \in \mu Sh_{\widetilde{L}}(\widetilde{L})$ and $\SG \in Sh_{\widetilde{L}}(M \times \bR \times (s_0, +\infty))_0$, by Theorem \ref{thm:double-noncpt-ad}, we know that
    $$Hom(w_{\widetilde{L}}^{(s_0, +\infty)}(\SL), \SG) = \Gamma(\widetilde{L}, \mu hom(\SL, m_{\widetilde{L}}(\SG))).$$
    On the overlap region, suppose $m_{\Lambda_-}(\SF) = i_-^{-1}\SL$. We also have
    $$Hom(w_{\Lambda_-}(\SL), i_-^{-1}\SG) = \Gamma(\Lambda_-, \mu hom(i_-^{-1}\SL, m_{\Lambda_-}(i_-^{-1}\SG))).$$
    Therefore, we can conclude that globally the adjunction holds.
\end{proof}

\subsection{Sheaf quantization of Lagrangian cobordisms}
    Given the doubling construction in the previous section, now we will pull the double copies $\widetilde{L}$ and $T_{\epsilon}(\widetilde{L})$ apart and get the sheaf quantization functor for the conical Legendrian cobordism $\widetilde{L}$. In addition, at the end of this section, we will explain why there is no unconditional sheaf quantization functor
    $$\Psi_{{L}}: \, \mu Sh_{\widetilde{L}}(\widetilde{L}) \hookrightarrow Sh_{\widetilde{L}}(M \times \mathbb{R} \times \mathbb{R}_{>0})$$
    even though $\widetilde{L}$ is a Legendrian with no Reeb chords.

\subsubsection{Separation of double copied Legendrians}
    First, we apply a contact Hamiltonian flow to separate the front projections of the double copies of the Legendrian.

\begin{proposition}\label{prop:sep-double-cob}
    Let $\widetilde{L} \subset J^1(M \times \mathbb{R}_{>0})$ be a conical Legendrian cobordism with no Reeb chords from $\Lambda_-$ to $\Lambda_+ \subset J^1(M)$. Then for any $u \geq \epsilon > 0$, there is a canonical equivalence
    $$ Sh_{\widetilde{L} \cup T_{\epsilon}(\widetilde{L})}(M \times \mathbb{R} \times \mathbb{R}_{>0}) \xrightarrow{\sim} Sh_{\widetilde{L} \cup T_{u}(\widetilde{L})}(M \times \mathbb{R} \times \mathbb{R}_{>0}).$$
\end{proposition}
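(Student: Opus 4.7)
My plan is to construct a contact Hamiltonian isotopy $\varphi_H^1$ on $J^1(M \times \bR_{>0})$ satisfying $\varphi_H^1(\widetilde{L}) = \widetilde{L}$ and $\varphi_H^1(T_\epsilon(\widetilde{L})) = T_u(\widetilde{L})$, whereupon Theorem \ref{GKS} yields the claimed equivalence of sheaf categories. Structurally the argument mirrors Proposition \ref{prop:separate}, but a new difficulty appears: the Legendrian cobordism $\widetilde{L}$ admits no tubular neighbourhood of positive radius near the negative conical end, so a single cut-off Reeb Hamiltonian cannot be built globally. I therefore assemble $H$ in two pieces.

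On the truncated region $J^1(M \times (s_0, +\infty))$ for a sufficiently small $s_0 > 0$ (chosen so that $\widetilde{L}$ remains conical on $J^1(M \times (0, 2s_0))$), Lemma \ref{lem:nbhd-sft-cob-2} supplies a complete adapted metric under which $\widetilde{L}$ has a tubular neighbourhood of positive radius $r$. Since $\widetilde{L}$ has no Reeb chords and the Reeb flow is pure $z$-translation (while $dz^2$ appears in this metric), after shrinking $r$ below $\epsilon/2$ the neighbourhood is disjoint from $\bigcup_{v \geq \epsilon} T_v(\widetilde{L})$. The argument of Proposition \ref{prop:separate} then produces a Hamiltonian $H_1$ with bounded vector field whose time-$1$ flow fixes $\widetilde{L}$ and carries $T_\epsilon(\widetilde{L})$ to $T_u(\widetilde{L})$ on this region.

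On the conical end $J^1(M \times (0, s_-))$ I would exploit the conical structure directly via the explicit Hamiltonian $K(x,s,y,\sigma,z) = z + \sigma s$. It vanishes on $\widetilde{L}$ (where $z = st = -\sigma s$) and equals $\epsilon$ on $T_\epsilon(\widetilde{L})$. A direct computation with $\alpha_{\mathrm{std}} = dz + \sigma\,ds + y\,dx$ gives the contact vector field
\[
X_K \;=\; -s\,\partial_s \;-\; y\,\partial_y \;+\; (z + 2\sigma s)\,\partial_z,
\]
which integrates explicitly: the time-$\tau$ flow preserves $\widetilde{L}$ while sending $T_\epsilon(\widetilde{L})$ to $T_{\epsilon e^\tau}(\widetilde{L})$. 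Setting $\lambda = \ln(u/\epsilon)$, the time-$1$ flow of $\lambda K$ realises the desired transformation near the negative end.

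Finally I would patch $H_1$ and $\lambda K$ via a cut-off $\chi(s)$ equal to $1$ on $(0, s_0/2)$ and $0$ on $(s_0, +\infty)$, additionally damping $K$ in the $(y,z)$-directions outside a neighbourhood of $\widetilde{L} \cup T_\epsilon(\widetilde{L})$ so that the vector field stays bounded in the complete adapted metric. Since both $\lambda K$ and $H_1$ preserve $\widetilde{L}$, any convex combination of their vector fields remains tangent to $\widetilde{L}$. The main obstacle I anticipate is verifying that the patched flow implements $T_\epsilon(\widetilde{L}) \mapsto T_u(\widetilde{L})$ correctly across the overlap band $s \in (s_0/2, s_0)$: because $X_K$ moves points in the $s$- and $y$-directions (not merely $z$), a naive convex combination of Hamiltonians may fail to send $T_\epsilon(\widetilde{L})$ into $T_u(\widetilde{L})$ pointwise, and one may instead need to compose two separate flows—each suitably cut off in $s$—rather than symmetrise a single Hamiltonian, together with a check that trajectories starting on the two Legendrians remain in regions where the adapted metric keeps the vector field bounded for all finite time. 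Once this technical verification is in place, Theorem \ref{GKS} delivers the equivalence.
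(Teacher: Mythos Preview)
Your proposal contains a significant misconception and a computational error that together undermine the approach.

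The main gap is your assertion that ``a single cut-off Reeb Hamiltonian cannot be built globally'' because $\widetilde{L}$ lacks a tubular neighbourhood of positive radius near the negative end. But Proposition~\ref{prop:separate} does not require a tubular neighbourhood of $\widetilde{L}$; it only requires a neighbourhood of positive radius \emph{disjoint from} $\bigcup_{v \ge \epsilon} T_v(\widetilde{L})$. These are different conditions: the former fails because distinct sheets of $\widetilde{L}$ approach one another as $s \to 0$, but the latter can still hold because the Reeb flow translates by at least $\epsilon$ in $z$ while the $z$-extent of $\widetilde{L}$ over the conical end is only $s\,h(\Lambda_-) \to 0$. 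The paper's proof exploits exactly this: working in the complete adapted metric on all of $J^1(M \times \bR_{>0})$ (induced by $g_M + s^{-2}ds^2$, not a truncation), it verifies the disjointness hypothesis of Proposition~\ref{prop:separate} directly. Near the negative end one picks $s_0$ with $e^{\epsilon/2} s_0 h(\Lambda_-) < \epsilon/2$ and observes that any point of $\bigcup_{v \ge \epsilon} T_v(\widetilde{L})$ lies at distance $\ge \epsilon/2$ from $\widetilde{L} \cap J^1(M \times (0, s_0))$, either in the $z$-coordinate or in the $s$-coordinate (the latter measured in $s^{-2}ds^2$). Away from the negative end one invokes Lemma~\ref{lem:nbhd-sft-cob-1} for the Lagrangian projection together with Lemma~\ref{lem:nbhd-symp-cont}. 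A single cut-off Hamiltonian then suffices globally. Remark~\ref{rem:obstruct-cob} in fact warns precisely against your truncated-metric strategy.

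Your step~2 also contains a computational error: the contact vector field of $K = z + \sigma s$ with respect to $\alpha_{\mathrm{std}} = dz + \sigma\,ds + y\,dx$ is $X_K = s\,\partial_s + y\,\partial_y + z\,\partial_z$ (one checks $\alpha(X_K) = K$ and $L_{X_K}\alpha = \alpha$); your formula $-s\,\partial_s - y\,\partial_y + (z + 2\sigma s)\,\partial_z$ is not a contact vector field. The correct flow does still preserve the conical part of $\widetilde{L}$ and send $T_\epsilon(\widetilde{L})$ to $T_{\epsilon e^\tau}(\widetilde{L})$, but now $s$ \emph{increases} under the flow, so for large $u$ it carries points out of the conical region entirely, making the patching difficulty you already flagged substantially worse. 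The paper sidesteps all of this by never needing to patch.
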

\begin{proof}
    Following Proposition \ref{prop:separate}, we need to show that there exists some neighbourhood $U_{\epsilon'}(\widetilde{L})$ of $\widetilde{L}$ that is disjoint from $\bigcup_{u \geq \epsilon}T_{u}(\widetilde{L})$. Without loss of generality, we assume that $\max_{(x, \xi, t)\in \Lambda_-}t > 0$ while $\min_{(x, \xi, t) \in \Lambda_+}t < 0$.

    First, we find a neighbourhood of the negative end of $\widetilde{L}$. Recall $h(\Lambda_-) = \max_{(x, \xi, t)\in \Lambda_-}t - \min_{(x, \xi, t) \in \Lambda_+}t$. Assume that $\widetilde{L}$ is conical on $J^1(M \times (0, s_0))$, and moreover fix $e^{\epsilon/2}s_0h(\Lambda_-) < \epsilon/2$. Then there exists a neighbourhood of $\widetilde{L} \cap J^1(M \times (0, s_0))$ of radius $\epsilon/2$ that is disjoint from $\bigcup_{u \geq \epsilon}T_u(\widetilde{L})$. This is because for points in $\bigcup_{u \geq \epsilon}T_u(\widetilde{L}) \cap J^1(M \times (0, e^{\epsilon/2}s_0))$, the distance between the $z$ coordinates is at least $\epsilon/2$, while for the other points, the distance between the $s$ coordinates (under the Riemannian metric $s^{-2}ds^2$) is at least $\epsilon/2$.

    Then, we find a tubular neighbourhood away from the negative end of $\widetilde{L}$. In fact, by Lemma \ref{lem:nbhd-sft-cob-1}, the Lagrangian projection $L \cap T^*(M \times (s_0, +\infty))$ admits a tubular neighbourhood of positive radius $\epsilon_1$. Therefore, following Lemma \ref{lem:nbhd-symp-cont}, consider a tubular neighbourhood of $\widetilde{L} \cap J^1(M \times (s_0, +\infty))$ of radius $\epsilon' = \min(\epsilon/2, \epsilon_1)$. Then $\widetilde{L} \cap J^1(M \times (s_0, +\infty))$ is disjoint from $\bigcup_{u \geq \epsilon}T_u(\widetilde{L})$. Indeed, for points in $\bigcup_{u \geq \epsilon}T_u(\widetilde{L})$ contained in either $J^1(M \times (0, e^{\epsilon/2}s_0))$ or $J^1(M \times (s_0, +\infty))$, the distance between the $z$ coordinates is at least $\epsilon'$.
    By considering the union of the two neighbourhoods, we complete the proof.
\end{proof}

\begin{remark}\label{rem:obstruct-cob}
    Suppose one starts by working on $J^1(M \times (s_0, +\infty))$ where the doubling construction exists for some uniform $\epsilon > 0$. Then there will be serious difficulty when one tries to pushoff one of the copies by the Reeb flow. This is because by choosing a complete adapted metric on $J^1(M \times (s_0, +\infty))$, different from the restriction of the one on $J^1(M \times \mathbb{R}_{>0})$, the negative end becomes asymptotically horizontal and there will no longer be a tubular neighbourhood of $\Lambda$ with positive radius that is disjoint from $\bigcup_{u \geq \epsilon}T_u(\Lambda)$.

    For example, consider the trivial 1-dimensional conical Legendrian cobordism again as in Figure \ref{fig:leg-nbhd} (right), by the reparametrization identifying $J^1(\mathrm{pt} \times (1, +\infty))$ with $J^1(\mathrm{pt} \times \mathbb{R})$, one may assume that
    $$\widetilde{L} = \{(s, \pm e^s, \pm t_0/2 \pm e^s) | s \in \mathbb{R}\}.$$
    One can easily check that $\inf_{x, x' \in \widetilde{L}}d(x, T_{t_0}(x')) = 0$ under the standard complete adapted metric. Lack of control on the tubular neighbourhood will forbid us to connect the obvious Legendrian isotopy from $\widetilde{L} \cup T_{\epsilon}(\widetilde{L})$ to $\widetilde{L} \cup T_{t_0 + \epsilon}(\widetilde{L})$ by a Hamiltonian isotopy.
\end{remark}

\subsubsection{Fully faithful sheaf quantization functor}
    Based on the proposition, we can finish the proof of Theorem \ref{thm:cond-quan-cob}. Let $t_{\max}(\Lambda_+) = \max_{(x,\xi,t)\in \Lambda_+}t$, $t_{\min}(\Lambda_-) = \min_{(x,\xi,t)\in \Lambda_+}t$ and the height $h(\Lambda_+) = t_{\max}(\Lambda_+) - t_{\min}(\Lambda_+)$. Suppose that $\widetilde{L}$ is conical on $J^1(M \times (s_+, +\infty))$. We choose $\epsilon' > 0$ and $s_+' > 0$ such that $s_+h(\Lambda_+) < s_+'h(\Lambda_+) < \epsilon'$.

\begin{proof}[Proof of Theorem \ref{thm:cond-quan-cob}]
    Using Theorem \ref{thm:double-cob} and Proposition \ref{prop:sep-double-cob}, we know that there exists a doubling functor (fully faithful on the subcategory of sheaves with acyclic stalk at $-\infty$)
    $$w_{\widetilde{L}}': \, Sh_{\Lambda_-}(M \times \mathbb{R})_0 \times_{\mu Sh_{\Lambda_-}(\Lambda_-)} \mu Sh_{\widetilde{L}}(\widetilde{L}) \hookrightarrow Sh_{\widetilde{L} \cup T_{\epsilon'}(\widetilde{L})}(M \times \mathbb{R} \times \mathbb{R}_{>0})_0.$$
    Then by restricting to $M \times (-\infty, s_+' + s_+'t_{\max}(\Lambda_+)) \times (0, s_+')$, we get a functor
    $$Sh_{\Lambda_-}(M \times \mathbb{R})_0 \times_{\mu Sh_{\Lambda_-}(\Lambda_-)} \mu Sh_{\widetilde{L}}(\widetilde{L}) \hookrightarrow Sh_{\widetilde{L}}(M \times (-\infty, s_+' + s_+'t_{\max}(\Lambda_+)) \times (0, s_+'))_0.$$
    Choose a diffeomorphism $\varphi: M \times (-\infty, s_+' + s_+'t_{\max}(\Lambda_+)) \times (0, s_+') \xrightarrow{\sim} M \times \mathbb{R} \times (0, s_+')$ that is the identity on $M \times (-\infty, s_+'/2 + s_+'t_{\max}(\Lambda_+)) \times (0, s_+')$. We will get the first equivalence
    $$Sh_{\widetilde{L}}(M \times (-\infty, s_+' + s_+'t_{\max}(\Lambda_+)) \times (0, s_+')) \xrightarrow{\sim} Sh_{\widetilde{L}}(M \times \mathbb{R} \times (0, s_+')).$$
    Then since $\widetilde{L}$ is conical on $J^1(M \times (s_+, +\infty))$, consider the equivalence by Guillermou-Kashiwara-Schapira Theorem \ref{GKS} \cite{GKS} that
    $$Sh_{\widetilde{L}}(M \times \mathbb{R} \times (s_+, s_+')) \xrightarrow{\sim} Sh_{\widetilde{L}}(M \times \mathbb{R} \times (s_+, +\infty)).$$
    We will therefore get the second equivalence
    $$Sh_{\widetilde{L}}(M \times \mathbb{R} \times (0, s_+')) \xrightarrow{\sim} Sh_{\widetilde{L}}(M \times \mathbb{R} \times (0, +\infty)).$$
    Therefore, combing the first equivalence and the second equivalence, we can conclude that there exists a conditional sheaf quantization functor.

    Now, we show that the sheaf quantization functor is fully faithful when restricted to the subcategory $Sh_{\Lambda_-}(M \times \bR)_0$. Let $j: M \times (-\infty, s_+' + s_+'t_{\max}(\Lambda_+)) \times (0, s_+') \hookrightarrow M \times \bR \times \bR_{>0}$ and $\varphi: M \times (-\infty, s_+' + s_+'t_{\max}(\Lambda_+)) \times (0, s_+') \xrightarrow{\sim} M \times \mathbb{R} \times (0, s_+')$ be the diffeomorphism. By Theorem \ref{thm:double-cob-ad}, we have
    \[\begin{split}
    Hom(\Psi_L(\SF, \SL), \Psi_L(\SF, \SL)) &= Hom(\varphi^{-1}j^{-1}w_{\widetilde{L}}'(\SF, \SL), \varphi^{-1}j^{-1}w_{\widetilde{L}}'(\SF, \SL)) \\
    &= Hom(w_{\widetilde{L}}'(\SF, \SL), j_*\varphi_*\varphi^{-1}j^{-1}w_{\widetilde{L}}'(\SF, \SL)) \\
    &\simeq Hom((\SF, \SL), (i_{\Lambda_-}^{-1}, m_{\widetilde{L}})(w_{\widetilde{L}}'(\SF, \SL))) \\
    &= Hom((\SF, \SL), (\SF, \SL)).
    \end{split}\]
    This concludes the proof of the full faithfulness property.

    Finally, it suffices to prove essential surjectivity in order to conclude that this is an equivalence. In fact, for any $\SF \in Sh_{\widetilde{L}}(M \times \bR)_0$, we can easily show that $\Psi_L(i_-^{-1}\SF, m_{\widetilde{L}}(\SF)) = \SF$. Actually, when constructing the doubling $w_{\widetilde{L}}(i_-^{-1}\SF, m_{\widetilde{L}}(\SF))$, we have
    $$w_{\widetilde{L}}(i_-^{-1}\SF, m_{\widetilde{L}}(\SF)) = \mathrm{Cone}(T_{-\epsilon}(\SF) \rightarrow T_{\epsilon}(\SF)).$$
    At the negative end, this identity follows from the definition, while away from the negative end, this identity follows from the definition and the exact triangle of functors in Theorem \ref{thm:double-noncpt}.
\end{proof}

    Moreover, we are able to prove that the restriction to the positive end is also fully faithful (once we restrict to the subcategory of sheaves with acyclic stalks at $-\infty$).

\begin{proposition}\label{prop:restrict-ff}
    Let $\widetilde{L} \subset J^1(M \times \mathbb{R}_{>0})$ be a conical Legendrian cobordism with no Reeb chords from $\Lambda_-$ to $\Lambda_+ \subset J^1(M)$. Then the restriction functor to positive end is fully faithful
    $$i_+^{-1}: Sh_{\widetilde{L}}(M \times \bR \times \bR_{>0})_0 \hookrightarrow Sh_{\Lambda_+}(M \times \bR)_0.$$
\end{proposition}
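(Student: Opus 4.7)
The plan is to reduce the claim, via the conditional sheaf quantization equivalence $\Psi_{\widetilde{L}}$ of Theorem \ref{thm:cond-quan-cob}, to a direct $\mathrm{Hom}$ computation on the doubled picture, and then exploit the no-Reeb-chord condition on $\widetilde{L}$ together with the acyclic stalks at $-\infty$ to propagate everything to the positive conical end. For $\mathscr{F}, \mathscr{G} \in Sh_{\widetilde{L}}(M \times \mathbb{R} \times \mathbb{R}_{>0})_0$, I would first use essential surjectivity of $\Psi_{\widetilde{L}}$ to write $\mathscr{F} = \Psi_{\widetilde{L}}(\mathscr{F}_-, \mathscr{L})$ and $\mathscr{G} = \Psi_{\widetilde{L}}(\mathscr{G}_-, \mathscr{L}')$, so that $\mathrm{Hom}(\mathscr{F}, \mathscr{G})$ agrees with the fiber product $\mathrm{Hom}(\mathscr{F}_-, \mathscr{G}_-) \times_{\mathrm{Hom}(m_{\Lambda_-}\mathscr{F}_-, m_{\Lambda_-}\mathscr{G}_-)} \mathrm{Hom}(\mathscr{L}, \mathscr{L}')$.

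Next, I would compute $\mathrm{Hom}(i_+^{-1}\mathscr{F}, i_+^{-1}\mathscr{G})$ via the doubling description of $\Psi_{\widetilde{L}}$. Since $\Lambda_+$ is closed, Lemma \ref{lem:nbhd-sft-cob-2} gives a Weinstein tubular neighbourhood of positive radius for $\widetilde{L}$ on $J^1(M \times (s_+, +\infty))$, so the conditional doubling $w_{\widetilde{L}}$ from Theorem \ref{thm:double-cob} agrees there with the standard Guillermou--Jin--Treumann doubling $w_{\Lambda_+}$ on the slice $\{s = s_+\}$. By Guillermou--Kashiwara--Schapira (Theorem \ref{GKS}) applied to the cylindrical region $M \times \mathbb{R} \times (s_+, +\infty)$, restriction to $\{s = s_+\}$ is an equivalence
\[ Sh_{\widetilde{L}}(M \times \mathbb{R} \times (s_+, +\infty))_0 \xrightarrow{\sim} Sh_{\Lambda_+^{s_+}}(M \times \mathbb{R})_0, \]
so the claim reduces to showing that $\mathrm{Hom}(\mathscr{F}, \mathscr{G}) \to \mathrm{Hom}(\mathscr{F}|_{(s_+, +\infty)}, \mathscr{G}|_{(s_+, +\infty)})$ is an isomorphism.

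For this propagation from the positive conical end back to the whole cobordism, I would analyze $SS^\infty(\mathscr{H}om(\mathscr{F}, \mathscr{G}))$ following the microlocal Morse arguments of Lemmas \ref{lem:negative-0} and \ref{lem:negative-hom}, now applied at the positive end. The no-Reeb-chord condition on $\widetilde{L}$ forces the antipodal sum $-T^{*,\infty}\widetilde{L} \cap T^{*,\infty}\widetilde{L}$ to be trivial in the Reeb direction, so $SS^\infty(\mathscr{H}om(\mathscr{F}, \mathscr{G}))$ has codirections with $\tau = 0$ away from the diagonal. Combined with the acyclic stalk condition at $t = -\infty$, one can remove the region $\{s \in (0, s_+]\}$ without changing $\mathrm{Hom}$, in exactly the way the proof of Proposition \ref{prop:double-negative-ff} does near the negative end. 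Matching this computation with the fiber product description of $\mathrm{Hom}(\mathscr{F}, \mathscr{G})$ through the adjunction property of Theorem \ref{thm:double-cob-ad} gives the result.

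The main obstacle is the compatibility of this positive-end propagation with the data on the cobordism body $\widetilde{L} \cap J^1(M \times (s_0, s_+))$: one needs to verify that the singular support estimate $SS(\mathscr{H}om(\mathscr{F}, \mathscr{G}))$ is sufficiently well-behaved (non-characteristic for the projection $(x, t, s) \mapsto (x, s)$ in the appropriate directions) so that microlocal Morse applies uniformly across the cobordism. The absence of a Weinstein neighbourhood of positive radius at the negative end prevents a direct application of the closed-Legendrian argument there, but this is precisely compensated for by the acyclic stalk hypothesis built into the $(-)_0$ subcategory, which plays the same role here as in Lemma \ref{lem:negative-0}.
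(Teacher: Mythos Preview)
Your proposal has a genuine gap at the key step. In your paragraph on propagation you want to control $SS^\infty(\mathscr{H}om(\mathscr{F},\mathscr{G}))$ directly via the antipodal sum estimate and then apply microlocal Morse. But the estimate \cite[Proposition 5.4.14]{KS} requires $SS(\mathscr{F}) \cap SS(\mathscr{G}) \subseteq 0_{M \times \mathbb{R} \times \mathbb{R}_{>0}}$, and here both sheaves have singular support in $\widetilde{L}$, so the hypothesis fails on the nose. The sentence ``the no-Reeb-chord condition on $\widetilde{L}$ forces the antipodal sum $-T^{*,\infty}\widetilde{L} \cap T^{*,\infty}\widetilde{L}$ to be trivial in the Reeb direction'' does not rescue this: the no-Reeb-chord condition tells you $\widetilde{L} \cap T_u(\widetilde{L}) = \varnothing$ for $u \neq 0$, but says nothing about $u = 0$, which is exactly where $SS(\mathscr{F})$ and $SS(\mathscr{G})$ overlap. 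Lemmas \ref{lem:negative-0} and \ref{lem:negative-hom}, which you invoke as a template, already have a Reeb push-off $T_{\pm\epsilon}$ built into their statements; that is what makes the SS estimate applicable there.

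The paper's proof supplies precisely this missing move: it first replaces $Hom(\mathscr{F},\mathscr{G})$ by $Hom(\mathscr{F}, T_u(\mathscr{G}))$ for arbitrary $u > 0$, using Lemma \ref{lem:small-pushoff} together with Proposition \ref{prop:sep-double-cob} (this is where the no-Reeb-chord hypothesis actually enters). Now $SS^\infty(\mathscr{F}) \cap SS^\infty(T_u(\mathscr{G})) = \widetilde{L} \cap T_u(\widetilde{L}) = \varnothing$, the SS estimate applies, and one can take $u$ large enough that the front projections are disjoint over $(0,s_+)$. Then the acyclic-stalk argument you outline kills the contribution over $(0,s_+)$, and GKS on the positive conical end handles $(s_+',+\infty)$. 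Your detour through $\Psi_{\widetilde{L}}$ and the fiber-product description of $\mathrm{Hom}$ is not incorrect, but it is not needed: once you insert the push-off $T_u$, the argument runs directly on $\mathscr{F}$ and $\mathscr{G}$ without ever invoking Theorem \ref{thm:cond-quan-cob}.
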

\begin{proof}
    Let $\SF, \SG \in Sh_{\widetilde{L}}(M \times \bR \times \bR_{>0})_0$. For any $u > 0$, by Lemma \ref{lem:small-pushoff} and Proposition \ref{prop:sep-double-cob}, we know that
    $$Hom(\SF, \SG) = Hom(\SF, T_u(\SG)).$$
    Since $\widetilde{L}$ is proper with respect to the projection $J^1(M \times \bR_{>0}) \rightarrow \bR_{>0}$, choose $u > 0$ be sufficiently large so that $\pi(\widetilde{L}) \cap \pi(T_u(\widetilde{L})) \subset J^1(M \times (s_+, +\infty))$. Then following Lemma \ref{lem:negative-0}, for $s_+ > 0$ sufficiently large, $\SF|_{(0, s_+)}$ is a local system on $\mathrm{supp}(T_u(\SG)|_{(0, s_+)})$, so
    $$SS^\infty(\mathscr{H} om(\SF|_{(0, s_+)}, T_u(\SG)|_{(0, s_+)})) \subset -SS^\infty(T_u(\SG)|_{(0, s_+)})$$
    consisting of points $(x, t, s; y, \tau, \sigma)$ such that $\tau < 0$. Therefore, by microlcoal Morse lemma \cite{KS}*{Corollary 5.4.19}
    $$Hom(\SF|_{(0, s_+)}, T_u(\SG)|_{(0, s_+)}) \simeq 0.$$
    On the other hand, by Lemma \ref{lem:negative-hom}, we know that for $0 < s_+' < s_+$ sufficiently large,
    $$Hom(\SF|_{(s_+', +\infty)}, T_u(\SG)|_{(s_+', +\infty)}) \simeq Hom(\SF, T_{u/s_+'}(\SG)) = Hom(\SF, \SG).$$
    Therefore, we can finish the proof.
\end{proof}

\subsubsection{Remark on general noncompact Legendrians}\label{sec:remark}
    We explain how our construction may be used to obtain the sheaf quantization for general noncompact Legendrians.

    When constructing and pushing off sheaves with singular support on the doubling $\Lambda \cup T_{u}(\Lambda)$, we have seen that the obstruction to obtain the doubling at $u \geq 0$ is either when $u$ is the length of the Reeb chords or when $u$ is the time that the distance $d(\Lambda, T_u(\Lambda)) = 0$. For (conical) Lagrangian cobordisms, the first property we use is that, with respect to the adapted metric that we choose, the only obstruction appears at $u = 0$. Therefore, what we need is simply existence of sheaves on the doubling for any $u > 0$.

    When understanding sheaves with singular support on $\Lambda \cup T_{u}(\Lambda)$ (for $u$ greater than all possible obstructions), we usually need some extra condition since sheaves singularly supported on the doubling are in general very complicated. For (conical) Lagrangian cobordisms, the second property we use is that, the obstruction is also local in space $(x, s) \in M \times \bR_{>0}$, where the obstruction is located at the negative end $M \times (0, s_-)$. Since there are no obstructions of doubling away from the negative end, they are exactly parametrized by local systems on $\Lambda$. Therefore, we can decompose sheaves on the doubling $\Lambda \cup T_{u}(\Lambda)$ as a fiber product.

    Finally, when separating the double copies, for (conical) Legendrian cobordisms, we use the condition that we are working in the 1-jet bundle. In general, it may be possible to consider the contact boundary of subcritical Weinstein manifolds or open contact manifolds with no Reeb orbits by using the colimits of positive isotopies following \cite{Kuo}, but we do not yet know a way to do that.

\section{Action Filtration and Length of Cobordisms}

    In this section, we study the the interaction between Lagrangian cobordisms between Legendrians and action filtrations for Homs between sheaves with singular support on the Legendrians.

    Recall that for $\Lambda \subset J^1(M)$ and $\SF, \SG \in Sh_\Lambda(M \times \bR)$, we have constructed a persistence module $\mathscr{H}om_{(0,+\infty)}(\SF, \SG)$ (using the notation in \cite{LiEstimate}) following Asano-Ike \cite{AsanoIke}. Now we try to understand the map between persistence modules for sheaves microsupported on $\Lambda_-$ and $\Lambda_+$ when they are connected by a Lagrangian cobordism $L$.

    For $\Lambda \subset J^1(M)$, we will use the convention
    $$\Lambda^s = \{(x, s\xi, st) | (x, \xi, t) \in \Lambda\} \subset J^1(M),$$
    which is the image of $\Lambda$ under the contactomorphism defined by scaling.

\begin{theorem}\label{thm:persist}
    Let $\widetilde{L} \subset J^1(M \times \mathbb{R}_{>0})$ be a conical Legendrian cobordism from $\Lambda_- \subset J^1(M)$ to $\Lambda_+ \subset J^1(M)$ with no Reeb chords that is conical outside $J^1(M \times (s_-, s_+))$. Let $\SF_-, \SG_- \in Sh_{\Lambda_-}(M \times \bR)_0$ and $\SF_+, \SG_+ \in Sh_{\Lambda_+}(M \times \bR)_0$, such that under the Lagrangian cobordism functor
    $$\Psi_L(\SF_-, \SL) = \SF, \; \Psi_L(\SG_-, \SL') = \SG, \;\, i_+^{-1}\SF = \SF_+, \; i_+^{-1}\SG = \SG_+.$$
    Let $\SF_+^{s_-}, \SG_+^{s_-} \in Sh_{\Lambda_-^{s_-}}(M \times \bR)_0$ and $\SF_+^{s_+}, \SG_+^{s_+} \in Sh_{\Lambda_+^{s_+}}(M \times \bR)_0$ be the images of $\SF_-, \SG_-$ and $\SF_+, \SG_+$ under the scaling contactomorphism. Then there is an (action decreasing) map between presistence modules
    $$\mathscr{H}om_{(0,+\infty)}(\SF_+^{s_+}, \SG_+^{s_+}) \longrightarrow \mathscr{H}om_{(0,+\infty)}(\SF_-^{s_-}, \SG_-^{s_-}),$$
which enhances the natural map induced by the Lagrangian cobordism functor.
\end{theorem}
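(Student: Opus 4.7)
The plan is to extend the two end-sheaves to cobordism-level sheaves $\SF, \SG \in Sh_{\widetilde{L}}(M \times \bR \times \bR_{>0})_0$ using the conditional sheaf quantization of Theorem~\ref{thm:cond-quan-cob}, define the map of persistence modules at each level $c$ by passing through the cobordism Hom $Hom(\SF, T_c \SG)$, and check naturality in $c$. Here $T_c$ denotes Reeb translation by $c$ in the $z$-direction of $M \times \bR_z \times \bR_{>0,s}$, i.e.\ the Reeb flow of $J^1(M \times \bR_{>0})$. The key geometric point is that this ambient Reeb flow restricts at each slice $s = s_\pm$ to the Reeb flow of $J^1(M)$ on $\Lambda_\pm^{s_\pm}$, so the parameter $c$ is compatible with the persistence-module parameters at both ends.

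The technical core of the argument is the following extension of Proposition~\ref{prop:restrict-ff} to the mixed singular support setting: for every $c > 0$, restriction at $s = s_+$ induces an isomorphism
\[
Hom_{M \times \bR \times \bR_{>0}}(\SF, T_c \SG) \xrightarrow{\sim} Hom_{M \times \bR}(\SF_+^{s_+}, T_c \SG_+^{s_+}).
\]
I would prove this by mimicking the proof of Proposition~\ref{prop:restrict-ff}. The no-Reeb-chord assumption on $\widetilde{L}$ ensures that $\widetilde{L}$ and $T_c \widetilde{L}$ are disjoint for every $c > 0$, so the combination of Lemma~\ref{lem:small-pushoff} with Proposition~\ref{prop:sep-double-cob} applied to the union $\widetilde{L} \cup T_c \widetilde{L}$ gives $Hom(\SF, T_c \SG) \simeq Hom(\SF, T_u \SG)$ for arbitrarily large $u$. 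Taking $u$ large enough that $\pi(\widetilde{L}) \cap \pi(T_u \widetilde{L}) \subset J^1(M \times (s_+, +\infty))$, Lemma~\ref{lem:negative-0} forces the Hom on the region $M \times \bR \times (0, s_+')$ to vanish, while Lemma~\ref{lem:negative-hom} identifies the Hom on the region $M \times \bR \times (s_+', +\infty)$ with $Hom(\SF_+^{s_+}, T_c \SG_+^{s_+})$ through the scaling contactomorphism.

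The analogous restriction $i_-^{-1}$ provides a natural (but in general non-invertible) map $Hom(\SF, T_c \SG) \to Hom(\SF_-^{s_-}, T_c \SG_-^{s_-})$. Composing the inverse of the positive-end isomorphism with this negative-end restriction yields the desired map at each level $c$. Naturality in $c$ is automatic since both restrictions commute with the ambient Reeb flow and hence intertwine the continuation maps that define the persistence module structure. The statement that this enhances the natural map induced by $\Phi_L$ is the $c \to 0^+$ specialization and follows from the compatibility $i_+^{-1} \circ \Psi_L \simeq \Phi_L$ of Theorem~\ref{thm:compatible-cob-intro}. The ``action-decreasing'' interpretation amounts to unpacking the scaling contactomorphism: the ambient Reeb flow by $c$ corresponds to Reeb flow by $c/s_+$ on $\Lambda_+$ and by $c/s_-$ on $\Lambda_-$, so the factor $s_-/s_+ < 1$ records the rescaling of the action filtration undergone by passing from the positive to the negative end.

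I expect the main obstacle to be the mixed-support extension of Proposition~\ref{prop:restrict-ff} in the first step. The crucial new input is the invariance $Hom(\SF, T_c \SG) \simeq Hom(\SF, T_u \SG)$; this rests on the no-Reeb-chord hypothesis, which guarantees disjointness of $\widetilde{L}$ and $T_c \widetilde{L}$ and hence the applicability of Proposition~\ref{prop:sep-double-cob} to their union. The microlocal Morse arguments at the conical ends should go through unchanged, since shifting one sheaf in the $z$ direction by $c$ does not alter the sign of the covector $\tau$ that drives the vanishing in Lemmas~\ref{lem:negative-0} and~\ref{lem:negative-hom}; it only translates the supports by $c$ in the $z$ coordinate.
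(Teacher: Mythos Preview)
Your proposal has a genuine gap at its technical core. The claimed isomorphism
\[
Hom_{M \times \bR \times \bR_{>0}}(\SF, T_c \SG) \xrightarrow{\sim} Hom_{M \times \bR}(\SF_+^{s_+}, T_c \SG_+^{s_+})
\]
is false once $c$ exceeds the length $s_+ c(\Lambda_+)$ of the shortest Reeb chord of $\Lambda_+^{s_+}$. The left side is \emph{independent} of $c$: since $\widetilde{L}$ has no Reeb chords, Lemma~\ref{lem:small-pushoff} together with Proposition~\ref{prop:sep-double-cob} give $Hom(\SF, T_c\SG) \simeq Hom(\SF,\SG)$ for every $c>0$ (this is exactly the corollary after Lemma~\ref{lem:ss-persist}). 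The right side, by contrast, genuinely jumps at each Reeb chord length of $\Lambda_+^{s_+}$ and eventually vanishes when the supports are compact. So the restriction map cannot be an isomorphism for all $c$, and your argument cannot produce a map of persistence modules that sees the filtration on the positive end. Concretely, in your step invoking Lemma~\ref{lem:negative-hom} on $M\times\bR\times(s_+',+\infty)$, you have already replaced the shift $c$ by a large $u$; what that lemma then returns is $Hom(\SF_+,\SG_+)$, not $Hom(\SF_+^{s_+}, T_c\SG_+^{s_+})$. The parameter $c$ has been lost.

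The paper addresses exactly this obstruction, and its method is essentially different from yours. Rather than factoring through the (constant in $c$) cobordism Hom, it packages all the Homs into a single sheaf $\mathscr{H}om_{(0,+\infty)}(\SF,\SG)$ on the $(s,u)$-plane and studies its singular support (Lemma~\ref{lem:ss-persist}). The desired inequality $\sigma>0$ is \emph{not} known to hold on the cobordism region, so the paper forces it by applying the microlocal cut-off $\iota_{\sigma\ge 0}^*$ (Tamarkin projector) and then checks in Proposition~\ref{prop:persist} that this cut-off does not alter the restrictions to $s=s_\pm$. Only after this modification does microlocal Morse lemma make the restriction to $s=s_+$ an isomorphism for every $u$, giving the persistence-module map. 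The missing idea in your approach is precisely this cut-off step; without it there is no intermediate object whose restriction to the positive end is an isomorphism level-by-level.
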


    We briefly explain the natural map $Hom(\SF_+, \SG_+) \rightarrow Hom(\SF_-, \SG_-)$ induced by the Lagrangian cobordism functor. Theorem \ref{thm:cond-quan-cob} and Proposition \ref{prop:restrict-ff} implies that for $\SF_-, \SG_- \in Sh_{\Lambda_-}(M \times \bR)_0$ and $\SF_+, \SG_+ \in Sh_{\Lambda_+}(M \times \bR)_0$ related by the Lagrangian cobordism functor, we have natural maps given by restrictions to both ends
    $$Hom(\SF_+, \SG_+) \xleftarrow{\,\sim\,} Hom(\SF, \SG) \longrightarrow Hom(\SF_-, \SG_-).$$
The goal of the theorem is to enhance the natural map of modules to a map between persistence modules using the action filtration.

\subsection{Action decreasing maps of persistence modules}
    We briefly recall the notions and translate the language of persistence modules and maps between persistence modules into sheaves on $\bR$ with singular support condition and morphisms between the sheaves.

\begin{definition}
    A persistence module $M_\bR$ is an family of graded $\Bbbk$-modules $\{M_a\}_{a \in \bR}$ together with a family of maps $\{f_{a_0a_1}: M_{a_0} \rightarrow M_{a_1}\}_{a_0 \leq a_1}$ such that $f_{aa} = \mathrm{id}$ and $f_{a_1a_2} \circ f_{a_0a_1} = f_{a_0a_2}$. $M_\bR$ is called tame if $\dim_\Bbbk M_a < \infty$ for any $a \in \bR$.

    An (action decreasing) map between persistence modules $M_\bR$ and $N_\bR$ is a family of maps $\{\varphi_a: M_a \rightarrow N_a\}_{a\in \bR}$ such that $\varphi_{a_1} \circ f^M_{a_0a_1} = f^N_{a_0a_1} \circ \varphi_{a_0}$.
\end{definition}

    Persistence modules can be translated to sheaves on $\bR$ with singular support conditions. Let $\SF \in Sh_{\nu < 0}(\bR)$ be a sheaf. One can define a persistence module by
    $$M_a = H^*\Gamma((a, +\infty), \SF), \; f_{a_0a_1}: H^*\Gamma((a_0, +\infty), \SF) \rightarrow H^*\Gamma((a_1, +\infty), \SF).$$
    Conversely, given a persistence module $M_\bR$, one can define a sheaf $\SF \in Sh_{\nu < 0}(\bR)$ that way. Thus we will not distinguish sheaves in $Sh_{\nu < 0}(\bR)$ and persistence modules.

    The following theorems show rigorously that tame persistence modules correspond to constructible sheaves.

\begin{theorem}[Guillermou \cite{Guithree}*{Corollary 7.3}, Kashiwara-Schapira \cite{KSpersist}*{Theorem 1.7}]
    Let $\SF \in Sh^b_{\nu < 0}(\bR)$ be a constructible sheaf. Then there exists a locally finite family of intervals $\{ (a_\alpha, b_\alpha] | \alpha \in A\}$ such that
    $$\SF \simeq \bigoplus_{\alpha \in A} \Bbbk_{(a_\alpha, b_\alpha]}[-j_\alpha].$$
    Here, each interval $(a_\alpha, b_\alpha]$ is called a bar.
\end{theorem}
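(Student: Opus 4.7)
The strategy is to reduce the decomposition to the structure theorem for pointwise finite-dimensional representations of a type $A$ quiver, and then promote the abelian-level classification to bounded complexes using hereditariness of the resulting representation category.

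Since $\SF \in Sh^b_{\nu < 0}(\bR)$ is constructible, I would first choose a single locally finite stratification $\{c_k\}_{k \in K}$ of $\bR$ (with $K$ an interval in $\bZ$) refined enough that every cohomology sheaf $H^j(\SF)$ is locally constant on each point stratum $\{c_k\}$ and each open interval $(c_k, c_{k+1})$. Writing $W_k^j$ for the restriction of $H^j(\SF)$ to $(c_k, c_{k+1})$, I would use the singular support condition $SS(\SF) \subseteq \{\nu \leq 0\}$ to argue that at each boundary point $c_k$ one of the two generization maps between the stalk $H^j(\SF)_{c_k}$ and its neighboring strata is forced to be an isomorphism (namely the generization from the side corresponding to the allowed codirection), while the generization on the other side produces an arbitrary $\Bbbk$-linear map $f_k^j \colon W_{k-1}^j \to W_k^j$. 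This is the sheaf-theoretic realization of the elementary computation that $\Bbbk_{(a,b]}$ only has singular support in the allowed codirections at its endpoints.

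Consequently, the assignment $H^j(\SF) \mapsto (W_k^j, f_k^j)_{k \in K}$ identifies the abelian subcategory of constructible sheaves in $Sh_{\nu < 0}(\bR)$ adapted to the stratification with the category of pointwise finite-dimensional representations of the type $A$ quiver $\cdots \to W_{k-1}^j \xrightarrow{f_k^j} W_k^j \to \cdots$. Under this dictionary, the interval representation with $\Bbbk$'s in positions $i, \dots, j$ linked by identity maps corresponds to the interval sheaf $\Bbbk_{(c_i, c_{j+1}]}$. Applying Gabriel's theorem in the finite case, or the Crawley-Boevey structure theorem in the locally finite pointwise finite-dimensional case, then decomposes each $H^j(\SF)$ uniquely into such interval modules, giving $H^j(\SF) \simeq \bigoplus_\alpha \Bbbk_{(a_\alpha^j, b_\alpha^j]}$.

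Finally, since the quiver has no cycles, its category of pointwise finite-dimensional representations is hereditary; a standard lemma for hereditary abelian categories then produces a canonical splitting $\SF \simeq \bigoplus_j H^j(\SF)[-j]$ in the bounded derived category. Combining this with the bar decomposition of each cohomology object yields the desired $\SF \simeq \bigoplus_\alpha \Bbbk_{(a_\alpha, b_\alpha]}[-j_\alpha]$. The main obstacle I anticipate is the careful verification that the singular support condition forces the claimed one-sided generization isomorphism at every stratification point, including a uniform choice of stratification across all cohomological degrees and proper handling of the non-compact ends of $\bR$ so that local finiteness of the collection of bars is preserved.
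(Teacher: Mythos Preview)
The paper does not supply its own proof of this statement; it is quoted as a known result with citations to Guillermou and Kashiwara--Schapira, so there is no in-paper argument to compare against.

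Your outline is correct and is essentially the standard route taken in the cited references. The identification of constructible sheaves in $Sh^b_{\nu < 0}(\bR)$ adapted to a stratification with representations of a linearly ordered quiver, followed by the Crawley-Boevey decomposition into interval modules and the hereditary splitting $\SF \simeq \bigoplus_j H^j(\SF)[-j]$, is precisely how this result is typically established. Your anticipated obstacle---verifying that the singular support condition forces the one-sided generization isomorphism---is routine: at a stratum point $c_k$, the condition $SS^\infty(\SF) \subset \{\nu < 0\}$ means the microlocal stalk in the positive codirection vanishes, which is exactly the statement that the restriction map from a small interval to the stalk on the right is an isomorphism. Local finiteness of the bar family follows immediately from local finiteness of the stratification together with boundedness of the complex, so no additional care is needed at the noncompact ends.
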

\begin{theorem}[Kashiwara-Schapira \cite{KSpersist}*{Theorem 1.25}]
    There is an equivalence between the category of 1-dimensional constructible sheaves $Sh^b_{\nu < 0,ctr}(\bR)$ and the category of tame graded persistence modules $\mathrm{Bar}_{<0,tame}^\bZ$.
\end{theorem}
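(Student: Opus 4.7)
The plan is to build the equivalence on the nose by using the bar decomposition from the previous theorem (Guillermou, Kashiwara--Schapira) as a ``basis'' and then matching morphism spaces. Concretely, I would define the functor
$$\Phi: Sh^b_{\nu < 0, ctr}(\bR) \longrightarrow \mathrm{Bar}^{\bZ}_{<0,tame}, \qquad \Phi(\SF)_a \;=\; H^* R\Gamma((a, +\infty), \SF),$$
with transition maps given by the natural restriction $R\Gamma((a_0,+\infty), \SF) \to R\Gamma((a_1, +\infty), \SF)$ for $a_0 \leq a_1$. A candidate quasi-inverse $\Psi$ sends a tame graded barcode $\{[c_\alpha, d_\alpha)[-j_\alpha]\}_{\alpha \in A}$ to the constructible sheaf $\bigoplus_{\alpha} \Bbbk_{(c_\alpha, d_\alpha]}[-j_\alpha]$.

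First I would verify the functor on generators. Using the triangle $\Bbbk_{(c,d]} \to \Bbbk_{[c,d]} \to \Bbbk_{\{c\}}[+1]$ (and the obvious sections of the last two constant/skyscraper sheaves on $(a,+\infty)$), a direct computation shows
$$H^0 R\Gamma((a,+\infty), \Bbbk_{(c,d]}) \;\simeq\; \begin{cases} \Bbbk & a \in [c, d), \\ 0 & \text{else,}\end{cases}$$
with all transition maps given by the identity on the supporting interval. Thus $\Phi(\Bbbk_{(c,d]}[-j])$ is the interval persistence module $\Bbbk_{[c,d)}[-j]$, and together with the bar decomposition theorem this shows that $\Phi \circ \Psi \simeq \mathrm{id}$. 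Essential surjectivity of $\Phi$ follows because every tame graded barcode is, by definition, a locally finite direct sum of interval modules, which is realized by $\Psi$. Local finiteness of the sheaf decomposition and of the barcode match up, since both translate to the condition that only finitely many intervals (resp.\ bars) cross any bounded interval of $\bR$.

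The heart of the argument is full faithfulness, which reduces to comparing $\mathrm{RHom}$ between bar sheaves with $\mathrm{Hom}$ between interval modules. For the sheaf side, the triangles
$$\Bbbk_{(a,b]} \to \Bbbk_{(a,+\infty)} \to \Bbbk_{(b,+\infty)} \xrightarrow{+1}$$
together with the standard computation $\mathrm{RHom}(\Bbbk_{(a,+\infty)}, \Bbbk_{(c,+\infty)}) = \Bbbk$ if $c \leq a$ and $0$ otherwise reduce everything to an elementary $2 \times 2$ case analysis, yielding $\mathrm{RHom}(\Bbbk_{(a,b]}, \Bbbk_{(c,d]}) = \Bbbk$ precisely when $c \leq a < d \leq b$, concentrated in degree $0$. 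On the persistence side, a morphism of interval modules $M_{[a,b)} \to M_{[c,d)}$ is nonzero exactly when the constraints from transition compatibility $a < a'$ force the same staircase condition $c \leq a < d \leq b$. The two nonzero Hom spaces are both canonically $\Bbbk$, and $\Phi$ sends the sheaf-theoretic ``restriction'' generator to the persistence-theoretic ``inclusion'' generator. Passing to (locally finite) direct sums gives full faithfulness on all of $Sh^b_{\nu < 0, ctr}(\bR)$.

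The main obstacle will be the morphism computation together with its correct bookkeeping of conventions: the $(c,d] \leftrightarrow [c,d)$ shift between sheaf-theoretic intervals and persistence intervals, and the compatibility of the staircase inequality with the derived Hom between locally closed constant sheaves. A minor but genuine technical point is that one must argue that interchanging $\mathrm{RHom}$ with locally finite direct sums is legitimate in this constructible context, for which one uses that any $\mathrm{RHom}(\Bbbk_{(a,b]}, -)$ commutes with small coproducts (since $\Bbbk_{(a,b]}$ is a compact object in $Sh^b_{\nu<0}(\bR)$). Once this is in place, the equivalence follows formally from checking on bars.
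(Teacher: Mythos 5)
This theorem is cited by the paper from Kashiwara--Schapira \cite{KSpersist} with no proof supplied, so there is no argument in the paper against which to compare your attempt; I can only evaluate it on its own. The overall strategy (define $\Phi$ by $H^*R\Gamma((a,+\infty),-)$, use the bar decomposition to build a quasi-inverse $\Psi$, check on generators, and then match Hom's) is the natural one, and your computation of $\Phi(\Bbbk_{(c,d]}[-j])$ is correct. One small slip first: the short exact sequence runs the other way. The decomposition of $(a,+\infty)$ into the open $(b,+\infty)$ and its closed complement $(a,b]$ gives $0 \to \Bbbk_{(b,+\infty)} \to \Bbbk_{(a,+\infty)} \to \Bbbk_{(a,b]} \to 0$, hence the triangle $\Bbbk_{(b,+\infty)} \to \Bbbk_{(a,+\infty)} \to \Bbbk_{(a,b]} \xrightarrow{+1}$, not the one you wrote.

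The substantive gap is in the $\mathrm{RHom}$ computation for full faithfulness. Applying $\mathrm{RHom}(-,\Bbbk_{(c,d]})$ to the corrected triangle and using $\mathrm{RHom}(\Bbbk_{(a,+\infty)}, \Bbbk_{(c,d]}) = R\Gamma((a,+\infty),\Bbbk_{(c,d]}) = \Bbbk$ iff $c \leq a < d$ (else $0$), the case analysis gives $\mathrm{RHom}(\Bbbk_{(a,b]},\Bbbk_{(c,d]}) \simeq \Bbbk$ in degree $0$ when $c \leq a < d \leq b$, as you say, \emph{but also} $\mathrm{RHom}(\Bbbk_{(a,b]},\Bbbk_{(c,d]}) \simeq \Bbbk[-1]$ when $a < c \leq b < d$: in that case the middle term of the triangle vanishes while the outer term $\mathrm{RHom}(\Bbbk_{(b,+\infty)},\Bbbk_{(c,d]})$ is $\Bbbk$. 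So $\mathrm{RHom}$ is \emph{not} concentrated in degree $0$ as you assert, and $Sh^b_{\nu<0,ctr}(\bR)$ in fact has nonzero morphisms $\Bbbk_{(a,b]}[-j] \to \Bbbk_{(c,d]}[-j+1]$ for crossing bars. A genuine equivalence with $\mathrm{Bar}^{\bZ}_{<0,tame}$ must account for these degree-crossing morphisms; your full-faithfulness argument only matches Hom's between bars carrying the same cohomological shift, and you never say what the morphisms in Kashiwara--Schapira's barcode category are between bars at different shifts. Until that bookkeeping is done (it is exactly the content of Kashiwara--Schapira's careful definition of $\mathrm{Bar}^{\bZ}$), the claim that $\Phi$ is fully faithful is unproved.
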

\begin{remark}
    Note that there exists a nonzero map $\Bbbk_{(0,a_1]} \rightarrow \Bbbk_{(0, a_2]}$ if and only if $a_1 \geq a_2$. This justifies the name of action decreasing maps between persistence modules.
\end{remark}

    An effective way to obtain maps between persistence modules is to construct sheaves on $\bR^2$ with singular support conditions. Let $\SF \in Sh_{\sigma > 0,\nu < 0}(\bR^2)$ be a sheaf, $\SF_- = i_-^{-1}\SF$ and $\SF_+ = i_+^{-1}\SF$ where $i_\pm: \{\pm s\} \times \bR \hookrightarrow \bR^2$ for $s \gg 0$. The singular support condition shows that $\SF$ is $i_\pm$-noncharacteristic, and thus \cite[Proposition 5.4.10]{KS}
    $$\SF_-, \SF_+ \in Sh_{\nu < 0}(\bR^2)$$
    can be viewed as persistence modules. Consider the restriction maps
    $$\Gamma(\{-s\} \times (a, +\infty), \SF) \leftarrow \Gamma([-s, s] \times (a, +\infty), \SF) \rightarrow \Gamma(\{s\} \times (a, +\infty), \SF).$$
    Compactifying the manifold and taking the push forward of the sheaves, we could assume that the supports of the sheaves are compact.
    Then using microlocal Morse lemma \cite[Corollary 5.4.19]{KS}, the second restriction map is a quasi-isomorphism. This thus induces a map between persistence modules
    $$H^*\Gamma((a, +\infty), \SF_-) \longleftarrow H^*\Gamma((a, +\infty), \SF_+).$$
    In the next section, to prove Theorem \ref{thm:persist}, we will therefore try to construct such a sheaf with the singular support assumptions.

\subsection{Action decreasing maps from Lagrangian cobordisms}
    First, we recall the definition of the persistence module $\mathscr{H}om_{(0,+\infty)}(\SF, \SG)$ for $\SF, \SG \in Sh_\Lambda(M \times \bR)$ in \cite{LiEstimate} which is a consequence of the discussion in Asano-Ike \cite{AsanoIke}.

    Recall that for a contact Hamiltonian $H$, the Legendrian movie of $\Lambda$ is
    $$\Lambda_H = \{(x, \xi, u, \nu) | (x, \xi) = \varphi_H^u(x_0, \xi_0), \nu = -H_u \circ \varphi_H^u(x_0, \xi_0), \, (x_0, \xi_0) \in \Lambda\}.$$
    In this section, we let $\Lambda_q$ be the Legendrian movie of the identity contact flow and $\Lambda_r$ be the Legendrian movie of the Reeb flow on $J^1(M) \cong T^{*,\infty}_{\tau>0}(M \times \bR)$, namely,
    \begin{align*}
    \Lambda_q &= \{(x, t, u; \xi, 1, 0) | (x, t; \xi, 1) \in \Lambda, u \in \bR\}, \\
    \Lambda_r &= \{(x, t+u, u; \xi, 1, -1) | (x, t, \xi, 1) \in \Lambda, u \in \bR\}.
    \end{align*}

\begin{definition}
    Let $\Lambda \subset J^1(M)$ be a Legendrian. For $\SF, \SG \in Sh_\Lambda(M \times \bR)$, let $\SF_q \in Sh_{\Lambda_q}(M \times \bR)$ be the image of $\SF$ under the trivial contact flow, and $\SG_r \in Sh_{\Lambda_r}(M \times \bR)$ be the image of $\SG$ under the Reeb flow. Define
    $$\mathscr{H}om_{(a, b)}(\SF, \SG) = u_*\mathscr{H}om(\SF_q, \SG_r)|_{(a, b)}.$$
    Let $\widetilde{L} \subset J^1(M \times \bR_{>0})$ be a Legendrian cobordism. For ${\SF}, {\SG} \in Sh_{\widetilde{L}}(M \times \bR \times \bR_{>0})$, define
    $$\mathscr{H}om_{(a, b)}(\SF, \SG) = (u \times s)_*\mathscr{H}om(\SF_q, \SG_r)|_{(a, b) \times \bR_{>0}}.$$
\end{definition}

    The singular support along the $u$-direction correspond to Reeb chords on $\Lambda_\pm$ and respectively $\widetilde{L}$. Thus the following observations are immediate:

\begin{lemma}\label{lem:ss-persist}
    Let $\widetilde{L} \subset J^1(M \times \bR_{>0})$ be a Legendrian cobordism conical outside $J^1(M \times (s_-, s_+))$ with no Reeb chords. Then for ${\SF}, {\SG} \in Sh_{\widetilde{L}}(M \times \bR \times \bR_{>0})$,
    $$SS^\infty(\mathscr{H}om_{(0,+\infty)}(\SF, \SG)) \cap \{(s, u; 0, \nu) | \nu \neq 0 \} = \varnothing.$$
    For $i_\pm: M \times \bR \times \{s_\pm\} \times \bR \hookrightarrow M \times \bR \times \bR_{>0} \times \bR$,
    \begin{align*}
    SS^\infty(\mathscr{H}om_{(0,+\infty)}(\SF, \SG)) & \cap T^{*,\infty}((0, s_-) \times (0, +\infty)) = \{(s, sc_{i-}; c_{i-}, -1) | s < s_-\},\\
    SS^\infty(\mathscr{H}om_{(0,+\infty)}(\SF, \SG)) & \cap T^{*,\infty}((s_+, \infty) \times (0, +\infty)) = \{(s, sc_{j+}; c_{j+}, -1) | s > s_+\},
    \end{align*}
    where $c_{i-}$ are the lengths of the Reeb chords on $\Lambda_-$ and $c_{j+}$ are the lengths of the Reeb chords on $\Lambda_+$.
\end{lemma}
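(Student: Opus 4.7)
The plan is to combine the Kashiwara--Schapira estimate $SS(\mathscr{H}om(\SF_q, \SG_r)) \subseteq a(SS(\SF_q)) + SS(\SG_r)$ from \cite{KS}*{Proposition 5.4.14} with explicit descriptions of the Legendrian movies $\widetilde{L}_q$ and $\widetilde{L}_r$, and then transport the estimate through the pushforward $(u \times s)_*$ along the projection $p: M \times \bR_t \times \bR_{>0, s} \times \bR_u \to \bR_{>0, s} \times \bR_u$. Extending the formulas for $\Lambda_q, \Lambda_r$ from the excerpt to include the $s$ and $\sigma$ coordinates, a point of $\widetilde{L}_q$ has the form $(x, t, s, u; \xi, 1, \sigma, 0)$ with $(x, t, s; \xi, 1, \sigma) \in \widetilde{L}$, and a point of $\widetilde{L}_r$ has the form $(x, t+u, s, u; \xi, 1, \sigma, -1)$. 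A covector in the antipodal sum then takes the shape $(\xi_2 - \xi_1, \, 0, \, \sigma_2 - \sigma_1, \, -1)$ above a base point $(x, t_1, s, u)$ coming from two points $p_i = (x, t_i, s; \xi_i, 1, \sigma_i) \in \widetilde{L}$ with $u = t_1 - t_2$. The pushforward estimate \cite{KS}*{Proposition 5.4.4} keeps only covectors whose fiberwise $\xi$- and $\tau$-components vanish, forcing $\xi_1 = \xi_2$ (the $\tau$-component already being zero).

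For the first claim, further impose $\sigma = \sigma_2 - \sigma_1 = 0$ with $\nu = -1$ and $u > 0$. Combined with $\xi_1 = \xi_2$, $x_1 = x_2$, and $s_1 = s_2$, the two points $p_1, p_2 \in \widetilde{L}$ then agree in every coordinate except possibly $t$, and $u > 0$ forces $t_1 \neq t_2$. Such a pair is precisely a Reeb chord of $\widetilde{L}$ in $J^1(M \times \bR_{>0})$, contradicting the no-Reeb-chord hypothesis.

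For the second claim over the conical end $s < s_-$, parametrize $\widetilde{L}|_{s<s_-}$ using Definition~\ref{def:conical-cob}: points are $(x, s\bar t + w_{0,-}, s; s\bar\xi, 1, -\bar t)$ with $(x, \bar t; \bar\xi, 1) \in \Lambda_-$. The condition $\xi_1 = \xi_2$ becomes $\bar\xi_1 = \bar\xi_2$, which identifies the two underlying points as endpoints of a Reeb chord on $\Lambda_-$ of length $c_{i-} = \bar t_1 - \bar t_2 > 0$. One then reads off $(s, u) = (s, sc_{i-})$ for the base and $(\sigma_2 - \sigma_1, \nu) = (\bar t_1 - \bar t_2, -1) = (c_{i-}, -1)$ for the fiber, matching the stated formula. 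The positive-end case $s > s_+$ is handled identically with $\Lambda_+$ in place of $\Lambda_-$.

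The main delicacy is making the pushforward singular support estimate rigorous, since $p$ is not globally proper. I plan to address this by working slab by slab in $s$: closedness of $\Lambda_\pm$ and the conical structure of $\widetilde{L}$ outside the compact slab $[s_-, s_+]$ bound the $t$-extent of $\mathrm{supp}(\mathscr{H}om(\SF_q, \SG_r))$ uniformly on each slab $s \in [s_0, s_1]$, so \cite{KS}*{Proposition 5.4.4} applies there and the estimate patches. Promoting the second claim from containment to equality requires showing that each Reeb chord of $\Lambda_\pm$ actually contributes a non-cancelling class, which should follow from the microlocal Morse / Sato--Sabloff analysis underlying the doubling construction in Section~\ref{sec:noncpt} applied along the conical ends.
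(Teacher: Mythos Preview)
Your proposal is correct and follows essentially the same route as the paper: apply \cite{KS}*{Proposition 5.4.14} to bound $SS^\infty(\mathscr{H}om(\SF_q,\SG_r))$ by $-\widetilde{L}_q + \widetilde{L}_r$, analyze this antipodal sum explicitly to extract the Reeb-chord constraint, and then push forward via \cite{KS}*{Proposition 5.4.4}. You are in fact slightly more careful than the paper on two points---the properness hypothesis for the pushforward (your slab-by-slab remark) and the fact that the argument only yields containment rather than the equality written in the statement---and both observations are well taken; only containment is actually used downstream in the paper.
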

\begin{proof}
    Consider the first assertion. Since $SS^\infty(\SF_q) \cap SS^\infty(\SG_r) = \varnothing$, by \cite{KS}*{Proposition 5.4.14}, we know that
    $$SS^\infty(\mathscr{H}om(\SF_q, \SG_r)) \subset -SS^\infty(\SF_q) + SS^\infty(\SG_r) = -\widetilde{L}_q + \widetilde{L}_r.$$
    We know that $(x, s, z, u; 0, 0, 0, \nu) \in -\widetilde{L}_q + \widetilde{L}_r$ for $\nu \neq 0$ if and only if there are points $(x, s, z; y, t, 1)$ and $(x, s, z + u; y, t, 1) \in \widetilde{L}$. Since $\widetilde{L}$ has no Reeb chords,
    $$SS^\infty(\mathscr{H}om(\SF_q, \SG_r)) \cap \{(x, s, z, u; 0, 0, 0, \nu) | \nu \neq 0\} = \varnothing.$$
    Therefore, the first assertion on $SS^\infty(\mathscr{H}om_{(0,+\infty)}(\SF, \SG))$ follows from the singular support estimate for push forward functors \cite{KS}*{Proposition 5.4.4}.

    Then consider the second assertion. We know that $(x, s, z, u; 0, t, 0, \nu) \in -\widetilde{L}_q + \widetilde{L}_r$ for $(\sigma, \nu) \neq (0, 0)$ if and only if there are points $(x, s, z; y, t_0, 1)$ and $(x, s, z + u; y, t_1, 1) \in \widetilde{L}$ such that
    $$t = t_1 - t_0, \;\; \nu = -1.$$
    On $T^{*,\infty}(M \times \bR \times (0, s_-) \times (0, +\infty))$, this condition means that there are pairs of points $(x, s, st_0; s\xi, t_0, 1)$ and $(x, s, st_1 + u; s\xi, t_1, 1)$ such that $(x, \xi, t) \in \Lambda_-$, where now
    $$t = t_1 - t_0, \;\; u = -s(t_1 - t_0), \;\; \nu = -1.$$
    However, the pair of points $(x, \xi, t_0)$ and $(x, \xi, t_1) \in \Lambda_-$ corresponds to Reeb chords of length $t_1 - t_0$ on $\Lambda_-$. Therefore, the second assertion also follows from \cite{KS}*{Proposition 5.4.4}.
\end{proof}

\begin{corollary}
    Let $\widetilde{L} \subset J^1(M \times \bR_{>0})$ be a Legendrian cobordism with no Reeb chords. Then for ${\SF}, {\SG} \in Sh_{\widetilde{L}}(M \times \bR \times \bR_{>0})$ and any $c > 0$,
    $$\mathscr{H}om_{c}(\SF, \SG) = s_*\mathscr{H}om(\SF, \SG).$$
\end{corollary}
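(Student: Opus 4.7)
The plan is to upgrade the first assertion of Lemma~\ref{lem:ss-persist} (absence of Reeb chords on $\widetilde{L}$ kills all singular support in the pure $u$-direction) to a microlocal propagation statement saying that $\mathscr{H}om_{(0,+\infty)}(\SF,\SG)$ on $\bR_{>0,s}\times(0,+\infty)_u$ is locally constant in $u$, so that its slice at any $u=c>0$ agrees with the value at the limit $u\to 0^+$, which is simply $s_*\mathscr{H}om(\SF,\SG)$.

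First, I would recall that by definition $\mathscr{H}om_{(0,+\infty)}(\SF,\SG) = (u\times s)_*\mathscr{H}om(\SF_q,\SG_r)|_{(0,+\infty)\times \bR_{>0}}$. The first assertion of Lemma~\ref{lem:ss-persist} tells us that its singular support contains no covector of the form $(s,u;0,\nu)$ with $\nu\neq 0$. Hence the projection $p\colon \bR_{>0,s}\times(0,+\infty)_u \to \bR_{>0,s}$ is non-characteristic for this sheaf.

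Second, I would apply the non-characteristic deformation lemma \cite{KS}*{Proposition~2.7.2} (or the microlocal Morse lemma \cite{KS}*{Corollary~5.4.19}) in the $u$-direction to conclude that for any $0 < c_0 < c_1 < +\infty$, the restriction maps between the slices $\{u=c_0\}$ and $\{u=c_1\}$ are isomorphisms; equivalently the sheaf is pulled back along $p$ from a sheaf on $\bR_{>0,s}$. To identify that underlying sheaf, I would extend $\mathscr{H}om(\SF_q,\SG_r)$ across the boundary $u=0$: since $\widetilde{L}_q$ and $\widetilde{L}_r$ both restrict to $\widetilde{L}$ at $u=0$ and the same singular support computation of Lemma~\ref{lem:ss-persist} applies verbatim in a neighborhood of $\{u=0\}$, the non-characteristic condition persists there. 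Taking the slice at $u=0$ one obtains $\mathscr{H}om(\SF_q,\SG_r)|_{u=0} = \mathscr{H}om(\SF,\SG)$ on $M\times\bR\times\bR_{>0}$, whose further push-forward along $s$ is precisely $s_*\mathscr{H}om(\SF,\SG)$. Combined with the propagation, this identifies the slice at every $c>0$ with $s_*\mathscr{H}om(\SF,\SG)$.

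The main obstacle is the extension to $u=0$: propagation on $(0,+\infty)_u$ by itself only compares different positive slices, so one has to verify the singular support estimate extends to a neighborhood of $u=0$. This is essentially bookkeeping, since the definitions of $\SF_q$ and $\SG_r$ make sense on $M\times\bR\times\bR_{>0}\times\bR_u$ and the computation in Lemma~\ref{lem:ss-persist} of $-\widetilde{L}_q+\widetilde{L}_r$ uses only the fact that $\widetilde{L}$ is Reeb-chord free, which holds uniformly in $u$.
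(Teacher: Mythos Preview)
Your approach is essentially the same as the paper's: invoke the first assertion of Lemma~\ref{lem:ss-persist} and then propagate in $u$ via the microlocal Morse lemma. The paper's own proof is literally one sentence citing exactly these two ingredients.

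There is, however, one imprecision in your handling of the boundary $u=0$. You claim that ``the non-characteristic condition persists there'' because the computation of Lemma~\ref{lem:ss-persist} ``applies verbatim.'' It does apply verbatim, but it does \emph{not} give emptiness: at $u=0$ every point of $\widetilde{L}$ pairs with itself in $-\widetilde{L}_q+\widetilde{L}_r$, producing covectors $(0,0,0,\nu)$ with $\nu=-1$. So the slice $\{u=0\}$ is \emph{characteristic} for the push-forward, and the base-change step $(i_0^{-1}(u\times s)_*K \simeq s_* i_0^{-1}K)$ you want does not follow from non-characteristicity. The example $H=\Bbbk_{(0,\infty)}$ on $\bR_u$ shows that one-sided singular support $\{\nu<0\}$ alone does not force the stalk at $0$ to agree with the stalk at $\epsilon>0$.

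The fix is small and is implicit in the paper's proof as well: rather than passing through the slice at $u=0$, use base change at a \emph{positive} $\epsilon$ (where non-characteristicity genuinely holds) to get $\mathscr{H}om_\epsilon(\SF,\SG)\simeq s_*\mathscr{H}om(\SF,T_\epsilon\SG)$, and then invoke the sheaf-level statement behind Lemma~\ref{lem:small-pushoff} (the continuation map $\SG\to T_\epsilon\SG$ induces an isomorphism on $\mathscr{H}om(\SF,-)$ after push-forward, since $\widetilde{L}$ has no Reeb chords). That identification, combined with your propagation for $u>0$, gives the corollary.
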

\begin{proof}
    This follows from microlocal Morse lemma \cite{KS}*{Corollary 5.4.19} and the first assertion of Lemma \ref{lem:ss-persist}.
\end{proof}

    Suppose one can prove the estimation that
    $$SS^\infty(\mathscr{H}om_{(0,+\infty)}(\SF, \SG)) \subset \{(s, u; \sigma, \nu) | \sigma > 0, \nu < 0\}.$$
    Then Theorem \ref{thm:persist} immediately follows. Unfortunately, we are not able to prove that, and in fact, it seems likely that the geometric statement is incorrect for arbitrary Lagrangian cobordisms (for example, Lagrangian caps \cite{LagCap}, especially the low dimensional ones \cite{LinCap}, might be potential non-examples, though algebraically we know that they cannot support any nontrivial sheaves). However, the property that
    $$SS^\infty(\mathscr{H}om_{(0,+\infty)}(\SF, \SG)) \cap \{(s, u; 0, \nu) | \nu \neq 0 \} = \varnothing$$
    will enable us to cut-off the singular support in $\{(s, u; \sigma, \nu) | \sigma < 0\}$ without changing the behaviour of the sheaf in $\{(s, u; \sigma, \nu) | \sigma > 0\}$.

    Let $\iota_{\sigma \geq 0}: Sh_{\sigma \geq 0}(\bR^2) \hookrightarrow Sh(\bR^2)$ be the inclusion. Tamarkin \cite{Tamarkin1}, following \cite{KS}, shows that the right adjoint of the inclusion is defined by convolution. The following theorem is a variation of Tamarkin \cite{Tamarkin1}*{Proposition 2.2} or \cite{GuiSchapira}*{Proposition 3.19}.

\begin{proposition}
    Let $\iota_{\sigma \leq 0}^*: Sh(\bR^2) \rightarrow Sh_{\sigma \leq 0}(\bR^2)$ be the left adjoint to the tautological inclusion $\iota_{\sigma \geq 0}: Sh_{\sigma \leq 0}(\bR^2) \hookrightarrow Sh(\bR^2)$. Then
    \begin{align*}
    \iota_{\sigma \leq 0}^* \SF &= \Bbbk_{[0,+\infty)} \star' \SF = a_*(\pi_1^{-1}\SF \otimes \pi_2^{-1}\Bbbk_{[0,+\infty)}), %\\
    %\iota_{\sigma \leq 0}^! \SF &= \mathscr{H} om^\star(\Bbbk_{[0,+\infty)}, \SF) = \pi_{1*} \mathscr{H}om(\pi_2^*\Bbbk_{[0,+\infty)}, a^!\SF),
    \end{align*}
    where $a: \bR^2 \times \bR \rightarrow \bR^2, \, (s_1, u, s_2) \mapsto (s_1 + s_2, u)$.
\end{proposition}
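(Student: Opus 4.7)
The plan is to verify the formula by checking that $\Bbbk_{[0,+\infty)} \star' (-)$ satisfies the universal property of the left adjoint $\iota^*_{\sigma \leq 0}$ of the inclusion $Sh_{\sigma \leq 0}(\bR^2) \hookrightarrow Sh(\bR^2)$, following the Tamarkin--Guillermou--Schapira strategy. First I would verify that the target subcategory is correct, namely that $\Bbbk_{[0,+\infty)} \star' \SF \in Sh_{\sigma \leq 0}(\bR^2)$ for any $\SF \in Sh(\bR^2)$. A direct computation yields $SS(\Bbbk_{[0,+\infty)}) \subseteq \{\sigma \leq 0\}$ as a sheaf on $\bR_{s_2}$, and the Kashiwara--Schapira singular support estimates for inverse images, tensor products, and direct images (\cite{KS}*{Propositions 5.4.4, 5.4.14}) then give the desired bound on $SS\bigl(a_*(\pi_1^{-1}\SF \otimes \pi_2^{-1}\Bbbk_{[0,+\infty)})\bigr) \subseteq \{\sigma \leq 0\}$; the relevant non-characteristicity required for the tensor product estimate is automatic because $\pi_1^{-1}\SF$ and $\pi_2^{-1}\Bbbk_{[0,+\infty)}$ depend on complementary sets of coordinates.

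Next I would construct the natural transformation underlying the adjunction. Starting from the distinguished triangle $\Bbbk_{(0,+\infty)} \to \Bbbk_{[0,+\infty)} \to \Bbbk_{\{0\}} \xrightarrow{+1}$ on $\bR_{s_2}$ and using that $\Bbbk_{\{0\}}$ is the monoidal unit for $\star'$ (so $\Bbbk_{\{0\}} \star' \SF \simeq \SF$, verified by direct computation as in Step~1), convolution with $\SF$ yields the exact triangle
\[
\Bbbk_{(0,+\infty)} \star' \SF \to \Bbbk_{[0,+\infty)} \star' \SF \to \SF \xrightarrow{+1}.
\]
Applying $\mathrm{RHom}(-,\SG)$ for $\SG \in Sh_{\sigma \leq 0}(\bR^2)$ then reduces the adjunction identity $\mathrm{Hom}(\Bbbk_{[0,+\infty)} \star' \SF, \SG) \simeq \mathrm{Hom}(\SF, \SG)$ (up to a sign/direction that depends on the author's convention for labelling left versus right adjoint) to the single vanishing statement $\mathrm{RHom}(\Bbbk_{(0,+\infty)} \star' \SF, \SG) \simeq 0$.

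The main obstacle is establishing this vanishing. I would handle it by rewriting the Hom through the $(a^{-1}, a_*)$-adjunction and the $(\pi_i^{-1}, \pi_{i*})$-adjunction, so that it becomes an expression involving sections of a pulled-back sheaf derived from $\SG$ over an open half-space in $\bR_{s_2}$, and then invoke the microlocal Morse lemma \cite{KS}*{Corollary 5.4.19} to conclude that these sections vanish because of the singular support hypothesis $SS(\SG) \subseteq \{\sigma \leq 0\}$. The technical subtlety is that $SS(\Bbbk_{(0,+\infty)})$ meets $\{\sigma = 0\}$ along the zero section over $\{s > 0\}$, so the vanishing does not come from support-disjointness alone; a cut-off argument isolating the strictly positive $\sigma$-directions is required, exactly as in the corresponding steps in \cite{Tamarkin1}*{Proposition 2.2} and \cite{GuiSchapira}*{Proposition 3.19}, and this cut-off is where one actually feels the difference between $\star$ (using $a_!$) and $\star'$ (using $a_*$).
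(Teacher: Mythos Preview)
Your overall architecture matches the paper: both arguments use the exact triangle coming from $\Bbbk_{(0,+\infty)} \to \Bbbk_{[0,+\infty)} \to \Bbbk_{\{0\}}$ together with $\Bbbk_{\{0\}} \star' \SF \simeq \SF$ to reduce the adjunction identity to the single vanishing $\mathrm{Hom}(\Bbbk_{(0,+\infty)} \star' \SF, \SG) \simeq 0$ for $\SG \in Sh_{\sigma \leq 0}(\bR^2)$.

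The difference lies in how the vanishing is actually established, and here your plan has a gap. You propose to unwind $\mathrm{Hom}\bigl(a_*(\pi_1^{-1}\SF \otimes \pi_2^{-1}\Bbbk_{(0,+\infty)}),\SG\bigr)$ via the $(a^{-1},a_*)$-adjunction, but that adjunction runs the wrong way: it computes $\mathrm{Hom}(a^{-1}Y,X) \simeq \mathrm{Hom}(Y,a_*X)$, not $\mathrm{Hom}(a_*X,\SG)$. There is no elementary adjunction that strips off $a_*$ on the left. The paper circumvents exactly this by observing that for the \emph{open} half-line one has
\[
\Bbbk_{(0,+\infty)} \star' \SF \;=\; a_*\,j_!j^{-1}(\pi_1^{-1}\SF) \;=\; a_!\,j_!j^{-1}(\pi_1^{-1}\SF) \;=\; \Bbbk_{(0,+\infty)} \star \SF,
\]
so the $\star'$-convolution coincides with the usual $\star$-convolution in this specific case. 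Once $a_*$ is replaced by $a_!$, the genuine adjunction $(a_!,a^!)$ is available, and the paper then invokes Tamarkin's criterion (via the microlocal cut-off lemma \cite{KS}*{Proposition 5.2.3}): $\mathscr{H} \in Sh_{\sigma \leq 0}(\bR^2)^\perp$ iff $\Bbbk_{(-\infty,0)} \star \mathscr{H} \simeq 0$, which is immediate for $\mathscr{H} = \Bbbk_{(0,+\infty)} \star \SF$. You correctly sensed that ``this cut-off is where one actually feels the difference between $\star$ and $\star'$,'' but the concrete mechanism---the equality $\star' = \star$ on $\Bbbk_{(0,+\infty)}$---is what you are missing.

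A smaller issue: your proposed verification that $\Bbbk_{[0,+\infty)} \star' \SF \in Sh_{\sigma \leq 0}(\bR^2)$ via \cite{KS}*{Proposition 5.4.4} is delicate, since $a$ is not proper on the support $\bR^2 \times [0,+\infty)$. The paper sidesteps this by not verifying the singular support bound directly; it is a consequence of the microlocal cut-off lemma once the adjunction is in hand.
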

\begin{proof}
    We need to prove that for any $\SF \in Sh(\bR^2)$ and $\SG \in Sh_{\sigma \leq 0}(\bR^2)$, there is a quasi-isomorphism
    $$Hom(\Bbbk_{[0,+\infty)} \star' \SF, \SG) \simeq Hom(\SF, \SG).$$
    Since $\SF = \Bbbk_{0} \star' \SF = a_*(\pi_1^{-1}\SF \otimes \pi_2^{-1}\Bbbk_0)$, we know that it suffices to show that
    $$Hom(\Bbbk_{(0,+\infty)} \star' \SF, \SG) \simeq 0.$$
    Let $j: \bR^2 \times (0, +\infty) \hookrightarrow \bR^2 \times \bR$ be the open embedding. Then we know that
    \begin{align*}
    \Bbbk_{(0,+\infty)} \star' \SF &= a_*(\pi_1^{-1}\SF \otimes \pi_2^{-1}\Bbbk_{(0,+\infty)}) = a_*(j_!j^{-1}(\pi_1^{-1}\SF \otimes \pi_2^{-1}\Bbbk_{(-\infty,+\infty)})) \\
    &= a_!j_!j^{-1}(\pi_1^{-1}\SF \otimes \pi_2^{-1}\Bbbk_{(-\infty,+\infty)}) = a_!(\pi_1^{-1}\SF \otimes \pi_2^{-1}\Bbbk_{(0,+\infty)}).
    \end{align*}
    Therefore, denoting $\Bbbk_{(0,+\infty)} \star \SF = a_!(\pi_1^{-1}\SF \otimes \pi_2^{-1}\Bbbk_{(0,+\infty)})$, it suffices show is that for any $\SF \in Sh(\bR^2)$ and $\SG \in Sh_{\sigma \leq 0}(\bR^2)$,
    $$Hom(\Bbbk_{(0,+\infty)} \star \SF , \SG) \simeq 0.$$
    This is equivalent to the result of Tamarkin \cite{Tamarkin1}*{Proposition 2.2} or \cite{GuiSchapira}*{Proposition 3.19}. We know by microlocal cut-off lemma \cite{KS}*{Proposition 5.2.3} that $\mathscr{H} \in Sh_{\sigma \leq 0}(\bR^2)^\perp$ is in the left orthogonal complement if and only if $\Bbbk_{(-\infty,0]} \star \mathscr{H} \simeq \mathscr{H}$ or equivalently $\Bbbk_{(-\infty,0)} \star \mathscr{H} \simeq 0$. One can explicitly show that
    $$\Bbbk_{(-\infty,0)} \star (\Bbbk_{(0,+\infty)} \star \SF) \simeq 0.$$
    Hence $\Bbbk_{(0,+\infty)} \star \SF \in Sh_{\sigma \leq 0}(\bR^2)^\perp$. This completes the proof.
\end{proof}
\begin{remark}
    We remark the difference with \cite{Tamarkin1}*{Proposition 2.2} or \cite{GuiSchapira}*{Proposition 3.19}. They proved that the standard convolution $\Bbbk_{[0,+\infty)} \star \SF = a_!(\pi_1^{-1}\SF \otimes \pi_2^{-1}\Bbbk_{[0,+\infty)})$ is the left adjoint to the localization functor $Sh(M \times \bR) \rightarrow Sh(M \times \bR)/Sh_{\sigma \leq 0}(M \times \bR)$. In our terminology, the essential image of this functor should be $Sh_{\sigma \geq 0}(M \times \bR)_0$.
\end{remark}

    The following lemma explains that the microlocal cutoff does not introduce extra singular support in $\{(s, u; \sigma, \nu) | \sigma \geq 0\}$. This is the place where we require the Lagrangian cobordism to be embedded.

\begin{lemma}\label{lem:ss-cutoff}
    Let $\SF \in Sh_{\sigma \neq 0}(\bR^2)$. Then $SS^\infty(\iota_{\sigma \geq 0}^* \SF) = SS^\infty(\SF) \cap \{(s, u; \sigma, \nu) | \sigma > 0\}$.
\end{lemma}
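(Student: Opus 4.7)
The proof will mirror the preceding proposition. First, by repeating that argument with $[0, +\infty)$ replaced by $(-\infty, 0]$ (equivalently, after precomposing with the reflection $s \mapsto -s$ in the first coordinate), one obtains the explicit formula
\[
\iota_{\sigma \geq 0}^*\SF \;\simeq\; \Bbbk_{(-\infty, 0]} \star' \SF \;=\; a_*\bigl(\pi_1^{-1}\SF \otimes \pi_2^{-1}\Bbbk_{(-\infty, 0]}\bigr),
\]
with $a(s_1, u, s_2) = (s_1+s_2, u)$. This reduces the lemma to a direct singular support computation on the convolution.

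For the inclusion $\subseteq$, I apply the standard Kashiwara--Schapira bounds \cite{KS}*{\S 5.4} for $\pi_i^{-1}$, tensor product and $a_*$. Since $a^*(\sigma, \nu) = (\sigma, \nu, \sigma)$ and $SS^\infty(\Bbbk_{(-\infty, 0]})$ is concentrated at $\{s_2 = 0,\ \sigma_2 \geq 0\}$, any point $(s, u; \sigma, \nu)$ of $SS^\infty(\iota_{\sigma \geq 0}^*\SF)$ with $(\sigma, \nu) \neq 0$ must come from a point $(s_1, u; \sigma, \nu) \in SS(\SF)$ with $s_2 = s - s_1 \leq 0$ and $\sigma \geq 0$. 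The case $\sigma > 0$ forces $s_2 = 0$ and lands in $SS^\infty(\SF) \cap \{\sigma > 0\}$, while the case $\sigma = 0$ with $\nu \neq 0$ would produce a point of $SS^\infty(\SF) \cap \{\sigma = 0\}$, which is excluded by the hypothesis $\SF \in Sh_{\sigma \neq 0}$.

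For the reverse inclusion $\supseteq$, I use the short exact sequence $0 \to \Bbbk_{(-\infty, 0)} \to \Bbbk_{(-\infty, 0]} \to \Bbbk_{\{0\}} \to 0$ together with the identification $\Bbbk_{\{0\}} \star' \SF \simeq \SF$ to produce a distinguished triangle
\[
\Bbbk_{(-\infty, 0)} \star' \SF \longrightarrow \iota_{\sigma \geq 0}^*\SF \longrightarrow \SF \xrightarrow{+1},
\]
whose first term has $SS^\infty \subseteq \{\sigma \leq 0\}$ by the same convolution bound. Consequently the right-hand map is a singular support isomorphism on the open stratum $\{\sigma > 0\}$, giving $SS^\infty(\SF) \cap \{\sigma > 0\} \subseteq SS^\infty(\iota_{\sigma \geq 0}^*\SF)$. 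The main technical subtlety throughout is controlling the $\sigma = 0$ stratum: a priori $\iota_{\sigma \geq 0}^*\SF$ lies only in $Sh_{\sigma \geq 0}$ and could carry singular support at $\sigma = 0$, and the embedded hypothesis $\SF \in Sh_{\sigma \neq 0}$ is crucial both to cut off such spurious contributions and to guarantee that the non-characteristic conditions required for the Kashiwara--Schapira estimates on $a_*$ are satisfied.
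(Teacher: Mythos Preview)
Your route is genuinely different from the paper's, and mostly sound, but there is one real gap.

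\textbf{The gap.} Your inclusion $\subseteq$ rests on the Kashiwara--Schapira bound for $a_*$ applied to $\pi_1^{-1}\SF \otimes \pi_2^{-1}\Bbbk_{(-\infty,0]}$. That bound (\cite{KS}*{Proposition~5.4.4}) requires $a$ to be proper on the support of the integrand, and here it is not: the support contains $\operatorname{supp}(\SF)\times(-\infty,0]$, and the fibers of $a$ over a point meet this in a half-line. What you call a ``non-characteristic condition'' is the hypothesis for \emph{pullback} estimates, not for pushforward; invoking it here does not close the gap. The same issue recurs when you bound $SS^\infty(\Bbbk_{(-\infty,0)}\star'\SF)\subset\{\sigma\le 0\}$ in the reverse inclusion. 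The fix is exactly what the paper does: cite the microlocal cut-off lemma \cite{KS}*{Proposition~5.2.3}, which packages precisely this convolution estimate without any properness assumption and gives $SS^\infty(\iota_{\sigma\ge 0}^*\SF)\cap\{\sigma>0\}=SS^\infty(\SF)\cap\{\sigma>0\}$ in one stroke.

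\textbf{Comparison with the paper.} The paper's proof is two lines: (i) the cut-off lemma for equality on $\{\sigma>0\}$; (ii) a categorical observation for emptiness at $\{\sigma=0\}$. Namely, $\iota_{\sigma\ge 0}^*\SF$ lies in $Sh_{\sigma\ge 0}$ by definition, so any microsupport with $\sigma\le 0$ is at $\sigma=0$; but $\{\sigma=0\}$ is disjoint from $\Lambda=SS^\infty(\SF)$, and since $\SF=\iota_\Lambda^*\SF$, the left adjoint $\iota_{\sigma\ge 0}^*$ cannot create microsupport outside $\Lambda$. Your approach, by contrast, works entirely with the explicit convolution kernel and a distinguished triangle. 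Once the properness issue is patched (again via the cut-off lemma, or by rewriting the convolution to make the pushforward proper), your argument is a perfectly valid alternative; it is more hands-on and makes the mechanism visible, while the paper's is shorter and leans on the abstract machinery of left adjoints.
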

\begin{proof}
    First, since $\iota_{\sigma \geq 0}^* \SF = \Bbbk_{[0,+\infty)} \star' \SF$, by microlocal cut-off lemma of Kashiwara-Schapira \cite{KS}*{Proposition 5.2.3} we have
    $$SS^\infty(\iota_{\sigma \geq 0}^* \SF) \cap \{(s, u; \sigma, \nu) | \sigma > 0\} = SS^\infty(\SF) \cap \{(s, u; \sigma, \nu) | \sigma > 0\}.$$
    Next, setting $\Lambda = SS^\infty(\SF)$, we know that $\Lambda \cap \{(s, u; \sigma, \nu) | \sigma = 0\} = \varnothing$. Then $\iota_{\sigma \geq 0}^* \SF = \iota_{\sigma \geq 0}^* \circ \iota_\Lambda^* \SF$. By the definition of the left adjoint functor,
    $$SS^\infty(\iota_{\sigma \geq 0}^* \SF) \cap \{(s, u; \sigma, \nu) | \sigma \leq 0\} \subseteq SS^\infty(\iota_{\sigma \geq 0}^* \SF) \cap T^{*,\infty}\bR^2 \setminus \Lambda = \varnothing.$$
    This therefore finishes the proof of the lemma.
\end{proof}
\begin{remark}
    We remark that the existence of the left adjoint $\iota_\Lambda^*: Sh(M) \rightarrow Sh_\Lambda(M)$ only requires $\Lambda \subset T^{*,\infty}M$ to be closed. When $\Lambda \subset T^{*,\infty}M$ that is noncompact, the argument still works by choosing exhausting domains, applying wrapping in the compact domains and then taking the colimit; see for example the work of Kuo \cite[Proposition 1.2]{Kuo} or Zhang \cite[Theorem A.1]{ZhangS1}.
\end{remark}

    Using that characterization, we now cut off the singular support of $\mathscr{H}om_{(0,+\infty)}(\SF, \SG)$ without changing the behaviour of the sheaf in $\{(s, u; \sigma, \nu) | \sigma > 0\}$.

\begin{proposition}\label{prop:persist}
    Let $\widetilde{L} \subset J^1(M \times \bR_{>0})$ be a Legendrian cobordism conical outside $J^1(M \times (s_-, s_+))$ with no Reeb chords. Then for ${\SF}, {\SG} \in Sh_{\widetilde{L}}(M \times \bR \times \bR_{>0})$ and $i_\pm: \{s_\pm\} \times \bR_{>0} \hookrightarrow \bR_{>0} \times \bR_{>0}$,
    $$i_\pm^{-1}\big(\iota_{\sigma\geq 0}^* \mathscr{H}om_{(0,+\infty)}(\SF, \SG)\big) = i_\pm^{-1}\mathscr{H}om_{(0,+\infty)}(\SF, \SG).$$
\end{proposition}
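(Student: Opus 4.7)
Write $\mathscr{H} := \mathscr{H}om_{(0,+\infty)}(\SF, \SG)$ and let $\eta: \mathscr{H} \to \iota^*_{\sigma \geq 0}\mathscr{H}$ be the unit of adjunction, with cofiber $C$. The plan is to reduce the proposition to showing $i_\pm^{-1}C \simeq 0$, and then to exploit the locality of the Bousfield localization together with the explicit conical-end structure of $\mathscr{H}$ furnished by Lemma~\ref{lem:ss-persist}.

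First, by Lemma~\ref{lem:ss-persist}, $SS^\infty(\mathscr{H}) \cap \{\sigma = 0, \nu \neq 0\} = \varnothing$, so $\mathscr{H} \in Sh_{\sigma \neq 0}(\bR^2)$ and Lemma~\ref{lem:ss-cutoff} applies to give $SS^\infty(\iota^*_{\sigma \geq 0}\mathscr{H}) = SS^\infty(\mathscr{H}) \cap \{\sigma > 0\}$. Consequently $SS^\infty(C) \subseteq SS^\infty(\mathscr{H}) \cap \{\sigma \leq 0\}$, and $\iota^*_{\sigma \geq 0}C \simeq 0$ since $\iota^*_{\sigma \geq 0}\eta$ is the identity map after identifying $\iota^{*2}_{\sigma \geq 0} \simeq \iota^*_{\sigma \geq 0}$.

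Next, since $\iota^*_{\sigma \geq 0}$ is a colimit-preserving left adjoint, it commutes with restriction to open subsets. On the outer conical opens $U_- := (0, s_-) \times (0, +\infty)$ and $U_+ := (s_+, +\infty) \times (0, +\infty)$, the restriction $\mathscr{H}|_{U_\pm}$ already lies in $Sh_{\sigma \geq 0}(U_\pm)$ by the explicit form of $SS^\infty(\mathscr{H})$ in Lemma~\ref{lem:ss-persist}. Hence $\eta|_{U_\pm}$ is an isomorphism, and $C$ is supported on the closed slab $[s_-, s_+] \times (0, +\infty)$.

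Finally, to deduce $i_\pm^{-1}C \simeq 0$, use the triangle $\Bbbk_{\{0\}} \to \Bbbk_{(-\infty, 0]} \to \Bbbk_{(-\infty, 0)}[1]$ convolved with $\mathscr{H}$ (in parallel with the construction in the proposition preceding Proposition~\ref{prop:persist}) to identify $C \simeq \Bbbk_{(-\infty, 0)} \star \mathscr{H}$. Proper base change along $i_{s_-}$ then computes $(i_{s_-}^{-1}C)(u_0)$ as $R\Gamma_c\bigl((s_-, +\infty)_s,\, \mathscr{H}|_{u = u_0}\bigr)$. By the previous step this integrand is nonzero only on $s \in (s_-, s_+]$ and on the positive conical tail, where Lemma~\ref{lem:ss-persist} pins down the slice $\mathscr{H}|_{u = u_0}$ as a direct sum of $(\sigma > 0)$-bars $\Bbbk_{(a_\alpha, b_\alpha]}$ associated to the Reeb chords of $\Lambda_+$; since $R\Gamma_c(\bR, \Bbbk_{(a, b]}) \simeq 0$ for any such bar, the integral vanishes. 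The case $i_{s_+}^{-1}C$ is handled symmetrically, exchanging the roles of $\Lambda_\pm$ and using the mirror triangle $\Bbbk_{(0, +\infty)} \star \mathscr{H}$.

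The main obstacle is the last step, where $SS(\mathscr{H})$ is a priori uncontrolled on the middle slab $(s_-, s_+) \times (0, +\infty)$. The resolution uses the kernel condition $\iota^*_{\sigma \geq 0}C \simeq 0$ together with the vanishing on the open conical regions established above; these together are what allow the $R\Gamma_c$ computation to reduce cleanly to the explicit bar structure supplied by the conical ends.
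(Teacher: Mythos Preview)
Your argument has a genuine gap in step 3 and a confusion in step 4 that together prevent the proof from going through.

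In step 3 you assert that $\iota^*_{\sigma\geq 0}$ commutes with restriction to the open sets $U_\pm$; this is false. The functor is a convolution in the $s$-variable and is not local in $s$: for instance $j_*$ from $Sh_{\sigma\geq 0}(U)$ does not land in $Sh_{\sigma\geq 0}(\bR^2)$, so the right adjoints (and hence the left adjoints) do not intertwine. What your singular-support bookkeeping does establish is only that $C|_{U_\pm}$ is a \emph{local system}, not that it vanishes. To kill the local system you would need a stalk computation, which is exactly what the paper does directly for $\iota^*_{\sigma\geq 0}\mathscr{H}$ rather than for the cofiber.

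In step 4 you compute $C_{(s_-,u_0)}$ as a compactly supported integral of $\mathscr{H}|_{u=u_0}$ over the ray $(s_-,+\infty)$, and then invoke ``the previous step'' to say the integrand is controlled. But the integrand is $\mathscr{H}$, not $C$, and on the middle slab $(s_-,s_+)$ you have no control over $\mathscr{H}|_{u=u_0}$ (you acknowledge this yourself). The support claim for $C$, even if it were established, says nothing about $\mathscr{H}$ there. The claim that on the positive tail $\mathscr{H}|_{u=u_0}$ is a sum of half-open bars is also not justified by Lemma~\ref{lem:ss-persist}, which only constrains the singular support.

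The paper sidesteps the whole difficulty by computing the stalk of $\iota^*_{\sigma\geq 0}\mathscr{H}$ itself (not its cofiber) via the convolution formula, which gives $\Gamma((0,s_\pm]\times\{u_0\},\mathscr{H})$ --- an integral to the \emph{left}. For $i_-$ this interval avoids the middle slab entirely, and microlocal Morse with Lemma~\ref{lem:ss-persist} finishes immediately. For $i_+$ the interval does cross the slab, so the paper first takes $u_0>h(\widetilde{L})$ large enough that $\mathscr{H}$ vanishes on the slab at that height, proves the stalk identity there, and then propagates to all $u_0>0$ by a scaling argument along the rays $\{(s,sc_{j+})\}$ using Lemma~\ref{lem:ss-cutoff}. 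Your approach, by passing to the cofiber, forces an integral to the right that inevitably crosses the uncontrolled region with no mechanism to handle it.
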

\begin{proof}
    Since there is a natural morphism $\iota_{\sigma\geq 0}^* \mathscr{H}om_{(0,+\infty)}(\SF, \SG) \rightarrow \mathscr{H}om_{(0,+\infty)}(\SF, \SG)$, it suffices to show isomorphisms on stalks. Consider $i_-: \{s_-\} \times \bR_{>0} \hookrightarrow \bR_{>0} \times \bR_{>0}$. Then it follows from Lemma \ref{lem:ss-persist} and microlocal Morse lemma that for any $u_0 > 0$
    \begin{align*}
    \iota_{\sigma\geq 0}^* \mathscr{H}om_{(0,+\infty)}(\SF, \SG)_{(s_-, u_0)} &= \Gamma((0, s_-] \times \{ u_0 \}, \mathscr{H}om_{(0,+\infty)}(\SF, \SG)) \\
    &= \mathscr{H}om_{(0,+\infty)}(\SF, \SG)_{(s_-, u_0)}.
    \end{align*}
    Consider $i_+: \{s_+\} \times \bR_{>0} \hookrightarrow \bR_{>0} \times \bR_{>0}$. Then by Lemma \ref{lem:ss-persist} and microlocal Morse lemma, setting $h(\widetilde{L}) = \max_{\widetilde{L} \cap J^1(M \times [s_-, s_+])}z - \min_{\widetilde{L} \cap J^1(M \times [s_-, s_+])}z$, we know for $u_0 > h(\widetilde{L})$,
    \begin{align*}
    \iota_{\sigma\geq 0}^* \mathscr{H}om_{(0,+\infty)}(\SF, \SG)_{(s_+, u_0)} &= \Gamma((0, s_+] \times \{ u_0 \}, \mathscr{H}om_{(0,+\infty)}(\SF, \SG)) \\
    &= \mathscr{H}om_{(0,+\infty)}(\SF, \SG)_{(s_+, u_0)}.
    \end{align*}
    Then, using Lemma \ref{lem:ss-persist}, we know $SS^\infty(\mathscr{H}om_{(0,+\infty)}(\SF, \SG)) \cap T^{*,\infty}((s_+, \infty) \times (0, +\infty)) = \{(s, sc_{i+}; c_{i+}, -1) | s > s_+\}$. For any $u_0 > 0$, consider $\lambda > 0$ sufficiently large such that $\lambda u_0 > h(\widetilde{L})$. Then by Lemma \ref{lem:ss-cutoff}
    \begin{align*}
    \iota_{\sigma\geq 0}^* \mathscr{H}om_{(0,+\infty)}(\SF, \SG)_{(s_+, u_0)} &= \iota_{\sigma\geq 0}^* \mathscr{H}om_{(0,+\infty)}(\SF, \SG)_{(\lambda s_+, \lambda u_0)} \\
    &= \mathscr{H}om_{(0,+\infty)}(\SF, \SG)_{(\lambda s_+, \lambda u_0)} \\
    &= \mathscr{H}om_{(0,+\infty)}(\SF, \SG)_{(s_+, u_0)}.
    \end{align*}
    This therefore completes the proof.
\end{proof}

\begin{proof}[Proof of Theorem \ref{thm:persist}]
    Consider the sheaf $\iota_{\sigma\geq 0}^* \mathscr{H}om_{(0,+\infty)}(\SF, \SG)$. We have
    $$SS^\infty(\iota_{\sigma\geq 0}^* \mathscr{H}om_{(0,+\infty)}(\SF, \SG)) \subset \{(s, u; \sigma, \nu) | \sigma > 0, \nu < 0\}.$$
    Then the theorem follows from Proposition \ref{prop:persist}. Indeed, consider the restrictions
    $$\mathscr{H}om_{(0,+\infty)}(\SF_-, \SG_-) \leftarrow \iota_{\sigma\geq 0}^* \mathscr{H}om_{(0,+\infty)}(\SF, \SG) \rightarrow \mathscr{H}om_{(0,+\infty)}(\SF_+, \SG_+).$$
    Compactifying the manifold and taking push forward of the sheaves, we may assume that the supports of the sheaves are compact. By microlocal Morse lemma \cite{KS}*{Corollary 5.4.19}, the right arrow is an isomorphism and this gives the map of persistence modules. 

     Since the natural map given by Lagrangian cobordism functor comes exactly from restrictions to both ends and microlocal Morse lemma, this map between persistence modules enhances the natural map induced by the Lagrangian cobordism functor.
\end{proof}

\subsection{Legendrian capacity and length of cobordisms}
    Let $\SF, \SG \in Sh_{\widetilde{L}}(M \times \bR \times \bR_{>0})$, $\SF_- = i_-^{-1}\SF, \SG_- = i_-^{-1}\SG$ and $\SF_+ = i_+^{-1}\SF, \SG_+ = i_+^{-1}\SG$. Theorem \ref{thm:persist} implies that there is a commutative diagram
    \[\xymatrix{
    {H}om(\SF_-^{s_-}, T_c(\SG_-^{s_-})) & {H}om(\SF_+^{s_+}, T_c(\SG_+^{s_+})) \ar[l]_{r_{L,c}} \\
    {H}om(\SF_-^{s_-}, \SG_-^{s_-}) \ar[u]^{P_{c-}} & {H}om(\SF_+^{s_+}, \SG_+^{s_+}) \ar[l]_{r_L} \ar[u]_{P_{c+}}.
    }\]
    Following \cite{SabTraynorLength}, we can define Legendrian capacities.

\begin{definition}
    Let $\SF, \SG \in Sh_\Lambda(M \times \bR)$. Then for any $\theta \in Hom(\SF, \SG)$, the Legendrian capacity is
    $$c(\Lambda, \SF, \SG, \theta) = \sup \{c > 0 \,| P_c(\theta) \neq 0\}.$$
    In particular, for $\mathrm{id} \in Hom(\SF, \SF)$, the fundamental Legendrian capacity is denoted by $c(\Lambda, \SF) = c(\Lambda, \SF, \SF, \mathrm{id})$.
\end{definition}
\begin{remark}\label{rem:cap-distance}
    Consider the persistence distance on $Sh^b_{\tau > 0}(M \times \bR)$ defined by Asano-Ike \cite{AsanoIke}. In the special case when one of the sheaves is zero,
    $$d(0, \SF) = \inf\left\{c\, \big|\, 0 \simeq T_c : \SF \rightarrow T_c\SF\right\}.$$
    Therefore, the fundamental capacity $c(\Lambda_+, \SF) = d(0, \SF)$ as they are both the length of the barcode that contains $\mathrm{id} \in Hom(\SF, \SF)$.
\end{remark}

    Suppose $L \subset T^*(M \times \bR_{>0})$ is a Lagrangian cobordism. Let $s_-$ be the supremum and $s_+$ be the infimum such that $L$ is conical outside $T^*(M \times [s_-, s_+])$. Define the length of the Lagrangian cobordism as
    $$l(L) = \ln s_+ - \ln s_-.$$
    Note that this is indeed the length after the reparametrization $\bR_{>0} \rightarrow \bR, s \mapsto \ln s$.

\begin{corollary}
    Let ${L} \subset T^*(M \times \bR_{>0})$ be an exact Lagrangian cobordism from $\Lambda_-$ to $\Lambda_+$. Let $\SF, \SG \in Sh_{\widetilde{L}}(M \times \bR \times \bR_{>0})$, $\SF_- = i_-^{-1}\SF, \SG_- = i_-^{-1}\SG$ and $\SF_+ = i_+^{-1}\SF, \SG_+ = i_+^{-1}\SG$. Then
    $$s_-c(\Lambda_-, \SF_-, \SG_-, r_L(\theta)) \leq s_+c(\Lambda_+, \SF_+, \SG_+, \theta).$$
    In particular, the length of the Lagrangian cobordism
    $$l(L) \geq \ln c(\Lambda_-, \SF_-, \SG_-, r_L(\theta)) - \ln c(\Lambda_+, \SF_+, \SG_+, \theta).$$
\end{corollary}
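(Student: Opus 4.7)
The plan is to deduce both inequalities directly from the commutative diagram of persistence modules displayed just above the statement, combined with a scaling observation for Legendrian capacities. First I would record that the scaling contactomorphism $(x,\xi,t)\mapsto(x,s\xi,st)$ carrying $\Lambda$ to $\Lambda^s$ multiplies every Reeb chord length by $s$, and hence dilates every bar of the persistence module $\mathscr{H}om_{(0,+\infty)}(\SF,\SG)$ by the factor $s$. Consequently, for the image $\theta^s\in Hom(\SF^s,\SG^s)$ of any $\theta\in Hom(\SF,\SG)$ under the induced identification of Hom spaces, the capacity satisfies the scaling formula
\[ c(\Lambda^s,\SF^s,\SG^s,\theta^s)=s\cdot c(\Lambda,\SF,\SG,\theta). \]

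Next I would apply the commutative square to $\theta^{s_+}\in Hom(\SF_+^{s_+},\SG_+^{s_+})$. For any $c>0$ with $P_{c+}(\theta^{s_+})=0$, commutativity forces $P_{c-}(r_L(\theta^{s_+}))=r_{L,c}(P_{c+}(\theta^{s_+}))=0$. Taking contrapositives over all such $c$ and passing to the supremum in the definition of capacity yields
\[ c(\Lambda_-^{s_-},\SF_-^{s_-},\SG_-^{s_-},r_L(\theta^{s_+}))\leq c(\Lambda_+^{s_+},\SF_+^{s_+},\SG_+^{s_+},\theta^{s_+}). \]
Identifying $r_L(\theta^{s_+})$ with $r_L(\theta)^{s_-}$ through the scaling contactomorphism (so that the scaled $r_L$ in the diagram corresponds to the unscaled Lagrangian cobordism map on Hom spaces) and applying the scaling formula from the first paragraph on both sides translates this into $s_-\,c(\Lambda_-,\SF_-,\SG_-,r_L(\theta))\leq s_+\,c(\Lambda_+,\SF_+,\SG_+,\theta)$, which is the first claim.

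The length estimate is then immediate: rearranging and taking logarithms in the first inequality gives $\ln s_+-\ln s_-\geq \ln c(\Lambda_-,\SF_-,\SG_-,r_L(\theta))-\ln c(\Lambda_+,\SF_+,\SG_+,\theta)$, and the left hand side is $l(L)$ by definition. There is no real obstacle; the only bookkeeping point is to verify that the map $r_L$ drawn between scaled Hom spaces in the diagram agrees, under the scaling identification, with the restriction map for the original unscaled sheaves, which is a naturality statement for the scaling contactomorphism with respect to the restrictions $i_\pm^{-1}$.
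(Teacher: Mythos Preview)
Your proposal is correct and is exactly the argument the paper has in mind: the corollary is stated without proof because it follows immediately from the commutative square (which is Theorem \ref{thm:persist}) together with the scaling behaviour of capacities, precisely as you spell out. The only step you add beyond what is implicit in the paper is the explicit scaling formula $c(\Lambda^s,\SF^s,\SG^s,\theta^s)=s\cdot c(\Lambda,\SF,\SG,\theta)$ and the naturality check for $r_L$, both of which are routine and needed to pass from the scaled diagram to the unscaled statement.
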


    This for example recovers the results of Sabloff-Traynor \cite{SabTraynorLength}. We recall their results and give sketches of proofs.

\begin{theorem}[Sabloff-Traynor \cite{SabTraynorLength}*{Theorem 1.1}]
    Let $\Lambda_- = \Lambda_\text{Unknot}(u)$ and $\Lambda_+ = \Lambda_\text{Unknot}(v) \subset J^1(\bR^n)$ be Legendrian unknots with single Reeb chords of length $u$ and $v$. Then the minimal length of Lagrangian cobordisms $L \subset T^*(\bR^n \times \bR_{>0})$ satisfies
    \begin{enumerate}
      \item $l(L) \geq \ln u - \ln v$ if $u \geq v$;
      \item $l(L)$ can be arbitrarily small if $u \leq v$.
    \end{enumerate}
\end{theorem}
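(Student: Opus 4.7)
\emph{Plan for (1).} The plan is to apply the corollary to sheaves whose fundamental capacities realize the Reeb chord lengths. For each $w > 0$, fix the standard microlocal-rank-one sheaf $\SF_w \in Sh^b_{\Lambda_{\text{Unknot}}(w)}(\bR^n \times \bR)_0$ coming from the Lagrangian disk filling of the unknot (concretely, a cohomological shift of the constant sheaf on the bounded region in $\bR^n \times \bR$ enclosed by the front). A direct computation of $\mathscr{H}om_{(0,+\infty)}(\SF_w, \SF_w)$, tracking the effect of Reeb translation $T_c$ on $\SF_w$, shows that the persistence module contains a single bar of length $w$ through the identity, so $c(\Lambda_{\text{Unknot}}(w), \SF_w) = w$.

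Given an exact Lagrangian cobordism $L$ from $\Lambda_- = \Lambda_{\text{Unknot}}(u)$ to $\Lambda_+ = \Lambda_{\text{Unknot}}(v)$ conical outside $J^1(\bR^n \times (s_-, s_+))$, I would lift $\SF_u$ via the conditional sheaf quantization functor $\Psi_L$ of Theorem \ref{thm:cond-quan-cob} applied to $(\SF_u, \SL)$, where $\SL$ is a rank-one local system on $\widetilde L$ extending the microlocal local system underlying $\SF_u$ (the trivial extension always exists). This produces $\SF = \Psi_L(\SF_u, \SL) \in Sh^b_{\widetilde L}(\bR^n \times \bR \times \bR_{>0})_0$ with $i_-^{-1}\SF = \SF_u$, and I set $\SF_+ = i_+^{-1}\SF$, which is a microlocal-rank-one sheaf on $\Lambda_+$. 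Under the identification with the standard sheaf $\SF_v$ (see the obstacle below), $c(\Lambda_+, \SF_+) = v$. Since $r_L(\mathrm{id}_{\SF_+}) = \mathrm{id}_{\SF_u}$ by the commutative restriction square preceding the corollary, the corollary gives $s_- u = s_- c(\Lambda_-, \SF_u) \leq s_+ c(\Lambda_+, \SF_+) = s_+ v$, and hence $l(L) = \ln s_+ - \ln s_- \geq \ln u - \ln v$.

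\emph{Plan for (2) and main obstacle.} For $u \leq v$ I would explicitly construct cobordisms of arbitrarily small length. The Legendrian isotopy enlarging the Reeb chord from $u$ to $v$ is generated by a positive contact Hamiltonian (precisely because $v \geq u$), and Chantraine's suspension construction \cite{Chantraine} turns a positive contact isotopy into a Lagrangian cobordism whose length is controlled by the time integral of the generating Hamiltonian; scaling the Hamiltonian compresses the cobordism arbitrarily. The main obstacle in (1) is the identification $c(\Lambda_+, \SF_+) = v$, since $\SF_+$ is defined only abstractly via the doubling construction of $\Psi_L$ and not by an explicit formula like $\SF_v$. The resolution is a rigidity statement for the unknot's sheaf category — every microlocal-rank-one object of $Sh^b_{\Lambda_{\text{Unknot}}(v)}(\bR^n \times \bR)_0$ is isomorphic up to cohomological shift to the standard sheaf $\SF_v$ — which, combined with the first paragraph's computation, yields the required capacity.
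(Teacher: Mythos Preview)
Your proposal for (1) is correct and follows essentially the same route as the paper: both take the (unique) compactly supported microlocal-rank-one sheaf on each unknot, compute its persistence module as a single bar of length equal to the Reeb chord, and then invoke the action-decreasing property (the paper cites Theorem~\ref{thm:persist} directly, you cite its corollary) to obtain $s_- u \le s_+ v$; the obstacle you flag is exactly the uniqueness statement the paper uses when it speaks of ``the unique compactly supported microlocal rank~1 sheaves'' on $\Lambda_\pm$. The paper does not reprove (2) at all---it simply quotes Sabloff--Traynor---so your sketch via positive contact isotopies is a reasonable supplement rather than a deviation.
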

\begin{proof}
    Consider the microlocal rank~1 sheaf $\SF$ microsupported on $\widetilde{L}$ which restrict to the unique compactly supported microlocal rank~1 sheaves $\SF_\pm$ with microsupport on the Legendrian unknots $\Lambda_\pm$. Then the persistence modules are
    $$\mathscr{H}om_{(0,+\infty)}(\SF_-, \SF_-) = \Bbbk_{(0, u]}, \; \mathscr{H}om_{(0,+\infty)}(\SF_+, \SF_+) = \Bbbk_{(0, v]}.$$
    Lagrangian cobordisms between Legendrian unknots have to be concordances, and then nontriviality of the cobordism functor implies that the pair of barcodes in the persistence modules is matched up respectively. Hence the result follows from Theorem \ref{thm:persist}.
\end{proof}

\begin{theorem}[Sabloff-Traynor \cite{SabTraynorLength}*{Theorem 1.2}]
    Let $\Lambda_- = \Lambda_\text{Hopf}(u)$, $\Lambda_+ = \Lambda_\text{Hopf}(v) \subset J^1(\bR^n)$ be Legendrian Hopf links of two unknots $\Lambda_\text{Unknot}(1)$ related by a Reeb translation such that Reeb chords between the two components have length $u$ and $v$. Then the minimal length of Lagrangian cobordism $L \subset T^*(\bR^n \times \bR_{>0})$ that connects the lower (resp.~upper) component of $\Lambda_-$ to the lower (resp.~upper) component of $\Lambda$ satisfies
    \begin{enumerate}
      \item $l(L) \geq \ln u - \ln v$ if $u \geq v$;
      \item $l(L) \geq \ln(1 - u) - \ln(1 - v)$ if $u \leq v$.
    \end{enumerate}
\end{theorem}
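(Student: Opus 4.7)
The plan is to adapt the Legendrian unknot argument of Theorem 1.1 by working with microlocal rank~$1$ sheaves on each component of the Hopf link separately. On each Hopf link $\Lambda_{\mathrm{Hopf}}(w)$ with $w \in \{u, v\}$, choose compactly supported microlocal rank~$1$ sheaves $\mathscr{L}^{\mathrm{lo}}, \mathscr{L}^{\mathrm{up}}$ whose singular supports are the lower and upper unknot components respectively. By Lemma \ref{lem:ss-persist}, the $u$-direction singular support of $\mathscr{H}om_{(0,+\infty)}(\mathscr{L}^{\mathrm{lo}}, \mathscr{L}^{\mathrm{up}})$ is controlled by Reeb chords running from the lower to the upper component (of length $w$), while the opposite direction Hom is controlled by Reeb chords running from upper to lower (of length $1-w$). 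A local computation using the doubling construction should identify
\begin{align*}
\mathscr{H}om_{(0,+\infty)}(\mathscr{L}^{\mathrm{lo}}, \mathscr{L}^{\mathrm{up}}) &\simeq \Bbbk_{(0, w]}, \\
\mathscr{H}om_{(0,+\infty)}(\mathscr{L}^{\mathrm{up}}, \mathscr{L}^{\mathrm{lo}}) &\simeq \Bbbk_{(0, 1-w]}
\end{align*}
up to a grading shift.

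Since the hypothesis forces $L$ to connect lower-to-lower and upper-to-upper, Theorem \ref{thm:cond-quan-cob} lets me extend these microlocal rank~$1$ sheaves to sheaves $\SF^{\mathrm{lo}}, \SF^{\mathrm{up}} \in Sh_{\widetilde{L}}^b(\bR^n \times \bR \times \bR_{>0})_0$ on the Legendrian lift. Applying Theorem \ref{thm:persist} to the pair $(\SF^{\mathrm{lo}}, \SF^{\mathrm{up}})$ for Case~(1), and to $(\SF^{\mathrm{up}}, \SF^{\mathrm{lo}})$ for Case~(2), produces maps of persistence modules
\begin{align*}
\Bbbk_{(0, s_+ v]} &\longrightarrow \Bbbk_{(0, s_- u]}, \\
\Bbbk_{(0, s_+(1-v)]} &\longrightarrow \Bbbk_{(0, s_-(1-u)]}
\end{align*}
enhancing the restriction maps $r_L$ on ordinary Homs. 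Nontriviality of these $r_L$'s should follow from the fact that the generator of each bar lifts to a nonzero class on the cobordism whose restriction to both ends is nonzero, a consequence of Proposition \ref{prop:restrict-ff} together with the equivalence in Theorem \ref{thm:cond-quan-cob}. Since a nontrivial action-decreasing map $\Bbbk_{(0, a_+]} \to \Bbbk_{(0, a_-]}$ forces $a_+ \geq a_-$, we obtain $s_+ v \geq s_- u$ in Case~(1) and $s_+(1-v) \geq s_-(1-u)$ in Case~(2), yielding the two length bounds $l(L) \geq \ln u - \ln v$ and $l(L) \geq \ln(1-u) - \ln(1-v)$ respectively.

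The main obstacle I anticipate is the clean identification of the two cross persistence modules as single bars of the claimed lengths: one needs to rule out extra bar endpoints arising from the front topology away from the Reeb chords and check that the bar endpoints are precisely $w$ and $1-w$. This should reduce to a Mayer--Vietoris-style computation in a neighbourhood of each relevant Reeb chord, using that a microlocal rank~$1$ sheaf on the standard unknot is (locally) the constant sheaf on a filled disc, so that the two cross Homs measure exactly the action gap between the two unknot components. A secondary subtlety is verifying that $r_L$ carries the chosen generator of the $\Lambda_+$ bar to the chosen generator of the $\Lambda_-$ bar, which should follow from the naturality of the Sato-Sabloff triangle across the cobordism combined with Proposition \ref{prop:restrict-ff}.
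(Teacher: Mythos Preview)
Your approach is essentially the same as the paper's, with one natural refinement: the paper takes the direct sum $\SF = \SL^{\mathrm{lo}} \oplus \SL^{\mathrm{up}}$ and computes the full endomorphism persistence module
\[
\mathscr{H}om_{(0,+\infty)}(\SF_\pm,\SF_\pm) \;=\; \Bbbk_{(0,1]}^2 \oplus \Bbbk_{(0,1-w]} \oplus \Bbbk_{(0,w]}[-2] \oplus \Bbbk_{(w,1+w]},
\]
whereas you work with the two component sheaves separately and look at the cross terms. Since the cobordism by hypothesis splits componentwise as $L = L^{\mathrm{lo}} \sqcup L^{\mathrm{up}}$, the extended sheaf on $\widetilde L$ also splits and the endomorphism persistence module decomposes into the four summands you are implicitly using; your version simply makes this decomposition explicit from the start. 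Both arguments then feed the relevant bars into Theorem~\ref{thm:persist} and invoke full faithfulness of the cobordism functor to conclude that the map of persistence modules is nonzero on those bars.

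One correction to your computation: only one of the two cross persistence modules is a single bar. From the paper's barcode above, the cross terms together contribute three bars, so one direction is $\Bbbk_{(0,1-w]}$ while the other is $\Bbbk_{(0,w]}[-2]\oplus\Bbbk_{(w,1+w]}$. This is exactly the obstacle you flagged, and it does not damage the argument: the bar $\Bbbk_{(0,w]}$ needed for Case~(1) sits alone in degree~$2$, while the bar $\Bbbk_{(0,1-w]}$ needed for Case~(2) is the unique bar in its cross direction, so in both cases the relevant bar is isolated and the nontriviality argument (via the isomorphism $Hom(\SF_+,\SG_+)\xleftarrow{\sim} Hom(\SF,\SG)\xrightarrow{\sim} Hom(\SF_-,\SG_-)$ from Theorem~\ref{thm:cond-quan-cob} and Proposition~\ref{prop:restrict-ff}) goes through exactly as you outline.
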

\begin{proof}
    Consider the microlocal rank~1 sheaf $\SF$ microsupported on $\widetilde{L}$ which is the direct sum of two microlocal rank~1 sheaves on each connected component of $\widetilde{L}$, that restricts to $\SF_\pm$ with microsupport on the Legendrian Hopf link $\Lambda_\pm$ which is the direct sum of two microlocal rank~1 sheaves on each component of the Legendrian unknot. Then one can check that the persistence modules are
    \begin{align*}
    \mathscr{H}om_{(0,+\infty)}(\SF_-, \SF_-) = \Bbbk_{(0, 1]}^2 \oplus \Bbbk_{(0,1-u]} \oplus \Bbbk_{(0,u]}[-2] \oplus \Bbbk_{(u, 1+u]}, \\
    \mathscr{H}om_{(0,+\infty)}(\SF_+, \SF_+) = \Bbbk_{(0, 1]}^2 \oplus \Bbbk_{(0,1-v]} \oplus \Bbbk_{(0,v]}[-2] \oplus \Bbbk_{(v, 1+v]}.
    \end{align*}
    Moreover, since the Lagrangian cobordism matches up the upper and lower components of the Hopf links, we know that each component has to be a Lagrangian concordance between Legendrian unknots. Nontriviality of the cobordism functor implies that each pair of barcodes in the persistence modules is matched up respectively. Hence the result follows from Theorem \ref{thm:persist}.
\end{proof}

\section{Lagrangian Cobordism Functor as Correspondence}

    In this section, we prove that the Lagrangian cobordism functor constructed using the sheaf quantization functor in Theorem \ref{thm:cond-quan-cob-intro} is compatible with the Lagrangian cobordism functor constructed using the gapped specialization of Nadler-Shende in Theorem \ref{thm:cob-functor}.

\begin{theorem}\label{thm:compatible-cob}
    Let $\Lambda_\pm \subset J^1(M)$ be closed Legendrian submanifolds, and $L \subset J^1(M) \times \mathbb{R}$ an embedded exact Lagrangian cobordism from $\Lambda_-$ to $\Lambda_+$, which lifts to a Legendrian cobordism $\widetilde{L} \subset (J^1(M) \times \mathbb{R}) \times \mathbb{R} \cong J^1(M \times \mathbb{R}_{>0})$.
    Write $i_\pm: M \times \mathbb{R} \times \{s_\pm\} \hookrightarrow M \times \mathbb{R} \times \mathbb{R}_{>0}$ for $T \gg 0$. Then there is a commutative diagram between sheaf categories
    \[\xymatrix@R=6mm{
    & Sh_{\widetilde{L}}(M \times \mathbb{R} \times \mathbb{R}_{>0}) \ar[dl]_{(i_-^{-1}, m_{\widetilde{L}})} \ar[dr]^{i_+^{-1}} & \\
    Sh_{\Lambda_-}(M \times \mathbb{R}) \times_{Loc(\Lambda_-)} Loc(\widetilde{L}) \ar[rr]_{\hspace{40pt}\Phi_L} & & Sh_{\Lambda_+}(M \times \mathbb{R})
    }\]
    where $m_{\widetilde{L}}$ is the microlocalization functor. In particular, $(i_-^{-1}, m_{\widetilde{L}})$ is essentially surjective while $i_+^{-1}$ is full faithful.
\end{theorem}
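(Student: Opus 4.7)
The plan is to reduce the theorem to three assertions, two of which follow immediately from results of the preceding sections, leaving only one substantive comparison. Essential surjectivity of $(i_-^{-1}, m_{\widetilde{L}})$ is immediate from Theorem \ref{thm:cond-quan-cob}, which supplies an honest right inverse $\Psi_L$ that is in fact an equivalence of categories. Fully faithfulness of $i_+^{-1}$ on $Sh_{\widetilde{L}}(M \times \mathbb{R} \times \mathbb{R}_{>0})_0$ is exactly Proposition \ref{prop:restrict-ff}. The substantive content is then the commutativity of the triangle, namely the identity $i_+^{-1} \circ \Psi_L \simeq \Phi_L$ where $\Phi_L$ is the Nadler-Shende/Li cobordism functor of Theorem \ref{thm:cob-functor}.

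To establish this comparison, I would first set $\Phi_L' := i_+^{-1} \circ \Psi_L$, which by construction fits into a commutative triangle with $\Psi_L^{-1} = (i_-^{-1}, m_{\widetilde{L}})$ on the left leg. It remains to identify $\Phi_L'$ with the functor $\Phi_L$ from \cite{LiCobordism}. Recall that $\Phi_L$ there is constructed by a gapped specialization procedure that factors through sheaves microsupported on the conical Legendrian lift $\widetilde{L}$: one produces an intermediate sheaf $\mathscr{F} \in Sh_{\widetilde{L}}(M \times \mathbb{R} \times \mathbb{R}_{>0})_0$ extending $\mathscr{F}_-$ across the cobordism, and then restricts to the positive end. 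Since Theorem \ref{thm:cond-quan-cob} shows that $(i_-^{-1}, m_{\widetilde{L}})$ is an equivalence, any sheaf $\mathscr{F}$ realizing the given negative-end restriction $\mathscr{F}_-$ and microlocalization $\mathscr{L}$ is canonically isomorphic to $\Psi_L(\mathscr{F}_-, \mathscr{L})$, so the two constructions must coincide after applying $i_+^{-1}$.

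The main obstacle is thus the explicit matching between our construction and the gapped specialization of \cite{NadShen, LiCobordism}: one has to verify that the intermediate sheaf produced by the Nadler-Shende procedure indeed lives in $Sh_{\widetilde{L}}(M \times \mathbb{R} \times \mathbb{R}_{>0})_0$ with the prescribed restriction and microlocalization. I would tackle this at the level of left adjoints, using the identification in Theorem \ref{thm:double-cob-ad} of $\Psi_L^*$ as $\iota_{\widetilde{L}}^* \circ w_{\widetilde{L}}[-1]$ and matching it against the corresponding left adjoint on the cobordism side of \cite{LiCobordism}. Alternatively, one can appeal to the behaviour of both functors under concatenation of cobordisms: both $\Psi_L$ and $\Phi_L$ respect concatenation (the former tautologically by local-to-global properties of the doubling construction of Section \ref{sec:quan-cob}, the latter by Theorem \ref{thm:cob-functor}), so it suffices to verify the identification on elementary building blocks such as trivial cylindrical cobordisms and Lagrangian handle attachments, where the two constructions visibly reduce to the same local model.
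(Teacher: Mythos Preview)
Your identification of the easy parts is correct: essential surjectivity of $(i_-^{-1}, m_{\widetilde{L}})$ comes from Theorem~\ref{thm:cond-quan-cob}, and full faithfulness of $i_+^{-1}$ is Proposition~\ref{prop:restrict-ff}. You have also correctly isolated the substantive content as the identification $i_+^{-1}\circ\Psi_L \simeq \Phi_L$. However, your proposed attack on this identification has a genuine gap.

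The central claim of your second paragraph --- that the Nadler--Shende functor $\Phi_L$ ``factors through sheaves microsupported on the conical Legendrian lift $\widetilde{L}$'' by producing ``an intermediate sheaf $\mathscr{F}\in Sh_{\widetilde{L}}(M\times\mathbb{R}\times\mathbb{R}_{>0})_0$ extending $\mathscr{F}_-$'' --- is precisely the statement to be proved, not something one can recall. The construction of $\Phi_L$ in \cite{LiCobordism,NadShen} proceeds via antimicrolocalization and nearby cycles on the Lagrangian skeleton $\mathfrak{c}_X\cup(\Lambda_-\times\mathbb{R})\cup L$, which lives in the \emph{radial} (Liouville) direction; the conical Legendrian $\widetilde{L}$ lives in the \emph{base} direction $\mathbb{R}_{>0,s}$. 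These are two genuinely different geometric pictures, and the Nadler--Shende procedure never visits $Sh_{\widetilde{L}}(M\times\mathbb{R}\times\mathbb{R}_{>0})$. Your uniqueness argument via the equivalence $\Psi_L$ therefore begs the question. Your alternative proposals do not close this gap either: matching left adjoints still requires identifying the two intermediate objects, and the building-block approach fails because a general exact Lagrangian cobordism need not decompose into isotopies and handle attachments (Lagrangian caps exist, and no classification is known).

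The paper resolves this by a geometric construction you have not anticipated: a \emph{suspension cobordism} $\Sigma L\subset T^*(M\times(1,+\infty)\times\mathbb{R}_{>0})$, diffeomorphic to $L\times\mathbb{R}$, which is simultaneously (i) an exact Lagrangian cobordism from $\widetilde{L}$ to the trivial cone $\Lambda_+\times(1,+\infty)$ in the $r$-direction, and (ii) has symplectic reductions along the slices $s=s_\pm$ equal to $L$ and to the trivial cobordism $\Lambda_+\times\mathbb{R}_{>0}$ respectively. Applying the Nadler--Shende functor $\Phi_{\Sigma L}$ to this two-parameter family and invoking a base-change lemma for nearby cycles (Proposition~\ref{basechange}) yields a commutative square interpolating between $\Phi_L$ on the left slice and the identity on the right slice; the restrictions $\tilde{j}_\pm^{-1}$ to the two slices are then identified with $(i_-^{-1},m_{\widetilde{L}})$ and $i_+^{-1}$ by direct inspection of the geometry of $\Sigma L$. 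This suspension is the missing idea that bridges the radial and horizontal pictures.
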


    Since by Theorem \ref{thm:cond-quan-cob}, $\Psi_L$ is the right inverse of $(i_-^{-1}, m_{\widetilde{L}})$, the fiber product of restriction to the negative end and microlocalization along the cobordism, once we prove that $\Phi_L \circ (i_-^{-1}, m_{\widetilde{L}}) = i_+^{-1}$, this will indicate that
    $$i_+^{-1} \circ \Psi_L = \Phi_L \circ (i_-^{-1}, m_{\widetilde{L}}) \circ \Psi_L = \Phi_L.$$
    This implies that the Lagrangian cobordism functor obtained by the gapped specialization functor over Lagrangian skeleta and the functor obtained by the conditional sheaf quantization functor on the Legendrian cobordism are compatible with each other.

\subsection{Lagrangian cobordism functor following Nadler-Shende}\label{sec:NadShen}
    In order to prove Theorem \ref{thm:compatible-cob}, we will first need to recall the definition of our Lagrangian cobordism functor \cite{LiCobordism} following Nadler-Shende \cite{NadShen}.

    We know that $(T^*(M \times \bR), d\lambda)$ is a Weinstein sector, and the Liouville flow $Z_\lambda = s\partial/\partial s$. Suppose $L \cap \partial_\infty X \times (0, s_-) = \Lambda_- \times (0, s_-)$. Glue ${L} \cap \partial_\infty X \times (s_-, +\infty)$ with $\Lambda_- \times (0, s_-)$ along $\Lambda_- \times \{s_-\}$, and denote by $\Lambda_- \times \mathbb{R} \cup {L}$ their concatenation (note that this is the same as $L$). Since $\mu Sh_-$ is a sheaf and cosheaf of dg categories, we only need to construct a fully faithful functor
    $$\Phi_L: \, \mu Sh_{\mathfrak{c}_X \cup \Lambda_- \times \mathbb{R} \cup \widetilde{L}}\big(\mathfrak{c}_X \cup \Lambda_- \times \mathbb{R} \cup \widetilde{L}\big) \longrightarrow \mu Sh_{\mathfrak{c}_X \cup \Lambda_+ \times \mathbb{R}}(\mathfrak{c}_X \cup \Lambda_+ \times \mathbb{R}).$$
    This relies on the gapped specialization theorem of Nadler-Shende.

    We recall the construction of functor by Nadler-Shende. Consider a subanalytic Legendrian $\Lambda_1 \subset J^1(M \times \bR)$, which is either compact or locally closed, relatively compact with cylindrical ends. Let $\varphi_H^\zeta: J^1(M \times \bR) \rightarrow J^1(M \times \bR)$ be a contact isotopy for $\zeta \in (0, 1]$ conical near the cylindrical ends. Let $\Lambda_0 = \overline{\Lambda}_H \cap (X \times \mathbb{R} \times \{0\})$ be the set of limit points of $\varphi_H^\zeta(\Lambda_1)$ as $\zeta \rightarrow 0$. Then the gapped specialization is a functor
    $$\mu Sh_{\Lambda_1}(\Lambda_1) \hookrightarrow \mu Sh_{\Lambda_0}(\Lambda_0).$$

    The construction consists of two steps. First, we need to define a fully faithful embedding from $\mu Sh_\Lambda(\Lambda)$ back to $Sh(M \times \bR^N)$ under some embedding $\Lambda \hookrightarrow J^1(M \times \bR) \hookrightarrow T^{*,\infty}(M \times \bR^N)$. Using the relative doubling construction. For $\partial\Lambda_{\pm s} \times [0, 1) \subset T^{*}(U \times (-1, 1)) \times \mathbb{R}$, connect the ends $\partial \Lambda_{\pm s}$ by a family of standard cusps $\partial\Lambda\, \times \prec$. Let
    $$(\Lambda, \partial\Lambda)^\prec_s = \Lambda_{-s} \cup \Lambda_s \cup (\partial\Lambda \,\times \prec).$$
    Nadler-Shende showed that the microlocalization functor (relative to conical ends)
    $$Sh_{(\Lambda,\partial\Lambda)^\prec_\epsilon}(M \times \bR^N)_0 \rightarrow \mu Sh_{\Lambda_{-\epsilon}}(\Lambda_{-\epsilon})$$
    admits a right inverse, which is called the antimicrolocalization functor \cite[Section 7]{NadShen}.

    Second, we need to construct a fully faithful functor between subcategories of $Sh(M \times \bR \times \bR^N)$. Namely, for a doubled sheaf $\SF_\text{dbl} \in Sh_{(\Lambda,\partial\Lambda)^\prec_\epsilon}(M \times \bR^N)_0$, we consider the nearby cycle for the movie of the sheaf under the contact Hamiltonian flow (which is a contact lift of the Liouville flow in a neighbourhood of $\Lambda$)
    $$\Phi(\SF_\text{dbl}) = i^{-1}j_*\Psi_H(\SF_\text{dbl})$$
    and show that it is fully faithful in our setting. This full faithfulness criterion is proposed by Nadler \cite{NadNonchar} and proved for families of Legendrians by Zhou \cite{Zhou}*{Proposition 3.2} and in general by Nadler-Shende \cite[Section 4 \& 5]{NadShen}.

    Given the Lagrangian cobordism functor defined by nearby cycles, a commutativity criterion of nearby cycle functors will be important when checking different commutative diagrams; see for example \cite{NadNearby} or \cite{KochNearby,MaiNearby}. We extract the main technical lemma as follows.

    Write the projection maps
    $$\pi_i: N \times [-1,1] \times [-1,1] \rightarrow [-1,1], \,\, (x, t_1, t_2) \mapsto t_i, \,\,(i = 1, 2)$$
    and $\pi = \pi_1 \times \pi_2: N \times [0, 1] \times [0, 1] \rightarrow [0, 1] \times [0, 1]$. Write the inclusions
    \[\xymatrix{
    N \times \{0\} \times [-1, 0) \cup (0, 1] \ar[r]^{i} \ar[d]_{j} & N \times [-1, 1] \times [-1, 0)\cup (0, 1] \ar[d]^{\overline{j}} \\
    N \times \{0\} \times [-1, 1] \ar[r]^{\overline{j}} & N \times [-1, 1] \times [-1, 1].
    }\]
    Recall from \cite[Section 6.2]{KS} that for a closed embedding $i: N \hookrightarrow M$ and a subset $A \subseteq T^*M$, we define $i^\#(A) \subseteq T^*N$ as the set of points $(x, \xi) \in T^*N$ such that there exists $(y_n, \eta_n, x_n, 0) \in T^*M \times T^*N$ and
    $$x_n, y_n \hookrightarrow x, \; i^*\eta_n \rightarrow \xi, \; |x_n - y_n| |\eta_n| \rightarrow 0.$$
    For $\mathscr{F} \in Sh(M)$, it is known by \cite[Theorem 6.3.1]{KS} that
    $$SS^\infty(i^{-1}\mathscr{F}) \subseteq i^\#SS^\infty(\mathscr{F}).$$
    Recall also the notion of relative singular supports in for example \cite[Section 2.2]{NadShen}. Let $\pi: N \times B \rightarrow B$ and $\Pi: T^*(N \times B) \rightarrow T^*(N \times B)/\pi^*T^*B$ be the projection maps. For $\mathscr{F} \in Sh(N \times B)$, define
    $$SS_\pi^\infty(\mathscr{F}) = \Pi(SS^\infty(\mathscr{F})).$$

\begin{proposition}[\cite{LiCobordism}*{Proposition 3.3}]\label{basechange}
    Let $\mathscr{F} \in Sh(N \times [-1,1] \times [-1, 0)\cup (0, 1])$ be a sheaf such that
    \begin{enumerate}
      \item $i^\#SS^\infty(\mathscr{F}) \cap \pi_2^*T^{*,\infty}([-1, 0)\cup (0,1]) = \varnothing$,
      \item $SS^\infty(\mathscr{F}) \cap \pi^*T^{*,\infty}([-1, 0)\cup (0,1] \times [-1, 0)\cup (0,1]) = \varnothing$,
      \item $\overline{SS_{\pi}^\infty(\mathscr{F})} \cap T^{*,\infty}N \times \{(0, 0)\}$ is a subanalytic Legendrian.
    \end{enumerate}
    Then there is a natural isomorphism of sheaves
    $$\overline{i}^{-1}\overline{j}_*\mathscr{F} \simeq j_*i^{-1}\mathscr{F}.$$
\end{proposition}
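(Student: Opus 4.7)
The plan is to verify that the canonical base change morphism $\alpha: \overline{i}^{-1}\overline{j}_*\mathscr{F} \to j_*i^{-1}\mathscr{F}$ is an isomorphism of sheaves by a stalkwise analysis. The morphism $\alpha$ is constructed from the Cartesian square by the standard recipe: apply $\overline{i}^{-1}$ to the unit of the $(\overline{i}^{-1}, \overline{i}_*)$ adjunction, and use $\overline{j}^{-1}\overline{j}_* \simeq \mathrm{id}$ (an open embedding) to obtain $\overline{i}^{-1}\overline{j}_*\mathscr{F} \simeq \overline{i}^{-1}\overline{j}_*\overline{j}^{-1}\overline{j}_*\mathscr{F} \to j_*i^{-1}\overline{j}^{-1}\overline{j}_*\mathscr{F} \simeq j_*i^{-1}\mathscr{F}$. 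Away from the corner stratum $N \times \{0\} \times \{0\}$ nothing needs to be checked: on the open locus $\{t_2 \neq 0\}$ both sides tautologically compute $i^{-1}\mathscr{F}$, and at a point with $t_1 = 0$ but $t_2 \neq 0$ the map $\overline{j}$ is a local isomorphism, so $\overline{i}^{-1}\overline{j}_*$ agrees with $j_*i^{-1}$ on germs. The entire content therefore concentrates at a corner point $(x_0, 0, 0) \in N \times \{0\} \times \{0\}$.

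At such a point, the two stalks are colimits over shrinking neighborhood bases,
\[
(\overline{i}^{-1}\overline{j}_*\mathscr{F})_{(x_0,0,0)} \simeq \mathrm{colim}_{\varepsilon \to 0}\, R\Gamma\bigl(V_\varepsilon \times (-\varepsilon, \varepsilon) \times ((-\varepsilon, 0) \cup (0, \varepsilon)),\, \mathscr{F}\bigr),
\]
\[
(j_*i^{-1}\mathscr{F})_{(x_0,0,0)} \simeq \mathrm{colim}_{\varepsilon \to 0}\, R\Gamma\bigl(V_\varepsilon \times \{0\} \times ((-\varepsilon, 0) \cup (0, \varepsilon)),\, i^{-1}\mathscr{F}\bigr),
\]
where $V_\varepsilon$ runs over a shrinking basis at $x_0$, and $\alpha_{(x_0,0,0)}$ is the evident restriction from the full slab to its $\{t_1 = 0\}$-slice. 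To show this restriction is a quasi-isomorphism at each fixed $\varepsilon$, I would apply non-characteristic restriction (\cite[Proposition 5.4.13]{KS} together with the microlocal Morse lemma \cite[Corollary 5.4.19]{KS}) to the coordinate $t_1$ on the open locus where $t_2 \neq 0$. Condition (2) is precisely what this requires: over points with $(t_1, t_2) \in ([-1,0)\cup(0,1])^2$, $SS^\infty(\mathscr{F})$ contains no purely-$(t_1, t_2)$ covector, hence no $dt_1$-covector, so the restriction from the slab to its $\{t_1=0\}$-slice is an equivalence at each finite scale.

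The main obstacle, as I expect, is controlling the argument uniformly as $\varepsilon \to 0$ and the stratum $\{t_2 = 0\}$ is approached. This is exactly what conditions (1) and (3) handle. Condition (1) transports the non-characteristic property across the closed embedding $i$: the estimate $SS^\infty(i^{-1}\mathscr{F}) \subseteq i^\#SS^\infty(\mathscr{F})$ combined with condition (1) shows that $i^{-1}\mathscr{F}$ has no pure $dt_2$-covectors over $\{t_2 \neq 0\}$, so $j_*i^{-1}\mathscr{F}$ is a genuine nearby-cycle sheaf whose corner stalks stabilize under shrinking $\varepsilon$. Condition (3), the hypothesis that $\overline{SS_\pi^\infty(\mathscr{F})} \cap T^{*,\infty}N \times \{(0,0)\}$ is a subanalytic Legendrian, serves as the constructibility/compactness input that allows both of the colimits above to stabilize at a common finite scale $\varepsilon > 0$; combined with conditions (1) and (2) this collapses $\alpha_{(x_0,0,0)}$ to a single finite-scale restriction that is already a quasi-isomorphism by the microlocal Morse lemma. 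The technical heart of the writeup will be the simultaneous application of \cite[Propositions 5.4.13 \& 5.4.14]{KS} to both coordinates $t_1$ and $t_2$ at the corner, arranged so that the covector estimates in each coordinate do not interfere with each other and so that condition (3) furnishes a single scale $\varepsilon$ that works for both restriction directions at once.
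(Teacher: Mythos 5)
You have the right overall architecture: the base change morphism is tautologically an isomorphism away from $N \times \{(0,0)\}$, so everything reduces to comparing the two stalk colimits at a corner point, and your identification of those colimits is correct. The gap is in the key step. You invoke the microlocal Morse lemma \cite[Corollary 5.4.19]{KS} in the $t_1$-coordinate ``at each fixed $\varepsilon$'' using condition (2), but condition (2) gives a non-characteristic estimate only over the open locus $t_1 \neq 0,\, t_2 \neq 0$ and says nothing at $t_1 = 0$, so it cannot by itself justify passing from the slab $V_\varepsilon \times (-\varepsilon,\varepsilon) \times ((-\varepsilon,0)\cup(0,\varepsilon))$ to the slice $\{t_1 = 0\}$. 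Moreover \cite[Corollary 5.4.19]{KS} carries a properness hypothesis on the Morse function restricted to $\mathrm{supp}(\mathscr{F})$, which fails here because the $t_2$-interval is open and the support may accumulate along $t_2 = 0$; so ``already a quasi-isomorphism at a finite scale'' does not follow from what you cite.

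The roles you assign to conditions (1) and (3) are also not the actual mechanism. Condition (3) is a structure statement about the limiting singular support --- that $\overline{SS_{\pi}^\infty(\mathscr{F})}$ over $N \times \{(0,0)\}$ is a subanalytic Legendrian --- and its purpose is to control $SS^\infty(\overline{j}_*\mathscr{F})$ over a neighborhood of the corner so that microlocal cut-off and the Morse-theoretic argument can be run uniformly there; describing it as ``stabilization at a common finite scale $\varepsilon$'' elides exactly the technical heart you flag at the end. Likewise, condition (1) bounds $SS^\infty(i^{-1}\mathscr{F})$ by $i^\# SS^\infty(\mathscr{F})$ via \cite[Theorem 6.3.1]{KS} and is a non-characteristic statement for the $t_2$-slice, not a ``stalk stabilization'' statement. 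What is missing from your plan is a precise singular support estimate for $\overline{j}_*\mathscr{F}$ on a full neighborhood of the corner, assembled from all three hypotheses, that simultaneously rules out the covectors obstructing restriction in either direction; without that, the Morse lemma cannot be applied where the conclusion actually needs it, and the proposal conflates the interior non-characteristic condition with the estimate required at the corner.
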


\subsection{Construction of a suspension Lagrangian cobordism}\label{sec:suspend}
    We note that in the statement of Theorem \ref{thm:compatible-cob}, the Lagrangian cobordism functor of Nadler-Shende is defined by attaching to the negative end the Lagrangian $L$ along the radius (vertical) direction, while the conical Lagrangian cobordism is defined by connecting by the Legendrian $\widetilde{L}$ along the base (horizontal) direction. In order to investigate the relation between the two pictures we have to consider both directions (and connect them in a geometric way).

    Our goal in this section is thus to define a suspension exact Lagrangian cobordism $\Sigma{L} \subset T^{*}(M \times (1, +\infty) \times \mathbb{R}_{>0})$ that is diffeomorphic to ${L} \times \mathbb{R}$, such that the following conditions hold. Here we view the Lagrangian cobordisms $L$ and $\Lambda_+ \times (1, +\infty)$ as in a subdomain in the the symplectization $J^1(M) \times (1, +\infty) \cong T^*(M \times (1, +\infty))$. This will simplify some of the formulas in the discussion.

\begin{proposition}\label{prop:suspend}
    Let $L \subset T^*(M \times (1, +\infty))$ be an exact Lagrangian cobordism from $\Lambda_-$ to $\Lambda_+$ whose Legendrian lift is $\widetilde{L}$. Then there exists an exact Lagrangian $\Sigma L \subset T^*(M \times (1, +\infty) \times \bR_{>0})$ diffeomorphic to $L \times \bR$ such that
    \begin{enumerate}
      \item the symplectic reduction of the suspension to $\Sigma L_{s_-} \subset T^*(M \times \mathbb{R}_{>0}) \times \{s_-\}$ is the exact Lagrangian cobordism $L$;
      \item the symplectic reduction of the suspension to $\Sigma L_{s_+} \subset T^*(M \times \mathbb{R}_{>0}) \times \{s_+\}$ is the trivial Lagrangian cobordism $\Lambda_+ \times \mathbb{R}_{>0}$;
      \item the suspension $\Sigma L \subset T^{*}(M \times (1, +\infty) \times \mathbb{R}_{>0})$ is an exact Lagrangian cobordism between the Legendrian lifts from $\widetilde{L}$ to $\Lambda_+ \times (1, +\infty)$.
    \end{enumerate}
\end{proposition}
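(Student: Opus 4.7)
I will construct $\Sigma L$ by taking the Legendrian movie of $\wt L$ under a one-parameter family of conformal contact rescalings of $J^1(M \times (1, +\infty))$ that push the non-conical part of $\wt L$ out of the domain. Explicitly, on $J^1(M \times (1, +\infty))$ with coordinates $(x, r, \eta, \mu, z)$ and contact form $\alpha = dz + \eta\, dx + \mu\, dr$, consider the flow
\[
\phi^u(x, r, \eta, \mu, z) = (x,\, e^{-u}r,\, e^{-u}\eta,\, \mu,\, e^{-u}z), \qquad u \in \bR,
\]
generated by the contact Hamiltonian $H = -z - r\mu$. The key observation is that $\phi^u$ exactly preserves the Legendrian lift of the conical $\Lambda_+$ end of $\wt L$---which, using the primitive $f_L = -rt$, has the form $\{(x, r, r\eta, -t, -rt) : (x, \eta, t) \in \Lambda_+,\, r > r_+\}$---mapping it to the same Legendrian over the enlarged range $r > e^{-u}r_+$. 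Meanwhile the bulge of $\wt L$ and its conical $\Lambda_-$ end, initially supported in $r \in (1, r_+)$, get rescaled into $r \in (e^{-u}, e^{-u}r_+)$, which lies entirely in $(0, 1)$ as soon as $u > \log r_+$. Hence for any $u_0 > \log r_+$, the restriction $\phi^{u_0}(\wt L) \cap J^1(M \times (1, +\infty))$ equals the trivial cobordism $\Lambda_+ \times (1, +\infty)$ exactly as Legendrian submanifolds.

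Now fix a smooth monotone cutoff $\rho: \bR_{>0, s} \to [0, u_0]$ with $\rho \equiv 0$ on $(0, s_-]$ and $\rho \equiv u_0$ on $[s_+, +\infty)$, and set the time-dependent contact Hamiltonian $H_s = \rho'(s) \cdot (-z - r\mu)$. The Legendrian movie (Definition \ref{lagmovie}) of $\wt L$ under the isotopy $\phi^{\rho(s)}$ defines a Legendrian submanifold $\wt{\Sigma L} \subset J^1(M \times (1, +\infty) \times \bR_{>0, s})$. Under the identification $J^1(N) \times \bR_{>0} \cong T^*(N \times \bR_{>0})$ from Section \ref{sec:conical-leg}, let $\Sigma L \subset T^*(M \times (1, +\infty) \times \bR_{>0, s})$ be the exact Lagrangian whose Legendrian lift is $\wt{\Sigma L}$; equivalently, $\Sigma L$ is the standard Lagrangian suspension of $L$ under the genuine symplectic Hamiltonian $\wh H_s = \rho'(s)\cdot sH$ on the symplectization of $J^1(M \times (1, +\infty))$.

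Properties (1)--(3) will then follow directly. Since $\rho \equiv 0$ for $s \leq s_-$, the Hamiltonian vanishes and the symplectic reduction at $s = s_-$ recovers $L$, giving (1). Since $\rho \equiv u_0$ for $s \geq s_+$ and $\phi^{u_0}$ sends the part of $\wt L$ in the domain to $\Lambda_+ \times (1, +\infty)$, the symplectic reduction at $s = s_+$ is the trivial cobordism $\Lambda_+ \times \bR_{>0}$, giving (2). Property (3) is immediate from the movie construction, which by design yields a Legendrian cobordism in $J^1(M \times (1, +\infty) \times \bR_{>0, s})$ with the asserted ends. The main technical subtlety I expect is the fact that $\phi^u$ is only a \emph{conformal} contactomorphism rather than strict one, so care is needed to ensure that the Lagrangian suspension is exact; this is handled by the observation that the contact Hamiltonian $H$ on $J^1(M \times (1, +\infty))$ lifts to the honest symplectic Hamiltonian $sH$ on its symplectization, under which the suspension of an exact Lagrangian is exact by the standard argument. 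The diffeomorphism $\Sigma L \simeq L \times \bR$ is then manifest from the parametrization by $\wt L \times \bR_{>0,s}$ under the flow, since $\phi^{\rho(s)}$ is a diffeomorphism of $J^1(M \times (1, +\infty))$ for every $s$.
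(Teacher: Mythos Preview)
There are two genuine gaps. First, the Legendrian movie of $\widetilde L$ under $\phi^{\rho(s)}$ (Definition~\ref{lagmovie}) does not produce a \emph{conical} Legendrian in the sense of Definition~\ref{def:conical-cob}, so its Lagrangian projection is not a Lagrangian cobordism in the required sense and condition (3) fails. Concretely, where $\rho\equiv 0$ the movie is the product $\{(x,r,s;\eta,\mu,0,z):(x,r,\eta,\mu,z)\in\widetilde L\}$, whereas a conical end must be $\{(x,r,s;s\eta,s\mu,-z,sz)\}$ with fibre coordinates scaled by $s$. Passing from a movie/suspension to an honest cobordism in the symplectization $J^1(M\times(1,+\infty))\times\bR_{>0}$ is a nontrivial step (essentially the Chantraine/Eliashberg--Gromov construction) that you have not supplied; relatedly, your flow does not preserve $r>1$, so you are implicitly taking an open piece of a movie defined on a larger manifold, which further complicates the cobordism structure you need.

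Second, you have verified conditions (1) and (2) in the wrong direction. The proposition asks for symplectic reductions at fixed values $s_\pm$ in the $(1,+\infty)$-factor (your $r$), with the surviving $\bR_{>0}$-factor being the cobordism direction of condition (3); you instead reduce at fixed values of your movie parameter $s$, which merely recovers the two ends of the cobordism already recorded in (3). The genuine content of (1) and (2) --- that the \emph{transverse} slices recover $L$ and $\Lambda_+\times\bR_{>0}$ --- is exactly what is used in Lemmas~\ref{lem:positive-end} and~\ref{lem:negative-end}, and is the reason the paper's construction via a carefully chosen base diffeomorphism on $(1,+\infty)\times\bR_{>0}$ is as involved as it is. The paper in fact starts from the trivial cobordism $\widetilde L\times\bR_{>0}$ (which \emph{is} conical by construction) and bends it by a partially defined symplectomorphism; the Remark following the definition of $\Sigma L$ explains why simultaneous control of the $r$- and $s$-directions forces this indirect route rather than a direct isotopy argument like yours.
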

\begin{remark}\label{rem:suspend-reverse}
    Similarly, we can also construct a reverse suspension exact Lagrangian cobordism $\overline{\Sigma{L}} \subset T^{*}(M \times (1, +\infty) \times \mathbb{R}_{>0})$ diffeomorphic to ${L} \times \mathbb{R}$ such that
    \begin{enumerate}
      \item the symplectic reduction of the suspension to $\Sigma L_{s_-} \subset T^*(M \times \mathbb{R}_{>0}) \times \{s_-\}$ is the trivial Lagrangian cobordism $\Lambda_- \times \bR_{>0}$;
      \item the symplectic reduction of the suspension to $\Sigma L_{s_+} \subset T^*(M \times \mathbb{R}_{>0}) \times \{s_+\}$ is the exact Lagrangian cobordism $L$;
      \item the suspension $\Sigma L \subset T^{*}(M \times (1, +\infty) \times \mathbb{R}_{>0})$ is an exact Lagrangian cobordism between the Legendrian lifts from $\Lambda_- \times (1, +\infty)$ to $\widetilde{L}$.
    \end{enumerate}
\end{remark}

    First, consider the trivial exact Lagrangian cobordism $\widetilde{L} \times \bR_{>0} \subset T^*(M \times (1, +\infty) \times \bR_{>0})$ of the Legendrian lift $\widetilde{L} \subset J^1(M \times (1, +\infty))$, i.e.
    $$\widetilde{L} \times \bR_{>0} = \big\{(x, s, \theta; s\theta\xi, \theta t, st + w) | (x, s; s\xi, t; st + w) \in \widetilde{L} \big\}.$$
    Then, a natural approach is to define a diffeomorphism
    $$\phi: (1, +\infty) \times \bR_{>0} \xrightarrow{\sim} (1, +\infty) \times \bR_{>0}$$
    that sends the negative end $(1, +\infty) \times \{0\}$ to $(1, +\infty) \times \{0\}$, sends the positive end $(1, +\infty) \times \{+\infty\}$ to $\{1\} \times \mathbb{R}_{>0}$, sends $\{1\} \times \mathbb{R}_{>0}$ to $\{(1, 0)\}$ and sends $\{+\infty\} \times \bR_{>0}$ to $\{+\infty\} \times \bR_{>0} \cup \bR_{>0} \times \{+\infty\}$,
    which will extend to an exact symplectomorphism
    $$\varphi: T^{*}(M \times (1, +\infty) \times \bR_{>0}) \xrightarrow{\sim} T^{*}(M \times (1, +\infty) \times \mathbb{R}_{>0}).$$
    However, there are some technical difficulties to define the suspension cobordism using a diffeomorphism defined globally on $(1, +\infty) \times \bR_{>0}$, and instead we will consider only a certain subdomain.

\begin{figure}
  \centering
  \includegraphics[width=0.8\textwidth]{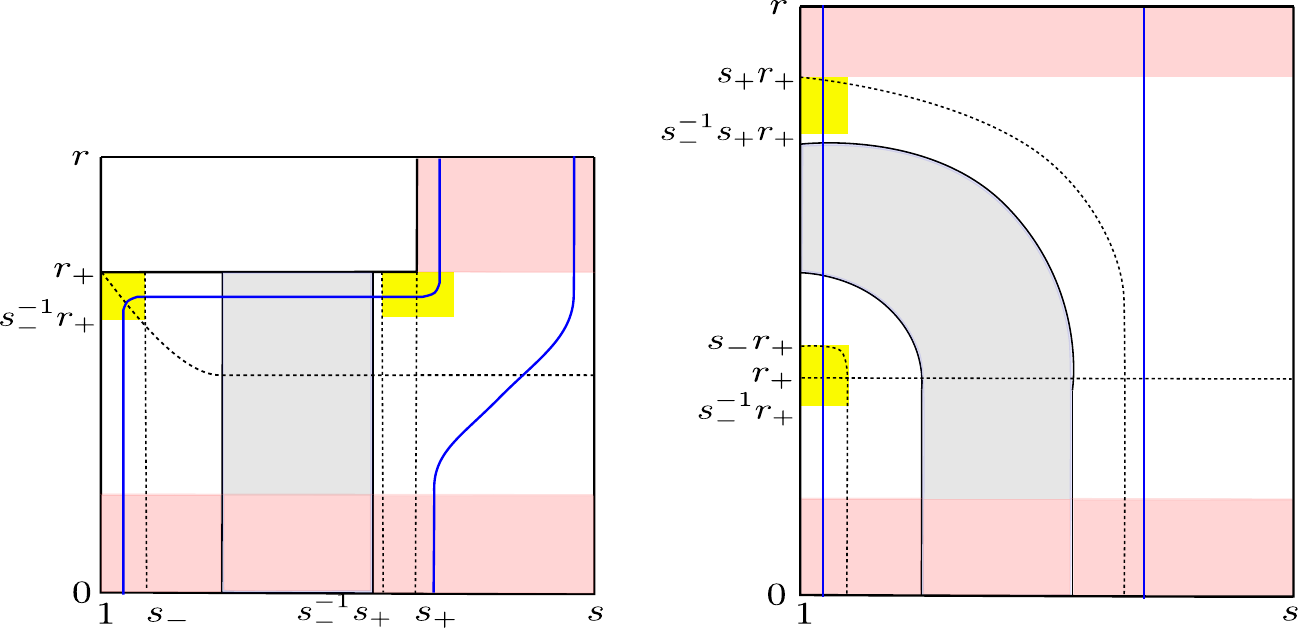}\\
  \caption{The diffeomorphism $\phi$ on a subdomain of $(1, +\infty) \times (0, +\infty)$. The grey region represents where the Lagrangian cobordism ${L} \subset T^*(M \times (1, +\infty))$ is not cylindrical. The pink regions are where Condition~(1) \& (2) are satisfied. The two blue lines are the preimage of $\phi^{-1}(\{s\} \times (0, +\infty))$ for $s < s_-$ and $s > s_+$. The two yellow regions are the regions in $\phi^{-1}((1, s_-) \times (0, +\infty))$ that are not controlled by Condition~(3).}\label{fig:suspend-cob}
\end{figure}

    Fix $s_- > 1$ and $r_- > 0$ sufficiently small, and respectively $s_+ > 1$ and $r_+ > 0$ sufficiently large. Suppose the Lagrangian cobordism $L$ is conical outside $T^*(M \times (s_-, s_+))$. Consider the diffeomorphism from a subdomain on the plane to the plane $\phi: (1, +\infty) \times \mathbb{R}_{>0} \setminus (1, s_+] \times [r_+, +\infty) \rightarrow (1, +\infty) \times \mathbb{R}_{>0}$ that satisfies the following conditions:
    \begin{enumerate}
      \item $\phi(s, r) = (s, r)$ for $s > 1$ and $r < r_-$ sufficiently small;
      \item $\phi(s, r) = (s_+^{-1} s, s_+ r)$ for $s > s_+$ and $r > r_+$ sufficiently large;
      \item $\phi(s, r) = (s, r)$ for $1 < s < s_-$ sufficiently small and $r < s_-^{-1}r_+ $, $\phi(s, r) = (r_+ r^{-1}, s_-^{-1} r_+ s)$ for $s_- < s < s_+$ and $s_-^{-1} r_+ < r < r_+$.
    \end{enumerate}
    See Figure \ref{fig:suspend-cob}. This induces a symplectomorphism (partially defined on the subdomain)
    $$\varphi: T^{*}\big(M \times ((1, +\infty) \times \bR_{>0} \backslash (1, s_+] \times [r_+, +\infty))\big) \xrightarrow{\sim} T^{*}\big(M \times (1, +\infty) \times \mathbb{R}_{>0}\big).$$
    Using the (partially defined) symplectomorphism, we define the suspension Lagrangian cobordism as follows.

\begin{definition}
    Let ${L} \subset T^*(M \times (1, +\infty))$ be an exact Lagrangian cobordism between Legendrians. Then the suspension Lagrangian $\Sigma {L}$ is the exact Lagrangian submanifold $\varphi(\widetilde{L} \times \mathbb{R}_{>0} \backslash T^*(M \times (1, s_+] \times [r_+, +\infty))) \subset T^*(M \times (1, +\infty) \times \mathbb{R}_{>0})$.
\end{definition}
\begin{remark}
    We explain the reason why the construction is more complicated than one may imagine. In fact, the Lagrangian $\widetilde{L} \times \bR_{>0}$ is conical with respect to the ${r}$-direction. In particular, the Liouville flow along the ${r}$-direction determines the positive/convex end $(1, +\infty) \times (r_+, +\infty)$ and negative/concave end $(1, +\infty) \times (0, r_-)$.

    Then we need to deform the Lagrangian on $(1, +\infty) \times \bR_{>0}$ in a way so that the Liouville flow changes and the new positive/convex end becomes the trivial Lagrangian cone on $(s_+, +\infty) \times (r_+, +\infty)$ while the negative/concave end stays the same as the original Lagrangian cobordism. Moreover, after the deformation the Lagrangian $\widetilde{L} \times \bR_{>0}$ should stay conical with respect to the ${r}$-direction, in order for it to be a Lagrangian cobordism.

    Therefore, it is in fact natural to come up with this complicated diffeomorphism, where Condition~(1) \& (2) ensure that the Lagrangian stay conical and the positive and negative end are exactly what we need. The reason to cut off the top left corner $(1, s_+] \times [r_+, +\infty)$ is to make sure that the conical condition is not violated on the top left corner after deformation (so that we only need to move $(s_+, +\infty) \times (r_+, +\infty)$ horizontally along the $s$-direction without extra uncontrolled deformation along the $r$-direction of the Liouville flow).

    This discussion also explains why we do not try to define $\Sigma L$ as both a cobordism from bottom to top and a cobordism from left to right, but rather only require that the behaviour of its symplectic reduction on the left and right slices. It seems that requiring the Lagrangian $\Sigma L$ to be tangent to both the horizontal and the vertical Liouville flow would make it too difficult to construct.
\end{remark}

    First, we verify Condition~(3) in Proposition \ref{prop:suspend}. Consider the suspension Lagrangian $\Sigma L$ and its positive/negative ends with respect to the $\bR_{>0}$ direction. Suppose $\widetilde{L} \times \bR_{>0} \cap T^*(M \times (1, \infty) \times (0, r_-)) = \{(x, s, r; sr\xi, rt, st + w) | (x, s; s\xi, t; st + w) \in \widetilde{L} \}$. With the assumption that $\phi(s, r) = (s, r)$ for $r < r_-$, we know that the induced symplectomorphism on this region is the identity,
    $$\Sigma {L} \cap T^*(M \times (1, \infty) \times (0, r_-)) = \big\{ (x, s, r; sr\xi, rt, st + w) | (x, s; s\xi, t; st + w) \in \widetilde{L} \big\}.$$
    With the assumption that $\phi(s, r) = (s_+^{-1} s, s_+ r)$ for $s > s_+$ and $r > r_+$, we know that the induced symplectomorphism sends $(x, s, r; \xi, \sigma, \rho)$ to $(x, s_+^{-1} s, s_+ r; \xi, s_+ \sigma, s_+^{-1} \rho)$, which, after coordinate changes, implies that
    $$\Sigma {L} \cap T^*(M \times (1, \infty) \times (s_+ r_+, \infty)) = \big\{ (x, s, r; sr\xi, rt, st) \big| (x, \xi, t) \in \Lambda_+ \big\}.$$
    Hence we can conclude the following lemma:

\begin{lemma}\label{lem:suspend-cob}
    Let ${L} \subset T^*(M \times \mathbb{R}_{>0})$ be a conical Legendrian cobordism from $\Lambda_-$ to $\Lambda_+$. Then $\Sigma {L} \subset T^*(M \times (1, \infty) \times \mathbb{R}_{>0})$ is an exact Lagrangian cobordism from the Legendrian lift $\widetilde{L}$ to the trivial Legendrian cone $\Lambda_+ \times (1, \infty)$.
\end{lemma}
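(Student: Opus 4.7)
The plan is to verify the three properties encoded in the statement, namely that $\Sigma L$ is (a) an exact Lagrangian submanifold of $T^{*}(M \times (1,\infty) \times \bR_{>0})$, (b) cylindrical/conical with respect to the Liouville dilation in the $r$-direction near $r = 0$ and near $r = \infty$, and (c) has Legendrian ideal boundaries equal to $\widetilde{L}$ at the concave end and $\Lambda_+ \times (1,\infty)$ at the convex end. Almost all of the real work is already contained in the two explicit computations immediately preceding the lemma; the proof is mainly a matter of assembling them.

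First I would observe that $\widetilde{L} \times \bR_{>0}$ is an exact Lagrangian in $T^*(M \times (1,\infty) \times \bR_{>0})$: it is the Liouville cone over the Legendrian $\widetilde{L} \subset J^1(M \times (1,\infty))$, with the $\bR_{>0, r}$ factor providing the Liouville scaling, so the pullback of $\lambda_{\mathrm{std}}$ admits a primitive that is homogeneous of degree $1$ in $r$. Because $\varphi$ is the cotangent lift of the base diffeomorphism $\phi$, it is an exact symplectomorphism, and hence $\Sigma L = \varphi\bigl(\widetilde{L} \times \bR_{>0} \setminus T^*(M \times (1, s_+] \times [r_+, \infty))\bigr)$ is exact Lagrangian on its domain of definition.

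Next I would use the two explicit formulas to read off the conical structure at both ends. On $\{r < r_-\}$, condition~(1) on $\phi$ makes $\varphi$ the identity there, so $\Sigma L \cap T^*(M \times (1,\infty) \times (0, r_-)) = \widetilde{L} \times (0, r_-)$ in the form displayed before the lemma; this is manifestly invariant under the Liouville dilation $r\partial_r$, and its Legendrian boundary as $r \to 0$ is exactly $\widetilde{L}$. On $\{r > s_+ r_+\}$, condition~(2) on $\phi$ gives $\varphi(x, s, r; \xi, \sigma, \rho) = (x, s_+^{-1} s, s_+ r; \xi, s_+ \sigma, s_+^{-1} \rho)$, so after the coordinate change, the displayed formula $\{(x, s, r; sr\xi, rt, st) \mid (x,\xi,t) \in \Lambda_+\}$ shows that $\Sigma L$ is the Liouville cone on the trivial Legendrian cylinder $\Lambda_+ \times (1,\infty) \subset J^1(M \times (1,\infty))$, whose ideal boundary is $\Lambda_+ \times (1,\infty)$. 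In both cases the primitive of the Liouville form is constant on the conical end, as required by the conventions of Section~\ref{sec:conical-leg}.

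The only mildly delicate step is verifying the convex end: one must check, using the contact form $\alpha_{\mathrm{std}} = dz + y\,dx + \sigma\,ds$ on $J^1(M \times (1,\infty))$ from Section~\ref{sec:conical-leg}, that the Legendrian condition forces $\sigma = -t$ on the trivial cylinder $\Lambda_+ \times (1,\infty)$, and that after conification the resulting Lagrangian cone is precisely $\{(x, s, r; sr\xi, rt, st) \mid (x,\xi,t) \in \Lambda_+\}$. This is a direct computation. Once these three items are in place, exactness together with the two cylindrical ends assembles into the claim that $\Sigma L$ is an exact Lagrangian cobordism in $T^*(M \times (1,\infty) \times \bR_{>0})$ from $\widetilde{L}$ to $\Lambda_+ \times (1,\infty)$, as required.
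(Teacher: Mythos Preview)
Your proposal is correct and follows essentially the same route as the paper: the two displayed computations immediately preceding the lemma already identify $\Sigma L$ on $\{r<r_-\}$ with the Liouville cone on $\widetilde{L}$ and on $\{r>s_+r_+\}$ with the Liouville cone on the trivial cylinder $\Lambda_+\times(1,\infty)$, and the lemma is stated as the resulting conclusion. You are slightly more explicit than the paper in spelling out why $\Sigma L$ is exact (cotangent lift of a base diffeomorphism applied to a Liouville cone) and in checking that the primitive is constant on the conical ends, but the argument is the same.
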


    Next, we verify Condition~(1) \& (2) in Proposition \ref{prop:suspend} by considering the symplectic reduction of $\Sigma L$ along the hypersurface $T^*M \times \{s\} \times (1, +\infty) \times T^*\bR_{>0}$, denoted by
    $$\Sigma L_s = \pi(\Sigma L \cap T^*M \times \{s\} \times (1, +\infty) \times T^*\bR_{>0}) \subset T^*M \times T^*\bR_{>0}.$$

    For Condition~(1), consider the slice $T^*M \times \{s\} \times T^*\bR_{>0}$ for $1 < s < s_-$ sufficiently small. Firstly, on $T^*M \times \{s\} \times T^*(0, s_-^{-1} r_+)$, we know that the symplectomorphism is induced by $\phi(s, r) = (s, r)$, so
    $$\Sigma L_s \cap T^*M \times T^*(0, s_-^{-1} r_+) = \Lambda_- \times (0, s_-^{-1} r_+).$$
    Secondly, on $T^*M \times \{s\} \times T^*(s_+r_+ , +\infty)$, we know that the symplectomorphism is induced by $\phi(s, r) = (s_+^{-1}s, s_+ r)$, so since the first coordinate $s_+^{-1}s$ is fixed
    $$\Sigma L_s \cap T^*M \times T^*(s_+ r_+, +\infty) = \Lambda_+ \times (s_+r_+, +\infty);$$
    on $T^*M \times \{s\} \times T^*(s_-r_+, s_-^{-1} s_+ r_+)$, we know that the symplectomorphism is induced by $\phi(s, r) = (r_+ r^{-1}, r_+ s)$, so since the first coordinate $r_+ r^{-1}$ is fixed
    $$\Sigma L_s \cap T^*M \times T^*(s_-r_+, s_-^{-1} s_+ r_+) = L \cap T^*(M \times (s_-r_+, s_-^{-1} s_+ r_+)).$$

    Finally, consider $\Sigma L_s \cap T^*M \times T^*(s_-^{-1} r_+, s_- r_+)$ and $\Sigma L_s \cap T^*M \times T^*(s_-^{-1} s_+ r_+, s_+ r_+)$. For $\Sigma L_s \cap T^*M \times T^*(s_-^{-1} r_+, s_- r_+)$, suppose that $\gamma_1(\theta) = \phi^{-1}(s, \theta)$. Consider the isotopy from the curve $\gamma_1$ to the line segment $\gamma_0$ connecting $(s, s_-^{-1} r_+)$ and $(s_-, s^{-1} r_+^{-1})$. We use the following lemma (see for example \cite[Section 3.3]{McDuffSalamon}):

\begin{lemma}\label{lem:symp-reduce}
    Let $X$ be a symplectic manifold. Let $C_u, \, 0 \leq u \leq 1$ be a smooth family of coisotropic submanifolds in $X$ and $\mathscr{F}_u = \ker(\omega_X|_{TC_u})$ be the characteristic foliation on $TC_u$. Then there is a family of symplectomorphisms
    $$\varphi_u: C_0/\mathscr{F}_0 \xrightarrow{\sim} C_u/\mathscr{F}_u.$$
    Moreover, let $L \subset X$ be a Lagrangian submanifold. Suppose that $L \pitchfork C_u$ for any $0 \leq u \leq 1$. Let $L_u = \pi_u(L \cap C_u) \subset C_u/\mathscr{F}_u$ be the symplectic reduction. Then
    $$\varphi_u(L_0) = L_u.$$
\end{lemma}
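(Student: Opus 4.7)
The plan is to prove both claims together by a parameterized Moser isotopy. The argument is classical (see e.g.\ \cite{McDuffSalamon}, \cite{GotayCoiso}); the main work is to arrange the Moser vector field to be tangent to the characteristic foliation so the flow descends to the quotient, and to arrange the ambient isotopy to carry $L\cap C_0$ to $L\cap C_u$ so the Lagrangian statement follows.

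First I would produce a smooth family of ambient diffeomorphisms $\psi_u : U_0 \to U_u$ between open neighborhoods of $C_0$ and $C_u$ in $X$, with $\psi_0 = \mathrm{id}$ and $\psi_u(C_0) = C_u$. By the parametric tubular neighborhood theorem applied to the nested family $(L\cap C_u) \subset C_u \subset X$, which consists of smooth families of submanifolds thanks to the transversality hypothesis $L \pitchfork C_u$, the family $\psi_u$ can be chosen so that in addition
\[
\psi_u(L \cap C_0) = L \cap C_u.
\]
The pullback $\omega_u := \psi_u^*\omega$ is then a smooth family of symplectic forms on $U_0$ for which $C_0$ is coisotropic with characteristic distribution $(\psi_u|_{C_0})_*^{-1}\mathscr{F}_u$.

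Next I would apply Moser's trick relative to $C_0$. Shrinking $U_0$ if necessary and using a relative Poincar\'e lemma, I can write $\partial_u \omega_u = d\beta_u$ for a smooth family of $1$-forms $\beta_u$ on $U_0$. The key point is to choose the primitives $\beta_u$ so that $\beta_u|_{C_0}$ annihilates a chosen complement to $T\mathscr{F}_0^{(u)}$ in $TC_0$; equivalently, so that the Moser vector field $X_u$ defined by $\iota_{X_u}\omega_u = -\beta_u$ is tangent to $C_0$ and in fact lies in the characteristic distribution there. This is a standard relative cohomological computation once one fixes a splitting of $TU_0|_{C_0}$ compatible with the coisotropic structure. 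The flow $\phi_u$ of $X_u$ fixes $C_0$ setwise, preserves every leaf of $\mathscr{F}_0^{(u)}$, and satisfies $\phi_u^*\omega_u = \omega_0$.

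Then the composition $\Psi_u := \psi_u \circ \phi_u$ is a symplectomorphism $(U_0,\omega) \to (U_u,\omega)$ with $\Psi_u(C_0) = C_u$. Because $\Psi_u$ is symplectic and preserves the coisotropic submanifold, it automatically sends the characteristic foliation $\mathscr{F}_0 = \ker(\omega|_{TC_0})$ to $\mathscr{F}_u = \ker(\omega|_{TC_u})$, and therefore descends to a symplectomorphism
\[
\varphi_u : C_0/\mathscr{F}_0 \xrightarrow{\sim} C_u/\mathscr{F}_u,
\]
which proves the first assertion. For the Lagrangian statement, by construction $\phi_u$ moves each point of $C_0$ along its leaf in $\mathscr{F}_0^{(u)}$, so $\phi_u(L\cap C_0)$ and $L\cap C_0$ lie in the same union of leaves. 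Applying $\psi_u$, which by design sends $L\cap C_0$ to $L\cap C_u$ and identifies characteristic foliations, it follows that $\Psi_u(L\cap C_0)$ and $L\cap C_u$ lie in the same union of leaves of $\mathscr{F}_u$ in $C_u$. Projecting by $\pi_u$, using the intertwining relation $\pi_u\circ \Psi_u = \varphi_u\circ \pi_0$, gives $\varphi_u(L_0) = L_u$.

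The main technical obstacle will be the choice of $\beta_u$ with $X_u$ tangent to the characteristic foliation: one has to carry out the relative Poincar\'e lemma in a form that respects the coisotropic splitting, which is where the characterization $\mathscr{F}_u = (TC_u)^{\omega}\subset TC_u$ is used. Once this is set up, the compatibility of $\varphi_u$ with the Lagrangian reduction is automatic from having chosen $\psi_u$ to preserve the family of intersections $L\cap C_u$.
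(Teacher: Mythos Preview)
Your overall strategy for the first claim --- produce a diffeomorphism $\psi_u$ carrying $C_0$ to $C_u$, then correct by a Moser isotopy to make it symplectic --- is exactly the content of the ``isotopy extension theorem'' the paper invokes in one line. So on that part you are doing the same thing, just more explicitly.

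The gap is in your treatment of the Lagrangian part. You claim the Moser vector field $X_u$ can be chosen to lie in the characteristic distribution of $C_0$. Unwinding the condition $\iota_{X_u}\omega_u=-\beta_u$, one checks that $X_u\in (TC_0)^{\omega_u}$ along $C_0$ forces $\beta_u|_{TC_0}=0$, and hence $\partial_u(\omega_u)|_{TC_0}=0$; that is, the pulled-back presymplectic form $(\psi_u|_{C_0})^*(\omega|_{TC_u})$ on $C_0$ must be independent of $u$. Nothing in the hypotheses guarantees this: the coisotropics $C_u$ may well carry genuinely varying reduced symplectic structures as seen through $\psi_u$. So the ``relative Poincar\'e lemma respecting the coisotropic splitting'' you allude to does not go through in general, and without it you cannot conclude that $\phi_u$ moves points of $L\cap C_0$ only along leaves.

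The paper sidesteps this entirely. Having built the ambient symplectomorphisms $\varphi_u$ with $\varphi_u(C_0)=C_u$, it does not attempt to make $\varphi_u$ carry $L\cap C_0$ to $L\cap C_u$. Instead it observes that $\varphi_u^{-1}(L)$ is a smooth family of Lagrangians transverse to $C_0$, so the reductions $\pi_0(\varphi_u^{-1}(L)\cap C_0)$ form a Lagrangian isotopy in $C_0/\mathscr{F}_0$ starting at $L_0$; pushing forward by $\varphi_u$ then shows $\varphi_u(L_0)$ and $L_u$ are Lagrangian (hence, in the exact case, Hamiltonian) isotopic. This is strictly weaker than the equality stated in the lemma, but it is precisely what the subsequent Lemmas~\ref{lem:suspend-reduce1} and~\ref{lem:suspend-reduce2} use. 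If you want to match the paper, drop the leafwise Moser step and argue isotopy of the reduced Lagrangians directly from the family $\varphi_u^{-1}(L)$.
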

\begin{proof}
    Using the isotopy extension theorem, there is a family of symplectomorphisms $\varphi_u: X \rightarrow X$ such that $\varphi_u(C_0) = C_u$. Properties of symplectomorphisms then ensure that it preserves characteristic foliations $\varphi_u(\SF_0) = \SF_u$. Then we obtain the first part of the claim.

    For the second part of the claim, note that $L \pitchfork \varphi_u(C_0)$ if and only if $\varphi_u^{-1}(L) \pitchfork C_0$. Since
    $$L \cap \varphi_u(C_0) = \varphi_u(\varphi_u^{-1}(L) \cap C_0),$$
    we can conclude that $\pi(\varphi_u^{-1}(L) \cap C_0)$ are Lagrangian isotopic. In particular, when they are exact Lagrangian isotopic, we can conclude that they are Hamiltonian isotopic.
\end{proof}

    Since $\widetilde{L} \times \bR_{>0} \cap T^*(M \times (1, s_-) \times (s_-^{-1} r_+, r_+))$ is the trivial suspension of the Legendrian cone $\Lambda_- \times (1, s_-) \times (s_-^{-1} r_+, r_+)$, where
    $$\Lambda_- \times (1, +\infty) \times \bR_{>0} = \{(x, s, r; sr\xi, rt, st) | (x, \xi, t) \in \Lambda_-\}.$$
    Thus $\widetilde{L} \times \bR_{>0} \pitchfork T^*M \times T^*_{\gamma_u}((1, s_-) \times (s_-^{-1} r_+, r_+))$ for any $0 \leq u \leq 1$. Using Lemma \ref{lem:symp-reduce}, we know that the symplectic reduction of $\widetilde{L} \times \bR_{>0}$ along $T^*M \times T^*_{\gamma_1}((1, s_-) \times (s_-^{-1} r_+, s_- r_+))$, is Hamiltonian isotopic to the reduction along $T^*M \times T^*_{\gamma_0}((1, s_-) \times (s_-^{-1} r_+, s_- r_+))$. We can easily tell that the symplectic reduction along $T^*M \times T^*_{\gamma_0}((1, s_-) \times (s_-^{-1} r_+, s_- r_+))$ is the trivial Legendrian cone, i.e.
    $$\Sigma L_s \cap T^*(M \times (s_-^{-1} r_+, s_- r_+)) \cong \Lambda_- \times (s_-^{-1} r_+, s_- r_+).$$

    Similarly, for $\Sigma L_s \cap T^*M \times T^*(s_-^{-1} s_+ r_+, s_+ r_+)$, let $\gamma_1(\theta) = \phi^{-1}(s, \theta)$. We can connect $\gamma_1$ to the line segment $\gamma_0$. Since $\widetilde{L} \times \bR_{>0} \cap T^*(M \times (s_-^{-1}s_+, s_-s_+) \times (s_-^{-1} r_+, r_+))$ is the trivial suspension of the Legendrian cone $\Lambda_+ \times (s_-^{-1}s_+, s_-s_+) \times (s_-^{-1} r_+, r_+)$, we can apply Lemma \ref{lem:symp-reduce} again. Therefore, we can conclude the following lemma:

\begin{lemma}\label{lem:suspend-reduce1}
    Let ${L} \subset T^*(M \times (1, +\infty))$ be an exact Lagrangian cobordism from $\Lambda_-$ to $\Lambda_+$. Then for $1 < s < s_-$ sufficiently small, the symplectic reduction $\Sigma {L}_{s} \subset T^*(M \times \{s\} \times \bR_{>0})$ is Hamiltonian isotopic to the Lagrangian cobordism ${L} \subset T^*(M \times \bR_{>0})$ under a compactly supported Hamiltonian isotopy.
\end{lemma}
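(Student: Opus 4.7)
The plan is to follow exactly the decomposition suggested by the paragraphs preceding the lemma. Fix $1 < s < s_-$ small. I would split the fiber direction $\bR_{>0}$ into five consecutive intervals $I_1 = (0, s_-^{-1} r_+)$, $I_2 = (s_-^{-1} r_+, s_- r_+)$, $I_3 = (s_- r_+, s_-^{-1} s_+ r_+)$, $I_4 = (s_-^{-1} s_+ r_+, s_+ r_+)$, $I_5 = (s_+ r_+, +\infty)$, and analyze $\Sigma L_s \cap T^*(M \times \{s\} \times I_k)$ on each. The regions $I_1, I_3, I_5$ are where $\phi$ has the explicit forms (1), (3)-second, (2) respectively, and the reductions can be written down by hand; $I_2$ and $I_4$ are the interpolating strips where I would invoke Lemma \ref{lem:symp-reduce}.

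On $I_1$, since $\phi = \mathrm{id}$ and $\widetilde{L} \times \bR_{>0}$ near the concave end is the trivial suspension of $\Lambda_-$, the reduction is exactly $\Lambda_- \times I_1$. On $I_3$, the formula $\phi(s, r) = (r_+ r^{-1}, s_-^{-1} r_+ s)$ keeps the first coordinate (namely $r_+ r^{-1}$, a function only of $r$) intact while rescaling the second; unpacking the symplectic reduction through the induced symplectomorphism gives precisely $L \cap T^*(M \times I_3)$. On $I_5$, the formula $\phi(s, r) = (s_+^{-1} s, s_+ r)$ also fixes the first coordinate along the slice, so the reduction is the trivial cone $\Lambda_+ \times I_5$. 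On each of $I_2$ and $I_4$, I would parametrize $\gamma_1(\theta) = \phi^{-1}(s, \theta)$ for $\theta \in I_k$ and take the straight line $\gamma_0$ joining its endpoints in $(1, +\infty) \times \bR_{>0}$, then connect them by a smooth family $\gamma_u$. Because $\widetilde{L} \times \bR_{>0}$ is the trivial suspension of the Legendrian cone on $\Lambda_\pm$ in a whole product neighborhood containing every $\gamma_u$, the coisotropic family $C_u = T^*M \times T^*_{\gamma_u}(\cdots)$ stays transverse to $\widetilde{L} \times \bR_{>0}$ uniformly in $u$. Lemma \ref{lem:symp-reduce} then yields a family of Lagrangian submanifolds interpolating between the reductions along $\gamma_1$ (which is $\Sigma L_s \cap T^*(M \times \{s\} \times I_k)$) and along $\gamma_0$ (which is readily a piece of the trivial cone $\Lambda_\pm \times I_k$ matching with the adjacent $I_1, I_3, I_5$-pieces).

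Once assembled, all five pieces fit into a globally defined Lagrangian in $T^*(M \times \{s\} \times \bR_{>0})$ which agrees with $L$ outside the compact region $(s_- r_+, s_-^{-1} s_+ r_+)$ and which is Lagrangian isotopic to $L$ via a compactly supported isotopy. The main obstacle is to upgrade these Lagrangian isotopies to compactly supported \emph{Hamiltonian} isotopies: this requires checking that the isotopies are exact. I would argue this using exactness of the original $\widetilde{L} \times \bR_{>0}$ together with the fact that each interpolating isotopy is supported in a product neighborhood where the primitive of the Liouville form restricted to the Lagrangian is unambiguous, so the flux class of the isotopy vanishes. A minor secondary point is to make sure the smoothing of $\gamma_u$ across the boundary points $\{\theta \in \partial I_k\}$ is compatible with the identifications of the explicit pieces; this is a routine matter of choosing $\gamma_u$ to agree with $\gamma_1$ near $\partial I_k$.
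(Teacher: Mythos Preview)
Your proposal is correct and follows essentially the same approach as the paper: the paper also splits the $\bR_{>0}$-direction into exactly these five intervals, computes $\Sigma L_s$ explicitly on $I_1, I_3, I_5$ from the formulas for $\phi$, and on the interpolating strips $I_2, I_4$ invokes Lemma~\ref{lem:symp-reduce} applied to the family of curves $\gamma_u$ joining $\gamma_1 = \phi^{-1}(s,\cdot)$ to a straight segment $\gamma_0$, using that $\widetilde{L}\times\bR_{>0}$ is the trivial suspension of the cone on $\Lambda_\pm$ in the relevant product neighbourhoods. Your extra care about upgrading the Lagrangian isotopy to a compactly supported Hamiltonian one (via exactness and vanishing flux) is more explicit than the paper, which leaves this point essentially implicit in the last line of the proof of Lemma~\ref{lem:symp-reduce}.
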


    For Condition~(2), consider the slice $T^*M \times \{s\} \times T^*\bR_{>0}$ for $s > s_+$ sufficiently large. Let $\gamma_1(\theta) = \phi^{-1}(s, \theta)$. Since $\widetilde{L} \times \bR_{>0} \cap T^*(M \times (s_+, +\infty) \times \bR_{>0})$ is the trivial suspension of the Legendrian cone $\Lambda_+ \times (s_+, +\infty) \times \bR_{>0}$, we can apply Lemma \ref{lem:symp-reduce} for the deformation induced by the isotopy from $\gamma_1$ to the line segment $\gamma_0$. We can easily tell that the symplectic reduction along $T^*M \times T^*_{\gamma_0}((s_+, +\infty) \times (r_-, r_+))$ is the trivial Legendrian cone, i.e.
    $$\Sigma L_s \cap T^*(M \times (r_-, r_+)) \cong \Lambda_+ \times (r_-, r_+).$$
    Therefore, we can conclude the following lemma:

\begin{lemma}\label{lem:suspend-reduce2}
    Let ${L} \subset T^*(M \times (1, +\infty))$ be an exact Lagrangian cobordism from $\Lambda_-$ to $\Lambda_+$. Then for $s > s_+$ sufficiently large, the symplectic reduction $\Sigma {L}_{s} \subset T^*(M \times \{s\} \times \bR_{>0})$ is Hamiltonian isotopic to the trivial Lagrangian cobordism $\Lambda_+ \times \bR_{>0} \subset T^*(M \times \bR_{>0})$ under a compactly supported Hamiltonian isotopy.
\end{lemma}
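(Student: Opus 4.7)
The plan is to mirror the strategy of Lemma \ref{lem:suspend-reduce1}, but now applied at the positive end, where the geometry is in fact easier because the Lagrangian is already a trivial suspension of a cone throughout the relevant region. Fix $s > s_+$ sufficiently large, and let $\gamma_1(\theta) = \phi^{-1}(s, \theta)$ denote the preimage of the vertical line $\{s\} \times \bR_{>0}$ under $\phi$. The symplectic reduction $\Sigma L_s$ is, by construction, the image under $\varphi$ of the reduction of $\widetilde{L} \times \bR_{>0}$ along the coisotropic $T^*M \times T^*_{\gamma_1}\bigl((1, +\infty) \times \bR_{>0}\bigr)$, so it suffices to compare this reduction with the reduction along the straight vertical line $\gamma_0(\theta) = (s, \theta)$.

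First I would verify that for $s > s_+$ sufficiently large, the curve $\gamma_1$ lies entirely inside $(s_+, +\infty) \times \bR_{>0}$. By Condition~(2) on $\phi$, we have $\phi(s', r') = (s_+^{-1} s', s_+ r')$ on the region $s' > s_+$, $r' > r_+$, so $\gamma_1(\theta) = (s_+ s, s_+^{-1}\theta)$ there; continuity together with Condition~(1) forces $\gamma_1$ to remain in the conical region once $s$ is large enough. In this region,
$$\widetilde{L} \times \bR_{>0} \,\cap\, T^*\bigl(M \times (s_+, +\infty) \times \bR_{>0}\bigr) = \bigl\{(x, s', r'; s'r'\xi, r't, s't) \,\big|\, (x, \xi, t) \in \Lambda_+\bigr\},$$
which is the trivial product of the cone $\Lambda_+ \times (s_+, +\infty)$ with $\bR_{>0}$, in particular conical in both the $s'$- and $r'$-directions.

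Next I would connect $\gamma_1$ to $\gamma_0$ by a compactly supported one-parameter family $\gamma_u$ of curves, each contained in $(s_+, +\infty) \times \bR_{>0}$, and coinciding with $\gamma_0$ outside a compact set. Transversality of $\widetilde{L} \times \bR_{>0}$ to the family of coisotropic submanifolds $T^*M \times T^*_{\gamma_u}\bigl((s_+, +\infty) \times \bR_{>0}\bigr)$ is automatic from the product-cone description above, since the conormal to any smooth curve in $(s_+, +\infty) \times \bR_{>0}$ is transverse to a Lagrangian of the form $\Lambda_+ \times (s_+, +\infty) \times \bR_{>0}$. Lemma \ref{lem:symp-reduce} then produces a compactly supported family of symplectomorphisms intertwining the reductions.

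Finally, the reduction along the straight line $\gamma_0$ simply forgets the $s'$-factor, yielding
$$\Sigma L_s \cap T^*(M \times \bR_{>0}) \cong \Lambda_+ \times \bR_{>0},$$
and the Hamiltonian isotopy produced by Lemma \ref{lem:symp-reduce} is compactly supported since the deformation $\gamma_u$ is. The only subtle point is ensuring that $s$ is chosen large enough that $\gamma_1$ stays inside the conical region $(s_+, +\infty) \times \bR_{>0}$ so that the trivial product description of $\widetilde{L} \times \bR_{>0}$ applies throughout the family; this is the main, though mild, technical obstacle, and it is handled purely by the explicit form of $\phi$ in Condition~(2).
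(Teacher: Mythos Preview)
Your approach is essentially the paper's: locate $\gamma_1 = \phi^{-1}(\{s\}\times\bR_{>0})$ inside the region where $\widetilde{L}\times\bR_{>0}$ is the trivial suspension of the $\Lambda_+$-cone, deform to a straight segment $\gamma_0$, and invoke Lemma~\ref{lem:symp-reduce}. The paper in fact splits into three pieces---the two ends where Conditions~(1) and~(2) give the cone explicitly, and the compact middle where Lemma~\ref{lem:symp-reduce} is applied---but this is only an organizational difference.

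There is one small slip worth fixing. You compute $\gamma_1(\theta) = (s_+ s,\, s_+^{-1}\theta)$ for large $\theta$, yet take $\gamma_0(\theta) = (s,\theta)$; these do \emph{not} agree at the positive end, so the family $\gamma_u$ cannot be compactly supported as stated. The repair is immediate: either choose $\gamma_0$ to coincide with $\gamma_1$ outside $(r_-, s_+ r_+)$ (both ends are already vertical segments in the $\Lambda_+$-cone region, so the reduction along such a $\gamma_0$ is still manifestly $\Lambda_+\times\bR_{>0}$ up to reparametrization), or, as the paper does, apply the deformation argument only on the compact interval and handle the two conical ends directly from Conditions~(1) and~(2). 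Either way the compactly supported Hamiltonian isotopy follows.
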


\subsection{Comparison between the two constructions}
    Recall that we have used the notation $\mu Sh_\Lambda$ for the sheaf of categories. However, for simplicity, in this subsection we will use them as the global sections $\mu Sh_\Lambda(\Lambda)$ of the corresponding sheaves of categories.

    Given the suspension Lagrangian cobordism $\Sigma{L} \subset T^*(M \times (1, +\infty) \times \mathbb{R}_{>0})$ from $\widetilde{L}$ to $\Lambda_+ \times \mathbb{R}_{>0}$, we will prove the following commutative diagram of global sections of categories
    \[\xymatrix@C=2mm{
    \mu Sh_{M \cup (\Lambda_- \times \mathbb{R}_{>0})} \times_{\mu Sh_{\Lambda_-}} \mu Sh_L \ar[d]_{\Phi_L} & \mu Sh_{(M \times \mathbb{R}_{>0}) \cup (\widetilde{L} \times \mathbb{R}_{>0})} \times_{\mu Sh_L} \mu Sh_{\Sigma L} \ar[d]_{\Phi_{\Sigma L}} \ar[l]_{\tilde{j}_-^{-1} \hspace{2pt}} \ar[r]^{\hspace{42pt} \tilde{j}_+^{-1}} & \mu Sh_{M \cup (\Lambda_+ \times \mathbb{R}_{>0})} \ar[d]_{\Phi_{\Lambda_+ \times \mathbb{R}_{>0}}}  \\
    \mu Sh_{M \cup (\Lambda_+ \times \mathbb{R}_{>0})} & \mu Sh_{(M \times \mathbb{R}_{>0}) \cup (\Lambda_+ \times \mathbb{R}_{>0}^2)} \ar[l]_{j_-^{-1} \hspace{5pt} } \ar[r]^{\hspace{16pt} j_+^{-1}} & \mu Sh_{M \cup (\Lambda_+ \times \mathbb{R}_{>0})},
    }\]
    where $\Phi_{\Lambda_+ \times \mathbb{R}_{>0}} \simeq \mathrm{id}$, and $j_-^{-1}$ and $j_+^{-1}$ are (obvious) equivalences with composition being the identity. Hence the diagram simplifies to
    \[\xymatrix{
    \mu Sh_{(M \times \mathbb{R}_{>0}) \cup (\widetilde{L} \times \mathbb{R}_{>0})} \times_{\mu Sh_L} \mu Sh_{\Sigma L} \ar[r]^{\hspace{12pt} \tilde{j}_-^{-1} } \ar[dr]_{\hspace{22pt} \tilde{j}_+^{-1}} & \mu Sh_{M \cup (\Lambda_- \times \mathbb{R}_{>0})} \times_{\mu Sh_{\Lambda_-}} \mu Sh_L \ar[d]^{\Phi_L} \\
    & \mu Sh_{M \cup (\Lambda_+ \times \mathbb{R}_{>0})}.
    }\]
    Using the fact that the cobordism $\widetilde{L}$ to $\Lambda_+ \times \mathbb{R}_{>0}$ diffeomorphic to ${L} \times \mathbb{R}$ and the negative end is equal to $\widetilde{L}$, we have a canonical identification of categories
    $$\mu Sh_{(M \times \mathbb{R}_{>0}) \cup (\widetilde{L} \times \mathbb{R}_{>0})} \times_{\mu Sh_L} \mu Sh_{\Sigma L} \simeq \mu Sh_{(M \times \mathbb{R}_{>0}) \cup (\widetilde{L} \times \mathbb{R}_{>0})}.$$
    Therefore, we will show that $\tilde{j}_-^{-1} \simeq (i_-^{-1}, m_{\widetilde{L}})$ and $\tilde{j}_+^{-1} \simeq i_+^{-1}$, and hence complete the proof of Theorem \ref{thm:compatible-cob}.

\begin{lemma}\label{lem:commute}
    Let $L$ be an exact Lagrangian cobordism from $\Lambda_-$ to $\Lambda_+$ and $\Sigma L$ the suspension exact Lagrangian cobordism. There is a commutative diagram of global sections of categories
    \[\xymatrix@C=2mm{
    \mu Sh_{M \cup (\Lambda_- \times \mathbb{R}_{>0})} \times_{\mu Sh_{\Lambda_-}} \mu Sh_L \ar[d]_{\Phi_L} & \mu Sh_{(M \times \mathbb{R}_{>0}) \cup (\widetilde{L} \times \mathbb{R}_{>0})} \times_{\mu Sh_L} \mu Sh_{\Sigma L} \ar[d]_{\Phi_{\Sigma L}} \ar[l]_{\tilde{j}_-^{-1} \hspace{2pt}} \ar[r]^{\hspace{42pt} \tilde{j}_+^{-1}} & \mu Sh_{M \cup (\Lambda_+ \times \mathbb{R}_{>0})} \ar[d]_{\Phi_{\Lambda_+ \times \mathbb{R}_{>0}}}  \\
    \mu Sh_{M \cup (\Lambda_+ \times \mathbb{R}_{>0})} & \mu Sh_{(M \times \mathbb{R}_{>0}) \cup (\Lambda_+ \times \mathbb{R}_{>0}^2)} \ar[l]_{j_-^{-1} \hspace{5pt} } \ar[r]^{\hspace{16pt} j_+^{-1}} & \mu Sh_{M \cup (\Lambda_+ \times \mathbb{R}_{>0})}.
    }\]
\end{lemma}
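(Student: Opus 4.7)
The plan is to realize both $\Phi_L$ and $\Phi_{\Sigma L}$ as instances of the Nadler--Shende specialization, set up in a \emph{two-parameter} family where the cobordism direction of $\Sigma L$ (the $r$-coordinate on $\mathbb{R}_{>0}$) is independent of the slicing direction (the $s$-coordinate on $(1,+\infty)$) used by the functors $\tilde{j}_\pm^{-1}$ and $j_\pm^{-1}$. Once this is arranged, the commutativity of each square becomes a base-change statement: the nearby-cycle functor defining the cobordism map commutes with restriction in a transverse coordinate. This is exactly the content of Proposition~\ref{basechange}.

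First I would apply the antimicrolocalization functor of Section~\ref{sec:NadShen} in the $r$-direction to the input $\SG \in \mu Sh_{(M \times \mathbb{R}_{>0}) \cup (\widetilde{L} \times \mathbb{R}_{>0})} \times_{\mu Sh_L} \mu Sh_{\Sigma L}$, producing a sheaf $\SG_\text{dbl}$ on $M \times \mathbb{R} \times (1,+\infty) \times \mathbb{R}^N$ whose microsupport at infinity lies on the relative doubling $(\Sigma L, \partial\Sigma L)^\prec_\epsilon$ in the $r$-direction. Then I would apply the contact Hamiltonian flow $\Psi_H$ which is a contact lift of the $r$-Liouville flow and is compactly supported in $s$ away from the slicing loci. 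The functor $\Phi_{\Sigma L}$ is then $i_0^{-1} j_* \Psi_H(\SG_\text{dbl})$ where $i_0, j$ refer to the closed/open embeddings at $r=0$ in the nearby-cycle picture. Because $\Psi_H$ is the identity on the $s$-direction, the restrictions $\tilde j_\pm^{-1}$ (and $j_\pm^{-1}$) pass through $\Psi_H$ and $j_*$ trivially; the only nontrivial exchange is between $i_0^{-1}$ (the nearby cycle in $r$) and $\tilde j_\pm^{-1}$ (restriction in $s$).

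Next I would verify the three hypotheses of Proposition~\ref{basechange} for the pair of directions $(r,s)$ applied to $j_* \Psi_H(\SG_\text{dbl})$. Hypothesis~(1) and (2) concern the non-characteristicity of the sheaf with respect to $\{s = s_\pm\}$ away from $r=0$; these follow because $\Psi_H$ preserves the microsupport of $\SG_\text{dbl}$ inside the conical movie of $\Sigma L$, and $\Sigma L$ is by Lemmas~\ref{lem:suspend-reduce1}--\ref{lem:suspend-reduce2} transverse to the slices $\{s = s_\pm\}$ (the reductions being the Hamiltonian-isotopic copies of $L$ and $\Lambda_+ \times \mathbb{R}_{>0}$). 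Hypothesis~(3) concerns subanalyticity of the limit at $(r,s) = (0,s_\pm)$, which holds because the specialization of $\Sigma L$ at $r=0$ over each slice is the expected subanalytic Legendrian (namely $\Lambda_+ \times \mathbb{R}_{>0}$ for $s$ either near $s_-$ or $s_+$, after specialization). With base change in hand, both squares become the identity assertion up to identifying the restrictions.

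The identification of restrictions is then handled by Guillermou--Kashiwara--Schapira, Theorem~\ref{GKS}: by Lemma~\ref{lem:suspend-reduce1}, $\Sigma L_{s_-}$ is Hamiltonian isotopic (compactly supported) to $L$, so $\tilde j_-^{-1}\SG$ corresponds to a sheaf on $L$ and its image under $i_0^{-1}j_*\Psi_H$ is $\Phi_L(\tilde j_-^{-1}\SG)$; likewise by Lemma~\ref{lem:suspend-reduce2}, $\Sigma L_{s_+}$ is Hamiltonian isotopic to $\Lambda_+ \times \mathbb{R}_{>0}$, so $\Phi_{\Sigma L}$ restricted there agrees with $\Phi_{\Lambda_+ \times \mathbb{R}_{>0}}(\tilde j_+^{-1}\SG)$, and $\Phi_{\Lambda_+ \times \mathbb{R}_{>0}} \simeq \mathrm{id}$ because the specialization of a trivial cylinder under the $r$-Liouville flow is tautological. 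The main obstacle I anticipate is the careful verification of the singular support hypotheses for Proposition~\ref{basechange} at the corner $(r,s) = (0,s_\pm)$: the antimicrolocalized sheaf $\SG_\text{dbl}$ has microsupport on the relative doubling $(\Sigma L, \partial \Sigma L)^\prec_\epsilon$, and one must check that after the Hamiltonian isotopy $\Psi_H$ this microsupport remains disjoint from the conormal of $\{s = s_\pm\}$ over a full neighborhood of $r=0$, uniformly in the doubling parameter $\epsilon$. This is where the compactly supported nature of the Hamiltonian isotopies provided by Lemmas~\ref{lem:suspend-reduce1}--\ref{lem:suspend-reduce2} and the careful geometry of $\phi$ in Section~\ref{sec:suspend} will be essential.
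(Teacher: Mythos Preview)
Your proposal is correct and follows essentially the same route as the paper: both arguments reduce each square to the base-change statement of Proposition~\ref{basechange}, after using the compatibility of the GKS equivalence with restriction in the transverse $s$-direction (this is the paper's invocation of Remark~\ref{GKSviaCont}, and your ``$\Psi_H$ is the identity on the $s$-direction'' plays the same role). One remark: your fourth paragraph, which identifies $\tilde j_\pm^{-1}$ with $(i_-^{-1}, m_{\widetilde L})$ and $i_+^{-1}$ via Lemmas~\ref{lem:suspend-reduce1}--\ref{lem:suspend-reduce2}, goes beyond what Lemma~\ref{lem:commute} asserts; in the paper that identification is deferred to Lemmas~\ref{lem:positive-end} and~\ref{lem:negative-end}, while Lemma~\ref{lem:commute} itself only establishes the bare commutativity of the two squares.
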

\begin{proof}
    Recall from Section \ref{sec:NadShen} the construction of the Lagrangian cobordism functor. Fix a contact embedding $T^*(M \times \bR_{>0}) \hookrightarrow T^{*,\infty}N$. Write
    $$N \times \{0\} \xrightarrow{{i}} N \times [0, 1] \xleftarrow{{j}} N (0, 1]$$
    and let $\phi_Z^\zeta,\,0 < \zeta \leq 1$ is the contact Hamiltonian flow that lifts the Liouville flow on $T^*(M \times \bR)$. For $\SF \in \mu Sh_{M \cup (\Lambda \times \bR_{>0})} \times_{\mu Sh_\Lambda} \mu Sh_L$, the image under the Lagrangian cobordism functor $\Phi_L$ is defined by
    $$\Phi_L(\SF)_\text{dbl} = i^{-1}j_*(\Psi_Z(\SF)_\text{dbl}),$$
    Then consider $\SF \in \mu Sh_{(M \times \bR_{>0}) \cup (\widetilde{L} \times \bR_{>0})} \times_{\mu Sh_L} \mu Sh_{\Sigma L}$. Write
    $$N \times \bR_{>0} \times \{0\} \xrightarrow{\overline{i}} N \times \bR_{>0} \times [0, 1] \xleftarrow{\overline{j}} N \times \bR_{>0} \times (0, 1]$$
    and let $\phi_{\bar{Z}}^\zeta,\,0 < \zeta \leq 1$ is the contact Hamiltonian flow that lifts the pull back Liouville flow on $T^*(M \times \bR \times \bR_{>0})$. We apply Proposition \ref{basechange} and Remark \ref{GKSviaCont} and get
    \[\begin{split}
    \Phi_L(\tilde{j}_-^{-1}\SF)_\text{dbl} &= i^{-1}j_*\big(\Psi_Z(\tilde{j}_-^{-1}\SF)_\text{dbl}\big) = i^{-1}j_*\big(\tilde{j}_-^{-1}\Psi_{\overline{Z}}(\SF)_\text{dbl}\big) \\
    &= i^{-1}\tilde{j}_-^{-1}\overline{j}_*\big(\Psi_{\overline{Z}}(\SF)_\text{dbl}\big) = j_-^{-1}\overline{i}^{-1}\overline{j}_*\big(\Psi_{\overline{Z}}(\SF)_\text{dbl}\big) = j_-^{-1}\Phi_{\Sigma L}(\SF)_\text{dbl}.
    \end{split}\]
    This prove the commutativity for the square on the left. Using the same argument, we have commutativity for the square on the right.
\end{proof}

    In the next lemma, we explain why there is an identification $\tilde{j}_+^{-1} \simeq i_+^{-1}$.

\begin{lemma}\label{lem:positive-end}
    Let $L$ be an exact Lagrangian cobordism from $\Lambda_-$ to $\Lambda_+$ and $\Sigma L$ the suspension exact Lagrangian cobordism. There is a commutative diagram of global sections of categories
    \[\xymatrix@R=3mm@C=6mm{
    \mu Sh_{(M \times \mathbb{R}_{>0}) \cup (\widetilde{L} \times \mathbb{R}_{>0})} \times_{\mu Sh_L} \mu Sh_{\Sigma L} \ar[dr]_-{ \tilde{j}_+^{-1} \hspace{12pt} } \ar[dd]_{\rotatebox{90}{$\sim$}} & \\
    & \mu Sh_{M \cup (\Lambda_+ \times \mathbb{R}_{>0})}.\\
    \mu Sh_{(M \times \mathbb{R}_{>0}) \cup (\widetilde{L} \times \mathbb{R}_{>0})} \ar[ur]_-{\hspace{0pt} i_+^{-1}} &
    }\]
\end{lemma}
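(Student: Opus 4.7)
The plan is to exploit the fact that in the ``positive corner'' of the doubly-cylindrical ambient space $M \times (1, +\infty)_s \times \bR_{>0, r}$, where both $s$ and $r$ are large, the whole skeleton $(M \times \bR_{>0}) \cup (\widetilde{L} \times \bR_{>0}) \cup \Sigma L$ reduces to the trivial double cylinder $\Lambda_+ \times (s_+, +\infty)_s \times (s_+ r_+, +\infty)_r$. This product structure forces the two restriction functors to compute the same microsheaf, up to the canonical identification of the two Liouville $\bR_{>0}$ directions with the single Liouville factor appearing in $\mu Sh_{M \cup (\Lambda_+ \times \bR_{>0})}$.

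First I would unpack the left vertical equivalence. Since $\Sigma L \cong L \times \bR$ deformation retracts onto its negative $r$-end $\widetilde{L}$, and $\Sigma L$ has no Reeb chords, restriction to the negative $r$-end gives an equivalence $\mu Sh_{\Sigma L} \xrightarrow{\sim} \mu Sh_L$ (Theorem \ref{Gui} applied to both sides, or directly Theorem \ref{thm:cond-quan-cob} applied to $\Sigma L$). Hence the fiber product collapses to $\mu Sh_{(M \times \bR_{>0}) \cup (\widetilde{L} \times \bR_{>0})}$, and for each $\SF$ in the latter we obtain a canonical lift $\Psi(\SF) \in \mu Sh_{\Sigma L}$ whose restriction at $r \to 0$ agrees with the microlocalization of $\SF$ along $\widetilde{L}$.

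Next I would carry out the geometric comparison. The explicit formula from the proof of Lemma \ref{lem:suspend-cob} gives $\Sigma L \cap T^*(M \times (1, +\infty) \times (s_+ r_+, +\infty)) = \{(x, s, r; sr\xi, rt, st) : (x, \xi, t) \in \Lambda_+\}$, and $\widetilde{L} \cap J^1(M \times (s_+, +\infty))$ is the trivial cone over $\Lambda_+$. Thus on the corner region the combined skeleton is exactly the product cone $\Lambda_+ \times (s_+, +\infty)_s \times (s_+ r_+, +\infty)_r$, and by Lemma \ref{sheafmusheaf} any microsheaf on it is the trivial bi-extension of a single microsheaf on $\Lambda_+$. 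Both $i_+^{-1}\SF$ (restriction at $s = s_+$) and $\tilde{j}_+^{-1}(\SF, \Psi(\SF))$ (restriction at $r = r_+$) therefore compute the trivial Liouville extension of that same underlying $\mu Sh_{M \cup \Lambda_+}$-sheaf, so the triangle commutes.

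The main obstacle I anticipate is making the two extensions land in literally the same object of $\mu Sh_{M \cup (\Lambda_+ \times \bR_{>0})}$. Because $\Sigma L$ is not strictly cylindrical in the $s$-direction --- its symplectic reduction at $s = s_+$ is only compactly supported Hamiltonian isotopic to the trivial cone $\Lambda_+ \times \bR_{>0}$ by Lemma \ref{lem:suspend-reduce2} --- one cannot equate the two restricted sheaves on the nose. Instead one invokes Theorem \ref{GKS} along this compactly supported Hamiltonian isotopy to identify $i_+^{-1}\SF$ with $\tilde{j}_+^{-1}(\SF, \Psi(\SF))$ inside $\mu Sh_{M \cup (\Lambda_+ \times \bR_{>0})}$. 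Absorbing this isotopy into the canonical equivalence of target categories furnished by Theorem \ref{cont-trans}, the commutative triangle of the lemma is immediate.
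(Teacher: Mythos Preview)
Your proposal rests on a misidentification of $\tilde{j}_+^{-1}$. You treat it as restriction in the $r$-direction (the Liouville direction for $\Sigma L$), but in the paper's setup it is restriction in the $s$-direction. The horizontal arrows $\tilde{j}_\pm^{-1}$ in the big diagram at the start of the comparison section connect the source category for $\Phi_{\Sigma L}$ to those for $\Phi_L$ and $\Phi_{\Lambda_+ \times \bR_{>0}}$; since the symplectic reductions of $\Sigma L$ at $s = s_\pm$ are (Hamiltonian isotopic to) $L$ and $\Lambda_+ \times \bR_{>0}$ respectively (Lemmas~\ref{lem:suspend-reduce1} and~\ref{lem:suspend-reduce2}), the maps $\tilde{j}_\pm^{-1}$ are by definition the componentwise $s$-restrictions $i_\pm^{-1}$ applied to each factor of the fiber product. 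Your corner analysis, which is designed to compare an $s$-restriction with an $r$-restriction, is therefore addressing a problem that does not arise.

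With the correct reading, the argument is purely formal and much shorter. The paper observes that $\tilde{j}_+^{-1}$ is the map on homotopy pullbacks induced by the three vertical $i_+^{-1}$'s in
\[
\xymatrix{
\mu Sh_{(M \times \mathbb{R}_{>0}) \cup (\widetilde{L} \times \mathbb{R}_{>0})} \ar[d]^{i_+^{-1}} \ar[r] & \mu Sh_L \ar[d]^{i_+^{-1}} & \mu Sh_{\Sigma L} \ar[l] \ar[d]^{i_+^{-1}} \\
\mu Sh_{M \cup (\Lambda_+ \times \mathbb{R}_{>0})} \ar[r] & \mu Sh_{\Lambda_+} & \mu Sh_{\Lambda_+ \times \bR_{>0}}. \ar[l]
}
\]
Since the right-hand horizontal arrows $\mu Sh_{\Sigma L} \xrightarrow{\sim} \mu Sh_L$ and $\mu Sh_{\Lambda_+ \times \bR_{>0}} \xrightarrow{\sim} \mu Sh_{\Lambda_+}$ are equivalences (the first is exactly your collapse of the fiber product), both pullbacks reduce to their left factors, and the induced map becomes $i_+^{-1}$ directly. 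The ingredients you assembled --- the equivalence $\mu Sh_{\Sigma L} \simeq \mu Sh_L$ and the symplectic reduction Lemma~\ref{lem:suspend-reduce2} --- are exactly what is needed, but they feed into a categorical functoriality statement rather than a pointwise geometric comparison, and no invocation of Theorem~\ref{GKS} is required at this step.
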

\begin{proof}
    Using the property that the symplectic reduction $\Sigma L_s \cong \Lambda_+ \times \bR_{>0}$, there is a commutative diagram
    \[\xymatrix{
    \mu Sh_{(M \times \mathbb{R}_{>0}) \cup (\widetilde{L} \times \mathbb{R}_{>0})} \ar[d]^{i_+^{-1}} \ar[r] & \mu Sh_L \ar[d]^{i_+^{-1}} & \mu Sh_{\Sigma L} \ar[l] \ar[d]^{i_+^{-1}} \\
    \mu Sh_{M \cup (\Lambda_+ \times \mathbb{R}_{>0})} \ar[r] & \mu Sh_{\Lambda_+} & \mu Sh_{\Lambda_+ \times \bR_{>0}} \ar[l].
    }\]
    The restriction $\tilde{j}_+^{-1}$ is the restriction functor on the homotopy pull back which commutes with the above diagram
    $$\tilde{j}_+^{-1}: \mu Sh_{(M \times \mathbb{R}_{>0}) \cup (\widetilde{L} \times \mathbb{R}_{>0})} \times_{\mu Sh_L} \mu Sh_{\Sigma L} \rightarrow \mu Sh_{M \cup (\Lambda_+ \times \mathbb{R}_{>0})} \times_{\mu Sh_{\Lambda_+}} \mu Sh_{\Lambda_+ \times \bR_{>0}}.$$
    However, since $\Sigma L$ is diffeomorphic to $L \times \bR$, the restriction is an equivalence $\mu Sh_{\Sigma L} \xrightarrow{\sim} \mu Sh_L$. Thus there are equivalences of the homotopy pull back given by natural restriction functors
    \[\xymatrix{
    \mu Sh_{(M \times \mathbb{R}_{>0}) \cup (\widetilde{L} \times \mathbb{R}_{>0})} \times_{\mu Sh_L} \mu Sh_{\Sigma L} \ar[r]^-{\sim} \ar[d]_{\tilde{j}_+^{-1}} & \mu Sh_{(M \times \mathbb{R}_{>0}) \cup (\widetilde{L} \times \mathbb{R}_{>0})} \ar[d]^{i_+^{-1}} \\
    \mu Sh_{M \cup (\Lambda_+ \times \mathbb{R}_{>0})} \times_{\mu Sh_{\Lambda_+}} \mu Sh_{\Lambda_+ \times \bR_{>0}} \ar[r]^-{\sim} & \mu Sh_{M \cup (\Lambda_+ \times \mathbb{R}_{>0})}.
    }\]
    Therefore, the diagram in the statement commutes by the commutativity of the restriction functors.
\end{proof}

    In the final lemma, we explain why there is an identification between $\tilde{j}_-^{-1}$ and $(i_-^{-1}, m_{\widetilde{L}})$.

\begin{lemma}\label{lem:negative-end}
    Let $L$ be an exact Lagrangian cobordism from $\Lambda_-$ to $\Lambda_+$ and $\Sigma L$ the suspension exact Lagrangian cobordism. There is a commutative diagram of global sections of categories
    \[\xymatrix@R=3mm@C=6mm{
    \mu Sh_{(M \times \mathbb{R}_{>0}) \cup (\widetilde{L} \times \mathbb{R}_{>0})} \times_{\mu Sh_L} \mu Sh_{\Sigma L}  \ar[dr(0.6)]_-{ \tilde{j}_-^{-1} \hspace{12pt}} \ar[dd]_{\rotatebox{90}{$\sim$}} & \\
    &  \mu Sh_{M \cup (\Lambda_- \times \mathbb{R}_{>0})} \times_{\mu Sh_{\Lambda_-}} \mu Sh_L.\\
    \mu Sh_{(M \times \mathbb{R}_{>0}) \cup (\widetilde{L} \times \mathbb{R}_{>0})} \ar[ur]_-{ (i_-^{-1}, m_{\widetilde{L}})} &
    }\]
\end{lemma}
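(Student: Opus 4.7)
The plan is to mirror the proof of Lemma~\ref{lem:positive-end}, now applied at the negative end of the suspension direction using Property~(1) of Proposition~\ref{prop:suspend}: the symplectic reduction of $\Sigma L$ at $s_2 = s_{2,-}$ recovers the original Lagrangian cobordism $L$. First I would verify that the restriction $\mu Sh_{\Sigma L} \to \mu Sh_L$ induced by the slice $s_2 = s_{2,-}$ is an equivalence, using $\Sigma L \cong L \times \mathbb{R}$ diffeomorphically together with Guillermou's Theorem~\ref{Gui} to identify both sides with $Loc(L)$. This yields the vertical equivalence displayed in the lemma by collapsing the source fiber product to its first factor, exactly as in the proof of Lemma~\ref{lem:positive-end}.

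Next I would set up the analogue of the commutative square of restriction functors used there:
\[
\xymatrix{
\mu Sh_{(M \times \mathbb{R}_{>0}) \cup (\widetilde{L} \times \mathbb{R}_{>0})} \ar[d]_-{(i_-^{-1},\, m_{\widetilde{L}})} \ar[r] & \mu Sh_L \ar[d]^{\mathrm{id}} & \mu Sh_{\Sigma L} \ar[l] \ar[d]^{\sim} \\
\mu Sh_{M \cup (\Lambda_- \times \mathbb{R}_{>0})} \times_{\mu Sh_{\Lambda_-}} \mu Sh_L \ar[r] & \mu Sh_L & \mu Sh_L \ar[l]_-{\mathrm{id}}
}
\]
where the right column is the equivalence from Step~1 and the bottom row consists of the projections from the fiber product to $\mu Sh_L$. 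The left vertical arrow being $(i_-^{-1}, m_{\widetilde{L}})$ is essentially the content of Theorem~\ref{thm:cond-quan-cob}: restricting a sheaf on the suspension ambient to the slice $s_2 = s_{2,-}$ lands in $Sh_{\widetilde{L}}(M \times \mathbb{R} \times \mathbb{R}_{>0})$, and the conditional sheaf quantization identifies this with $Sh_{\Lambda_-}(M \times \mathbb{R}) \times_{Loc(\Lambda_-)} Loc(\widetilde{L})$ via precisely the pair $(i_-^{-1}, m_{\widetilde{L}})$; the identification $Loc(\widetilde{L}) \simeq \mu Sh_L$ then follows from $\widetilde{L} \cong L$ and Guillermou's theorem. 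Taking the induced map on fiber products and composing with the source-side equivalence realizes $\tilde{j}_-^{-1}$ as $(i_-^{-1}, m_{\widetilde{L}})$.

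The main obstacle will be the commutativity of this square, specifically the compatibility between the two projections to $\mu Sh_L$ on $\mu Sh_{(M \times \mathbb{R}_{>0}) \cup (\widetilde{L} \times \mathbb{R}_{>0})}$: the one coming from the fiber product structure on the source (which factors through $\mu Sh_{\Sigma L}$ via the restriction to the interface) and the one arising from the second component of $(i_-^{-1}, m_{\widetilde{L}})$ (which goes through $m_{\widetilde{L}}$ followed by $Loc(\widetilde{L}) \simeq \mu Sh_L$). Both should extract the microlocalization of the ambient sheaf along $\widetilde{L}$ — one by geometrically reducing $\Sigma L$ at $s_{2,-}$ and then identifying the resulting local system on $L$ with one on $\widetilde{L}$, the other by direct microlocalization — and they agree because the construction of $\Psi_L$ in Theorem~\ref{thm:cond-quan-cob} is built so that $m_{\widetilde{L}} \circ \Psi_L$ returns the prescribed local system data.
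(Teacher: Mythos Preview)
Your overall plan---mirror the positive end argument, use $\Sigma L \cong L \times \mathbb{R}$ to collapse the source fiber product, and identify $\tilde{j}_-^{-1}$ with $(i_-^{-1}, m_{\widetilde{L}})$---matches the paper. But two aspects of your execution do not.

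First, the diagram you write is not the right shape to produce a map of fiber products. The source is a pullback over $\mu Sh_L$ while the target is a pullback over $\mu Sh_{\Lambda_-}$; to get a functor between them you need a map of \emph{spans}, i.e.\ individual maps on each factor together with a map $\mu Sh_L \to \mu Sh_{\Lambda_-}$ on the bases making both squares commute. The paper does exactly this: the top row is $\mu Sh_{(M \times \mathbb{R}_{>0}) \cup (\widetilde{L} \times \mathbb{R}_{>0})} \xrightarrow{m_{\widetilde{L}}} \mu Sh_L \leftarrow \mu Sh_{\Sigma L}$, the bottom row is $\mu Sh_{M \cup (\Lambda_- \times \mathbb{R}_{>0})} \to \mu Sh_{\Lambda_-} \leftarrow \mu Sh_L$, and each vertical arrow is the restriction $i_-^{-1}$. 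This realizes $\tilde{j}_-^{-1}$ directly as the induced map on pullbacks. Your diagram instead places the target fiber product in the lower-left corner and keeps $\mu Sh_L$ fixed in the middle column, so it is not a map of spans and does not formally produce the functor you want.

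Second, your justification for the key compatibility is misdirected. You appeal to Theorem~\ref{thm:cond-quan-cob} and the fact that $m_{\widetilde{L}} \circ \Psi_L$ returns the input local system. But the lemma is purely about restriction and microlocalization functors on the $\mu Sh$ categories; $\Psi_L$ plays no role here, and invoking it is circular since Theorem~\ref{thm:compatible-cob} (of which this lemma is a piece) is meant to relate the two constructions after the fact. The paper's argument is more elementary: the two restrictions $\mu Sh_L \xleftarrow{\sim} \mu Sh_{\Sigma L} \xrightarrow{\sim} \mu Sh_L$ compose to the identity (one is the inverse of the other, since both come from $\Sigma L \cong L \times \mathbb{R}$), so the composite $\mu Sh_{(M \times \mathbb{R}_{>0}) \cup (\widetilde{L} \times \mathbb{R}_{>0})} \to \mu Sh_L \xleftarrow{\sim} \mu Sh_{\Sigma L} \to \mu Sh_L$ is just $m_{\widetilde{L}}$. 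That is the whole verification---no sheaf quantization needed.
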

\begin{proof}
    Using the property that the symplectic reduction $\Sigma L_s \cong L$, there is a commutative diagram
    \[\xymatrix{
    \mu Sh_{(M \times \mathbb{R}_{>0}) \cup (\widetilde{L} \times \mathbb{R}_{>0})} \ar[d]^{i_-^{-1}} \ar[r]^-{m_{\widetilde{L}}} & \mu Sh_L \ar[d]^{i_-^{-1}} & \mu Sh_{\Sigma L} \ar[l] \ar[d]^{i_-^{-1}} \\
    \mu Sh_{M \cup (\Lambda_- \times \mathbb{R}_{>0})} \ar[r] & \mu Sh_{\Lambda_-} & \mu Sh_{L}. \ar[l]
    }\]
    The restriction $\tilde{j}_-^{-1}$ is the restriction functor on the homotopy pull back which commutes with the above diagram
    $$\tilde{j}_-^{-1}: \mu Sh_{(M \times \mathbb{R}_{>0}) \cup (\widetilde{L} \times \mathbb{R}_{>0})} \times_{\mu Sh_L} \mu Sh_{\Sigma L} \rightarrow \mu Sh_{M \cup (\Lambda_- \times \mathbb{R}_{>0})} \times_{\mu Sh_{\Lambda_-}} \mu Sh_{L}.$$
    However, since $\Sigma L$ is diffeomorphic to $L \times \bR$, the restriction is an equivalence $\mu Sh_{\Sigma L} \xrightarrow{\sim} \mu Sh_L$, and the composition determined by the trivial cobordism is the identity $\mu Sh_L \xleftarrow{\sim} \mu Sh_{\Sigma L} \xrightarrow{\sim} \mu Sh_L$. Therefore, the composition is the microlocalization $m_{\widetilde{L}}$
    $$\mu Sh_{(M \times \mathbb{R}_{>0}) \cup (\widetilde{L} \times \mathbb{R}_{>0})} \xrightarrow{m_{\widetilde{L}}} \mu Sh_L \xleftarrow{\sim} \mu Sh_{\Sigma L} \xrightarrow{\sim} \mu Sh_L.$$
    Therefore, the diagram in the statement commutes by the equivalence of homotopy pull back
    $$\mu Sh_{(M \times \mathbb{R}_{>0}) \cup (\widetilde{L} \times \mathbb{R}_{>0})}  \xrightarrow{\sim} \mu Sh_{(M \times \mathbb{R}_{>0}) \cup (\widetilde{L} \times \mathbb{R}_{>0})} \times_{\mu Sh_L} \mu Sh_{\Sigma L}$$
    given by natural restriction functor and commutativity of all the restriction functors.
\end{proof}

    Combining Lemmas \ref{lem:commute}, \ref{lem:positive-end}, \ref{lem:negative-end}, and the discussion at the beginning of this section, we can conclude Theorem \ref{thm:compatible-cob}. In particular, the two approaches of Lagrangian cobordism functor agree with each other.

\section{Examples and Applications}

\subsection{Computation of Lagrangian handle attachments}
    We provide some computations of examples of Lagrangian cobordisms. In the literature, the only known examples of embedded Lagrangian cobordisms are Legendrian isotopies \cite{GroEliashGF,Chantraine}, Lagrangian handle attachments \cite{EHK,Rizconnectsum} and Lagrangian caps \cite{LagCap}. 

    Lagrangian caps are exact Lagrangian cobordisms from loose or stabilized Legendrians to $\varnothing$, and thus we will not get non-trivial sheaves. Lagrangian cobordisms induced by Legendrian isotopies have been studied by Guillermou-Kashiwara-Schapira \cite{GKS,LiCobordism}. Therefore, we will only give computations of Lagrangian handle attachments (the reader may compare with \cite[Section 4.1]{LiCobordism}).

    Consider Lagrangian $k$-handle attachments on Legendrian $n$-submanifolds ($0 \leq k \leq n$). The local model of the front projection of $\widetilde{L}$ in $\bR^{n} \times \bR_t \times \bR_{>0,s}$ is shown in Figure \ref{fig:handle-attach}. The front projection of $\widetilde{L}$ determines a stratification on $\bR^{n} \times \bR_t \times \bR_{>0,s}$, which restricts to stratifications on $\bR^{n} \times \bR_t$, determined by the front projections of $\Lambda_\pm$.

\begin{figure}
  \centering
  \includegraphics[width=0.9\textwidth]{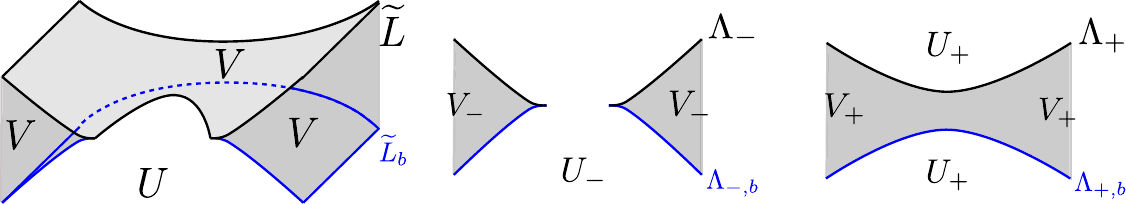}\\
  \caption{The front projection of the standard exact Lagrangian $1$-handle attachment on $1$-dimensional Legendrian links where the positive $s$-direction is pointing inward. The grey region is $V$ and the white region is $U$, and $V_\pm$ and $U_\pm$ are the intersections of $U$ and $V$ with a single horizontal slice $\bR^n \times \bR_t \times \{s_\pm\}$. Later, we will consider the the stratum in the front projection below the cusps in blue and denote it by $\widetilde{L}_b$ and $\Lambda_{\pm,b}$.}\label{fig:handle-attach}
\end{figure}

    For the stratification of $\bR^{n} \times \bR_t \times \bR_{>0,s}$, denote by $V$ the domain whose $t$-coordinate is bounded by the front projection the cobordism and $U$ the domain whose $t$-coordinate is unbounded on each vertical slice $\{(x_1, . . . , x_n)\} \times \bR_t \times \{s\}$. Restricting to $\bR^{n} \times \bR_t \times \{s_\pm\}$, we can get stratifications on the slices, and denote the restrictions of the regions by $V_\pm$ and $U_\pm$ of $V$ and $U$; see Figure \ref{fig:handle-attach}. For a sheaf in $\mathscr{F}_- \in Sh_{\Lambda_-}(\bR^{n} \times \bR_t)$, suppose the stalk in the region $V_-$ is $B$ and the stalk in $U_-$ is $A$. The microstalk of $\mathscr{F}$ is
    $$F = \mathrm{Cone}(A \to B).$$

    Consider the Lagrangian cobordism functor induced by $L$. First, we need to extend the microlocalization of $m_{\Lambda_-}(\mathscr{F}_-)$ from $\Lambda_-$ to $L$. Since $\Lambda_- = S^k \times D^{n-k}$ and ${L} = D^{k+1} \times D^{n-k}$, this is simply a extension of the local system on $S^k$ to $D^{k+1}$. Denote the local system on ${L}$ with stalk $F$ by $\mathscr{L}_L$, with $\mathscr{L}|_{\Lambda_-} = m_{\Lambda_-}(\mathscr{F}_-)$. This determines an object
    $$(\mathscr{F}_-, \mathscr{L}_L) \in Sh_{\Lambda_-}^b(\bR^{n+1}) \times_{Loc^b(\Lambda_-)} Loc^b(L).$$
    Here is how we determine the sheaf $\mathscr{F} \in Sh^b_{\widetilde{L}}(\bR^{n+1} \times \bR_{>0})$ that is the image of $(\mathscr{F}_-, \mathscr{L}_L)$. 

    (1) We determine the stalk of the sheaf $\mathscr{F}$. Since the region $U$ and $V$ are connected, we know that the stalk of $\mathscr{F}$ in $V$ has to be $B$, and the stalk of $\mathscr{F}$ in $U$ has to be $A$. 

    (2) We determine the local system of the sheaf $\mathscr{F}$ in the region $U$. Since $U$ admits a deformation retract to $U_-$, we can conclude that $\mathscr{F}|_U = A_U$. 

    (3) We determine the local system of the sheaf $\mathscr{F}$ in the region $V$. Since $V \cong D^k \times D^{n-k+2}$ and $V_- \cong S^k \times D^{n-k+1}$, we know that $V$ is not contractible relative to the negative boundary $V_-$, globally there could be nontrivial monodromy that comes from the local monodromy, parametrized by the fiber of restriction functor $Loc^b(V) \to Loc^b(V_-)$. Because there are transition maps
    $$A \to B \to A$$
    whose compositions are quasi-isomorphisms, without loss of generality, we assume that it is the identity \cite[Corollary 3.18]{STZ}. Then there is a splitting of chain complexes
    $$B \simeq A \oplus \mathrm{Cone}(A \to B) \simeq A \oplus F,$$
    and we can identify $\mathscr{F}|_V \simeq A_V \oplus \mathscr{L}_V$, where $\mathscr{L}_V$ is a local system on $V$ with stalk $F$.

    Consider the stratum of the front projection of $\widetilde{L}$ and $\Lambda_\pm$ below the family of cusps and denote it by $\widetilde{L}_b$ and $\Lambda_{\pm,b}$ as in Figure \ref{fig:handle-attach}. The inclusion $(L_b, \Lambda_{-,b}) \xrightarrow{\sim} (L, \Lambda_-)$ is a homotopy equivalence. We also know that the inclusion $(L_b, \Lambda_{-,b}) \xrightarrow{\sim} (V, V_-)$ induced by the front projection is a homotopy equivalence. By calculating the microlocalization, we can see that
    $$m_{\widetilde{L}_b}(\mathscr{F}) = \mathscr{L}_V|_{L_b} = \mathscr{L}_{L_b}$$
    relative to the boundary $\Lambda_{-,b} \xrightarrow{\sim} V_-$, meaning that they live in the same point in the fiber of the restriction functor. Thus, $\mathscr{L}_V$ is completely determined by $\mathscr{L}_L$.

    By Theorem \ref{thm:cond-quan-cob}, we know that the sheaf $\mathscr{F}$ is unquely determined by $(\mathscr{F}_-, \mathscr{L}_L)$. Therefore, the $\mathscr{F}$ we obtained has to be the image of $(\mathscr{F}_-, \mathscr{L}_L)$.

\subsection{Construction of Legendrians with simple sheaf categories}
    We provide one application of our framework. Using the sheaf quantization result in Section \ref{sec:quan-cob}, we show that the construction of Ekholm and Courte-Ekholm \cite{CourteEkholm} gives Legendrian submanifolds whose sheaf categories are equivalent to local systems on the Lagrangian fillings.

    Consider a Legendrian $\Lambda \subset J^1(M)$ with a Lagrangian filling $L$. We define a new Legendrian submanifold $\Lambda(L, L) \subset J^1(M \times \bR)$ diffeomorphic to $L \cup_\Lambda L$. Let $\widetilde{L} \subset J^1(M \times \bR_{>0})$ be the Legendrian lift conical outside $J^1(M \times (0, s_+))$. Let $\widetilde{L}^- \subset J^1(M \times \bR_{<0})$ the conical Legendrian under the map $\bR_{>0} \rightarrow \bR_{<0}, s \mapsto -s$. Consider (up to translations along $\bR_s$)
    \begin{align*}
    \Lambda(L, L) & \cap J^1(M \times (-\infty, -\epsilon)) = \widetilde{L} \cap J^1(M \times (0, s_+)), \\
    \Lambda(L, L) & \cap J^1(M \times (\epsilon, +\infty)) = \widetilde{L}^- \cap J^1(M \times (-s_+, 0)).
    \end{align*}
    Finally, consider an even function $\rho: (-2\epsilon, 2\epsilon) \rightarrow \bR$ such that $\rho|_{(-2\epsilon, -\epsilon)} = s - \epsilon + s_+$, $\rho|_{(\epsilon, 2\epsilon)} = -s + \epsilon + s_+$ and $\rho''(s) \leq 0$. Then we define
    $$\Lambda(L, L) \cap J^1(M \times (-2\epsilon, 2\epsilon)) = \{(x, s; \rho(s)\xi, \rho'(s)t, \rho(s)t) | (x, \xi, t) \in \Lambda\}.$$
    The union of the above three pieces then defines the closed Legendrian $\Lambda(L, L)$ (see \cite[Remark 4]{CourteEkholm} for the relation between their formalism and ours).

    Using the construction of the reverse suspension Lagrangian cobordism in Section \ref{sec:suspend} Remark \ref{rem:suspend-reverse}, we can prove that $\Lambda(L, L) \subset J^1(M \times \bR)$ has a exact Lagrangian filling $K(L, L) \subset J^1(M \times \bR) \times \bR_{>0} \cong T^*(M \times \bR \times \bR_{>0})$ as follows. We define (up to translations along $\bR_r$)
    \begin{align*}
    K(L, L) & \cap T^*(M \times (-\infty, -\epsilon) \times \bR_{>0}) = \overline{\Sigma L} \cap T^*(M \times (0, s_+) \times \bR_{>0}), \\
    K(L, L) & \cap T^*(M \times (\epsilon, +\infty) \times \bR_{>0}) = \overline{\Sigma L}^- \cap T^*(M \times (-s_+, 0) \times \bR_{>0}).
    \end{align*}
    By Lemma \ref{lem:suspend-cob}, this defines a Lagrangian filling of $\Lambda(L, L) \cap J^1(M \times (-\infty, -\epsilon))$ and $\Lambda(L, L) \cap J^1(M \times (\epsilon, +\infty))$. By Lemma \ref{lem:suspend-reduce1}, we know that the symplectic reduction along the hyperplanes $s = \pm \epsilon$ are Hamiltonian isotopic to the Lagrangian filling $L$ of $\Lambda$. Then define
    $$K(L, L) \cap T^*(M \times (-2\epsilon, 2\epsilon) \times \bR_{>0}) = \{(x, s, r; \rho(s)r\xi, \rho'(s)rt, \rho(s)t) | (x, s; \xi, t) \in L\}.$$
    The union of the above three pieces then defines the exact Lagrangian filling $K(L, L)$

    Given the Legendrian submanifold $\Lambda(L, L) \subset J^1(M \times \bR)$ with the exact Lagrangian filling $K(L, L) \subset J^1(M \times \bR) \times \bR_{>0}$, we prove the following result.

\begin{theorem}\label{thm:courteekholm}
    Let $L$ be an exact Lagrangian filling of $\Lambda \subset J^1(M)$. Then there exists an equivalence
    $$Sh_{\Lambda(L, L)}(M \times \bR)_0 \simeq Loc(L).$$
\end{theorem}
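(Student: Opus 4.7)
The plan is to decompose $\Lambda(L,L) \subset J^1(M \times \bR_s)$ along the gluing coordinate $s$ into the two fillings and a neck region, assemble the sheaf category by Mayer--Vietoris, and identify each piece using the conditional sheaf quantization Theorem \ref{thm:cond-quan-cob} for the two halves together with the Guillermou--Kashiwara--Schapira Theorem \ref{GKS} for the neck. Fix $0 < \epsilon'' < \epsilon$ and cover $M \times \bR_s$ by
\[
U_L = M \times (-\infty, -\epsilon''), \quad U_M = M \times (-\epsilon, \epsilon), \quad U_R = M \times (\epsilon'', +\infty),
\]
so that $U_L \cap U_R = \varnothing$. Since $Sh_{\Lambda(L,L)}$ is a sheaf of dg categories on the base and the acyclic-at-$t = -\infty$ condition is orthogonal to this $\bR_s$-cover, I expect to obtain
\[
Sh^b_{\Lambda(L,L)}(M \times \bR_s \times \bR_t)_0 \simeq Sh^b_{\Lambda(L,L)|_{U_L}}(U_L \times \bR_t)_0 \times_{Sh^b_\Lambda(M \times \bR_t)_0} Sh^b_{\Lambda(L,L)|_{U_R}}(U_R \times \bR_t)_0,
\]
after using GKS to identify the category on $U_M$ and on each pairwise overlap with $Sh^b_\Lambda(M \times \bR_t)_0$ via the Legendrian isotopy coming from the rescaling function $\rho$.

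Next I compute each factor. On $U_L$, after reparametrisation the Legendrian coincides with $\widetilde{L} \cap J^1(M \times (0, s_+))$, a conical Legendrian cobordism from $\varnothing$ at $s = 0$ to $\Lambda$ at $s = s_+$. Because the base is connected, $Sh_\varnothing(M \times \bR_t)_0 = 0$, and Theorem \ref{thm:cond-quan-cob} produces $Sh^b_{\widetilde{L}}(U_L \times \bR_t)_0 \simeq Loc^b(\widetilde{L}) \simeq Loc^b(L)$; under this identification the restriction to the right boundary of $U_L$ becomes the composition $i_+^{-1} \circ \Psi_{\widetilde{L}}$, which is fully faithful by Proposition \ref{prop:restrict-ff}. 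The $s \mapsto -s$ reflection used in the construction of $\widetilde{L}^-$ yields the mirror picture on $U_R$. Substituting into the fibre product and contracting the middle factor using the GKS equivalences yields
\[
Sh^b_{\Lambda(L,L)}(M \times \bR_s \times \bR_t)_0 \simeq Loc^b(L) \times_{Sh^b_\Lambda(M \times \bR_t)_0} Loc^b(L).
\]
Both structural maps equal $i_+^{-1} \circ \Psi_{\widetilde{L}}$ up to the natural involution of $Loc^b(L)$ induced by the reflection, and each is fully faithful with the same essential image; hence the fibre product reduces to $Loc^b(L)$ via projection to either factor. Running the same argument for compact objects using the left adjoint $\Psi_L^*$ of Theorem \ref{thm:cond-quan-cob} gives $Sh^c_{\Lambda(L,L)} \simeq Loc^c(L)$.

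The main obstacle will be pinning down the two embeddings $Loc^b(L) \rightrightarrows Sh^b_\Lambda(M \times \bR_t)_0$ coming from the left and right halves and showing that they share essential image, so that the fibre product truly collapses. This amounts to tracking the sheaf quantization $\Psi_{\widetilde{L}}$ across the Legendrian isotopy produced by $\rho$ and matching it with the GKS equivalence on $U_M$, i.e.\ a compatibility check between Theorems \ref{thm:cond-quan-cob} and \ref{GKS} along the neck. The remaining points -- the sheaf-of-categories property for the acyclic-stalks subcategory, the bookkeeping of Maslov data across the gluing, and the compact-object statement -- should follow routinely from the framework developed earlier in the paper.
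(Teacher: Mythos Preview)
Your proposal is correct and follows essentially the same route as the paper: decompose along the $s$-direction into the two halves and the neck, identify each half with $Loc(L)$ via Theorem~\ref{thm:cond-quan-cob} (using that the negative end is empty), identify the neck with $Sh_\Lambda(M\times\bR)_0$ via Theorem~\ref{GKS}, and then collapse the homotopy fibre product using Proposition~\ref{prop:restrict-ff} together with the observation that the two restriction functors are both the same cobordism functor $\Phi_L$ and hence share essential image. The paper's proof is exactly this argument, stated slightly more tersely.
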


    Theorem \ref{thm:courteekholm-intro} will follow from Theorem \ref{thm:courteekholm} once we are able to realize any manifold with boundary $L$ such that $TL \otimes_\bR \bC$ is trivial as a Lagrangian with Legendrian boundary.

    For $n = 1$ this is clear. Let $n = 2$. We also know that any surface with boundary $(L, \partial L)$ has a trivial complexified tangent bundle and can arise as Lagrangian fillings of some Legendrian link in $(J^1(\bR) \times \bR_{>0}, J^1(\bR))$ \cite{EHK}. Let $n \geq 3$. Then by the $h$-principle of flexible Lagrangians of Eliashberg-Ganatra-Lazarev \cite[Theorem 4.5]{FlexLag}, $(L, \partial L)$ admits a (flexible) Lagrangian embedding with Legendrian boundary into $(D^{2n}, S^{2n-1})$ and hence by general position into $(J^1(\bR^{n-1}) \times \bR_{>0}, J^1(\bR^{n-1}))$. Therefore Theorem \ref{thm:courteekholm-intro} follows.

\begin{proof}[Proof of Theorem \ref{thm:courteekholm}]
    In the construction of Courte-Ekholm \cite{CourteEkholm}, we know that
    \begin{align*}
    \Lambda(L, L) & \cap J^1(M \times (-\infty, -\epsilon)) = \widetilde{L} \cap J^1(M \times (0, s_+)), \\
    \Lambda(L, L) & \cap J^1(M \times (\epsilon, +\infty)) = -\widetilde{L} \cap J^1(M \times (-s_+, 0)).
    \end{align*}
    Therefore, by Theorem \ref{thm:cond-quan-cob}, we know that $Sh_{\Lambda(L, L)}(M \times \bR \times (-\infty, -\epsilon))_0 \simeq Sh_{\Lambda(L, L)}(M \times \bR \times (\epsilon, +\infty))_0 \simeq Loc(L)$. We also know that
    $$\Lambda(L, L) \cap J^1(M \times (-2\epsilon, 2\epsilon)) \cong \Lambda \times (-2\epsilon, 2\epsilon)$$
    is a deformation of the trivial Legendrian cylinder. Therefore, we have $Sh_{\Lambda(L, L)}(M \times (-2\epsilon, 2\epsilon))_0 \simeq Sh_\Lambda(M \times \bR)$. By Proposition \ref{prop:restrict-ff}, we know that the restriction functors are fully faithful
    $$Sh_{\Lambda(L, L)}(M \times \bR \times (-\infty, -\epsilon))_0 \hookrightarrow Sh_\Lambda(M \times \bR)_0 \hookleftarrow Sh_{\Lambda(L, L)}(M \times \bR \times (\epsilon, +\infty))_0$$
    and the essential images coincide. Therefore, their (homotopy) pullback, i.e.~$Sh_{\Lambda(L, L)}(M \times \bR \times \bR)_0$, is the common essential image of the restriction functors, which is isomorphic to $Loc(L)$. This completes the proof.
\end{proof}

    More generally, for two different exact Lagrangian fillings $L_0$ and $L_1$ of $\Lambda$, we can consider the Legendrian $\Lambda(L_0, L_1)$. We still have the homotopy pullback formula for the category of sheaves
    \[\xymatrix{
    Sh_{\Lambda(L_0, L_1)}(M \times \bR \times \bR)_0 \ar[r] \ar[d] & Loc(L_1) \ar[d] \\
    Loc(L_0) \ar[r] & Sh_\Lambda(M \times \bR)_0.
    }\]
    If the essential images are disjoint, then the category of sheaves with singular support on $\Lambda(L_0, L_1)$ is trivial. For example, when $\Lambda$ is the Legendrian $m(9_{46})$-knot with maximal Thurston-Bennequin number, and $L_0, L_1$ are two different Lagrangian fillings \cite{ChantraineNonsym,ObstructConcord}, it is proved by Murphy \cite[Appendix]{CourteEkholm} that $\Lambda(L_0, L_1)$ is a loose Legendrian \cite{Loose}. Hence the category of sheaves is trivial.

    On the contrary, it is not always the case that the sheaf category is trivial. For instance, we now know a number of Legendrian links with different Lagrangian fillings such that the images of local systems on the different fillings have overlaps (for example, when the moduli space of rank 1 local systems on the fillings induced different cluster charts on the moduli space of microlocal rank 1 sheaves \cite{STWZ,CasalsGao,GaoShenWeng}). Then it is clear that the homotopy pull back category $Sh_{\Lambda(L_0, L_1)}(M \times \bR)_0$ is not zero.

\begin{remark}
    We remark the difference between the computation of sheaves, or equivalently computations of Legendrian contact homologies with loop space coefficients, and the computations of Legendrian contact homologies without loop space coefficients.

    When one works with Legendrian contact homology over $\bZ/2\bZ$ without loop space coefficients $\mathcal{A}(\Lambda)$, one can prove that $\mathcal{A}(\Lambda(L_0, L_1)) \simeq 0$ whenever the augmentations induced by $L_0, L_1$ are different if the Reeb chords of $\Lambda$ are supported in non-negative degrees. Let $c$ be a degree 0 Reeb chord on $\Lambda$ such that $\epsilon_{L_0}(c) \neq \epsilon_{L_1}(c)$. By assumption $\partial_\Lambda c = 0$. Note that a degree $d$ Reeb chord $c$ on $\Lambda$ lifts uniquely to a degree $(d+1)$ Reeb chord $\widetilde{c}$ on $\Lambda(L_0, L_1)$. Thus
    $$\partial_{\Lambda(L_0, L_1)}\widetilde{c} = \epsilon_{L_0}(c) + \epsilon_{L_1}(c) + \widetilde{\partial_\Lambda c} = \epsilon_{L_0}(c) + \epsilon_{L_1}(c) = 1$$
    using Morse flow trees. Hence $\mathcal{A}(\Lambda(L_0, L_1)) \simeq 0$.

    On the contrary, when working with Legendrian contact homology with loop space coefficients $\mathcal{A}_{C_{-*}(\Omega_*\Lambda)}(\Lambda)$, even over the field $\bZ/2\bZ$, there will be more restrictions for the homology to vanish. This is because to compute the differential now we will need to control the values of enhanced augmentations
    $$\Phi_{L_i}: \mathcal{A}_{C_{-*}(\Omega_*\Lambda)}(\Lambda) \rightarrow C_{-*}(\Omega_*L_i)$$
    and gluing the loop space coefficients by $C_{-*}(\Omega_*L_i) \rightarrow C_{-*}(\Omega_*\Lambda(L_0, L_1))$. Even though for Legendrian knots, the augmentation $\epsilon_{L_i}: \mathcal{A}_{C_{-*}(\Omega_*\Lambda)}(\Lambda) \rightarrow \Bbbk$ always sends the generator of $\bZ[\pi_1(\Lambda)]$ to $-1$ \cite{Leverson}, the enhanced augmentations $\Phi_{L_i}$ may send the generator to different values in $\bZ[\pi_1(L_i)]$ (for example, compare \cite[Section 8.1]{EHK} with \cite{PanTorus}).
\end{remark}

\bibliographystyle{amsplain}
\bibliography{legendriansheaf}

@article{AsanoIke,
  title={Persistence-like distance on {T}amarkin's category and symplectic displacement energy},
  author={Asano, Tomohiro and Ike, Yuichi},
  journal={Journal of Symplectic Geometry},
  volume={18},
  number={3},
  pages={613--649},
  year={2020},
  publisher={International Press of Boston}
}

@inproceedings{AsanoIkeimmersion,
  title={Sheaf quantization and intersection of rational {L}agrangian immersions},
  author={Asano, Tomohiro and Ike, Yuichi},
  booktitle={Annales de l'{I}nstitut {F}ourier},
  pages={1--55}
}

@article{AsplundEkholm,
  title={Chekanov-{E}liashberg dg-algebras for singular {L}egendrians},
  author={Asplund, Johan and Ekholm, Tobias},
  journal={arXiv preprint arXiv:2102.04858},
  year={2021}
}

@article{CasalsGao,
  title={Infinitely many {L}agrangian fillings},
  author={Casals, Roger and Gao, Honghao},
  journal={Annals of Mathematics},
  volume={195},
  number={1},
  pages={207--249},
  year={2022},
  publisher={Department of Mathematics, Princeton University Princeton, New Jersey, USA}
}

@article{AlgWeave,
  title={Algebraic weaves and braid varieties},
  author={Casals, Roger and Gorsky, Eugene and Gorsky, Mikhail and Simental, Jos{\'e}},
  journal={arXiv preprint arXiv:2012.06931},
  year={2020}
}

@article{CasalsZas,
  title={Legendrian weaves: {N}--graph calculus, flag moduli and applications},
  author={Casals, Roger and Zaslow, Eric},
  journal={Geometry \& Topology},
  volume={26},
  number={8},
  pages={3589--3745},
  year={2023},
  publisher={Mathematical Sciences Publishers}
}

@article{Chantraine,
  title={Lagrangian concordance of {L}egendrian knots},
  author={Chantraine, Baptiste},
  journal={Algebraic \& Geometric Topology},
  volume={10},
  number={1},
  pages={63--85},
  year={2010},
  publisher={Mathematical Sciences Publishers}
}

@article{RepSheaf,
  title={Representations, sheaves and {L}egendrian {$(2, m)$} torus links},
  author={Chantraine, Baptiste and Ng, Lenhard and Sivek, Steven},
  journal={Journal of the London Mathematical Society},
  volume={100},
  number={1},
  pages={41--82},
  year={2019},
  publisher={Wiley Online Library}
}

@article{Rizconnectsum,
  title={Legendrian ambient surgery and {L}egendrian contact homology},
  author={Dimitroglou Rizell, Georgios},
  journal={Journal of Symplectic Geometry},
  year={2016},
  volume={14},
  pages={811-901}
}

@article{Ekcobordism,
  title={Rational symplectic field theory over {$\mathbb{Z}_2$} for exact {L}agrangian cobordisms},
  author={Ekholm, Tobias},
  journal={Journal of the European Mathematical Society},
  volume={10},
  number={3},
  pages={641--704},
  year={2008}
}

@article{EkSurgery,
  title={Holomorphic curves for {L}egendrian surgery},
  author={Ekholm, Tobias},
  journal={arXiv preprint arXiv:1906.07228},
  year={2019}
}

@article{EHK,
  title={Legendrian knots and exact {L}agrangian cobordisms},
  author={Ekholm, Tobias and Honda, Ko and K{\'a}lm{\'a}n, Tam{\'a}s},
  journal={Journal of the European Mathematical Society},
  volume={18},
  number={11},
  pages={2627--2689},
  year={2016}
}

@article{EkholmLekili,
  title={Duality between {L}agrangian and {L}egendrian invariants},
  author={Ekholm, Tobias and Lekili, Yanki},
  journal={arXiv preprint arXiv:1701.01284},
  year={2017}
}

@incollection{SFT,
  title={Introduction to symplectic field theory},
  author={Eliashberg, Yakov and Givental, A and Hofer, Helmut},
  booktitle={Visions in mathematics},
  pages={560--673},
  year={2000},
  publisher={Springer}
}

@article{FlexLag,
  title={Flexible {L}agrangians},
  author={Eliashberg, Yakov and Ganatra, Sheel and Lazarev, Oleg},
  journal={International Mathematics Research Notices},
  volume={2020},
  number={8},
  pages={2408--2435},
  year={2020},
  publisher={Oxford University Press}
}

@article{LagCap,
  title={Lagrangian caps},
  author={Eliashberg, Yakov and Murphy, Emmy},
  journal={Geometric and Functional Analysis},
  volume={23},
  number={5},
  pages={1483--1514},
  year={2013},
  publisher={Springer}
}

@article{GPS2,
  title={Sectorial descent for wrapped {F}ukaya categories},
  author={Ganatra, Sheel and Pardon, John and Shende, Vivek},
  journal={arXiv preprint arXiv:1809.03427},
  year={2018}
}

@article{GPS3,
  title={Microlocal {M}orse theory of wrapped {F}ukaya categories},
  author={Ganatra, Sheel and Pardon, John and Shende, Vivek},
  journal={arXiv preprint arXiv:1809.08807},
  year={2018}
}

@article{AugSheafknot,
  title={Simple sheaves for knot conormals},
  author={Gao, Honghao},
  journal={Journal of Symplectic Geometry},
  volume={18},
  number={4},
  pages={1027--1070},
  year={2020},
  publisher={International Press of Boston}
}

@article{GaoShenWeng,
  title={Augmentations, fillings, and clusters},
  author={Gao, Honghao and Shen, Linhui and Weng, Daping},
  journal={arXiv preprint arXiv:2008.10793},
  year={2020}
}

@article{Gui,
  title={Quantization of conic {L}agrangian submanifolds of cotangent bundles},
  author={Guillermou, St{\'e}phane},
  journal={arXiv preprint arXiv:1212.5818},
  year={2012}
}

@article{Guithree,
  title={The three cusps conjecture},
  author={Guillermou, St{\'e}phane},
  journal={arXiv preprint arXiv:1603.07876},
  year={2016}
}

@article{GKS,
  title={Sheaf quantization of {H}amiltonian isotopies and applications to nondisplaceability problems},
  author={Guillermou, St{\'e}phane and Kashiwara, Masaki and Schapira, Pierre},
  journal={Duke Mathematical Journal},
  volume={161},
  number={2},
  pages={201--245},
  year={2012},
  publisher={Duke University Press}
}

@article{Ike,
  title={Compact exact {L}agrangian intersections in cotangent bundles via sheaf quantization},
  author={Ike, Yuichi},
  journal={Publications of the Research Institute for Mathematical Sciences},
  volume={55},
  number={4},
  pages={737--778},
  year={2019}
}

@article{JinTreu,
  title={Brane structures in microlocal sheaf theory},
  author={Xin Jin and David Treumann},
  journal={arXiv preprint arXiv:1704.04291},
  year={2017}
}

@book{KS,
  title={Sheaves on Manifolds},
  author={Kashiwara, Masaki and Schapira, Pierre},
  volume={292},
  year={2013},
  publisher={Springer Science \& Business Media}
}

@article{KSpersist,
  title={Persistent homology and microlocal sheaf theory},
  author={Masaki Kashiwara and Pierre Schapira},
  journal={Journal of Applied and Computational Topology},
  year={2018},
  volume={2},
  pages={83-113}
}

@article{Kuo,
  title={Wrapped sheaves},
  author={Kuo, Christopher},
  journal={Advances in Mathematics},
  volume={415},
  pages={108882},
  year={2023},
  publisher={Elsevier}
}

@article{Loose,
  title={Loose {L}egendrian embeddings in high dimensional contact manifolds},
  author={Emmy Murphy},
  journal={arXiv preprint arxiv:1201.2245},
  year={2012}
}

@article{NadNonchar,
  title={Non-characteristic expansions of {L}egendrian singularities},
  author={Nadler, David},
  journal={arXiv preprint arXiv:1507.01513},
  year={2015}
}

@article{NadWrapped,
  title={Wrapped microlocal sheaves on pairs of pants},
  author={Nadler, David},
  journal={arXiv preprint arXiv:1604.00114},
  year={2016}
}

@article{NadShen,
  title={Sheaf quantization in {W}einstein symplectic manifolds},
  author={Nadler, David and Shende, Vivek},
  journal={arXiv preprint arXiv:2007.10154},
  year={2020}
}

@article{AugSheaf,
  title={Augmentations are sheaves},
  author={Ng, Lenhard and Rutherford, Dan and Shende, Vivek and Sivek, Steven and Zaslow, Eric},
  journal={Geometry \& Topology},
  volume={24},
  number={5},
  pages={2149--2286},
  year={2020},
  publisher={Mathematical Sciences Publishers}
}

@article{PanTorus,
  title={Exact {L}agrangian fillings of {L}egendrian {$(2, n)$}-torus links},
  author={Pan, Yu},
  journal={Pacific Journal of Mathematics},
  volume={289},
  number={2},
  pages={417--441},
  year={2017},
  publisher={Mathematical Sciences Publishers}
}

@article{PanRuther,
  title={Functorial {LCH} for immersed {L}agrangian cobordisms},
  author={Pan, Yu and Rutherford, Dan},
  journal={Journal of Symplectic Geometry},
  volume={19},
  number={3},
  pages={635--722},
  year={2021},
  publisher={International Press of Boston}
}

@article{AugSheafsurface,
  title={Sheaves via augmentations of {L}egendrian surfaces},
  author={Rutherford, Dan and Sullivan, Michael},
  journal={Journal of Homotopy and Related Structures},
  volume={16},
  pages={703--752},
  year={2021},
  publisher={Springer}
}

@article{STWZ,
  title={Cluster varieties from {L}egendrian knots},
  author={Shende, Vivek and Treumann, David and Williams, Harold and Zaslow, Eric},
  journal={Duke Mathematical Journal},
  volume={168},
  number={15},
  pages={2801--2871},
  year={2019},
  publisher={Duke University Press}
}

@article{STZ,
  title={Legendrian knots and constructible sheaves},
  author={Vivek Shende and David Treumann and Eric Zaslow},
  journal={Inventiones mathematicae},
  year={2017},
  volume={207},
  pages={1031-1133}
}

@article{SylvanOrlov,
  title={Orlov and {V}iterbo functors in partially wrapped {F}ukaya categories},
  author={Sylvan, Zachary},
  journal={arXiv preprint arXiv:1908.02317},
  year={2019}
}

@inproceedings{Tamarkin1,
  title={Microlocal condition for non-displaceability},
  author={Tamarkin, Dmitry},
  booktitle={Algebraic and Analytic Microlocal Analysis},
  pages={99--223},
  year={2013},
  organization={Springer}
}

@article{Zhou,
  title={Sheaf quantization of {L}egendrian isotopy},
  author={Zhou, Peng},
  journal={Compositio Mathematica},
  volume={159},
  number={2},
  pages={419--435},
  year={2023},
  publisher={London Mathematical Society}
}

@article{NadNearby,
  title={A microlocal criterion for commuting nearby cycles},
  author={Nadler, David},
  journal={arXiv preprint arXiv:2003.11477},
  year={2020}
}

@article{KochNearby,
  title={Th{\'e}or{\`e}me de comparaison pour les cycles proches par un morphisme sans pente},
  author={Kochersperger, Matthieu},
  journal={Journal of Singularities},
  volume={16},
  pages={52--72},
  year={2017}
}

@article{MaiNearby,
  title={Cycles {\'e}vanescents alg{\'e}briques et topologiques par un morphisme sans pente},
  author={Maisonobe, Philippe},
  journal={J. Singul},
  volume={7},
  pages={157--189},
  year={2013},
  publisher={Citeseer}
}

@article{LiCobordism,
  title={Lagrangian cobordism functor in microlocal sheaf theory {I}},
  author={Li, Wenyuan},
  journal={arXiv preprint arXiv:2108.10914},
  year={2021}
}

@article{AugSheafLink,
  title={Augmentations and sheaves for links},
  author={Gao, Honghao},
  journal={arXiv preprint arXiv:2109.01285},
  year={2021}
}

@article{SackelGraph,
  title={Differential graded algebras for trivalent plane graphs and their representations},
  author={Sackel, Kevin},
  journal={arXiv preprint arXiv:2110.07585},
  year={2021}
}

@article{TwistGF,
  title={Twisted generating functions and the nearby {L}agrangian conjecture},
  author={Abouzaid, Mohammed and Courte, Sylvain and Guillermou, Stephane and Kragh, Thomas},
  journal={arXiv preprint arXiv:2011.13178},
  year={2020}
}

@article{KuoLi,
  title={Duality and spherical adjunction from microlocalization--An approach by contact isotopies},
  author={Kuo, Christopher and Li, Wenyuan},
  journal={arXiv preprint arXiv:2210.06643},
  year={2022}
}

@article{ChantraineEx,
  title={A note on exact {L}agrangian cobordisms with disconnected {L}egendrian ends},
  author={Chantraine, Baptiste},
  journal={Proceedings of the American Mathematical Society},
  volume={143},
  number={3},
  pages={1325--1331},
  year={2015}
}

@article{ChekanovGF,
  title={Critical points of quasi-functions and generating families of {L}egendrian manifolds},
  author={Chekanov, Yu V},
  journal={Functional Analysis and Its Applications},
  volume={30},
  number={2},
  pages={118--128},
  year={1996},
  publisher={Springer}
}

@article{GroEliashGF,
  title={Lagrangian intersection theory: finite-dimensional approach},
  author={Eliashberg, Yasha and Gromov, Misha},
  journal={Translations of the American Mathematical Society-Series 2},
  volume={186},
  pages={27--118},
  year={1998},
  publisher={Providence [etc.] American Mathematical Society, 1949-}
}

@book{McduffSalamon,
  title={Introduction to symplectic topology},
  author={McDuff, Dusa and Salamon, Dietmar},
  volume={27},
  year={2017},
  publisher={Oxford University Press}
}

@article{LiEstimate,
  title={Estimating {R}eeb chords using microlocal sheaf theory},
  author={Li, Wenyuan},
  journal={arXiv preprint arXiv:2106.04079},
  year={2021}
}

@article{GuiSchapira,
  title={Microlocal theory of sheaves and {T}amarkin's non displaceability theorem},
  author={Guillermou, St{\'e}phane and Schapira, Pierre},
  journal={Homological mirror symmetry and tropical geometry},
  pages={43--85},
  year={2014},
  publisher={Springer}
}

@article{SabTraynorLength,
  title={The minimal length of a {L}agrangian cobordism between {L}egendrians},
  author={Sabloff, Joshua M and Traynor, Lisa},
  journal={Selecta Mathematica},
  volume={23},
  pages={1419--1448},
  year={2017},
  publisher={Springer}
}

@article{Leverson,
  title={Augmentations and rulings of {L}egendrian knots},
  author={Leverson, Caitlin},
  journal={Journal of Symplectic Geometry},
  volume={14},
  number={4},
  pages={1089--1143},
  year={2016},
  publisher={International Press of Boston}
}

@inproceedings{CourteEkholm,
  title={Lagrangian fillings and complicated {L}egendrian unknots},
  author={Courte, Sylvain and Ekholm, Tobias},
  booktitle={Proceedings of the {G}{\"o}kova {G}eometry-{T}opology conference},
  year={2017}
}

@article{CourteEkholmv1,
  title={Subcritical {L}agrangian fillings and complicated {L}egendrian unknots},
  author={Courte, Sylvain and Ekholm, Tobias},
  journal={arXiv preprint arXiv:1712.07849v1},
  year={2017}
}

@article{Roy,
  title={Constructions and Isotopies of High-Dimensional {L}egendrian Spheres},
  author={Roy, Agniva},
  journal={arXiv preprint arXiv:2211.00773},
  year={2022}
}

@article{ChantraineNonsym,
  title={Lagrangian concordance is not a symmetric relation},
  author={Chantraine, Baptiste},
  journal={Quantum Topology},
  volume={6},
  number={3},
  pages={451--474},
  year={2015}
}

@article{ObstructConcord,
  title={Obstructions to {L}agrangian concordance},
  author={Cornwell, Christopher and Ng, Lenhard and Sivek, Steven},
  journal={Algebraic \& Geometric Topology},
  volume={16},
  number={2},
  pages={797--824},
  year={2016},
  publisher={Mathematical Sciences Publishers}
}

@article{LinCap,
  title={Exact {L}agrangian caps of {L}egendrian knots},
  author={Lin, Francesco},
  journal={Journal of Symplectic Geometry},
  volume={14},
  number={1},
  pages={269--295},
  year={2016},
  publisher={International Press of Boston}
}

@article{ZhangS1,
  title={Idempotence of microlocal kernels and {$S^1$}-equivariant {C}hiu-{T}amarkin complex},
  author={Zhang, Bingyu},
  journal={arXiv preprint arXiv:2306.12316},
  year={2023}
}

@article{PanRutherImmersed,
  title={Augmentations and immersed {L}agrangian fillings},
  author={Pan, Yu and Rutherford, Dan},
  journal={Journal of Topology},
  volume={16},
  number={1},
  pages={368--429},
  year={2023},
  publisher={Wiley Online Library}
}

@article{ArnoldCobordism,
  title={Lagrange and {L}egendre cobordisms. {I}},
  author={Arnol'd, Vladimir Igorevich},
  journal={Funktsional'nyi Analiz i ego Prilozheniya},
  volume={14},
  number={3},
  pages={1--13},
  year={1980},
  publisher={Russian Academy of Sciences, Steklov Mathematical Institute of Russian}
}

@inproceedings{EliashCobordism,
  title={Cobordisme des solutions de relations diff{\'e}rentielles},
  author={Eliashberg, J},
  booktitle={South Rhone seminar on geometry, Travaux en Cours, 1984},
  year={1984}
}

@book{AudinCobordism,
  title={Cobordismes d'immersions lagrangiennes et legendriennes},
  author={Audin, Mich{\`e}le},
  volume={20},
  year={1987},
  publisher={Hermann Paris}
}

@article{Vasilev,
  title={Lagrange and {L}egendre characteristic classes},
  author={Vasil'ev, Viktor Anatol'evi{\v{c}}},
  journal={(No Title)},
  year={1988}
}
\end{document}